\documentclass[11pt,letterpaper,twoside,reqno,nosumlimits]{amsart}

\usepackage[usenames,dvipsnames]{xcolor}
\usepackage{fancyhdr}
\usepackage{amsmath,amsfonts,amsbsy,amsgen,amscd,amssymb,amsthm}
\usepackage{url}

\usepackage{mathtools}

\usepackage[font=small,margin=0.25in,labelfont={bf},labelsep={colon}]{caption}

\usepackage{subcaption}

\usepackage{tikz}
\usepackage{microtype}
\usepackage{enumitem}

\definecolor{dark-gray}{gray}{0.3}
\definecolor{dkgray}{rgb}{.4,.4,.4}
\definecolor{dkblue}{rgb}{0,0,.5}
\definecolor{medblue}{rgb}{0,0,.75}
\definecolor{rust}{rgb}{0.5,0.1,0.1}

\usepackage[colorlinks=true]{hyperref}

\hypersetup{urlcolor=rust}
\hypersetup{citecolor=dkblue}
\hypersetup{linkcolor=dkblue}

\usepackage{setspace}

\usepackage{graphicx}
\usepackage{booktabs,longtable,tabu} 
\usepackage{multirow} 

\usepackage{float}

\usepackage[full]{textcomp}

\usepackage[scaled=.98,sups]{XCharter}
\usepackage[scaled=1.04,varqu,varl]{inconsolata}
\usepackage[type1]{cabin}
\usepackage[charter,vvarbb,scaled=1.07]{newtxmath}
\usepackage[cal=boondoxo]{mathalfa}
\usepackage[T1]{fontenc}

\usepackage{bm} 

\graphicspath{{figures/}}

\newtheorem{theorem}{Theorem}[section]
\newtheorem{lemma}[theorem]{Lemma}

\newtheorem{proposition}[theorem]{Proposition}

\newtheorem{corollary}[theorem]{Corollary}

\theoremstyle{definition}

\newtheorem{assumption}[theorem]{Assumption}

\numberwithin{equation}{section}
\numberwithin{figure}{section}
\numberwithin{table}{section}

\floatstyle{plaintop}
\newfloat{recipe}{thp}{lor}
\floatname{recipe}{Recipe}
\numberwithin{recipe}{section}

\providecommand{\mathbold}[1]{\bm{#1}}  

\renewcommand{\phi}{\varphi}

\renewcommand{\mid}{\mathrel{\mathop{:}}}

\newcommand{\iunit}{\mathrm{i}}

\providecommand{\mathbbm}{\mathbb} 

\newcommand{\R}{\mathbbm{R}}

\newcommand{\sgn}[1]{\operatorname{sgn}{#1}}

\newcommand{\diff}[1]{\mathrm{d}{#1}}
\newcommand{\idiff}[1]{\, \diff{#1}}

\newcommand{\mtx}[1]{\mathbold{#1}}

\newcommand{\triplenorm}[1]{{\left\vert\kern-0.25ex\left\vert\kern-0.25ex\left\vert #1
            \right\vert\kern-0.25ex\right\vert\kern-0.25ex\right\vert}}

\allowdisplaybreaks

\usepackage[numbers]{natbib}
\usepackage{diagbox}

\newcommand{\om}{\omega}

\mathtoolsset{showonlyrefs}

\newcounter{mylabelcounter}

\makeatletter
\newcommand{\labelText}[2]{%
#1\refstepcounter{mylabelcounter}%
\immediate\write\@auxout{%
  \string\newlabel{#2}{{1}{\thepage}{{\unexpanded{#1}}}{mylabelcounter.\number\value{mylabelcounter}}{}}%
}%
}

\begin{document}

\title[Self-similar blowups with singular profiles of the gCLM model]{Self-similar finite-time blowups with singular profiles of the generalized Constantin--Lax--Majda model: theoretical and numerical investigations}
\author[D. Huang, J. Tong, X. Wang]{De Huang$^1$, Jiajun Tong$^2$, and Xiuyuan Wang$^3$}
\thanks{$^1$ School of Mathematical Sciences, Peking University. Email: dhuang@math.pku.edu.cn}
\thanks{$^2$ Beijing International Center for Mathematical Research, Peking University. E-mail: tongj@bicmr.pku.edu.cn}
\thanks{$^3$ School of Mathematical Sciences, Peking University. Email: wangxiuyuan@stu.pku.edu.cn}

\begin{abstract}
    We investigate novel scenarios of self-similar finite-time blowups of the generalized Constantin--Lax--Majda model with a parameter $a$, which are induced by a new setting where the smooth initial data satisfy a certain derivative degeneracy condition. In this setting, our numerical study reveals distinct self-similar blowup behaviors depending on the sign of $a$. For $a>0$, we observe one-scale self-similar blowups with regular profiles that have not been found in previous studies. In contrast, for $a\le 0$, we discover a novel two-scale self-similar blowup scenario where the outer profile converges to a singular function at the blowup time while the inner profile remains regular on a much smaller scale. Correspondingly, an $a$-parameterized family of singular self-similar profiles with explicit expressions are constructed for $a<0$ and shown to match nicely with the limiting profiles obtained in numerical simulation. In particular, for the specific case of $a=0$, we rigorously prove the convergence of the outer profile to an explicit singular function in self-similar coordinates. Furthermore, we demonstrate the two-scale nature of the blowup in this scenario by showing that the local inner profile behavior around the singularity point of the outer profile is governed by a traveling wave on a smaller scale. To support this observation, we rigorously establish the existence of such traveling wave solutions via a fixed-point method.
\end{abstract}

\maketitle

\vspace{-4mm}
\setcounter{tocdepth}{1}
\tableofcontents

\vspace{-4mm}
\section{Introduction}
We consider the 1D generalized Constantin--Lax--Majda (gCLM) equation
\begin{equation}\label{eqt:gCLM}
    \omega_t+au\omega_x=u_x\omega,\quad u_x=\mtx{H}(\omega),
\end{equation}
for $x \in \mathbb{R}$, where $\mtx{H}(\cdot)$ denotes the Hilbert transform on the real line. This equation is a 1D model for the vorticity formulation of the 3D incompressible Euler equations, proposed to study the competitive relation between advection and vortex stretching. In
particular, $\omega$ models the vorticity, and the nonlinear terms $u\omega_x$ and $u_x\omega$ model the advection term and
the vortex stretching term, respectively. The 3D Biot--Savart law that recovers the velocity from the
vorticity is modeled by $u_x = \mtx{H}(\omega)$, which has the same scaling as the original Biot--Savart law.

The fundamental question on the global regularity of the 3D Euler equations with smooth initial data of finite energy remains one of the most challenging open problems in fluid dynamics. It is widely believed that the vortex stretching effect has the potential to induce an infinite growth of the vorticity in finite time. The first construction of a (stable) self-similar finite-time blowup for the 3D incompressible Euler equations on $\mathbb{R}^3$ was established by Elgindi \cite{elgindi2021finite} in the axisymmetric setting
from $C^{1,\alpha}$ initial velocity without swirl for sufficiently small $\alpha$ (with stability of the blowup discussed in \cite{elgindi2021stable}). Under some change of variables, the $C^{1,\alpha}$ regularity of the solution gives rise to a small coefficient $\alpha$ in the advection term that weakens its effect, which is similar to the role of the parameter $a$ in \eqref{eqt:gCLM}. Later, Cordoba et al. \cite{cordoba2025finite} also constructed a finite-time blowup of the axisymmetric incompressible Euler equations from $C^{1,\alpha}$ initial velocity without swirl for small $\alpha$. Instead of relying on self-similarity, they constructed the finite-time blowup in a novel way by manipulating an infinite cascade of vorticity. More recently, several breakthroughs have further advanced this direction by extending singularity formation in the axisymmetric setting for $C^{1,\alpha}$ initial velocity without swirl up to the threshold case $\alpha=1/3-$ \cite{shkoller2026incompressible,chen2026asymptotically,chen2026asymptotically2,shao2026self}. Note that for $\alpha\geq 1/3$, global regularity has been established for axisymmetric $C^{1,\alpha}$ initial velocity without swirl \cite{danchin2007axisymmetric,shao2026global}. 

As for smooth initial data, finite-time singularity formation has also been constructed for the axisymmetric incompressible Euler equations in a different scenario. Chen and Hou \cite{chen2022stable,chen2025singularity} employed rigorous computer-assisted proof to show for the first time that the 3D vortex stretching can actually dominate the advection and lead to an asymptotically self-similar finite-time singularity from smooth initial data with swirl in the presence of a solid boundary.

However, whether a finite-time singularity can happen for the 3D incompressible Euler equations from smooth initial data in the free space $\mathbb{R}^3$ still remains open.
Nevertheless, Hou and Huang \cite{hou2022potential} presented numerical evidence of a potential self-similar finite-time blowup of the 3D Euler equations from smooth initial data. In particular, the self-similar solution they discovered exhibits a two-scale feature in the sense that the outer profile becomes more and more singular while the local inner profile remains smooth on a much smaller spatial scale. The mechanism of this self-similar blowup has not been fully understood. Therefore, it is still worthwhile to explore more sophisticated mechanisms that lead to novel self-similar finite-time blowups from smooth initial data for simplified models of the 3D Euler equations such as the gCLM model.

The original version of \eqref{eqt:gCLM} with $a=0$ was proposed by Constantin, Lax and Majda \cite{constantin1985simple} to demonstrate that a nonlocal vortex stretching term can lead to finite-time blowup in the absence of advection. Later, De Gregorio \cite{de1990one} included an advection term $u\omega_x$ back in the equation (known as the De Gregorio model) and conjectured the occurrence of a finite-time singularity nonetheless. As a generalization, Okamoto, Sakajo and Wunsch \cite{okamoto2008generalization} introduced the real parameter $a$ to modify the effect of advection in the competition against vortex stretching. Hence, equation \eqref{eqt:gCLM} is also referred to as the Okamoto--Sakajo--Wunsch (OSW) model.

Motivated by the long-standing problem on finite-time blowup of the 3D incompressible Euler equations, finite-time singularity formation of the gCLM model for a wide range of $a$ has been studied extensively in the literature. In view of the scaling property of equation \eqref{eqt:gCLM}, we are particularly interested in finite-time blowups that take the self-similar form
\begin{equation}\label{eqt:exact_selfsimilar_ansatz}
    \omega(x,t)=(T-t)^{\lambda}\cdot\bar\Omega\left(\frac{x}{(T-t)^{\gamma}}\right),
\end{equation}
where $\bar\Omega$ is referred to as the self-similar profile, and $\lambda,\gamma$ are the scaling factors. Plugging the ansatz into \eqref{eqt:gCLM}, assuming that $\bar\Omega$ is sufficiently regular, and taking $t \to T^-$ yields that the only possible non-zero value for $\lambda$ is $-1$, which corresponds to the quadratic nonlinearity in vortex stretching. The value of $\gamma$ determines the spatial feature of the blowup: A positive $\gamma$ corresponds to a spatially focusing blowup at $x = 0$, while a negative $\gamma$ corresponds to an expanding blowup. The existence of self-similar finite-time blowups of \eqref{eqt:gCLM} has been established for a wide range of the parameter $a$ in the literature \cite{castro2009self,elgindi2020effects,lushnikov2021collapse,chen2020singularity,chen2021finite,huang2023self,huang2024self,wang2025discovery}. Note that an exact self-similar solution as in \eqref{eqt:exact_selfsimilar_ansatz} means that the initial condition $\omega(x,0)$ is already a rescaling of the self-similar profile $\Omega$, which is not favorable if $\Omega$ is not smooth or does not decay sufficiently fast in the far field. In order to construct singularity formation for more general initial data (especially for smooth ones), we need to study self-similar finite-time blowups in an asymptotic sense that
\begin{equation}\label{eqt:asymptotic_selfsimilar}
    \omega(x,t)=(T-t)^{-1}\cdot\Omega\left( \frac{x}{(T-t)^{\gamma}},t\right) ,
\end{equation}
where $\Omega(X,t)\to \bar \Omega(X)$ as $t\to T^-$ for some exact self-similar profile $\bar \Omega$. More rigorously, we say that $\omega$ develops an asymptotically self-similar blowup (with $\lambda=-1$) at some finite time $T\in(0,+\infty)$, if
\[
    \lim_{t\to T^-}(T-t)\cdot \omega\big((T-t)^{\gamma}X,t\big)=\bar\Omega(X)
\]
for some $\gamma\in \mathbb{R} $ and some (limit) profile function $\bar\Omega$.
Naturally, an asymptotically self-similar blowup requires not only the existence of a profile $\bar\Omega$ but also its stability in some sense. One common way to accomplish this is by applying a time-dependent change of variables that transforms the gCLM equation \eqref{eqt:gCLM} into a dynamic equation of the profile $\Omega(X,t)$ and then studying the evolution and potential convergence of $\Omega(X,t)$. This approach is usually referred to as the dynamic rescaling method. Stability of self-similar profiles of the gCLM model has been established in a few special cases; see \cite{elgindi2021stable,chen2021finite,chen2020singularity}. We refer to \cite{elgindi2020effects} and \cite{huang2024self} for a more comprehensive review on related results.

It is worth noticing that stable self-similar blowup profiles of the gCLM model that have been obtained in previous research so far are interiorly smooth or at worst H\"older continuous. This naturally raises the question whether the gCLM equation can also develop self-similar finite-time blowups with singular profiles (not in $L^\infty$) from smooth initial data. We remark that earlier works typically assume that the initial solution $\omega_0=\omega(\cdot,0)$ is odd symmetric (which helps to locate the blowup at the symmetry point $x=0$) and has non-zero derivative at the expected blowup point, i.e. $\omega'_0(0)\ne 0$. We will refer to such initial data as \textit{non-degenerate} initial data in the sequel. In contrast, we call an initial datum \textit{degenerate} if $\omega$ has zero first derivative at the eventual blowup point. \textit{The main purpose of this paper is to find out whether and how self-similar blowups can develop from degenerate initial data and in what ways they are different from those in the non-degenerate case}.

Our numerical investigation in this problem reveals a whole new picture of more complicated blowup behavior of the gCLM model \eqref{eqt:gCLM}. For $a>0$, we observe that smooth degenerate initial data could lead to asymptotically self-similar finite-time blowups with undiscovered regular profiles that share the same type of derivative degeneracy as the initial data. As far as we know, such degenerate self-similar profiles of the gCLM model have not been reported in the literature before. More interestingly, for $a\le0$, we find that \textit{smooth degenerate initial data can lead to asymptotically self-similar finite-time blowups with singular profiles} in the sense that the profile $\Omega(X,t)$ in \eqref{eqt:asymptotic_selfsimilar} not only converges in some sense but also becomes infinitely large at a certain point as $t\to T^-$ (therefore $\bar\Omega$ not in $L^\infty$).
This new self-similar blowup phenomenon is remarkably different from all the previously studied self-similar blowups with non-degenerate initial data and therefore broadens our understanding of finite-time singularity formation for models of the Euler equations. In Figure \ref{fig:a_neg_1_profiles}, we compare the formation of a regular self-similar profile (Figure \ref{fig:a_neg_1_profiles}(c)) in the development of a blowup to that of a singular self-similar profile (Figure \ref{fig:a_neg_1_profiles}(d)).
Note that an asymptotically self-similar blowup of the form \eqref{eqt:asymptotic_selfsimilar} with a singular limiting profile implies that, even though the overall blowup of $\om$ is at the scale of $(T-t)^{-1}$, the maximum of $|\om|$ actually blows up much faster than $(T-t)^{-1}$, which is not typical in view of the quadratic nonlinearity of the equation. Such non-typical growth of $\om$ in the gCLM model has been rigorously constructed in \cite{huang2025multiscale} for $a=0$ and is now observed for all $a<0$.

At the theoretical level, we construct a family of singular self-similar profiles $\bar\Omega_a$ with explicit expressions parameterized by $a$ for $a\leq0$, each of which gives an exact self-similar solution of the form \eqref{eqt:exact_selfsimilar_ansatz} (though in this case $\om$ can only be understood as a weak solution). For the special case $a=0$, we can additionally prove the convergence of the profile function $\Omega(X,t)$ to the self-similar profile $\bar\Omega_a$. As for all $a<0$, our numerical simulation further shows that, with a particular setting of the initial data (that will be made clearer in Section \ref{sec:blowup_scenarios}), the profile $\Omega(X,t)$ in \eqref{eqt:asymptotic_selfsimilar} will converge to $\bar\Omega_a$ as $t\to T^-$. With the explicit expressions of $\bar \Omega_a$ in hand, it is possible to follow the methods in \cite{elgindi2021stable,chen2021finite,chen2020singularity} to theoretically prove the stability of $\bar \Omega_a$ and therefore rigorously establish asymptotically self-similar finite-time blowups with singular profiles for all $a<0$. We leave this to future work.

In addition to singular self-similar profiles, another interesting feature of our new blowup scenario for $a<0$ is the breakdown of the strictly outgoing property. More precisely, for the limiting self-similar profile, the induced flow field in the dynamic rescaling equation (i.e. the term $c_l X+aU$ in \eqref{eqt:dynamic_rescaling} below) is not strictly positive for every $X>0$. Our numerical simulation also indicates that the singularity point of the limiting profile occurs precisely at a stagnation point of the flow field. This is also true for the explicitly constructed singular self-similar profiles $\bar\Omega_a$ for $a<0$. We will return to the discussion of this phenomenon later in Section \ref{sec:a_le_0} with more details based on our numerical observations. Notably, the absence of the strictly outgoing property for focusing self-similar blowups (i.e. $\gamma>0$) only occurs in the case $a<0$ (where $c_l X$ and $aU$ have opposite signs). This heuristically explains why self-similar blowups with singular profiles can only appear for $a<0$ but not for $a>0$. In contrast, all previously found focusing one-scale self-similar blowups of the gCLM model satisfy the strictly outgoing property and have bounded self-similar profiles (see e.g. \cite{elgindi2020effects,huang2024self}). This comparison suggests a close connection between the breakdown of strictly outgoing property and the emergence of singular self-similar profiles. We remark that, as discussed in \cite{elgindi2026dynamics,constantin2026putativeselfsimilarityincompressible3d}, the strictly outgoing property has been found to play an important role in the formation and stability of putative self-similar finite-time blowups for the 3D incompressible Euler equations.

\begin{figure}[h]
    \centering
    \begin{subfigure}{0.45\textwidth}
        \centering
        \includegraphics[width=\textwidth]{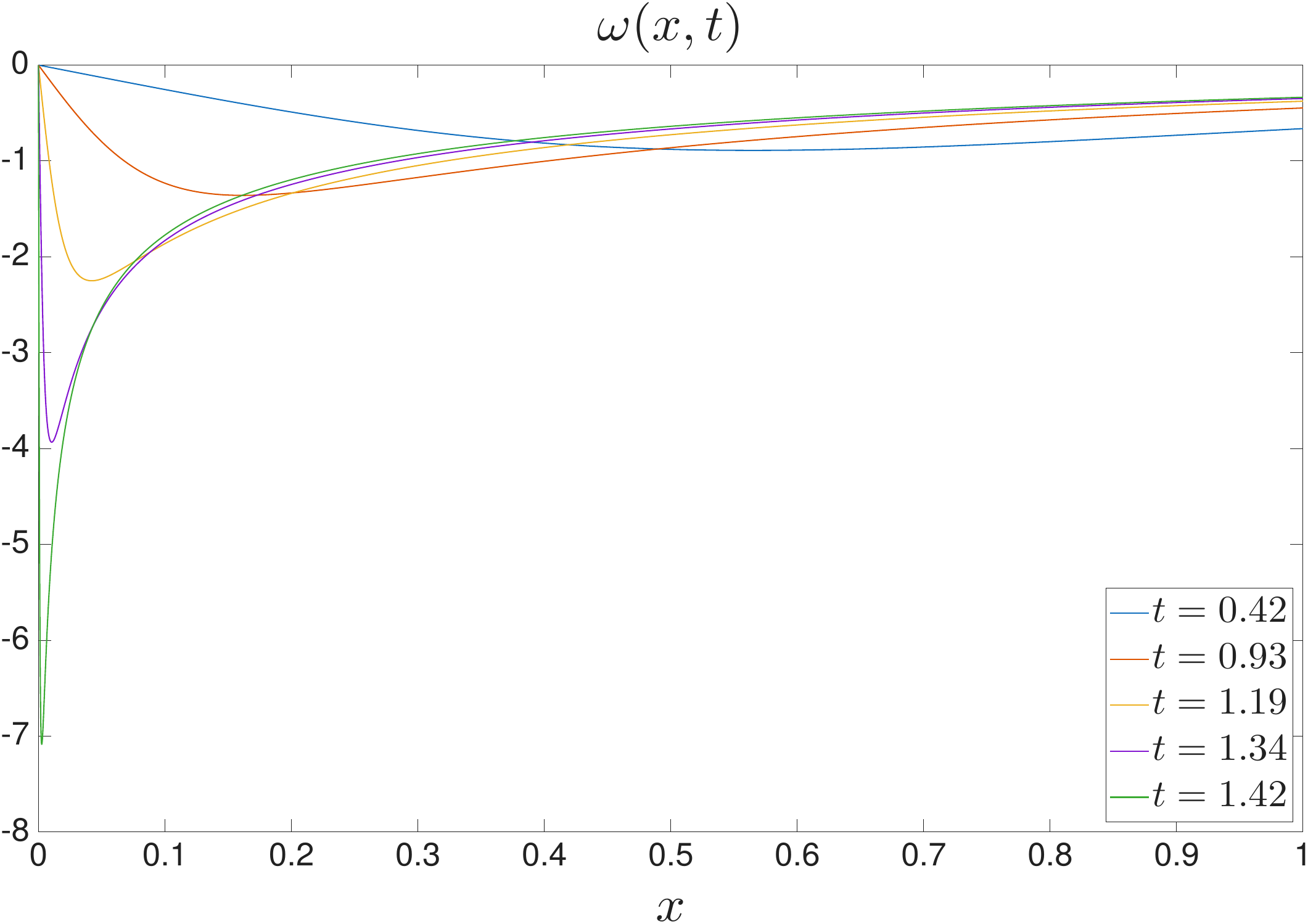}
        \caption{{Evolution of $\omega$ from non-degenerate initial data}}
        \label{fig:a_neg_1_nondeg_phys}
    \end{subfigure}
    \hfill
    \begin{subfigure}{0.45\textwidth}
        \centering
        \includegraphics[width=\textwidth]{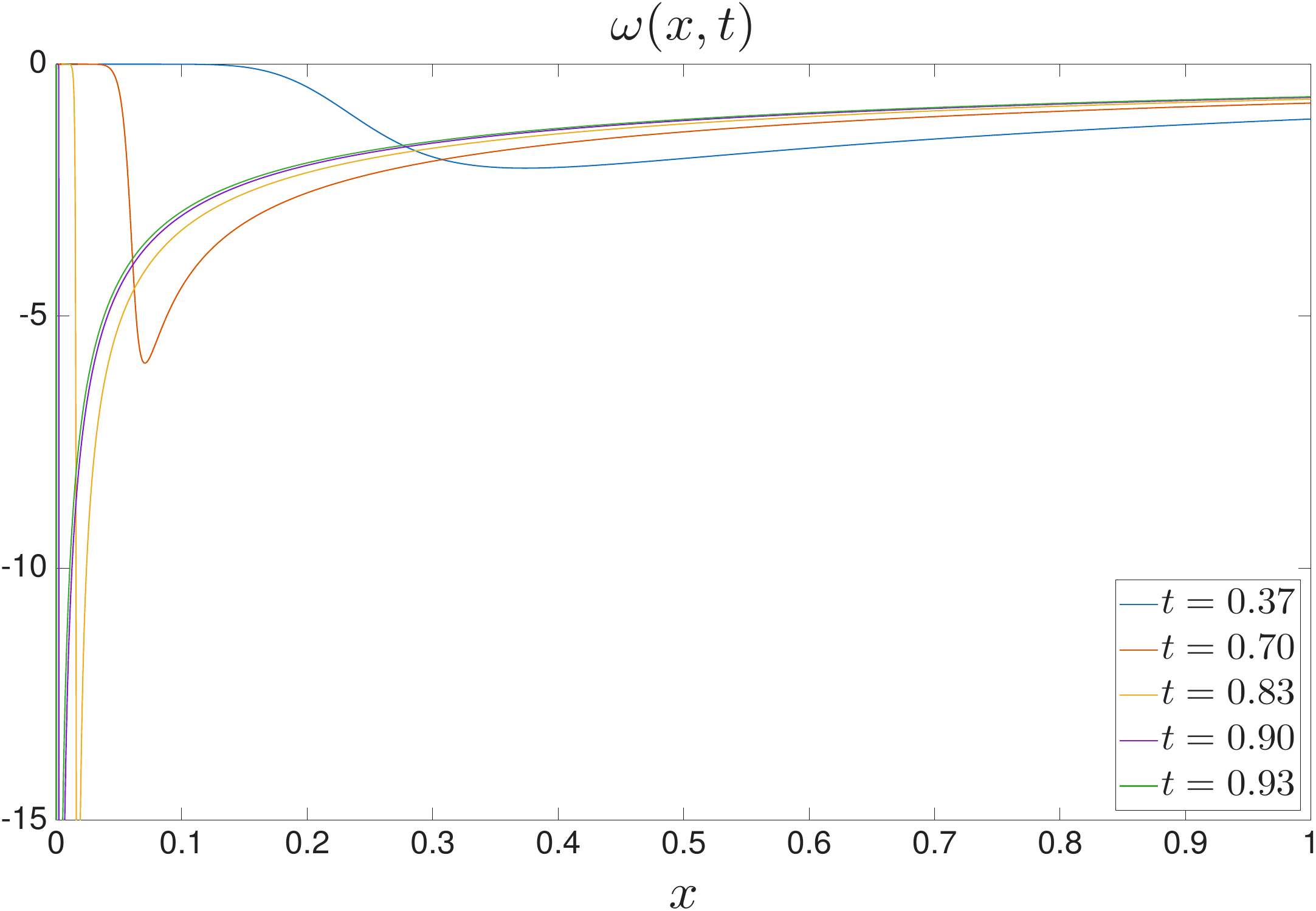}
        \caption{{Evolution of $\omega$ from degenerate initial data}}
        \label{fig:a_neg_1_deg_phys}
    \end{subfigure}
    \begin{subfigure}{0.45\textwidth}
        \centering
        \includegraphics[width=\textwidth]{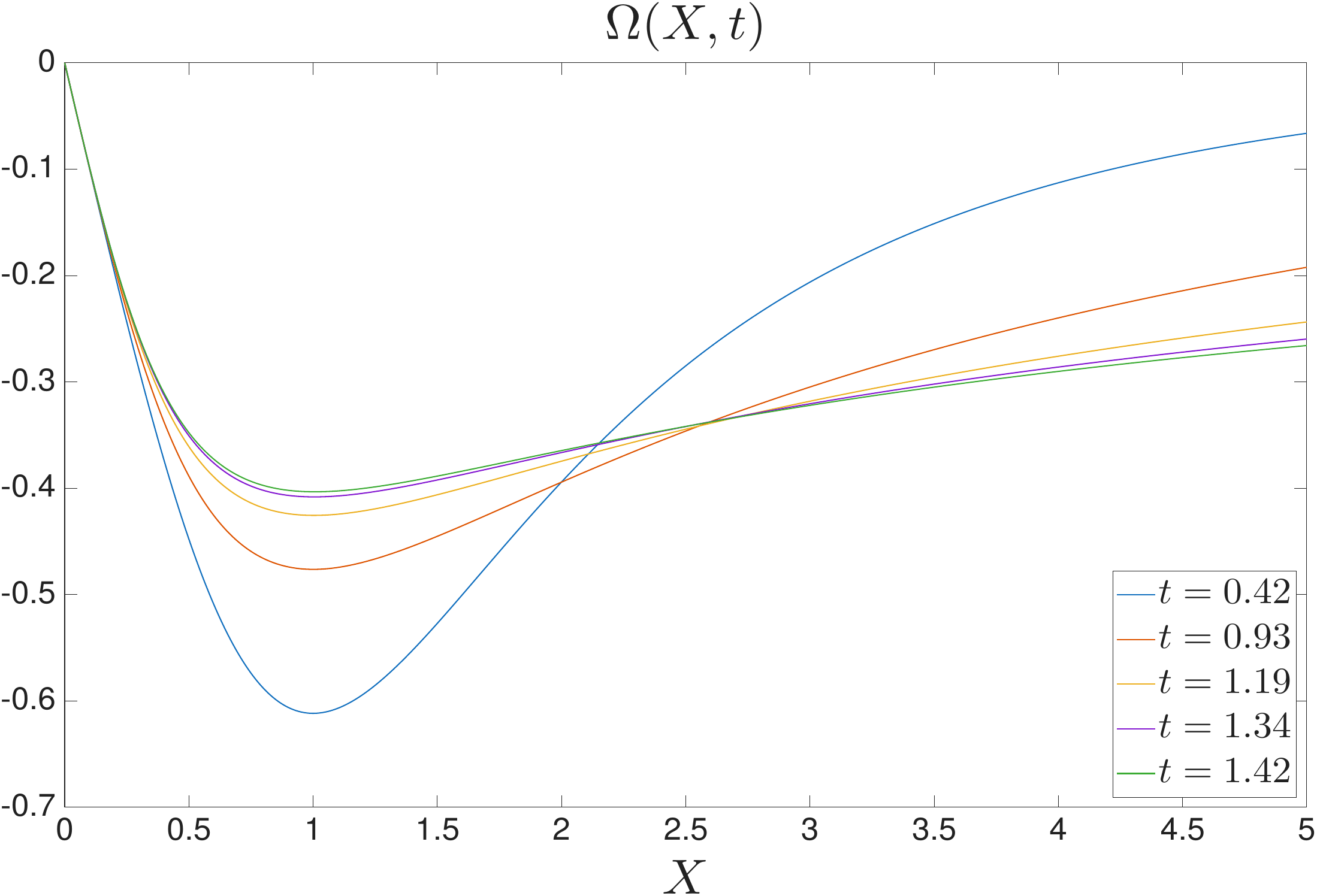}
        \caption{{Profile evolution from non-degenerate initial data}}
        \label{fig:a_neg_1_nondeg}
    \end{subfigure}
    \hfill
    \begin{subfigure}{0.45\textwidth}
        \centering
        \includegraphics[width=\textwidth]{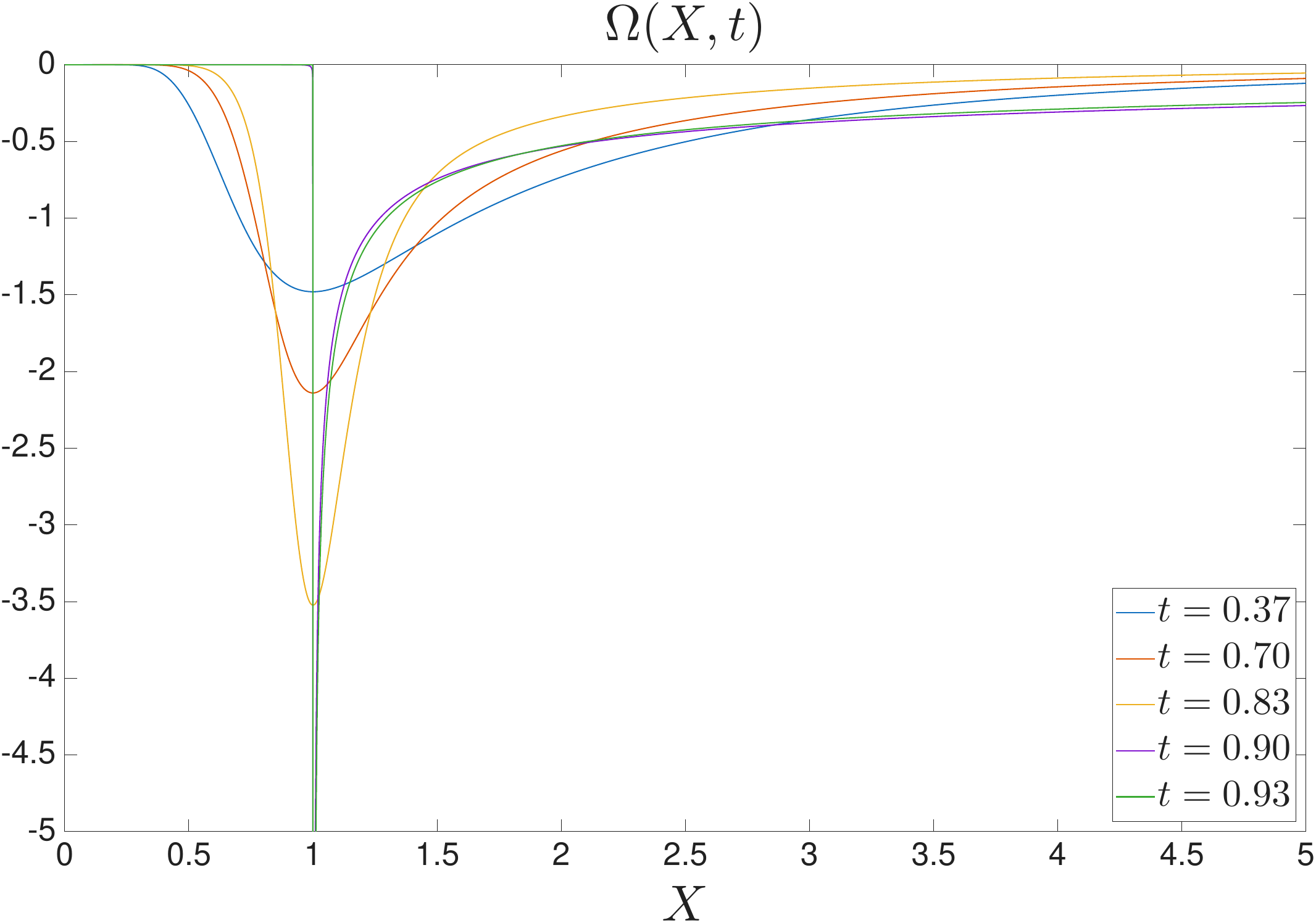}
        \caption{{Profile evolution from degenerate initial data}}
        \label{fig:a_neg_1_deg}
    \end{subfigure}
    \caption{First row: Evolution of $\omega$ (in original spatial coordinate) for $a=-1$ (a) with non-degenerate odd-symmetric initial data and (b) with degenerate odd-symmetric initial data, respectively. Second row: Corresponding evolution of the profile (in dynamically rescaled coordinate) for $a=-1$ (c) with non-degenerate odd-symmetric initial data and (d) with degenerate odd-symmetric initial data, respectively. A regular profile arises from non-degenerate initial data, while a singular profile with singularities at $X=\pm 1$ arises from degenerate initial data. Here $X=x/(T-t)^\gamma$ is the dynamically rescaled spatial variable. Only\textbf{} the parts for $x\geq 0$ (or $X\geq 0$) are plotted due to the odd symmetry.}
    \label{fig:a_neg_1_profiles}
\end{figure}

Moreover, for $a\leq0$, if we zoom in around the singularity point of the (outer) profile $\Omega(X,t)$, we observe another inner (smaller) scale of the solution with an inner profile. That is, the solution has a two-scale behavior. Let the larger scale be denoted by $(T-t)^\gamma$, and let the trajectory of the maximum point of $|\om|$ be given by $x=r(t)(T-t)^\gamma$ for some function $r(t)$.
Zooming in around that point,
we observe that the inner profile, after being properly rescaled in magnitude, remains regular on a secondary spatial scale denoted by $(T-t)^{\hat \gamma}$ with some $\hat \gamma >\gamma$.
In other words, for $x=r(t)(T-t)^\gamma+O((T-t)^{\hat \gamma})$ the solution behaves like
\begin{equation}\label{eqt:2scale-ansatz}
    \omega(x,t)=(T-t)^{\hat\lambda}\left( \widehat \Omega\left( \frac{x-r(t)(T-t)^{\gamma}}{(T-t)^{\hat\gamma}}\right)+o(1)\right)
\end{equation}
for some regular inner profile $\widehat \Omega$ and some $\hat \lambda <-1$. The observation $\hat \lambda<-1$ indicates that the maximum of $|\om|$ grows faster than $(T-t)^{-1}$, i.e. $\lim_{t\to T^-}(T-t)\cdot\|\omega(\cdot,t)\|_{L^\infty} = +\infty$, which is sometimes referred to as a Type II blowup in the literature. By a formal asymptotic analysis, we show that the inner profile (if exists) must be an exact traveling wave solution of the gCLM model \eqref{eqt:gCLM}. To support our observation and analysis of this two-scale blowup phenomenon, we also prove the existence of such traveling wave solutions for all $a\le1$ via a fixed-point approach. We remark that for the original CLM model ($a=0$), the two-scale self-similar blowup \eqref{eqt:2scale-ansatz} has been rigorously established in \cite{huang2025multiscale}.
However, how the outer profile $\Omega$ behaves as $t\to T^-$ was not discussed in \cite{huang2025multiscale} but will be rigorously addressed in this paper.
Moreover, this two-scale self-similarity setting \eqref{eqt:2scale-ansatz} was also adopted in \cite{hou2022potential} for understanding the numerically observed potential two-scale self-similar blowup of the axisymmetric incompressible Euler equations in $\mathbb{R}^3$. 
In fact, locally self-similar blowups with two-scale features as in \eqref{eqt:2scale-ansatz} have been studied and established as Type II blowups in other PDE systems such as the Keller--Segel system \cite{collot2023collapsing} and the semilinear heat equation \cite{del2020type,del2020new}. 
We hope that our numerical simulation of self-similar blowups with singular profiles and a two-scale feature will in the future turn into a rigorous proof and in the end help us construct finite-time blowups of the incompressible Euler equations from smooth initial data in $\mathbb{R}^3$.

The main contributions of this paper are the following.
\begin{enumerate}
    \item Theoretical aspect:
          \begin{itemize}
              \item For the case $a=0$ with degenerate initial data, we prove the convergence of the (outer) profile to an exact singular function under certain assumptions and normalization conditions, thereby establishing an asymptotically self-similar blowup in the form of \eqref{eqt:asymptotic_selfsimilar} with a singular profile in this case. (Theorem \ref{thm:convergence_a_0} and Corollary \ref{cor:convergence_a_0_halfline})
              \item For $a<0$, we construct a family of exact self-similar solutions in the form of \eqref{eqt:exact_selfsimilar_ansatz} with explicit singular profiles $\bar \Omega_a$ parameterized by $a$. (Theorem \ref{thm:explicit_singular_solution})
              \item We prove the existence of traveling wave solutions for the gCLM model for all $a\le1$ using a fixed-point method. (Theorem \ref{thm:traveling_wave_existence})
          \end{itemize}
    \item Numerical aspect:
          \begin{itemize}
              \item For $a>0$, we show that degenerate initial data can lead to self-similar finite-time blowups with regular profiles that have the same type of degeneracy.
              \item For $a<0$, we show that degenerate initial data can lead to self-similar finite-time blowups with singular profiles. In a particular setting, we show that the profile converges to $\bar \Omega_a$.
              \item We provide numerical evidence of the two-scale behavior of self-similar blowups for $a<0$ with degenerate initial data, and we show that the inner profile on a smaller scale converges to a traveling wave solution.
          \end{itemize}
\end{enumerate}

It is worth mentioning that the gCLM model on the torus $\mathbb{T}$ has also been widely considered in parallel studies \cite{de1996partial,okamoto2008generalization,jia2019gregorio,lei2020constantin,chen2021finite,lushnikov2021collapse,chen2020singularity,chen2025regularity,chen2021slightly}. It is therefore natural to ask whether the novel two-scale blowup scenarios discovered here can also occur in the periodic setting. Heuristically speaking, if a self-similar finite-time blowup on $\R$ is stable and focusing (i.e. $\gamma >0$ in \eqref{eqt:asymptotic_selfsimilar}), then it can also happen in the periodic setting, since the effect of periodicity is asymptotically negligible in the focusing blowup region of size $(T-t)^\gamma$. Indeed, all the two-scale self-similar blowups for $a\leq0$ discovered in this work are numerically observed to be robust and satisfy $\hat\gamma>\gamma>0$. This suggests that analogous two-scale blowups should also happen on the torus $\mathbb{T}$ in the case $a\leq 0$ for initial data under a similar degeneracy condition.

The remainder of this paper is organized as follows. In Section \ref{sec:blowup_scenarios}, we introduce the dynamic rescaling framework, review previous results on self-similar blowups with non-degenerate initial data, and present new blowup scenarios involving singular profiles arising from degenerate initial data. We also discuss the multiscale structure of these solutions and show that the inner profile converges to a traveling wave. Section \ref{sec:a_eq_0} proves that for $a=0$ with odd degenerate initial data, the profile converges to an exact singular solution under certain assumptions. Section \ref{sec:a_gt_0} details the numerical computations and convergence analysis for one-scale blowups from degenerate initial data when $a>0$. Section \ref{sec:a_le_0} presents numerical evidence that, for $a<0$, degenerate initial data can lead to finite-time blowup with a singular self-similar profile. Section \ref{sec:numerical_multiscale} investigates the associated two-scale behavior for $a<0$, including a comparison between numerically extracted inner profiles and traveling wave solutions, as well as a scaling analysis of the two-scale blowup. In Section \ref{sec:existence_traveling_wave}, we prove the existence of traveling wave solutions for the gCLM model using a fixed-point method and derive regularity and tail estimates for these solutions. Finally, technical lemmas and details of the numerical implementation are provided in the appendices.

\section{Dynamic rescaling formulation and self-similar blowups of the gCLM model}{\label{sec:blowup_scenarios}}
We first review the dynamic rescaling method and its application to self-similar finite-time blowups of the gCLM model \eqref{eqt:gCLM}. Based on this method (and its variation), self-similar blowups with non-degenerate odd symmetric regular profiles have been found for a wide range of the parameter $a$. We then introduce novel self-similar blowup scenarios of the gCLM model arising from degenerate initial data (to be clarified later). Most interestingly, when $a\le 0$, this new class of self-similar blowups are characterized by singular self-similar profiles and locally two-scale behavior. 

Before we proceed, we clarify how to recover $u$ from $\omega$ in the gCLM equation \eqref{eqt:gCLM}. The first step is to determine $u_x$ from $\omega$ by the Hilbert transform:
\begin{equation}\label{eqt:hilbert_transform}
    u_x(x,t)=\mtx{H}(\omega)(x,t)=\frac{1}{\pi} \mathrm{P.V.}\int_{\mathbb{R}}\frac{\omega(y,t)}{x-y}\idiff{y}.
\end{equation}
Then we recover $u$ from $u_x$ by integrating.
One way to do this is to formally write
\[
    u(x,t)=-(-\Delta)^{-1/2}\omega(x,t)=\frac{1}{\pi}\int_{\mathbb{R}}\ln|x-y|\omega(y,t)\idiff{y}
\]
which, however, may not be well defined if $\omega$ does not decay fast enough at infinity.
An alternative way is to fix a point $x_0$ and set $u(x_0,t)=0$ for all time $t$, which gives
\[
    u(x,t)=\int_{x_0}^x \mtx{H}(\omega)(y,t)\idiff{y}=\frac{1}{\pi}\int_{\mathbb{R}} \ln\left| \frac{x-y}{x_0-y} \right|\omega(y,t)\idiff{y}.
\]
In particular, when $\omega$ is odd, a natural choice (as commonly used in previous works) is to set $x_0=0$, thus making $u$ also an odd function. We remark that the choice of $x_0$ does not essentially affect the behavior of the solution; that is, solutions with the same initial data but different choices of $x_0$ are equivalent under a time-dependent translation.
We will rigorously argue this in Appendix \ref{app:translation_invariance}.
Therefore, for simplicity, we fix $x_0=0$ as default, which yields
\begin{equation}\label{eqt:biot_savart_law}
    u(x,t)=\frac{1}{\pi}\int_{\mathbb{R}} \ln\left| \frac{x-y}{y} \right|\omega(y,t)\idiff{y}.
\end{equation}

\subsection{Self-similar finite-time blowup via the dynamic rescaling method}

One important way to study and construct self-similar finite-time blowups is by considering the dynamic rescaling formulation. The idea of dynamic rescaling was introduced in \cite{landman1988rate,mclaughlin1986focusing} to study self-similar blowups of the nonlinear Schr\"odinger equations. A combination of the dynamic rescaling method and the method of computer-assisted proof has been successfully applied to proving self-similar finite-time blowups with smooth initial data for the De Gregorio model \cite{chen2021finite} (\eqref{eqt:gCLM} with $a=1$), the 1D Hou--Luo model \cite{chen2022asymptotically}, and the 3D axisymmetric incompressible Euler equations with boundary \cite{chen2022stable,chen2025singularity}. 

We will explain in detail how to apply the dynamic rescaling method to studying self-similar finite-time blowups of the gCLM model \eqref{eqt:gCLM}. To be specific, we consider the following time-dependent change of variables
\begin{equation}\label{eqt:change_of_variables}
    \omega(x,t)=C_\omega^{-1}\Omega(C_lx,\tau(t))
\end{equation}
where
\begin{equation}\label{eqt:c_to_C}
C_\omega(\tau)=\exp\int_0^\tau c_\omega(s)\idiff{s},\quad C_l(\tau)=\exp\int_0^\tau c_l(s)\idiff{s}
\end{equation}
are cumulative scaling factors representing the magnitude blowup of $\omega$ and the spatial scale of the blowup, respectively. The relation between the dynamic rescaling time $\tau$ and the physical time $t$ will be determined soon below. This change of variables rescales the solution with the hope that the profile function $\Omega$ will have a more stable behavior if the dynamic rescaling factors $c_\omega(\tau)$ and $c_l(\tau)$ are chosen properly, e.g. via some normalization conditions. Through this change of variables, studying the self-similar finite-time blowup of $\omega(x,t)$ boils down to studying the evolution of the profile function $\Omega(X,\tau)$. In fact, substituting \eqref{eqt:change_of_variables} into the gCLM equation \eqref{eqt:gCLM} yields
\[C_\omega^{-1}\Omega_\tau\tau_t-c_\omega C_\omega^{-1}\Omega\tau_t+c_lC_\omega^{-1} X\Omega_X\tau_t+aC_\omega^{-2}U\Omega_X=C_\omega^{-2}U_X\Omega,\]
where $X=C_lx, U_X=\mtx{H}(\Omega)$, and in view of \eqref{eqt:biot_savart_law},
\begin{equation}\label{eqt:Omega_U}
    U(X,\tau)=\frac{1}{\pi}\int_{\mathbb{R}} \ln\left| \frac{X-Y}{Y} \right|\Omega(Y,\tau)\idiff{Y}.
\end{equation}
Simplifying the above equation gives
\[
    \left( \Omega_\tau+c_lX\Omega_X-c_\omega\Omega \right)\tau_t=C_\omega^{-1}\left( U_X\Omega-aU\Omega_X \right).
\]
To balance the profile equation, it remains to define the dynamic rescaling time $\tau$ by
\begin{equation}\label{eqt:time_rescaling}
\tau'(t)=C_\omega(\tau)^{-1}=\exp\int_0^\tau -c_\omega(s) \idiff{s},
\end{equation}
which implicitly determines the relation between physical time $t$ and the dynamic rescaling time $\tau$. With this definition of $\tau$, we arrive at the famous dynamic rescaling equation of the gCLM model
\begin{equation}\label{eqt:dynamic_rescaling}
    \Omega_\tau+(c_lX+aU)\Omega_X=(c_\omega+U_X)\Omega,\quad U_X=\mtx{H}(\Omega).
\end{equation}
We remark that \eqref{eqt:dynamic_rescaling} is completely equivalent to \eqref{eqt:gCLM} under the change of variables given by \eqref{eqt:change_of_variables}, \eqref{eqt:c_to_C}, and \eqref{eqt:time_rescaling} with the same initial condition $\Omega(X,0)=\omega(x,0)$. We will use $\Omega_0$ and $\omega_0$ to denote initial data in what follows.

We are also particularly interested in steady state solutions to \eqref{eqt:dynamic_rescaling} that satisfy the self-similar profile equation
\begin{equation}\label{eqt:steady_profile}
    (\bar c_lX+a\bar U)\bar \Omega_X=(\bar c_\omega+\bar U_X)\bar \Omega,\quad \bar U_X=\mtx{H}(\bar \Omega),
\end{equation}
where $\bar U$ is related to $\bar \Omega$ as in \eqref{eqt:Omega_U}.
One can easily verify that if $(\bar \Omega,\bar c_l,\bar c_\omega)$ is a solution to \eqref{eqt:steady_profile}, then it gives an exact self-similar finite-time blowup solution to the gCLM model in the form of \eqref{eqt:exact_selfsimilar_ansatz} with $\lambda=-1$ and $\gamma=-\bar c_l/\bar c_\omega$.
Note that if $(\bar \Omega(X), \bar c_l,\bar  c_\omega)$ is a solution to \eqref{eqt:steady_profile}, then
\begin{equation}\label{eqt:scaling_invariance}
    \left(\alpha\bar \Omega(\beta X), \alpha \bar c_l, \alpha \bar c_\omega\right)
\end{equation} is also a solution for any $\alpha\in\mathbb{R},\beta>0$, which indicates that this equation admits two degrees of freedom. Hence, in order to uniquely identify the profile up to rescaling, we need to impose two normalization conditions to fix these two degrees of freedom.

Moreover, if under some normalization conditions the solution tuple $(\Omega,c_l,c_\omega)$ converges to a steady state $(\bar \Omega,\bar c_l,\bar c_\omega)$ of \eqref{eqt:dynamic_rescaling} as $\tau\to+\infty$, then the solution $\omega(x,t)$ would blow up in an asymptotically self-similar manner given by \eqref{eqt:asymptotic_selfsimilar}. We demonstrate this more precisely by the following proposition.

\begin{proposition}\label{prop:convergence_implies_onescale_blowup}
    Assume the solution $(\Omega,c_l,c_\omega)$ to \eqref{eqt:dynamic_rescaling} converges to a steady state $(\bar \Omega,\bar c_l,\bar c_\omega)$ (a solution of \eqref{eqt:steady_profile}) with $\bar c_\omega<0$ as $\tau\to+\infty$. In particular, assume the convergence of $\Omega$ is locally uniform, the limiting profile $\bar \Omega$ is continuous, and the scaling factors $c_l,c_\omega$ converge sufficiently fast such that
    \[
        \int_0^{+\infty}\left|c_l(\tau)-\bar c_l\right|\idiff \tau <+\infty,\quad \int_0^{+\infty}\left|c_\omega(\tau)-\bar c_\omega\right|\idiff \tau <+\infty.
    \]
    Then the solution $\omega(x,t)$ develops an asymptotically self-similar blowup at the origin in the sense that
    \[
        \lim_{t\to T^-}(T-t)\cdot \omega\big((T-t)^{\gamma}X,t\big)=\frac{1}{|\bar c_\omega|}\bar \Omega(C_0 X)
    \]
    for some constant $C_0>0$, where $\gamma=-\bar c_l/\bar c_\omega$ and $T<+\infty$ is the finite blowup time given by
    \[
        T=\int_0^{+\infty}\left(\exp\int_0^\tau c_\omega(s) \idiff{s}\right)\idiff \tau.
    \]
\end{proposition}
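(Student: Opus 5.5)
The plan is to invert the change of variables \eqref{eqt:change_of_variables}--\eqref{eqt:time_rescaling} explicitly and track the cumulative factors $C_\omega(\tau)$ and $C_l(\tau)$ asymptotically.

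\textbf{Step 1: the blowup time.} Integrating \eqref{eqt:time_rescaling} gives $t(\tau)=\int_0^\tau C_\omega(s)\idiff s$. Write $c_\omega=\bar c_\omega+\epsilon_\omega$ with $\epsilon_\omega\in L^1(0,\infty)$, and set $E_\omega(\tau)=\int_0^\tau\epsilon_\omega$, which converges to a finite limit $E_\omega^\infty$. Then
\[
C_\omega(\tau)=e^{\bar c_\omega\tau+E_\omega(\tau)},
\]
and $E_\omega$ is bounded. Since $\bar c_\omega<0$, $C_\omega$ decays exponentially, so $T=\lim_{\tau\to\infty}t(\tau)$ is finite and equals the stated integral. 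Because $t'(\tau)=C_\omega>0$, the map $\tau\mapsto t$ is a bijection $[0,\infty)\to[0,T)$.

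\textbf{Step 2: the asymptotics of $T-t$, the key estimate.} We have $T-t(\tau)=\int_\tau^\infty e^{\bar c_\omega s+E_\omega(s)}\idiff s$. Since $E_\omega(s)\to E_\omega^\infty$ as $s\to\infty$, uniformly for $s\ge\tau$, it follows that
\[
T-t(\tau)=\frac{e^{\bar c_\omega\tau+E_\omega^\infty}}{|\bar c_\omega|}\bigl(1+o(1)\bigr).
\]
Combining this with $C_\omega(\tau)=e^{\bar c_\omega\tau+E_\omega^\infty}(1+o(1))$ gives
\[
\frac{T-t}{C_\omega(\tau)}\to\frac{1}{|\bar c_\omega|}.
\]
For the spatial factor, similarly write $C_l(\tau)=e^{\bar c_l\tau+E_l(\tau)}$ with $E_l(\tau)\to E_l^\infty$. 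Then
\[
(T-t)^\gamma C_l(\tau)=\left(\frac{e^{E_\omega^\infty}}{|\bar c_\omega|}\right)^{\gamma}e^{\gamma\bar c_\omega\tau+\bar c_l\tau+E_l^\infty}\bigl(1+o(1)\bigr).
\]
The exponent $\gamma\bar c_\omega+\bar c_l$ vanishes precisely because $\gamma=-\bar c_l/\bar c_\omega$. Hence $(T-t)^\gamma C_l(\tau)\to C_0:=|\bar c_\omega|^{-\gamma}e^{\gamma E_\omega^\infty+E_l^\infty}>0$. This step is where the $L^1$ hypotheses are essential: without them the $o(1)$ corrections in the exponents could accumulate, and the limit $C_0$ might not exist. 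I expect this to be the main point requiring care, namely justifying the uniform approximation of the tail integral. It follows from $|e^{E_\omega(s)}-e^{E_\omega^\infty}|\le\sup_{s\ge\tau}|\cdots|\to0$ multiplying an integrable exponential.

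\textbf{Step 3: conclusion.} By \eqref{eqt:change_of_variables},
\[
(T-t)\,\omega\bigl((T-t)^\gamma X,t\bigr)=\frac{T-t}{C_\omega(\tau)}\,\Omega\bigl(C_l(T-t)^\gamma X,\tau\bigr).
\]
The first factor tends to $1/|\bar c_\omega|$. For the second, the argument $X_\tau:=C_l(T-t)^\gamma X$ tends to $C_0X$, so for fixed $X$ it stays in a compact set. We split
\[
|\Omega(X_\tau,\tau)-\bar\Omega(C_0X)|\le|\Omega(X_\tau,\tau)-\bar\Omega(X_\tau)|+|\bar\Omega(X_\tau)-\bar\Omega(C_0X)|.
\]
The first term vanishes by locally uniform convergence, and the second by continuity of $\bar\Omega$. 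Since $t\to T^-$ is equivalent to $\tau\to\infty$, the claimed limit follows, in fact locally uniformly in $X$.
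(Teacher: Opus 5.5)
Your proof is correct and follows the same route as the paper. You show $T<\infty$ from the exponential decay of $C_\omega$, derive $C_\omega(\tau)\sim|\bar c_\omega|(T-t)$, get $C_l(T-t)^\gamma\to C_0$ from the $L^1$ hypotheses and $\bar c_l+\gamma\bar c_\omega=0$, and conclude by locally uniform convergence and continuity of $\bar\Omega$. The differences are minor: you estimate the tail $\int_\tau^\infty C_\omega$ directly where the paper applies L'Hospital's rule to $C_\omega/(T-t)$ (which needs only $c_\omega\to\bar c_\omega$ at that step), and you write out the final splitting argument that the paper leaves implicit.
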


\begin{proof}
    By the assumption on the convergence of $c_\omega(\tau)$, we have 
    \[C_\omega(\tau)=\exp\left(\bar c_\omega \tau + \int_0^\tau (c_\omega(s) - \bar c_\omega)\idiff{s} \right)\lesssim \exp(\bar c_\omega \tau).\]
    As $\bar c_\omega<0$, it follows that the physical time $t$ as a function of dynamic rescaling $\tau$ given by \eqref{eqt:time_rescaling} has a finite limit
    \[
        T:= \lim_{\tau\to +\infty}t(\tau) = \int_0^{+\infty} t_\tau(\tau)\idiff \tau=\int_0^{+\infty} \tau_t^{-1}(\tau)\idiff \tau=\int_0^{+\infty}C_\omega(\tau)\idiff\tau<+\infty.
    \]
    Then by L'Hospital's rule we can compute that
    \[
        \lim_{t\to T^-}\frac{C_\omega(\tau)}{T-t}=-\lim_{t\to T^-}\frac{\left( C_\omega \right)_\tau}{t_\tau}=-\lim_{t\to T^-} c_\omega(\tau) = -\lim_{\tau\to +\infty} c_\omega(\tau) =-\bar c_\omega>0,
    \]
    i.e. $C_\omega(\tau)\sim -\bar c_\omega (T-t)$ as $t\to T^-$. Moreover, by the assumption on the convergence of $c_l,c_\omega$, we have
    \begin{align*}
            \lim_{t\to T^-}C_l\cdot(T-t)^\gamma
             & =\lim_{t\to T^-}C_l \cdot \left( \frac{C_\omega}{-\bar c_\omega} \right)^\gamma\\
             &=(-\bar c_\omega)^{-\gamma}\lim_{t\to T^-}C_l\cdot C_\omega^{\gamma}\\
             & =(-\bar c_\omega)^{-\gamma}\exp\int_0^{+\infty}( c_l(\tau)+\gamma c_\omega(\tau) )\idiff{\tau}                                                      \\
             & = (-\bar c_\omega)^{-\gamma}\exp\int_0^{+\infty}( (c_l(\tau)-\bar c_l)+\gamma (c_\omega(\tau)-\bar c_\omega) )\idiff{\tau}                          \\
             & =:C_0>0.
    \end{align*}
    Here $\gamma = -\bar c_l/\bar c_\omega$.
    Recall, in view of the change of the variables in \eqref{eqt:change_of_variables} and \eqref{eqt:c_to_C}, we have
    \[
        (T-t)\cdot\omega\big((T-t)^\gamma X,t\big)= \frac{T-t}{C_\omega}\cdot \Omega( C_l\cdot(T-t)^\gamma X, \tau(t) ).
    \]
    Therefore
    \[
            \lim_{t\to T^-}(T-t)\cdot\omega\big((T-t)^\gamma X,t\big)
            = \lim_{t\to T^-}\frac{T-t}{C_\omega}\cdot \Omega( C_l\cdot(T-t)^\gamma X , \tau(t) )=-\frac{1}{\bar c_\omega}\bar \Omega(C_0 X),
    \]
    since the convergence of $\Omega$ is locally uniform.
\end{proof}

As a remark, the assumptions on the convergence of $\Omega$ and on the regularity of $\bar \Omega$ in Proposition \ref{prop:convergence_implies_onescale_blowup} can be weakened (e.g. only in $L^2$ sense), and a similar conclusion (also in a weaker sense) can be derived with some extra technical effort. We state the proposition in the presented form only for demonstration purposes.

One can see from Proposition \ref{prop:convergence_implies_onescale_blowup}  that the magnitude blowup of $\omega$ is always at the scale of $(T-t)^{-1}$ regardless of the particular values of $\bar c_\omega,\bar c_l$ (as long as $\bar c_\omega <0$). What matters is the ratio $\gamma=-\bar c_l/\bar c_\omega$ that determines the spatial shrinking rate of the solution, rather than the individual values of $\bar c_\omega,\bar c_l$. Moreover, the asymptotically self-similar blowup behavior of $\om$ is governed by a rescaling of the limiting profile $\bar \Omega$, regardless of the specific form of the initial data. Nevertheless, the finite blowup time $T$ relies on the initial condition.

Guided by Proposition \ref{prop:convergence_implies_onescale_blowup}, the problem of finding asymptotically self-similar blowups can be decomposed into two steps:
\begin{enumerate}
    \item Finding a steady state of the dynamic rescaling equation \eqref{eqt:dynamic_rescaling}.
    \item Proving the stability of the steady state.
\end{enumerate}
In general, it is not easy to theoretically construct exact self-similar profiles, let alone prove their stability. A more promising way to investigate the existence and stability of self-similar profiles is to solve the dynamic rescaling equation \eqref{eqt:dynamic_rescaling} numerically, and the key point is to appropriately impose normalization conditions to ensure the convergence of the profile $\Omega$. Recall that the profile equation \eqref{eqt:steady_profile} admits two degrees of freedom due to its scaling property \eqref{eqt:scaling_invariance}. Hence, we need to impose two normalization conditions to fix these two degrees of freedom, which in general also determine how the two variables $c_l(\tau),c_\omega(\tau)$ depend on the profile $\Omega(X,\tau)$ dynamically.

In previous studies on the blowup of the gCLM model, the initial condition $\omega_0$ is usually chosen to be an odd function with negative derivative at the origin, i.e. $\omega_0'(x)\sim -c $ for some $c>0$. In view of \eqref{eqt:biot_savart_law}, $u_0$ is also an odd function. This odd-symmetry of $\omega$ is then preserved by the gCLM equation \eqref{eqt:gCLM} for all $t$ up to the blowup time, and so is the sign of $\omega_x(0,\tau)$. Therefore, one common normalization condition is to fix the slope of $\Omega$ at the blowup point, i.e. $\Omega_X(0,\tau)=\Omega_X(0,0)$ for all $\tau>0$, which easily leads to
\begin{equation}\label{eqt:1st_normalization}
    c_l(\tau) = c_\omega(\tau) + (1-a)U_X(0,\tau).
\end{equation}
This settles one degree of freedom. Nevertheless, we still need to determine another normalization to settle the second degree of freedom. While the first normalization condition \eqref{eqt:1st_normalization} is commonly used in previous studies, the second normalization condition may differ from case to case, since the stability of the dynamic rescaling equation may heavily depend on the concrete choice of the normalization conditions. 

\subsection{A short survey of existing results}
In this subsection, we review some existing results on self-similar finite-time blowups of the gCLM model \eqref{eqt:gCLM}. Some of these self-similar blowups were constructed using the dynamic rescaling method, for which we will also mention their dynamical normalization conditions. We again emphasize that in all results below, the self-similar profile $\bar \Omega$ is odd symmetric and has a non-zero derivative at $X=0$ (non-degenerate in our terminology). 

\begin{itemize}
    \item For the case $a=0$, the gCLM model \eqref{eqt:gCLM} reduces to the original CLM model \cite{constantin1985simple}, and a self-similar blowup solution was first found in \cite{elgindi2020effects} with
          \[
              \bar \Omega_0(X)=-\frac{4X}{1+4X^2},\quad \mtx{H}(\bar \Omega_0)(X)=\frac{2}{1+4X^2},\quad \gamma=1.
          \]
          Here the subscript ``$0$'' corresponds to $a=0$, not the initial condition.
          The stability of this profile was proved in \cite{elgindi2021stable} and \cite{chen2021finite} along with the existence of asymptotic blowup for small $|a|$. Especially, the proof in \cite{chen2021finite} is based on the dynamic rescaling method with one of the normalization conditions chosen as \eqref{eqt:1st_normalization} and the other one as
          \[
              c_\omega(\tau) = 1 - U_X(0,\tau).
          \]
    \item For the case $a=1/2$, an exact self-similar blowup solution was first found in \cite{chen2020singularity} and \cite{lushnikov2021collapse} independently, with the self-similar profile and the scaling factor given by
          \[
              \bar \Omega_{1/2}(X)=-\frac{\sqrt{3/8}\, X}{(3/8+X^2)^2},\quad \mtx{H}(\bar \Omega_{1/2})(X)=\frac{3/8-X^2}{(3/8+X^2)^2},\quad \gamma=\frac{1}{3}.
          \]
          In \cite{chen2020singularity}, the stability of this profile and the existence of asymptotically self-similar blowups for $a$ close to $1/2$ were also proved via the dynamic rescaling method, and the normalization conditions are chosen to be \eqref{eqt:1st_normalization} plus the second one as 
          \[c_\omega(\tau) = \frac{5}{3} - U_X(0,\tau).\]
    \item For the case $a=1$, the gCLM model \eqref{eqt:gCLM} reduces to the De Gregorio model \cite{de1990one}, and \eqref{eqt:1st_normalization} directly leads to $\gamma=-1$. The existence and stability of a compactly supported self-similar profile $\bar\Omega_1$ was obtained in \cite{chen2021finite} using a combination of the dynamic rescaling method and a computer-assisted proof. In addition to \eqref{eqt:1st_normalization}, the second normalization condition was imposed as
          \[
              c_l(\tau)=-\frac{U(L,\tau)}{L},
          \]
          where $x=\pm L$ are the two boundary points of the compact support of $\bar\Omega_1$. More recently, \cite{huang2023self} constructed infinitely many compactly supported self-similar profiles (all with $\gamma=-1$) for the De Gregorio model as eigenfunctions of a linear compact operator, where the leading eigenfunction coincides with $\bar \Omega_1$ up to rescaling.
    \item Recently, a general existence result of interiorly smooth self-similar profiles for all $a\le1$ was established in \cite{huang2024self} using a unified fixed-point argument. To be specific, for each $a$, an exact self-similar profile $\bar\Omega_a$ is constructed as the fixed point of a certain nonlinear map. The profiles above for $a=0,1/2,1$ are special cases in this family. They also provided theoretical estimates for a critical value $a_c$ that divides focusing self-similar blowups and expanding ones, the existence of which was first numerically observed in \cite{lushnikov2021collapse}.
    \item Remarkably, a universal singular self-similar profile for all values of $a$ was constructed in \cite{castro2009self} in a closed form
          \[
              \bar \Omega(X)=-\boldsymbol{1}_{\{-1<X<1\}}\frac{X}{\sqrt{1-X^2}},\quad \mtx H(\bar \Omega)(X)=\begin{cases}
                  1-\frac{X}{\sqrt{X^2-1}}, & X>1,        \\
                  1,                        & X\in(-1,1), \\
                  1+\frac{X}{\sqrt{X^2-1}}, & X<-1,
              \end{cases}
          \]
          with an $a$-dependent scaling factor $\gamma = \gamma_a = -a$.
          Note that this singular profile $\bar \Omega$ is distinct from those numerically observed and explicitly constructed in this paper. However, no evidence has been found that an asymptotically self-similar blowup of the form \eqref{eqt:asymptotic_selfsimilar} with this singular profile $\bar\Omega$ can emerge from smooth initial data. 
    \item Besides, a family of $C^\alpha$ self-similar blowup profiles was found in \cite{elgindi2020effects} for $\alpha \in \{1/n\mid n\in \mathbb N\}$ and $|a|<\epsilon_0/\alpha$ with some small uniform constant $\epsilon_0$. For $a=0$, each of these profiles can be explicitly written as
          \[
              \bar \Omega^{(\alpha)}(X)=-\frac{\sin\left(\frac{\alpha\pi}{2}\right)\mathrm{sgn}(X)|X|^{\alpha}}{1+2\cos\left(\frac{\alpha\pi}{2}\right)|X|^\alpha+|X|^{2\alpha}},
              \quad
              \mtx H(\bar \Omega^{(\alpha)})(X)=\frac{1+\cos\left(\frac{\alpha\pi}{2}\right)|X|^{\alpha}}{1+2\cos\left(\frac{\alpha\pi}{2}\right)|X|^\alpha+|X|^{2\alpha}}
          \]
          with a scaling factor $\gamma = \gamma_\alpha = 1/\alpha$.
          For general values of $a$, $C^\alpha$ self-similar profiles were constructed by power series expansion with respect to $a$.
          Later this result was improved in \cite{zheng2023exactly} by relaxing the restriction $\alpha \in \{1/n\mid n\in \mathbb N\}$, but still requiring that $a\alpha\ll 1$.
    \item It is worth mentioning that modern techniques built on neural network architectures and training schemes have recently been developed to discover new self-similar blowups for the 3D Euler equations and related models \cite{wang2023asymptotic,wang2025discovery}. In particular, they numerically discovered a stable self-similar profile plus multiple unstable self-similar profiles for the C\'ordoba--C\'ordoba--Fontelos model that corresponds to the case $a=-1$. The stable one numerically coincides (up to rescaling) with that obtained in \cite{huang2024self} by a fixed-point method, though the uniqueness of which has not been established. 
\end{itemize}

\subsection{Novel self-similar finite-time blowups with degenerate initial data} Note that all stable self-similar profiles constructed so far are odd functions of $X$ and have a non-zero derivative at the origin. Correspondingly, the initial data must also have a non-zero derivative at the origin, since the sign of the derivative at the origin is preserved by the equation. Such initial data will be referred to as \textit{non-degenerate} initial data in the sequel. 
It is then natural to ask whether self-similar finite-time blowups can develop from more degenerate initial data and how they differ from those arising from non-degenerate data. As stated in the introduction, the main purpose of this paper is to investigate this problem. To be specific, we investigate novel scenarios where the solution may develop a finite-time singularity at the origin while the initial condition $\omega_0$ satisfies $\omega_0'(0)=0$. In what follows, we call a solution $\om$ or a profile $\Omega$ \textit{degenerate} if its first derivative in space is zero at the origin. Specifically, we consider two kinds of degenerate initial data:
\begin{assumption}[Odd-symmetric degenerate case]\label{ass:odd_sym}
    The initial solution $\omega_0$ (or the initial profile $\Omega_0$) satisfies
    \begin{itemize}
        \item Odd symmetry: $\omega_0(-x)=-\omega_0(x)$;
        \item Degeneracy: $\omega_0'(0)=0$;
        \item Sign property: $\omega_0(x)\le 0$ for all $x>0$.
    \end{itemize}
\end{assumption}

\begin{assumption}[Half-line degenerate case]\label{ass:half_line}
    The initial solution $\omega_0$ (or the initial profile $\Omega_0$) satisfies
    \begin{itemize}
        \item Supported inside the positive half-line $\mathbb{R} _+$: $\omega_0(x)=0$ for all $x\le 0$;
        \item Degeneracy: $\omega_0'(0)=0$;
        \item Sign property: $\omega_0(x)\le 0$ for all $x>0$.
    \end{itemize}
\end{assumption}
In addition to the above assumptions, we also assume that the initial data $\omega_0$ satisfies suitable regularity and decay conditions (for instance, $\omega_0\in H^1(\mathbb{R})$), so that the corresponding Hilbert transform \eqref{eqt:hilbert_transform} and its antiderivative \eqref{eqt:biot_savart_law} can be well defined.

Finite-time singularity formation from degenerate initial data has been studied for the special case $a=0$ in a recent work \cite{huang2025multiscale}, where they proved that degenerate initial data can lead to self-similar finite-time blowups with a two-scale feature. In this paper we perform a systematic numerical study on finite-time blowups with degenerate initial data for a larger range of $a$. Our numerical simulation reveals novel self-similar blowup scenarios that are not only essentially different from those in the non-degenerate case but also have qualitatively distinct behaviors for $a>0$ and for $a\le 0$.

\subsubsection{Regular self-similar profiles for $a>0$} In the case $a>0$, our numerical simulation indicates that odd-symmetric degenerate initial data (Assumption \ref{ass:odd_sym}) can lead to ordinary self-similar finite-time blowups but with new regular profiles. These blowups exhibit behavior similar to that observed in the previous non-degenerate setting as in \cite{elgindi2020effects,chen2020singularity,lushnikov2021collapse,chen2021finite,huang2024self}, in the sense that the limiting self-similar profiles are regular and smooth in the interior of their support. Nevertheless, the novelty lies in the fact that these new profiles satisfy the same derivative degeneracy property as the initial data, a feature that has not been discovered and studied in the previous non-degenerate setting.

The behavior of the scaling factor $\gamma$ in this scenario also differs from that in the non-degenerate case. Firstly, for each fixed $a>0$, the value of $\gamma$ not only depends on whether the corresponding profile is non-degenerate or degenerate but also on the vanishing order of the (limit) profile at the origin. Here the vanishing order (which is preserved by the profile equation \eqref{eqt:dynamic_rescaling}) of a profile $\Omega(X)$ is defined as the smallest integer $k$ such that $\partial_X^k\Omega(0)\neq 0$. Secondly, the critical value $a_c$ as the transition point of the sign of $\gamma$ also depends on the vanishing order of the profile. As reported in \cite{lushnikov2021collapse}, in the non-degenerate case, a critical value $a_c\approx 0.689$ was numerically found to separate focusing self-similar blowups ($\gamma>0$) from expanding ones ($\gamma<0$). However, in the degenerate case, for example with vanishing order $k=3$, we have identified two critical values $a_{c,1}\approx0.233$ and $a_{c,2}\approx0.751$ such that $\gamma>0$ for $a\in(0,a_{c,1})\cup(a_{c,2},+\infty)$ while $\gamma < 0$ for $a\in(a_{c,1},a_{c,2})$. Moreover, the limiting self-similar profile starts to be compactly supported when $a$ increases past $a_{c,1}$ and then transitions from a stable one to an unstable one when $a$ increases through $a_{c,2}$. Detailed numerical results supporting these observations are presented in Section \ref{sec:a_gt_0}.

\subsubsection{Singular self-similar profiles for $a\leq 0$}
Things become more interesting when $a\le 0$. In contrast to the non-degenerate setting, our numerical simulations suggest that for $a\le 0$, degenerate initial data satisfying Assumption \ref{ass:odd_sym} or Assumption \ref{ass:half_line} lead to self-similar blowups with singular profiles. As we can see in Figure \ref{fig:a_neg_1_profiles}, profile evolution observed in the degenerate setting is qualitatively quite distinct from that in the non-degenerate setting. Specifically, the magnitude of the profile $\Omega(X,\tau)$ becomes unbounded at a certain point in $[0,+\infty)$ as $\tau\rightarrow +\infty$, while elsewhere the profile converges to some limiting function. In other words, $\Omega(X,\tau)$ converges to a singular function as $\tau\rightarrow +\infty$. The location of the singularity point in the profile can be predetermined by imposing a normalization condition that will be explained later. 

More precisely, we observe the following phenomenon for $a<0$: as the solution evolves, the profile develops a sharp front near $X=\pm 1$ in the odd-symmetry case under Assumption \ref{ass:odd_sym} (or near $X=1$ in the half-line case under Assumption \ref{ass:half_line}) under our normalization conditions. The part of the profile in $(-1,1)$ in the odd-symmetry case (or in $(0,1)$ in the half-line case) flattens in time and asymptotically vanishes, while the part beyond eventually converges to a spatially decaying tail extending to the far field with an algebraic decay rate. Notably, the profile behaviors in the odd-symmetry case and in the half-line case are qualitatively quite similar, only differing primarily in their limiting tail decay rates as $X\to+\infty$. Corresponding numerical results and analysis for $a<0$ are presented in Section \ref{sec:a_le_0}.

In addition to numerical evidence, we can in fact rigorously prove in the special case $a=0$ that the profile solution $\Omega(X,\tau)$ to the dynamic rescaling equation \eqref{eqt:dynamic_rescaling} converges to a singular function as $\tau\rightarrow +\infty$. The following theorem describes such convergence in the odd-symmetry case.

\begin{theorem}\label{thm:convergence_a_0}
    Let $\Omega(X,\tau)$ be a solution to the dynamic rescaling equation \eqref{eqt:dynamic_rescaling} for $a=0$ with smooth initial data $\Omega_0$ satisfying Assumption \ref{ass:odd_sym} (the odd-symmetry case). Moreover, assume $\Omega_0(X)<0$ for $X>0$ and
    \[\limsup_{X\to 0}\left|\frac{X\Omega_0'(X)}{\Omega_0(X)}\right|<+\infty.\]
    Without loss of generality, consider the normalization conditions
    \[\mtx{H}(\Omega)(0,\tau)\equiv \mtx{H}(\Omega_0)(0)=2,\quad \mtx{H}(\Omega)_{XX}(0,\tau)\equiv \mtx{H}(\Omega_0)_{XX}(0)=4,\]
    which yield constant scaling factors 
    \[c_l(\tau) \equiv  \bar c_l = \frac{1}{2},\quad c_\omega(\tau) \equiv \bar c_\omega=-1.\]
    Then, as $\tau\to +\infty$, the solution $\Omega(X,\tau)$ converges to a singular limiting profile $\bar \Omega$ given by 
    \[\bar\Omega(X)=-\pi( \delta(X-1)-\delta(X+1) ),\quad \mtx{H}(\bar\Omega)(X)=\frac{2}{1-X^2},\]
    in the sense that
    \[\lim_{\tau\to+\infty} \Omega(X,\tau) = \bar \Omega(X),\quad \lim_{\tau\to+\infty} \mtx{H}(\Omega)(X,\tau) = \mtx{H}(\bar \Omega)(X),\quad \text{for $X\neq \pm1$},\]
    and for any test function $\phi\in C_c^\infty(\mathbb{R})$,
    \[\lim_{\tau\to+\infty} \int_{-\infty}^{+\infty}\Omega(X,\tau)\phi(X)\idiff X=-\pi(\phi(1)-\phi(-1)).\]
\end{theorem}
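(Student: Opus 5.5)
The plan is to exploit the exact solvability of the CLM model ($a=0$) and read off the profile directly from the closed-form solution, rather than doing any stability analysis. Set $f=\mtx{H}(\omega)+\iunit\omega$, the boundary value of a function analytic in the upper half-plane. Then $\omega_t=\mtx{H}(\omega)\omega$ is the imaginary part of $f_t=\tfrac12 f^2$. Since this equation preserves analyticity, the real part $\mtx{H}(\omega)_t=\tfrac12(\mtx{H}(\omega)^2-\omega^2)$ holds as well, and one gets $f(x,t)=2f_0(x)/(2-tf_0(x))$. Explicitly,
\[
\omega=\frac{4\omega_0}{(2-t\mtx{H}\omega_0)^2+t^2\omega_0^2},\qquad
\mtx{H}(\omega)=\frac{2\big(\mtx{H}\omega_0\,(2-t\mtx{H}\omega_0)-t\omega_0^2\big)}{(2-t\mtx{H}\omega_0)^2+t^2\omega_0^2}.
\]
I would justify this rigorously up to $t<1$ by checking that $2-tf_0$ does not vanish in the closed upper half-plane for $t<1$. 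This uses that $\mathrm{Re}\,f_0=\mtx{H}\omega_0$ attains its relevant maximum $2$ at $x=0$, where $\omega_0=0$, and that $\omega_0\neq0$ elsewhere by the strict sign assumption.

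Next I would translate to dynamic rescaling variables. With $c_l\equiv\tfrac12$ and $c_\omega\equiv-1$ we have $C_\omega=e^{-\tau}$, $C_l=e^{\tau/2}$, and $t=1-e^{-\tau}$, so $T=1$. Writing $\varepsilon=e^{-\tau}$ and $y=\varepsilon^{1/2}X$,
\[
\Omega(X,\tau)=\varepsilon\,\omega(y,1-\varepsilon),\qquad \mtx{H}(\Omega)(X,\tau)=\varepsilon\,\mtx{H}(\omega)(y,1-\varepsilon).
\]
One checks directly that the normalization conditions are consistent with these constant $c_l$ and $c_\omega$. By evenness and smoothness, $\mtx{H}\omega_0(y)=2+2y^2+O(y^4)$, so
\[
2-t\mtx{H}\omega_0=2\varepsilon(1-X^2)+O(\varepsilon^2(1+X^4)).
\]
Since $\omega_0$ is odd, smooth and degenerate, $\omega_0(y)=O(y^3)$, hence $t\omega_0(y)=O(\varepsilon^{3/2}|X|^3)$, which is $o(\varepsilon)$ locally uniformly in $X$. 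Substituting gives, for $X\neq\pm1$, that $\Omega(X,\tau)\approx\omega_0(y)/(\varepsilon(1-X^2)^2)=O(\varepsilon^{1/2})\to0$ and that $\mtx{H}(\Omega)(X,\tau)\to 2/(1-X^2)$. This settles the pointwise statements.

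For the distributional limit, I would fix $\phi\in C_c^\infty$ and split $\mathbb{R}$ into neighborhoods $|X\mp1|<\eta$ and their complement. On the complement, the bound $|\Omega|\lesssim\varepsilon^{1/2}$ on compact sets gives a vanishing contribution. Near $X=1$, substitute $s=1-X^2$. The integrand is a Lorentzian
\[
\frac{4\varepsilon\omega_0(y)}{4\varepsilon^2 s^2+\omega_0(y)^2}\cdot\frac{\diff s}{2X}\,(1+o(1)),
\]
with width $|\omega_0(y)|/(2\varepsilon)\sim\varepsilon^{1/2}\to0$. Its total mass is $\frac{2\varepsilon\omega_0}{X}\cdot\frac{\pi}{2\varepsilon|\omega_0|}\to-\pi$. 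By oddness the mass near $X=-1$ is $+\pi$, giving $-\pi(\phi(1)-\phi(-1))$.

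The main obstacle is making this Lorentzian computation rigorous, because $\omega_0(y)$ varies with $X$ across the window. This is where the hypothesis $\limsup|X\Omega_0'/\Omega_0|<\infty$ enters. It implies $|\omega_0(y')/\omega_0(y)|\le (y'/y)^{M}$, so $\omega_0(\varepsilon^{1/2}X)/\omega_0(\varepsilon^{1/2})\to X^{\text{(bounded power)}}\approx1$ uniformly for $|X-1|<\eta$ as $\eta\to0$. It also gives a lower bound $|\omega_0(y)|\gtrsim y^{M}$, ensuring the Lorentzian width is $o(1)$ in $X$ yet dominates the $O(\varepsilon^2)$ error in the denominator. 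That lower bound needs $M<4$, or else sharper expansions of $\mtx{H}\omega_0$ to higher order; I would handle this by keeping the exact $\mtx{H}\omega_0$ rather than its Taylor polynomial, and shifting the singular point to the exact root $X_\varepsilon\to1$ of $2-t\mtx{H}\omega_0(y)=0$. With these, dominated convergence applied to $\phi(X)-\phi(X_\varepsilon)$ finishes the proof.
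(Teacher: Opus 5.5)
Your proposal is correct and is essentially the paper's argument. Both use the same closed-form solution from the complexified Riccati equation: the paper solves $h_t+\frac12 xh_x=-h-\frac{\iunit}{2}h^2$ for $h=\Omega+\iunit\mtx{H}(\Omega)$ along characteristics directly in rescaled variables, which is your formula after $t=1-\varepsilon$, $y=\varepsilon^{1/2}X$. Both also obtain the same pointwise rate $\varepsilon^{1/2}$, and both compute the Lorentzian mass near $X=\pm1$ with the hypothesis on $X\Omega_0'/\Omega_0$ controlling how $\Omega_0$ varies across the window.

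The paper realizes your ``exact root'' fix with a single substitution $y=Y(x,\tau)=\bigl(x^2q(x\mathrm{e}^{-\tau/2})-(1-\mathrm{e}^{-\tau})^{-1}\bigr)/(2\eta)$, where $\mtx{H}(\Omega_0)(x)=2+2x^2q(x)$ and $\eta=-\mathrm{e}^{\tau}\Omega_0(x\mathrm{e}^{-\tau/2})$. It then shows $\eta Y_x=1+O(\epsilon+\mathrm{e}^{-\tau})$, so no lower bound on $|\Omega_0|$ (your $M<4$ concern) is ever needed.

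A minor point: with the paper's sign convention, $\mtx{H}(\omega)+\iunit\omega$ extends to the lower half-plane, not the upper. In any case, for a given solution the pointwise Riccati ODE already yields the formula without any analyticity argument.
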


The proof of Theorem \ref{thm:convergence_a_0} is deferred to Section \ref{sec:a_eq_0}. As a remark, the degeneracy condition in Theorem \ref{thm:convergence_a_0}, i.e. $\limsup_{X\to0}|{X\Omega_0'(X)}/{\Omega_0(X)}|<+\infty$, is satisfied by any initial data with a finite vanishing order at the origin. Also note that, due to the sign property and the degeneracy property of the initial data $\Omega_0$, the normalization conditions in Theorem \ref{thm:convergence_a_0} are possible since 
\[\mtx{H}(\Omega_0)(0) = -\frac{1}{\pi}\int_{\R}\frac{\Omega_0(X)}{X}\idiff X,\quad \mtx{H}(\Omega_0)_{XX}(0) = -\frac{2}{\pi}\int_{\R}\frac{\Omega_0(X)}{X^3}\idiff X.\]

A similar result can be proved if the odd symmetry is replaced by the half-line condition, which is in fact a consequence of the above theorem. 

\begin{corollary}\label{cor:convergence_a_0_halfline}
    Let $\Omega(X,\tau)$ be a solution to the dynamic rescaling equation \eqref{eqt:dynamic_rescaling} for $a=0$ with initial data $\Omega_0$ satisfying Assumption \ref{ass:half_line} (the half-line case). Moreover, assume $\Omega_0(X)<0$ for $X>0$ and
    \[\limsup_{X\to 0^+}\left|\frac{X\Omega_0'(X)}{\Omega_0(X)}\right|<+\infty.\]
    Without loss of generality, consider the normalization conditions
    \[\mtx{H}(\Omega)(0,\tau)\equiv \mtx{H}(\Omega_0)(0)=2,\quad \mtx{H}(\Omega)_X(0,\tau)\equiv \mtx{H}(\Omega_0)_X(0)=2,\]
    which yield constant scaling factors
    \[c_l(\tau) \equiv  \bar c_l = 1,\quad c_\omega(\tau) \equiv \bar c_\omega=-1.\]
    Then, as $\tau\to +\infty$, the solution $\Omega(X,\tau)$ converges to a singular limiting profile $\bar \Omega$ given by 
    \[\bar\Omega(X)=-2\pi \delta(X-1),\quad \mtx{H}(\bar\Omega)(X)=\frac{2}{1-X},\]
    in the sense that
    \[\lim_{\tau\to+\infty} \Omega(X,\tau) = \bar \Omega(X),\quad \lim_{\tau\to+\infty} \mtx{H}(\Omega)(X,\tau) = \mtx{H}(\bar \Omega)(X),\quad \text{for $X\neq 1$},\]
    and for any test function $\phi\in C_c^\infty(\mathbb{R})$,
    \[\lim_{\tau\to+\infty} \int_{-\infty}^{+\infty}\Omega(X,\tau)\phi(X)\idiff X = -2\pi\phi(1).\]
\end{corollary}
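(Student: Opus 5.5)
The plan is to use the explicit solution formula of the CLM model instead of deducing the result from the odd case through a symmetry map. I briefly tried such a map: for half-line $g$ and $f(x)=x\,g(x^2)$ one has $\mtx H f(x)=x\,\mtx H g(x^2)$. However, the quadratic nonlinearity then picks up an extra factor of $x$, so the map does not intertwine the two evolutions. I therefore rerun the mechanism of the proof of Theorem \ref{thm:convergence_a_0} directly on the half-line data.

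\textbf{Step 1 (explicit solution).} For $a=0$, the function $z=\mtx H(\omega)+\iunit\,\omega$ is the boundary value of an analytic function, and \eqref{eqt:gCLM} becomes $z_t=\tfrac12 z^2$ pointwise in $x$. Hence
\[
\omega(x,t)=\frac{4\omega_0}{(2-t\mtx H\omega_0)^2+t^2\omega_0^2},\qquad
\mtx H\omega(x,t)=\frac{2\mtx H\omega_0\,(2-t\mtx H\omega_0)-2t\,\omega_0^2}{(2-t\mtx H\omega_0)^2+t^2\omega_0^2},
\]
where $\omega_0$ and $\mtx H\omega_0$ are evaluated at $x$.

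\textbf{Step 2 (scaling factors and blowup time).}
\begin{itemize}
\item Impose the normalizations $\mtx H(\Omega)(0,\tau)=2$ and $\mtx H(\Omega)_X(0,\tau)=2$ in \eqref{eqt:dynamic_rescaling}. Evaluating the $\tau$-derivatives of $\mtx H(\Omega)$ and $\mtx H(\Omega)_X$ at $X=0$, and using $\Omega_0(0)=\Omega_0'(0)=0$ (these vanishing properties are preserved by the formula above), gives $c_\omega\equiv-1$ and $c_l\equiv1$.
\item From \eqref{eqt:c_to_C} and \eqref{eqt:time_rescaling}, $C_\omega=e^{-\tau}$, $C_l=e^{\tau}$ and $t=1-e^{-\tau}$. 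The blowup time is $T=1$, which is exactly where $2-t\,\mtx H\omega_0(0)$ vanishes.
\item Set $\varepsilon=e^{-\tau}$. Then
\[
\Omega(X,\tau)=\varepsilon\,\omega(\varepsilon X,1-\varepsilon),\qquad
\mtx H(\Omega)(X,\tau)=\varepsilon\,\mtx H(\omega)(\varepsilon X,1-\varepsilon).
\]
\end{itemize}

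\textbf{Step 3 (local expansion).}
\begin{itemize}
\item Write $\mtx H\omega_0(x)=2+2x+O(x^2)$. Then $2-t\,\mtx H\omega_0(\varepsilon X)=2\varepsilon(1-X)+O(\varepsilon^2(1+X^2))$.
\item The hypothesis $\limsup|x\omega_0'/\omega_0|<\infty$ gives two-sided power bounds $x^{m}\lesssim|\omega_0(x)|\lesssim x^{1+\delta}$. The upper bound uses $\omega_0'(0)=0$ together with smoothness, and ensures $\omega_0(\varepsilon X)=o(\varepsilon)$ locally uniformly in $X$. The lower bound gives doubling-type control, which I will use in Step 4.
\item Consequently the denominator is $4\varepsilon^2(1-X)^2+o(\varepsilon^2)$ for $X\neq1$. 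This yields $\Omega\to0=\bar\Omega$ for $X\ne1$, and $\mtx H(\Omega)\to 2/(1-X)$ for $X\neq1$. For $X<0$ we have $\Omega\equiv0$, since the support is preserved by the formula.
\end{itemize}

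\textbf{Step 4 (distributional limit, main obstacle).} I must show $\int\Omega\,\phi\,\diff X\to-2\pi\phi(1)$.
\begin{itemize}
\item Near $X=1$, with $w=\omega_0(\varepsilon X)<0$, the integrand is a Lorentzian
\[
\frac{4\varepsilon w}{4\varepsilon^2(1-X)^2+w^2},
\]
of width $|w|/(2\varepsilon)\to0$ and total mass $4\varepsilon w\cdot\frac{\pi}{2\varepsilon|w|}=-2\pi$.
\item To make this rigorous, I will split the $X$-integral into a shrinking window $|X-1|\le\eta(\varepsilon)$ and its complement.
\item On the window, the power bounds of Step 3 show that $w$ varies only by a factor $1+o(1)$, and the $O(\varepsilon^2)$ correction in the denominator is negligible relative to $w^2$. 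Comparing with the exact Lorentzian then gives mass $-2\pi+o(1)$.
\item Off the window, the integrand is bounded by $|w|/(\varepsilon(1-X)^2)$, and its integral tends to $0$ on compact sets.
\item The main difficulty is the choice of $\eta(\varepsilon)$. It must satisfy $|w|/\varepsilon\ll\eta$ so that the window captures the Lorentzian mass, and also $\eta\ll1$ so that $w$ stays essentially constant across it. The power-type control from the hypothesis is exactly what makes such a choice possible. This is the same estimate as at $X=1$ in Theorem \ref{thm:convergence_a_0}, with no mirror contribution at $X=-1$.
\end{itemize}
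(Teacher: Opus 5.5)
Your direct route works in outline, but Step 4 rests on a false claim. Write $D(X)=2-(1-\varepsilon)\mtx H\omega_0(\varepsilon X)$ and $\mtx H\omega_0(x)=2+2x+bx^2+O(x^3)$. Then
\[
D(X)=2\varepsilon(1-X)+\varepsilon^2(2X-bX^2)+O(\varepsilon^3),
\]
so the peak is centred at the zero $X^*=1+\tfrac{2-b}{2}\varepsilon+O(\varepsilon^2)$ of $D$, and its width is $|w|/(2\varepsilon)$. The $O(\varepsilon^2)$ term is negligible only if $|w|\gg\varepsilon^2$, and the hypotheses do not give this. The opposite is in fact typical: any half-line datum in $C^2(\R)$ has $\omega_0''(0)=0$, hence $|w|=o(\varepsilon^2)$.

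For example, $\omega_0(x)=-27\pi x^4\mathrm{e}^{-3x}\boldsymbol{1}_{\{x>0\}}$ satisfies every hypothesis and both normalizations, and has $b=3$. Its peak has width $O(\varepsilon^3)$ and sits at $1-\varepsilon/2+O(\varepsilon^2)$, many widths away from $X=1$. So a pointwise comparison with $4\varepsilon w/(4\varepsilon^2(1-X)^2+w^2)$ fails. Your window constraints are also incomplete: you need $\eta\gg\varepsilon$ as well, otherwise the window need not contain the peak at all.

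The conclusion survives because the Lorentzian mass is translation invariant. On $|X-1|\le\eta$, integrate in the variable $s=D(X)$, using $D'(X)=-2\varepsilon(1+O(\varepsilon))$ and $D(1)=O(\varepsilon^2)$. To get $w(X)=w(1)(1+O(\eta))$ on the window, use the hypothesis $|x\omega_0'/\omega_0|\le C$ directly. The two power bounds you cite cannot give this, since they do not rule out oscillation. With $\max\{\varepsilon,\ \sup_{|X-1|\le\eta}|w|/\varepsilon\}\ll\eta\ll1$, the window mass is $-\frac{2\pi}{1-\varepsilon}(1+O(\eta))+o(1)$. This is exactly what the change of variables $y=Y(x,t)$ together with \eqref{eqt:Y_x} does in the proof of Theorem \ref{thm:convergence_a_0}. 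Your Step 4 needs that device, not a comparison at a fixed centre.

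Separately, your reason for not reducing to Theorem \ref{thm:convergence_a_0} only rules out the map $x\,g(x^2)$. The paper uses $\tilde\Omega(X,\tau)=\operatorname{sgn}(X)\,\Omega(X^2,\tau)$. For odd functions the Hilbert kernel on $(0,\infty)$ is $\frac{2y}{X^2-y^2}$, and the substitution $s=y^2$ turns $\frac{2y\,\mathrm{d}y}{X^2-y^2}$ exactly into $\frac{\mathrm{d}s}{X^2-s}$. Hence $\mtx H(\tilde\Omega)(X)=\mtx H(\Omega)(X^2)$ with no extra weight, while $X\tilde\Omega_X(X)=2\operatorname{sgn}(X)\,(s\,\Omega_s)(X^2)$.

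So the equation with $c_l=1$ becomes the one with $\tilde c_l=1/2$ and $\tilde c_\omega=-1$. The normalizations transfer via $\mtx H(\tilde\Omega)_{XX}(0)=2\mtx H(\Omega)_X(0)=4$, and the corollary follows in a few lines. Incidentally, $\mtx H(x\,g(x^2))$ equals $\mtx H(\sqrt{\cdot}\,g)(x^2)$, not $x\,\mtx H g(x^2)$. Once Step 4 is repaired, your computation is a valid self-contained alternative that redoes the theorem's estimate in half-line variables; the paper's reduction gains brevity by reusing that estimate.
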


The proof of Corollary \ref{cor:convergence_a_0_halfline} is also deferred to Section \ref{sec:a_eq_0}.

As for $a<0$, we are unfortunately not yet able to prove the convergence of the profile $\Omega(X,\tau)$. However, we have found a family of explicit singular self-similar profiles in the half-line case (Assumption \ref{ass:half_line}) for $a<0$.

\begin{theorem}\label{thm:explicit_singular_solution}
    For any $a<0$, the self-similar profile equation \eqref{eqt:steady_profile} admits an exact solution $(\bar \Omega_a,\bar c_{l,a},\bar c_{\omega,a})$ given by
    \[
        \bar \Omega_a(X)=-\sin(\pi \mu)\frac{\boldsymbol{1}_{\{X>1\}}}{|1-X|^\mu},\quad \bar c_{l,a}=1-a,\quad \bar c_{\omega,a}=-1,
    \]
    where $\mu=1/(1-a)$.

\end{theorem}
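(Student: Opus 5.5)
The plan is to verify the profile equation \eqref{eqt:steady_profile} by direct computation. Since $\bar\Omega_a$ vanishes for $X<1$, the equation holds trivially there. The whole task therefore reduces to computing $\mtx{H}(\bar\Omega_a)$ and $\bar U_a$ in closed form and checking the identity on $X>1$. Throughout set $s=X-1$. Since $a<0$, we have $\mu=1/(1-a)\in(0,1)$, so $\bar\Omega_a\sim -\sin(\pi\mu)s^{-\mu}$ is locally integrable near $s=0$ and decays like $X^{-\mu}$ at infinity.

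\textbf{Step 1: the Hilbert transform.} I will use the classical formula for $f(y)=y_+^{-\mu}$ with $0<\mu<1$ under the convention \eqref{eqt:hilbert_transform}:
\[
\mtx{H}(f)(x)=\cot(\pi\mu)\,x^{-\mu}\ (x>0),\qquad \mtx{H}(f)(x)=-\frac{|x|^{-\mu}}{\sin(\pi\mu)}\ (x<0).
\]
This follows from the Beta-integral identity $\int_0^\infty \frac{y^{-\mu}}{y+t}\idiff y=\frac{\pi t^{-\mu}}{\sin(\pi\mu)}$ and the principal-value identity $\mathrm{P.V.}\int_0^\infty \frac{y^{-\mu}}{y-x}\idiff y=-\pi\cot(\pi\mu)x^{-\mu}$. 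Both can be obtained by contour integration or by scaling to $x=1$. By translation invariance,
\[
\mtx{H}(\bar\Omega_a)(X)=-\cos(\pi\mu)\,s^{-\mu}\ (X>1),\qquad \mtx{H}(\bar\Omega_a)(X)=|1-X|^{-\mu}\ (X<1).
\]

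\textbf{Step 2: the velocity.} With the normalization $\bar U_a(0)=0$ from \eqref{eqt:Omega_U}, we have $\bar U_a(X)=\int_0^X \mtx{H}(\bar\Omega_a)(Y)\idiff Y$. The integrand is locally integrable, and the logarithmic kernel $\ln|(X-Y)/Y|=O(1/Y)$ makes the integral \eqref{eqt:Omega_U} absolutely convergent against the $Y^{-\mu}$ tail, so the two definitions agree. Integrating the formulas from Step 1 gives
\[
\bar U_a(X)=\frac{1}{1-\mu}-\frac{\cos(\pi\mu)}{1-\mu}\,s^{1-\mu}\qquad (X>1).
\]

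\textbf{Step 3: the identity on $X>1$.} Use the relations $1-\mu=-a\mu$ and $a/(1-\mu)=-(1-a)$. With $\bar c_{l,a}=1-a$ they give
\[
\bar c_{l,a}X+a\bar U_a=(1-a)\big(s+\cos(\pi\mu)s^{1-\mu}\big).
\]
Since $\bar\Omega_a'/\bar\Omega_a=-\mu/s$ and $(1-a)\mu=1$, dividing the profile equation by $\bar\Omega_a$ turns the left side into
\[
-(1+\cos(\pi\mu)s^{-\mu}).
\]
The right side is $\bar c_{\omega,a}+\bar U_{a,X}=-1-\cos(\pi\mu)s^{-\mu}$. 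The two sides match, so the equation holds pointwise for $X\neq1$.

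\textbf{Main obstacle: the singular point $X=1$.} The step needing most care is the point $X=1$, where $\bar\Omega_a$ is unbounded and the equation must be read in a weak sense. I would interpret it as the distributional identity
\[
\partial_X\big((\bar c_lX+a\bar U)\bar\Omega\big)=\big(\bar c_\omega+(1+a)\bar U_X\big)\bar\Omega.
\]
To check this, note that the transport coefficient is $\bar c_lX+a\bar U=O(s^{1-\mu})$ from the right. Hence the flux $(\bar c_lX+a\bar U)\bar\Omega=O(s^{1-2\mu})$, which tends to $0$ as $s\to0^+$ when $\mu<1/2$; for $\mu\ge1/2$ a finer argument is needed. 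Meanwhile $\bar U_X\bar\Omega=O(s^{-2\mu})$, which is integrable when $\mu<1/2$. In that range, integrating against a test function produces no boundary contribution at $X=1$, and the weak form follows from the pointwise identity. Away from this point everything reduces to the explicit Beta-function computations above.
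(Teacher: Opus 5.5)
You follow the paper's route: compute $\mtx{H}(\bar\Omega_a)$ explicitly, integrate from $0$ to get $\bar U_a$, and check the equation pointwise for $X\neq1$. The paper reads the Hilbert transform off the boundary values of $\mathrm{e}^{\mathrm{i}\theta}z^{-\mu}$ rather than using Beta integrals, but that difference is immaterial. However, Step 1 has a sign error. The correct identity is $\mathrm{P.V.}\int_0^\infty \frac{y^{-\mu}}{y-x}\idiff y=+\pi\cot(\pi\mu)\,x^{-\mu}$ for $x>0$. To see this, let $\mu\to1^-$: the part $y<x/2$ alone contributes at most $-\frac{(x/2)^{1-\mu}}{x(1-\mu)}\to-\infty$, while the rest stays bounded. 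Indeed $\pi\cot(\pi\mu)\to-\infty$, whereas your $-\pi\cot(\pi\mu)\to+\infty$. Hence $\mtx{H}(y_+^{-\mu})(x)=-\cot(\pi\mu)x^{-\mu}$ for $x>0$. On $X>1$ this gives $\mtx{H}(\bar\Omega_a)=+\cos(\pi\mu)s^{-\mu}$, as in the paper, not $-\cos(\pi\mu)s^{-\mu}$. Your formula on $X<1$ is correct. The error propagates into Steps 2--3; the correct expressions on $X>1$ are $\bar U_a=\frac{1+\cos(\pi\mu)s^{1-\mu}}{1-\mu}$ and $\bar c_{l,a}X+a\bar U_a=(1-a)\bigl(s-\cos(\pi\mu)s^{1-\mu}\bigr)$.

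Your conclusion is unaffected because the verification does not depend on this coefficient at all. You should make that explicit rather than carry a wrong formula. Suppose $\bar U_{a,X}=\kappa s^{-\mu}$ on $X>1$ for an arbitrary constant $\kappa$. Then $\bar U_a=\frac{1}{1-\mu}+\frac{s}{1-\mu}\bar U_{a,X}$ there, which is the relation the paper uses. It follows that $\bar c_{l,a}X+a\bar U_a=(1-a)s\,(1-\bar U_{a,X})$. Multiplying by $\bar\Omega_{a,X}/\bar\Omega_a=-\mu/s$ gives $-1+\bar U_{a,X}=\bar c_{\omega,a}+\bar U_{a,X}$. The input that actually matters is the value on $X<1$, which yields $\bar U_a(1)=1/(1-\mu)$. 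With the sign fixed, your argument proves exactly what the paper proves, namely the equation for $X\neq1$.

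Your final paragraph on $X=1$ goes beyond the paper and is not a proof as written. The divergence form should be $\partial_X\bigl((\bar c_lX+a\bar U)\bar\Omega\bigr)=(\bar c_l+\bar c_\omega+(1+a)\bar U_X)\bar\Omega$; you dropped $\bar c_l$. Moreover, for $a\in(-1,0)$, i.e. $\mu\in(1/2,1)$, the product $\bar U_X\bar\Omega=-\sin(\pi\mu)\cos(\pi\mu)s^{-2\mu}$ is not locally integrable, so that weak formulation is undefined there. That paragraph can at best cover $a\le-1$.
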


The proof follows from direct verification and is provided in Appendix \ref{app:explicit_singular_solution}. In Section \ref{sec:a_le_0}, we will provide strong evidence that, for each $a<0$, solutions to \eqref{eqt:dynamic_rescaling} in the half-line degenerate case (Assumption \ref{ass:half_line}) converge to the exact steady state given in Theorem \ref{thm:explicit_singular_solution} under appropriate normalization conditions. Besides, it is also an interesting problem whether one can construct exact singular self-similar profiles in the odd-symmetry case for all $a<0$, which we leave for future studies.

\subsection{Two-scale feature of self-similar blowups with singular profiles}\label{sec:2scale_analysis}
The extra singularity formation in the profile $\Omega(X,\tau)$ in self-similar coordinates indicates that the solution may exhibit a two-scale behavior. In order to understand this two-scale feature, we introduce a secondary, smaller scale to resolve the inner profile. Recall that the outer scale, namely the ordinary self-similar scaling, is captured by $(T-t)^{\gamma}$. We also assume the trajectory of the maximum point of $|\omega|$ is given by $x(t) = r(t) (T-t)^\gamma$ for some function $r(t)$ such that the limit $\lim_{t\to T^-}r(t)$ exists. If we zoom into the neighborhood of $x(t)$ on a smaller spatial scale $(T-t)^{\hat\gamma}$ for some $\hat\gamma > \gamma$, we may observe a finer behavior of the solution in the form
\begin{equation}\label{eqt:multiscale_blowup}
    \omega(x,t)=(T-t)^{\hat \lambda}\left(\widehat \Omega\left( \frac{x-r(t)(T-t)^{\gamma}}{(T-t)^{\hat\gamma}}\right) +o(1)\right),
\end{equation}
where $\widehat \Omega$ denotes the inner profile, as opposed to the outer profile $\Omega(X,\tau)$. We would also expect $\hat\lambda<-1$, since the inner profile is obtained by zooming in around the singularity of the outer profile. The validity of this two-scale ansatz depends on whether a secondary scale $(T-t)^{\hat\gamma}$ actually exists and captures the characteristic width of the shrinking peak of the outer profile. Fortunately, our numerical simulation has identified such a secondary scale for some $\hat\gamma > \gamma$ and $\hat\lambda<-1$ when the outer profile forms a singularity for $a<0$. Moreover, we find that the inner profile $\widehat\Omega$ remains smooth for all time and converges as $t\to T^-$ (or as $\tau \to +\infty$).

As mentioned in the introduction, two-scale self-similar finite-time blowups of the form \eqref{eqt:multiscale_blowup} with degenerate initial data have been rigorously established in \cite{huang2025multiscale} for the special case $a=0$, where the inner profile $\widehat\Omega$ is proved to be a traveling wave solution to the physical equation \eqref{eqt:gCLM} with $a=0$. Below we argue that for a general $a<0$, if the two-scale ansatz \eqref{eqt:multiscale_blowup} is in fact valid, then the inner profile $\widehat\Omega$ must also be a traveling wave solution to the original physical equation \eqref{eqt:gCLM}.

Let us first assume that $r(t)$ is a constant $r$, as we can always achieve this by properly stretching the outer profile $\Omega$ in space. We also omit the $o(1)$ term in \eqref{eqt:multiscale_blowup}. The advection speed $u$ given by the Biot--Savart law \eqref{eqt:biot_savart_law} can be decomposed into two parts as
    \begin{equation}\label{eqt:decomposition_u}
        u(x,t)=(T-t)^{\hat\lambda+\hat\gamma}\left(\widehat U(\widehat X)-\widehat U(c(t))\right)+(T-t)^{\gamma-1} U(X),
    \end{equation}
    where
    \[
        \widehat X=\frac{x-r(T-t)^{\gamma}}{(T-t)^{\hat\gamma}},\quad X=\frac{x}{(T-t)^{\gamma}},\quad\widehat U_{\widehat X}=\mtx{H}(\widehat\Omega),\quad U_X=\mtx{H}(\Omega),
    \]
    and $\widehat U$ and $U$ are defined in terms of $\widehat\Omega$ and $\Omega$ respectively via \eqref{eqt:biot_savart_law}, i.e. $\widehat U(0)=0$ and $U(0)=0$. Hence the condition $u(0,t)\equiv 0$ yields
    \[c(t)=-r(T-t)^{\gamma-\hat\gamma}.\]
    Substituting the two-scale ansatz \eqref{eqt:multiscale_blowup} and \eqref{eqt:decomposition_u} into the gCLM equation \eqref{eqt:gCLM} gives
    \[
        \begin{split}
             & -\hat\lambda(T-t)^{\hat \lambda-1}\widehat \Omega+(T-t)^{\hat \lambda-1}\left( \hat\gamma \widehat X+r\gamma(T-t)^{\gamma-\hat\gamma} \right)\widehat \Omega_{\widehat X}+a(T-t)^{2\hat\lambda}\left(\widehat U-\widehat U(c(t))\right)\widehat \Omega_{\widehat X} \\&+a(T-t)^{\hat\lambda+\gamma-\hat\gamma-1}U(X) \widehat \Omega_{\widehat X} =(T-t)^{2\hat\lambda}\widehat U_{\widehat X}\widehat \Omega+(T-t)^{\hat\lambda-1}U_X(X)\widehat \Omega,
        \end{split}
    \]
    which can be rearranged as
    \begin{equation}\label{eqt:two_scale_equation}
        \begin{split}
             & (T-t)^{-1}\left( -\big(\hat\lambda+U_X(X)\big)\widehat \Omega+ \hat\gamma \widehat X\widehat \Omega_{\widehat X} \right) \\&=(T-t)^{\hat\lambda}\left( \widehat U_{\widehat X}\widehat \Omega -a\left(\widehat U-\widehat U(c(t))+(T-t)^{\gamma-\hat\gamma-\hat\lambda-1}\left(\frac{r\gamma}{a}+U(X)\right)\right)\widehat \Omega_{\widehat X} \right).
        \end{split}
    \end{equation}
    By the two-scale assumption \eqref{eqt:multiscale_blowup}, we should have $\hat\lambda < -1$. Therefore the right-hand side of \eqref{eqt:two_scale_equation} should balance by itself, i.e. the leading-order equation of \eqref{eqt:two_scale_equation} is
    \begin{equation}\label{eqt:two_scale_equation_leading}
        a\left(\widehat U-\widehat U(c(t))+(T-t)^{\gamma-\hat\gamma-\hat\lambda-1}\left(\frac{r\gamma}{a}+U(r+(T-t)^{\hat\gamma-\gamma}\widehat X)\right)\right)\widehat \Omega_{\widehat X}=\widehat U_{\widehat X}\widehat \Omega ,
    \end{equation}
    where we have used $X=r+(T-t)^{\hat\gamma-\gamma}\widehat X$. Furthermore, our numerical fitting in Section \ref{sec:numerical_multiscale} suggests that $\gamma-\hat \gamma-\hat\lambda-1<0$. Therefore, for \eqref{eqt:two_scale_equation_leading} to balance, the term $r\gamma/a+U(r+(T-t)^{\hat\gamma-\gamma}\widehat X)$ must converge to zero as $t\to T^-$. Note that $\hat\gamma>\gamma$ due to the two-scale assumption, which implies that $U(r+(T-t)^{\hat\gamma-\gamma}\widehat X)$ converges to $U(r)$ as $t\to T^-$. Hence we must have $U(r)=-r\gamma/a$. Then, we may expect that the limit
    \[
        \lim_{t\to T^-} a\left( -\widehat U(c(t))+(T-t)^{\gamma-\hat\gamma-\hat\lambda-1}\left(\frac{r\gamma}{a}+U(r+(T-t)^{\hat\gamma-\gamma}\widehat X)\right) \right)
    \]
    exists, and we denote it by $\hat r$. If this is true, taking the limit $t\to T^-$ in \eqref{eqt:two_scale_equation_leading} then yields the steady-state equation for $\widehat\Omega$ as
    \begin{equation}\label{eqt:leading_order}
        (a\widehat U+\hat r)\widehat \Omega_{\widehat X}=\widehat U_{\widehat X} \widehat \Omega,
    \end{equation}
    which in fact describes the profile of a traveling wave solution to the original gCLM model \eqref{eqt:gCLM}.

To support the two-scale ansatz \eqref{eqt:multiscale_blowup} and our asymptotic analysis above, we perform the following two additional studies. Firstly, we rigorously prove in Section \ref{sec:existence_traveling_wave} the existence of a traveling wave solution to \eqref{eqt:gCLM} (whose profile is a solution to \eqref{eqt:leading_order}) via a fixed-point method for all $a\le1$. This fixed-point method also leads to a numerical fixed-point iteration that generates a traveling wave profile up to high accuracy (see Appendix \ref{apx:iterative_method}). Secondly, for $a<0$, we perform in Section \ref{sec:numerical_multiscale} a comparison between the inner profile $\widehat \Omega$ obtained by numerically solving the dynamic rescaling equation \eqref{eqt:dynamic_rescaling} and the traveling wave profile obtained by the fixed-point iteration, showing that they match well with each other. Therefore our theoretical and numerical results together strongly support the validity of the two-scale ansatz \eqref{eqt:multiscale_blowup} and our asymptotic analysis in this subsection. 

\subsection{Traveling wave solutions}
As argued above, the inner profile $\widehat\Omega$ in a two-scale self-similar blowup \eqref{eqt:multiscale_blowup}, if it exists, must be a traveling wave solution to the gCLM model \eqref{eqt:gCLM}. This in turn leads to the question of whether such traveling wave solutions exist. To be specific, we look for traveling wave solutions to \eqref{eqt:gCLM} of the form
\begin{equation}\label{eqt:traveling_wave_ansatz}
    \omega(x,t)=\bar\omega(x-rt),\quad u(x,t) = \bar u(x-rt),\quad u(rt,t)=0,
\end{equation}
where $\bar\omega$ is the traveling wave profile (and also the initial solution), the absolute value $|r|$ represents the traveling speed, and the sign of $r$ indicates the traveling direction. Note that here we set $u(rt,t)=0$ only for the convenience of studying the existence of such a traveling wave profile $\bar\omega$. If instead we choose to set $u(0,t)=0$ as in \eqref{eqt:biot_savart_law}, then by Lemma \ref{lem:translation_invariance} there exists some function $c(t)$ such that with the same initial data $\omega(x,0) = \bar \omega(x)$ the solution will evolve like $\omega(x,t)= \bar \omega(x-rt-c(t))$ for all $t\geq 0$, which is still a traveling wave solution with a time-dependent translating speed.

Substituting \eqref{eqt:traveling_wave_ansatz} into the gCLM model \eqref{eqt:gCLM} yields
\[
    (a\bar u-r) \bar\omega_x=\bar u_x\bar\omega,
\]
where $\bar u_x=\mtx{H}(\bar\omega)$ and $\bar u(0) = 0$. One may immediately notice that this equation is identical to \eqref{eqt:leading_order} up to change of notations and possible sign flipping. Using a similar fixed-point method as in \cite{huang2024self}, we can actually prove that traveling wave solutions exist for any $a\in(-\infty,1]$, with the profile $\bar\omega$ being non-zero on $\mathbb{R}$ for $a\leq 0$ but compactly supported for $a\in(0,1]$.

\begin{theorem}\label{thm:traveling_wave_existence}
    For any $a\in(-\infty,1]$, the generalized Constantin--Lax--Majda model admits a traveling wave solution of the form \eqref{eqt:traveling_wave_ansatz} (up to rescaling), where $\bar\omega$ is a non-negative even function such that $\bar\omega(x)$ is smooth in its support and non-increasing on $[0,+\infty)$, $\bar\omega(\sqrt s)$ is convex in $s$ for $s>0$, $\bar\omega(0)>0$, $\bar\omega''(0)<0$, and
    \[
        r = r_a=-\frac{2\bar\omega(0)}{\pi\bar\omega_{xx}(0)}\int_0^{+\infty}\frac{\bar\omega(0)-\bar\omega(x)}{x^2}\idiff{x}>0.
    \]
    Moreover, the following holds depending on the sign of $a$:
    \begin{itemize}
        \item $a\in(0,1]$: $\bar\omega$ is compactly supported on $[-L_a,L_a]$ for some $L_a>0$.
        \item $a=0$: $\bar\omega(x)=1/(1+x^2)$.
        \item $a<0$: $\bar\omega(x)>0$ on $\mathbb{R}$, and there exist some $\bar C_a,\tilde C_a>0$ such that, for all $|x|\ge 1 $,
        \[\bar C_a x^{-1/(1+|a|)}\le \bar\omega(x)\le \tilde C_a x^{-1/(1+|a|)}.\]
    \end{itemize}
\end{theorem}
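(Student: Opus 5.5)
We would follow the fixed-point strategy of \cite{huang2024self}, adapted to the traveling-wave equation $(a\bar u-r)\bar\omega_x=\bar u_x\bar\omega$ with $\bar u_x=\mtx{H}(\bar\omega)$ and $\bar u(0)=0$. We restrict to even, non-negative profiles. For these $\bar u$ is odd and $\bar u_x$ is even.

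\textbf{Reduction to a fixed-point map.} Away from zeros of $a\bar u-r$, the equation integrates to
\[
\bar\omega(x)=\bar\omega(0)\exp\int_0^x\frac{\bar u_y(y)}{a\bar u(y)-r}\idiff y .
\]
This suggests the following map $\mathcal{T}$ on a cone $\mathcal{K}$ of even functions $\omega\ge0$ with $\omega(0)=1$, non-increasing on $[0,\infty)$, and with $\omega(\sqrt s)$ convex in $s$. Given $\omega\in\mathcal K$, set $v=\mtx H(\omega)$ and $u=\int_0^x v$. Choose $r=r(\omega)$ so that the new profile satisfies a normalization, for instance $\mathcal T(\omega)''(0)=-2$. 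Then define $\mathcal T(\omega)$ by the exponential formula. Two scaling freedoms are available, $\alpha\bar\omega(\beta x)$ together with $r$ rescaling. The first is used to fix $\bar\omega(0)=1$ and the second to fix the curvature at $0$.

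Differentiating the equation twice at $0$ gives $-r\bar\omega''(0)=\bar u_{xx}(0)\bar\omega(0)+\dots$. Since $\bar u_x$ is even, this reduces to $-r\bar\omega''(0)=\bar u_x(0)\bar\omega'(0)$-type terms. Carrying this out carefully should recover the stated formula $r=-\frac{2\bar\omega(0)}{\pi\bar\omega_{xx}(0)}\int_0^\infty\frac{\bar\omega(0)-\bar\omega(x)}{x^2}\idiff x$. That integral equals $\frac{\pi}{2}(\bar\omega(0)\cdot0-\dots)$, which is essentially $\mtx H(\omega)_x(0)$ up to a constant, so $r>0$ is automatic. The case $a=0$ serves as a consistency check: $\bar\omega=1/(1+x^2)$ has $\mtx H(\bar\omega)=x/(1+x^2)$ and the equation holds with $r=1$.

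\textbf{Invariance and compactness.} This is the heart of the proof. We need $\mathcal T(\mathcal K)\subset\mathcal K$, and the key structural inputs are the following.
\begin{itemize}
\item For $\omega\in\mathcal K$, $v=\mtx H(\omega)$ satisfies $v(x)/x$ non-increasing in $x^2$, in the style of the lemmas of \cite{huang2024self} on Hilbert transforms of functions convex in $x^2$.
\item $u$ is odd and $u/x$ has a monotonicity property.
\end{itemize}
From these, the integrand $v/(au-r)$ is negative with the right monotonicity. This makes the result decreasing and $\log\mathcal T(\omega)(\sqrt s)$ convex, hence $\mathcal T(\omega)(\sqrt s)$ is convex.

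We then work in a weighted topology, for example local uniform convergence with uniform bounds on $\omega''(0)$ and tail control. The cone is convex, and the map is continuous and compact by Arzel\`a--Ascoli, since monotone convex-in-$s$ functions with fixed $\omega(0)$ and $\omega''(0)$ are equicontinuous. Schauder's theorem then yields a fixed point. Smoothness in the support follows by bootstrapping the ODE.

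\textbf{Sign-dependent behavior.} For $a\in(0,1]$, $au-r$ vanishes at some $L_a$ because $u$ grows. Near $L_a$, $\int v/(au-r)\to-\infty$, so the profile hits zero and we set it to be zero beyond $L_a$, giving compact support. Here $a\le1$ ensures the divergence of the integral, since $v(L_a)/(a u'(L_a))=1/a\ge1$. For $a<0$, $au-r\le -r<0$ always, so $\bar\omega>0$. For the tail we use $u\sim\frac{1}{\pi}\|\bar\omega\|$-type growth. If $\bar\omega\sim x^{-p}$ with $p<1$, then $v\sim c_p x^{-p}$ and $u\sim c_px^{1-p}/(1-p)$, so $v/(au-r)\sim (1-p)/(a x)$. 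This gives $p=(1-p)/|a|$, that is $p=1/(1+|a|)$. Two-sided bounds then follow by comparing $v/u$ with $(1-p)/x$ via a bootstrap on power-law bounds.

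\textbf{Main obstacle.} The main obstacle is the invariance step: showing that convexity of $\omega(\sqrt s)$ propagates through $\mtx H$, the antiderivative, and the exponential formula uniformly in $a\le1$. The second difficulty is preventing degeneration in the compactness argument, namely $r$ staying bounded away from $0$ and $\infty$ and the support length $L_a$ staying bounded. For invariance we would first try to import the monotonicity lemmas of \cite{huang2024self} directly, since the equation differs only by the constant $-r$ replacing $cX$.
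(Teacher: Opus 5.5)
\textbf{Where the proposal stands.} Your overall strategy is the paper's. It is a Schauder argument on a convex set of even, non-increasing profiles with $\omega(0)=1$ and $\omega(\sqrt s)$ convex, with $r(\omega)$ chosen to pin $\omega''(0)=-2$. Your exponential formula integrates exactly to the paper's map $\mtx{R}_a(\omega)=(1-a\mtx{T}(\omega)/r(\omega))_+^{1/a}$. The key input is that $\mtx{H}(\omega)(x)/x$ is non-increasing, i.e.\ $u(\sqrt s)$ is concave; the paper proves this via $(\mtx{T}(\omega)'/x)'=\frac1\pi\int_0^\infty G(y/x)(\omega'(y)/y)'\idiff y$ with $G(t)=t-\frac{1+t^2}{2}\ln\bigl|\frac{1+t}{1-t}\bigr|\le0$.

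\textbf{The gap: compactness fails without a non-degeneracy constraint.} The compactness step breaks exactly at the obstacle you flag but do not resolve. Profiles of your type with $\omega(0)=1$ and $\omega''(0)=-2$ do not form a closed set in any topology where Arzel\`a--Ascoli gives compactness. For example, $\omega_\epsilon(x)=\max\{1-x^2,1-\epsilon\}$ belongs to the set and has $r(\omega_\epsilon)=\frac1\pi\int_0^\infty\frac{1-\omega_\epsilon(y)}{y^2}\idiff y=2\sqrt\epsilon/\pi\to0$. It converges uniformly to $\omega\equiv1$, for which $u_{xx}(0)=0$, so no choice of $r$ yields $\omega''(0)=-2$. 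The map is therefore undefined on the closure, and Schauder cannot be invoked.

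The paper handles this in two moves:
\begin{itemize}
\item It replaces the equality $\omega''(0)=-2$ by the closed condition $(1-x^2)_+\le\omega\le1$.
\item It adds the quantitative constraint $\omega'_-(1/2)\le-\eta_a$, which forces $\eta_a/\pi\le r(\omega)\le2/\pi$.
\end{itemize}
The real work is showing that this constraint is invariant under the map. The paper uses the bounds $\mtx{T}(\omega)'(x)\ge\frac1\pi(x/\sqrt2)^{1/3}|\omega'(x)|^{2/3}(1-\omega(x))^{1/2}$ and $r(\omega)\le\frac{2(x+1)}{\pi x}(1-\omega(x))^{1/2}$. Together these give $\mtx{R}_a(\omega)'(1/2)\le-\eta_a^{1/3}|\omega'(1/2)|^{2/3}\le-\eta_a$. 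Some invariant non-degeneracy condition of this kind is indispensable, and your proposal contains none. On the other hand, no control of $L_a$ is needed: with the weight $(1+|x|)^{-1/2}$ on the whole line, compact support for $a>0$ is derived a posteriori from $\mtx{T}(\omega)(x)\to+\infty$.

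\textbf{Convexity for $a\in(0,1]$.} Your route to convexity is invalid for $a\in(0,1]$, because $\log h(\sqrt s)$ is not convex there. Take $\tau(s)=u(\sqrt s)/r$, which is concave and increasing, and $h=(1-a\tau)_+^{1/a}$. Then $(\log h)''=-\frac{\tau''}{1-a\tau}-\frac{a\tau'^2}{(1-a\tau)^2}$. Near the edge of the support, $\log h\approx\frac1a\log(L^2-s)$, which is concave. What does hold, and suffices, is convexity of $h$ itself: $h=F(\tau)$ with $F$ non-increasing and $F''(\tau)=(1-a)(1-a\tau)^{1/a-2}\ge0$. This is where $a\le1$ is actually used. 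It is not needed for the divergence of $\int v/(au-r)$, which occurs for every $a>0$.

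\textbf{Smaller slips.}
\begin{itemize}
\item For even $\omega$, $\mtx{H}(\omega)$ is odd and $u$ is even with $u\ge0$, not the other way round. You need $u\ge0$ on all of $\mathbb{R}$ to get $au-r\le-r$ when $a<0$.
\item Differentiating the equation once at $0$ gives $-r\omega''(0)=u_{xx}(0)\omega(0)$, with $u_{xx}(0)=\frac2\pi\int_0^\infty\frac{\omega(0)-\omega(y)}{y^2}\idiff y$. This is the stated formula for $r$.
\item For $a=0$ the speed is $r=1/2$, not $1$.
\item Your tail ``bootstrap'' for $a<0$ needs a starting point. In the paper, monotonicity of $\omega'(y)/y$ gives $\mtx{T}(\omega)'(x)\ge-x\omega'(x)/\pi$. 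Inserting this into $\omega'=-\frac{\mtx{T}(\omega)'}{r}\omega^{1+|a|}$ yields $\omega(x)^{1+|a|}\le\pi r/x$. The lower bound then follows by feeding this estimate into $\mtx{T}(\omega)$.
\end{itemize}
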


A detailed proof of Theorem \ref{thm:traveling_wave_existence}, which consists of a series of intermediate lemmas, will be elaborated in Section \ref{sec:existence_traveling_wave}.

\section{Convergence to singular profiles of the dynamic rescaling equation for $a=0$}\label{sec:a_eq_0}

When $a=0$, the gCLM model \eqref{eqt:gCLM} reduces to the original Constantin--Lax--Majda (CLM) model
\begin{equation}\label{eqt:CLM}
    \omega_t=u_x\omega,\quad u_x=\mtx H(\omega).
\end{equation}
Correspondingly, the dynamic rescaling equation \eqref{eqt:dynamic_rescaling} simplifies to
\begin{equation}\label{eqt:dynamic_rescaling_a_0}
    \Omega_\tau+c_l X\Omega_X=c_\omega\Omega+U_X\Omega.
\end{equation}
It has been rigorously proved in \cite{huang2025multiscale} that odd degenerate initial data can lead to multi-scale self-similar finite-time blowups for the CLM model \eqref{eqt:CLM}. However, the convergence of the outer profile has not been established yet. In this section, we supplement their result by providing proofs for Theorem \ref{thm:convergence_a_0} and Corollary \ref{cor:convergence_a_0_halfline}, establishing that the outer profile converges to a singular function under appropriate conditions.

\begin{proof}[Proof of Theorem \ref{thm:convergence_a_0}]
    For notational simplicity, we will use $x,t$ for $X,\tau$ and write $F=\Omega,G=\mtx H(\Omega)$ in this proof. We also denote $F_0=F(\cdot,0),G_0=G(\cdot,0)$. By the odd symmetry and degeneracy of the initial data $\Omega_0$, we have
    \[
        F_0(0)=0,\quad F_0'(0)=0,\quad F_0''(0)=0,\quad \lim_{x\to0}\frac{|F_0(x)|}{|x|^3} <+\infty.  
    \]
    Note that the odd symmetry and degeneracy of the initial datum are preserved by the dynamic rescaling equation \eqref{eqt:dynamic_rescaling_a_0}, i.e.
    \[
        F(x,t)=F(-x,t),\quad F_x(0,t)=F_{xx}(0,t)=0,\quad \text{for all $t\geq 0$}.
    \]
    By applying the Hilbert transform to the whole equation \eqref{eqt:dynamic_rescaling_a_0}, we obtain a system of $(F,G)$ as
    \begin{equation}
        \begin{split}
            F_t+c_l xF_x & =c_\omega F+F G,                    \\
            G_t+c_l xG_x & =c_\omega G+\frac{1}{2}( G^2-F^2 ).
        \end{split}
    \end{equation}
    We have used the fact that $\mtx H(F\mtx H F)=(\mtx H(F)^2-F^2)/2$. Moreover, it is straightforward to check that the prescribed normalization conditions
    \[
        G(0,t)\equiv G(0,0)=2,\quad G_{xx}(0,t)\equiv G_{xx}(0,0)=4,
    \]
    determine the scaling factors $c_l,c_\omega$ as
    \[
        c_l(t) \equiv \frac{1}{2},\quad c_\omega(t)\equiv -1.
    \]
    We thus obtain the dynamic system
    \[
        \begin{split}
                &F_t+\frac{1}{2}xF_x =- F+F G,                    \\
                &G_t+\frac{1}{2}xG_x =- G+\frac{1}{2}( G^2-F^2 ), \\
                &G(0,t)=2,\quad G_{xx}(0,t)=4.
        \end{split}
    \]
    Note that the relation $G=\mtx H(F)$ is preserved by this system. If we define $h=F+\mathrm{i}G$, then this complex-valued function solves
    \[
        h_t+\frac{1}{2}xh_x=-h-\frac{\iunit}{2} h^2,\quad h(0,t)=2\iunit,\quad h_{xx}(0,t)=4\iunit.
    \]
    Now consider the characteristic trajectories
    \[
        X(x,t)=x \mathrm{e}^{t/2},\quad x\in \mathbb{R}.
    \]
    Denote $h_0=h(\cdot,0)$. Then, along a characteristic $X(x,t)$, $h(X(x,t),t)$ solves
    \[
        \frac{D h}{D t}=-h-\frac{\mathrm{i}}{2} h^2,
    \]
    which yields
    \[
        h( X(x,t),t )=\frac{h(x,0)}{\mathrm{e}^t+\frac{\iunit}{2} (\mathrm{e}^t-1)h(x,0)}=\frac{h_0(x)}{\mathrm{e}^t+\frac{\mathrm{i}}{2}(\mathrm{e}^t-1)h_0(x)}.
    \]
    By reversing the characteristics, we obtain
    \[
        h(x,t)=\frac{h_0(x\mathrm{e}^{-t/2})}{\mathrm{e}^t+\frac{\mathrm{i}}{2}( \mathrm{e}^t-1 )h_0(x\mathrm{e}^{-t/2})},
    \]
    that is,
    \[
        \begin{split}
            F(x,t) & =\frac{4\mathrm{e}^t F_0(x\mathrm{e}^{-t/2})}{(\mathrm{e}^t-1)^2 F_0(x\mathrm{e}^{-t/2})^2+\left( 2\mathrm{e}^t-(\mathrm{e}^t-1) G_0(x\mathrm{e}^{-t/2})\right)^2},                                                                                     \\
            G(x,t) & =\frac{4\mathrm{e}^t G_0(x\mathrm{e}^{-t/2})-2(\mathrm{e}^t-1)\left( F_0(x\mathrm{e}^{-t/2})^2+ G_0(x\mathrm{e}^{-t/2})^2\right)}{(\mathrm{e}^t-1)^2 F_0(x\mathrm{e}^{-t/2})^2+\left( 2\mathrm{e}^t-(\mathrm{e}^t-1) G_0(x\mathrm{e}^{-t/2})\right)^2}. \\
        \end{split}
    \]
    Note that we have assumed $F_0(x)<0$ for all $x>0$, and thus $F(x,t)^2+G(x,t)^2<+\infty$ for any finite $t\ge 0$. Besides, recall that we have
    \[
        \lim_{x\to0}\frac{|F_0(x)|}{|x|^3}<+\infty,\quad G_0(0)=2,\quad \lim_{x\to 0}\frac{G_0(x)-G_0(0)}{x^2} = \frac{1}{2}(G_0)_{xx}(0)=2.
    \]
    Under these assumptions, we may further assume that 
    \[|G_0(x) - (2 + 2x^2)|\leq Ax^4,\quad |x|\leq 1,\]
    for some finite constant $A>0$. Then, it can be easily verified that, for any compact set $S\subset \R\setminus\{\pm1\}$, and for any sufficiently large $t$ satisfying
    \[x^2\mathrm{e}^{-t}\leq \min\left\{1\,,\,\frac{|x^2-1|}{2(1+Ax^2)}\right\},\quad \forall x\in S,\]
    there exists some constant $C>0$ (only depending on $F_0,G_0$) such that 
    \begin{equation}\label{eqt:F_G_convergence_rate}
    |F(x,t)| \leq C\mathrm{e}^{-t/2}\cdot \frac{|x|^3}{|1-x^2|^2},\quad \left|G(x,t)-\frac{2}{1-x^2}\right| \leq C\mathrm{e}^{-t}\cdot\frac{(1+x^2)^2x^2}{|1-x^2|^3},\quad x\in S.
    \end{equation}
    In particular, for each $x\ne \pm1$, we have
    \[
        \lim_{t\to +\infty}F(x,t)=0=\bar \Omega(x),\quad \lim_{t\to+\infty}G(x,t)=\frac{2}{1-x^2}=\mtx{H}(\bar \Omega)(x).
    \]
    In fact, one can see from the calculation above that the convergence of $F(x,t)$ roots in the fact that $\mathrm{e}^t F_0(x\mathrm{e}^{-t/2})$ converges to $0$ as $t\to+\infty$, which is true only when the vanishing order of $F_0(x)$ at $x=0$ is strictly higher than $2$. This reveals the essential difference between non-degenerate data and degenerate data in their blowup behaviors. 

    Next, we show that $F(x,t)$ converges to $\bar \Omega(x) = -\pi(\delta(x-1)-\delta(x+1))$ in the weak sense. By odd symmetry we only need to consider $x\geq 0$. For any compactly supported test function $\phi\in C_c^\infty(\mathbb{R}_+)$, we consider the integral decomposition 
    \[ \int_0^{+\infty} F(x,t)\phi(x)\idiff x = \int_{|x-1|\leq\epsilon} F(x,t)\phi(x)\idiff x + \int_{|x-1|>\epsilon,\,x\geq 0} F(x,t)\phi(x)\idiff x=: I_\epsilon(t) + J_\epsilon(t),\]
    for some sufficiently small but fixed $\epsilon\in(0,1)$. We can use \eqref{eqt:F_G_convergence_rate} to obtain
    \[ |J_\epsilon(t)| \leq C\frac{\mathrm{e}^{-t/2}}{\epsilon^2}.\] 
    The constant $C$ may vary from line to line but is always independent of $t$ and $\epsilon$. Let us write 
    \[G_0(x) = 2 + 2x^2q(x),\]
    where $q$ is a smooth even function such that $q(0)=1$, $|q(x)-1|\leq C x^2$ and $|q'(x)|\leq C|x|$ for $|x|\leq 2$. We then rewrite $I_\epsilon(t)$ as
    \begin{align*}
     I_\epsilon(t) &= \int_{1-\epsilon}^{1+\epsilon}\frac{4\mathrm{e}^t F_0(x\mathrm{e}^{-t/2})\phi(x)}{(\mathrm{e}^t-1)^2 F_0(x\mathrm{e}^{-t/2})^2+\left( 2\mathrm{e}^t-(\mathrm{e}^t-1) G_0(x\mathrm{e}^{-t/2})\right)^2}\idiff x \\
     &= \int_{1-\epsilon}^{1+\epsilon}\frac{4\mathrm{e}^t F_0(x\mathrm{e}^{-t/2})\phi(x)}{(\mathrm{e}^t-1)^2 F_0(x\mathrm{e}^{-t/2})^2+4\left( 1-x^2(1-\mathrm{e}^{-t})q(x\mathrm{e}^{-t/2})\right)^2}\idiff x\\
     &= \frac{1}{(1-\mathrm{e}^{-t})^2}\cdot \int_{1-\epsilon}^{1+\epsilon}\frac{-4\phi(x)}{1 +16\left( \frac{1-x^2(1-\mathrm{e}^{-t})q(x\mathrm{e}^{-t/2})}{2(1-\mathrm{e}^{-t})\eta(x,t)}\right)^2}\cdot \frac{1}{\eta(x,t)}\idiff x,
    \end{align*}
    where
    \[\eta(x,t):= -\mathrm{e}^t F_0(x\mathrm{e}^{-t/2}).\]
    Note that $\eta(x,t)>0$ for all $x>0$ and $t\geq 0$. Moreover, by the degeneracy assumption that $\lim_{x\to 0}|F_0(x)|/|x|^3<+\infty$, we have 
    \[\eta(x,t)< C\mathrm{e}^{-t/2},\quad x\in[1-\epsilon,1+\epsilon].\]
    Next, define
    \[Y(x,t) := \frac{x^2q(x\mathrm{e}^{-t/2})-(1-\mathrm{e}^{-t})^{-1}}{2\eta(x,t)},\quad x\in[1-\epsilon, 1+\epsilon].\]
    Note that by the properties of $q$, we have 
    \begin{equation}\label{eqt:Y_boundary}
    Y(1+\epsilon,t) \geq \left(c_0\epsilon  - C\mathrm{e}^{-t}\right)\mathrm{e}^{t/2},\quad Y(1-\epsilon,t)\leq -\left(c_0\epsilon - C\mathrm{e}^{-t}\right)\mathrm{e}^{t/2}, 
    \end{equation}
    for some constant $c_0>0$. In particular, when $t$ is sufficiently large, $Y(1+\epsilon,t)>0$ and $Y(1-\epsilon,t)<0$. We can also compute that
    \begin{align*}
    Y_x(x,t) &= \frac{xq(x\mathrm{e}^{-t/2})}{\eta(x,t)} + \frac{x^2\mathrm{e}^{-t/2}q'(x\mathrm{e}^{-t/2})}{2\eta(x,t)} - \frac{x^2q(x\mathrm{e}^{-t/2})-(1-\mathrm{e}^{-t})^{-1}}{2\eta(x,t)}\cdot \frac{\eta_x(x,t)}{\eta(x,t)}\\
    &= \frac{1}{\eta(x,t)}\left(1 + (x-1)q(x\mathrm{e}^{-t/2}) + (q(x\mathrm{e}^{-t/2}) -1) + \frac{x^2\mathrm{e}^{-t/2}q'(x\mathrm{e}^{-t/2})}{2} \right.\\
    &\qquad\qquad\qquad - \left.\frac{x^2q(x\mathrm{e}^{-t/2})-(1-\mathrm{e}^{-t})^{-1}}{2x}\cdot \frac{x\eta_x(x,t)}{\eta(x,t)}\right).
    \end{align*}
    Note that by the assumption 
    \[\limsup_{x\to 0}\left|\frac{xF_0'(x)}{F_0(x)}\right|<+\infty,\]
    we have 
    \[\left|\frac{x\eta_x(x,t)}{\eta(x,t)}\right| = \left| \frac{x\mathrm{e}^{-t/2}F_0'(x\mathrm{e}^{-t/2})}{F_0(x\mathrm{e}^{-t/2})}\right| \leq C,\quad x\in[1-\epsilon,1+\epsilon].\]
    It follows that, for all $x\in [1-\epsilon, 1+\epsilon]$ and for sufficiently large $t$,
    \begin{equation}\label{eqt:Y_x}
        \left|\eta(x,t)Y_x(x,t) - 1\right| \leq C (\epsilon+\mathrm{e}^{-t}).
    \end{equation}
    In particular, there exists some $t_0>0$ and some $\epsilon_0\in(0,1)$ such that, for $t>t_0$ and for $x\in [1-\epsilon, 1+\epsilon]\subset [1-\epsilon_0,1+\epsilon_0]$,
    \[Y_x(x,t) > \frac{1}{2\eta(x,t)} >0.\]
    This means for any $t\geq t_0$, $Y(x,t)$ as a function of $x\in [1-\epsilon, 1+\epsilon]$ admits an inverse function $x = X(y,t)$ for $y\in [Y(1-\epsilon,t), Y(1+\epsilon,t)]$. Using $x = X(y,t)$ as a change of variable, we may continue the calculation of $I_\epsilon(t)$ as
    \begin{align*}
    I_\epsilon(t) &= \frac{1}{(1-\mathrm{e}^{-t})^2}\cdot \int_{1-\epsilon}^{1+\epsilon}\frac{-4\phi(x)}{1 +16\left( \frac{1-x^2(1-\mathrm{e}^{-t})q(x\mathrm{e}^{-t/2})}{2(1-\mathrm{e}^{-t})\eta(x,t)}\right)^2}\cdot \frac{1}{\eta(x,t)}\idiff x\\
    &= \frac{1}{(1-\mathrm{e}^{-t})^2}\cdot \int_{Y(1-\epsilon,t)}^{Y(1+\epsilon,t)}\frac{-4\phi(X(y,t))}{1 +16y^2}\cdot \frac{1}{\eta(X(y,t),t)Y_x(X(y,t),t)}\idiff y.
    \end{align*}
    Note that 
    \[\pi = \int_{\R}\frac{4}{1+16y^2}\idiff y = \int_{Y(1-\epsilon,t)}^{Y(1+\epsilon,t)}\frac{4}{1+16y^2}\idiff y + \int_{\R\backslash[Y(1-\epsilon,t),Y(1+\epsilon,t)]}\frac{4}{1+16y^2}\idiff y.\]
    Then, using \eqref{eqt:Y_boundary}, \eqref{eqt:Y_x}, and the smoothness of $\phi$, we can derive that
    \begin{align*}
    |I_\epsilon(t) - (-\pi\phi(1))| & \leq \left|\int_{Y(1-\epsilon,t)}^{Y(1+\epsilon,t)}\left( \frac{(1-\mathrm{e}^{-t})^{-2}}{\eta(X(y,t),t)Y_x(X(y,t),t)}\phi(X(y,t)) -\phi(1)\right)\frac{4}{1 +16y^2}\idiff y\right|\\
    &\quad\, + \left|\int_{\R\backslash[Y(1-\epsilon,t),Y(1+\epsilon,t)]}\frac{4\phi(1)}{1+16y^2}\idiff y\right|\\
    &\leq C\left(\epsilon + \mathrm{e}^{-t} + \frac{\mathrm{e}^{-t/2}}{c_0\epsilon - C\mathrm{e}^{-t}}\right).
    \end{align*}
    Finally, we obtain
    \begin{align*}
        \left|\int_0^{+\infty} F(x,t)\phi(x)\idiff x - (-\pi\phi(1))\right| &\leq |I_\epsilon(t) - (-\pi\phi(1))| + |J_\epsilon(t)| \\
        &\leq C\left(\epsilon + \mathrm{e}^{-t} + \frac{\mathrm{e}^{-t/2}}{c_0\epsilon - C\mathrm{e}^{-t}} + \frac{\mathrm{e}^{-t/2}}{\epsilon^2}\right).
    \end{align*}
    In particular, for $t$ sufficiently large such that $\mathrm{e}^{-t}\ll \epsilon^6$, we have 
    \[\left|\int_0^{+\infty} F(x,t)\phi(x)\idiff x - (-\pi\phi(1))\right|\leq C\epsilon.\]
    Since $\epsilon \in(0,1)$ can be arbitrarily small, it follows that 
    \[\lim_{t\to +\infty} \int_0^{+\infty} F(x,t)\phi(x)\idiff x = -\pi\phi(1).\]
    This completes the proof.
\end{proof}

\begin{proof}[Proof of Corollary \ref{cor:convergence_a_0_halfline}]
    This corollary follows directly from Theorem \ref{thm:convergence_a_0} via a change of variable. Let $\Omega(X,\tau)$ be a solution to \eqref{eqt:dynamic_rescaling} for $a=0$ that satisfies all assumptions and normalization conditions of Corollary \ref{cor:convergence_a_0_halfline}. Specifically, $\Omega$ solves the dynamic rescaling equation
    \[\Omega_\tau+c_lX\Omega_X=(c_\omega+\mtx{H}(\Omega))\Omega\]
    with $c_l = 1$ and $c_\om=-1$. Define 
    \[\tilde \Omega(X,\tau) := \sgn(X)\cdot \Omega(X^2,\tau).\]
    It is not hard to see that, if the initial condition $\Omega_0(x) = \Omega(x,0)$ satisfies Assumption \ref{ass:half_line} (half-line case), then the new initial condition $\tilde \Omega_0(x) = \tilde \Omega(x,0)$ satisfies Assumption \ref{ass:odd_sym} (odd-symmetry case). It also holds that 
    \[\limsup_{X\to 0}\left|\frac{X\tilde \Omega_0'(X)}{\tilde \Omega_0(X)}\right| = 2\limsup_{X\to 0}\left|\frac{X^2\Omega_0'(X^2)}{\Omega_0(X^2)}\right| = 2\limsup_{X\to 0^+}\left|\frac{X\Omega_0'(X)}{\Omega_0(X)}\right|<+\infty.\]
    Moreover, one can easily use the formula of the Hilbert transform to verify that
    \[\mtx{H}(\tilde \Omega)(X,\tau) = \mtx{H}(\Omega)(X^2,\tau). \]
    As an immediate consequence, $\tilde \Omega(X,\tau)$ solves the corresponding dynamic rescaling equation
    \[\tilde \Omega_\tau+\tilde c_lX\tilde \Omega_X=(\tilde c_\omega+\mtx{H}(\tilde\Omega))\tilde \Omega\]
    with $\tilde c_l = c_l/2 = 1/2$ and $\tilde c_\om = c_\om =-1$, and it satisfies the normalization conditions
    \begin{align*}
        &\mtx{H}(\tilde\Omega)(0,\tau)\equiv \mtx{H}(\tilde\Omega_0)(0) = \mtx{H}(\Omega_0)(0) =2,\\
        &\mtx{H}(\tilde\Omega)_{XX}(0,\tau)\equiv \mtx{H}(\tilde \Omega_0)_{XX}(0)=2\mtx{H}(\Omega_0)_{X}(0) =4.
    \end{align*}
    That is, the solution $\tilde \Omega$ satisfies all assumptions and normalization conditions of Theorem \ref{thm:convergence_a_0}. Therefore, the convergence result of Corollary \ref{cor:convergence_a_0_halfline} readily follows from the convergence result of Theorem \ref{thm:convergence_a_0}.
    \end{proof}

\section{Numerical results with degenerate initial data for $a>0$}\label{sec:a_gt_0}

In this section, we present numerical results obtained from dynamic rescaling simulations of the gCLM model \eqref{eqt:gCLM} with degenerate initial data in the case $a>0$. Recall that the dynamic rescaling equation is given by
\begin{equation}\label{eqt:dynamic_rescaling_a_gt_0}
    \Omega_\tau+(c_l X+ a U)\Omega_X=(c_\omega+U_X)\Omega,\quad U_X=\mtx H(\Omega),\quad U(0)=0.
\end{equation}
As discussed in Section \ref{sec:blowup_scenarios}, \eqref{eqt:dynamic_rescaling_a_gt_0} is equivalent to the original gCLM equation \eqref{eqt:gCLM} via time-dependent scaling transformations. Furthermore, the convergence of the profile solution would imply that the original gCLM equation develops a finite-time singularity in a self-similar manner. Therefore, we simulate the dynamic rescaling equation \eqref{eqt:dynamic_rescaling_a_gt_0} to investigate potential self-similar blowup of the gCLM model with degenerate initial data.

It is straightforward to verify that the dynamic rescaling equation \eqref{eqt:dynamic_rescaling_a_gt_0} preserves the vanishing order of the solution at the origin. Numerically, however, directly evolving \eqref{eqt:dynamic_rescaling_a_gt_0} with degenerate initial data is delicate, since round-off and discretization errors tend to destroy the high-order degeneracy near $X=0$. To deal with this issue, we factor out the prescribed vanishing order by introducing
\[
    f(X,\tau)=\frac{\Omega(X,\tau)}{X^k},
\]
where $k$ is the vanishing order of the initial data $\Omega_0$ at the origin, and we instead evolve $f$ via
\begin{equation}\label{eqt:dynamic_rescaling_F_a_gt_0}
    f_\tau+(c_l X+aU)f_X=\left(c_\omega+U_X-k\left(c_l+a\frac{U}{X}\right)\right)f,
\end{equation}
where $U_X=\mtx H(X^k f)$. To uniquely determine the scaling factors $c_l$ and $c_\omega$, we must impose two normalization conditions. Since the model preserves the degeneracy of the initial data, a natural normalization condition is to fix the value of $f$ at the origin, i.e.
\[
    f(0,\tau)=\lim_{X\to 0}\frac{\Omega(X,\tau)}{X^k}\equiv -1.
\]
For the second normalization condition, we fix the location of the minimum of $\Omega$ at $X=1$ by imposing
\[
    \Omega_X(1,\tau)\equiv0.
\]

In this section, we focus on odd-symmetric smooth initial data (Assumption \ref{ass:odd_sym}). In fact, under odd symmetry, the vanishing order $k$ of $\Omega$ must be an odd natural number and is preserved for all time by the equation. Throughout this section, we take the initial condition of $f$ as
\[
    f_0(X)=-\frac{3}{3+kX^{k+3}},
\]
so that $\Omega_0(X)=X^k f_0(X)=-3X^k/(3+kX^{k+3})$. This choice satisfies the normalization conditions at $\tau=0$, namely $f_0(0)=-1$ and $\Omega_0'(1)=0$. We defer the implementation details of the dynamic rescaling simulations to Appendix \ref{app:numerics}.

In the remainder of this section, we first demonstrate the convergence of the dynamic rescaling simulations using the case $a=0.5$ and $k=3$ as an illustrative example. We then present numerical results for various values of $a>0$ with a fixed vanishing order $k=3$, followed by results for different vanishing orders $k\ge 3$ with a fixed $a=0.5$. Finally, we provide a comprehensive table summarizing the scaling factors $c_l, c_\omega$ and the spatial shrinking rate $\gamma=-c_l/c_\omega$ for different combinations of $a>0$ and $k\ge 3$.

\subsection{Convergence of profiles}
Below we present illustrative numerical results demonstrating convergence of the dynamic rescaling method for the case $a=0.5$ with vanishing order $k=3$. Figure \ref{fig:a0.5_k3_convergence} shows the evolution of the rescaled profile $\Omega(X,\tau)$ and $f(X,\tau)$. The condition $f(0)=-1$ is preserved throughout the computation, confirming that the prescribed degeneracy at the origin is maintained numerically. We declare convergence when the residual of \eqref{eqt:dynamic_rescaling_a_gt_0} falls below $1.5\times 10^{-8}$, and denote the resulting steady state by $\bar\Omega$. As illustrated in Figure \ref{fig:a0.5_k3_convergence}, the limiting profile $\bar\Omega$ is regular and smooth in the interior of its support and vanishes at the origin with the same order as the initial data. In particular, this profile differs from previously studied self-similar blowup profiles, which typically exhibit linear vanishing at $X=0$.

\begin{figure}[htbp]
    \begin{subfigure}{0.49\textwidth}
        \includegraphics[width=\textwidth]{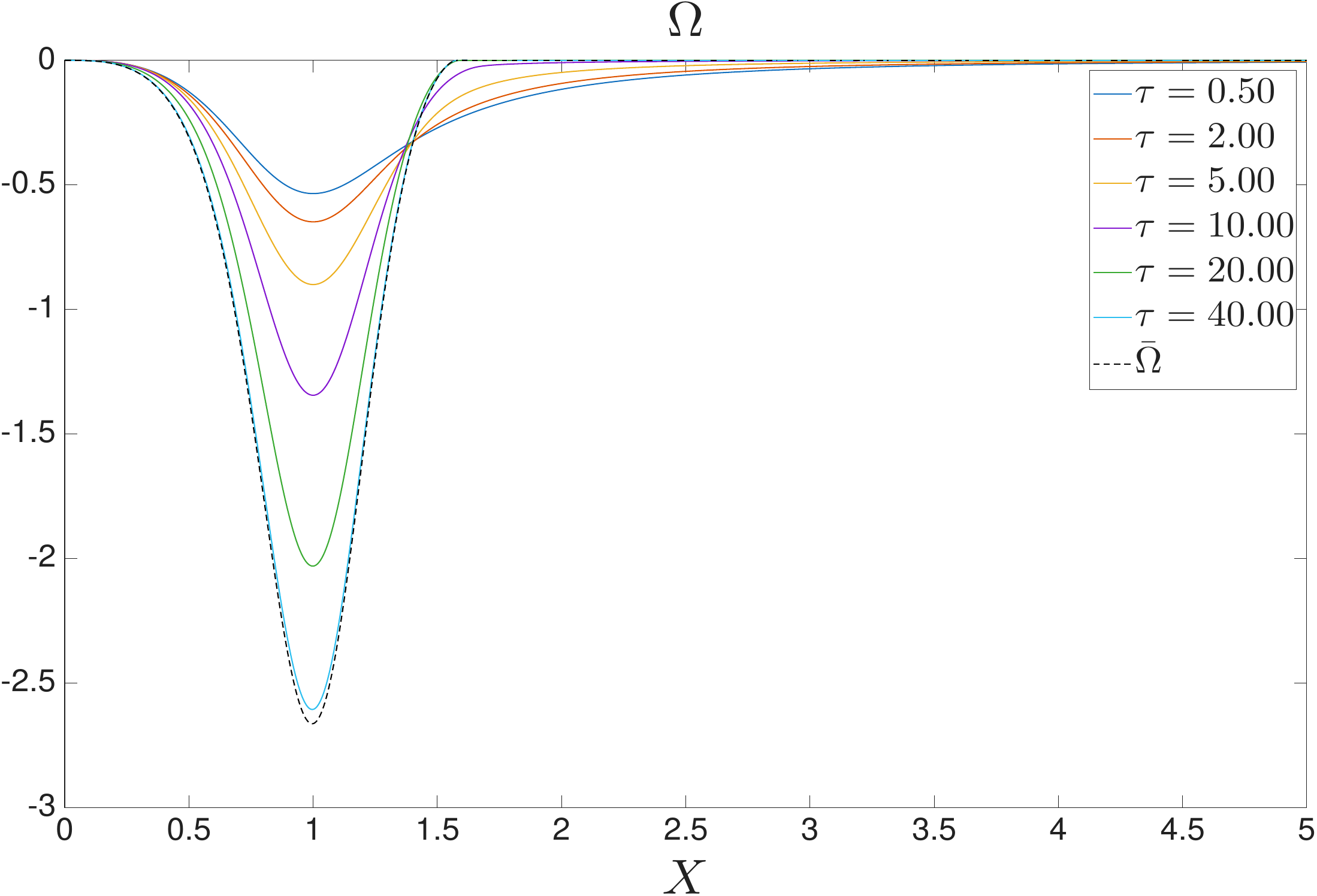}
        \caption{Evolution of $\Omega$}
        \label{fig:a0.5_k3_evolution}
    \end{subfigure}
    \begin{subfigure}{0.49\textwidth}
        \includegraphics[width=\textwidth]{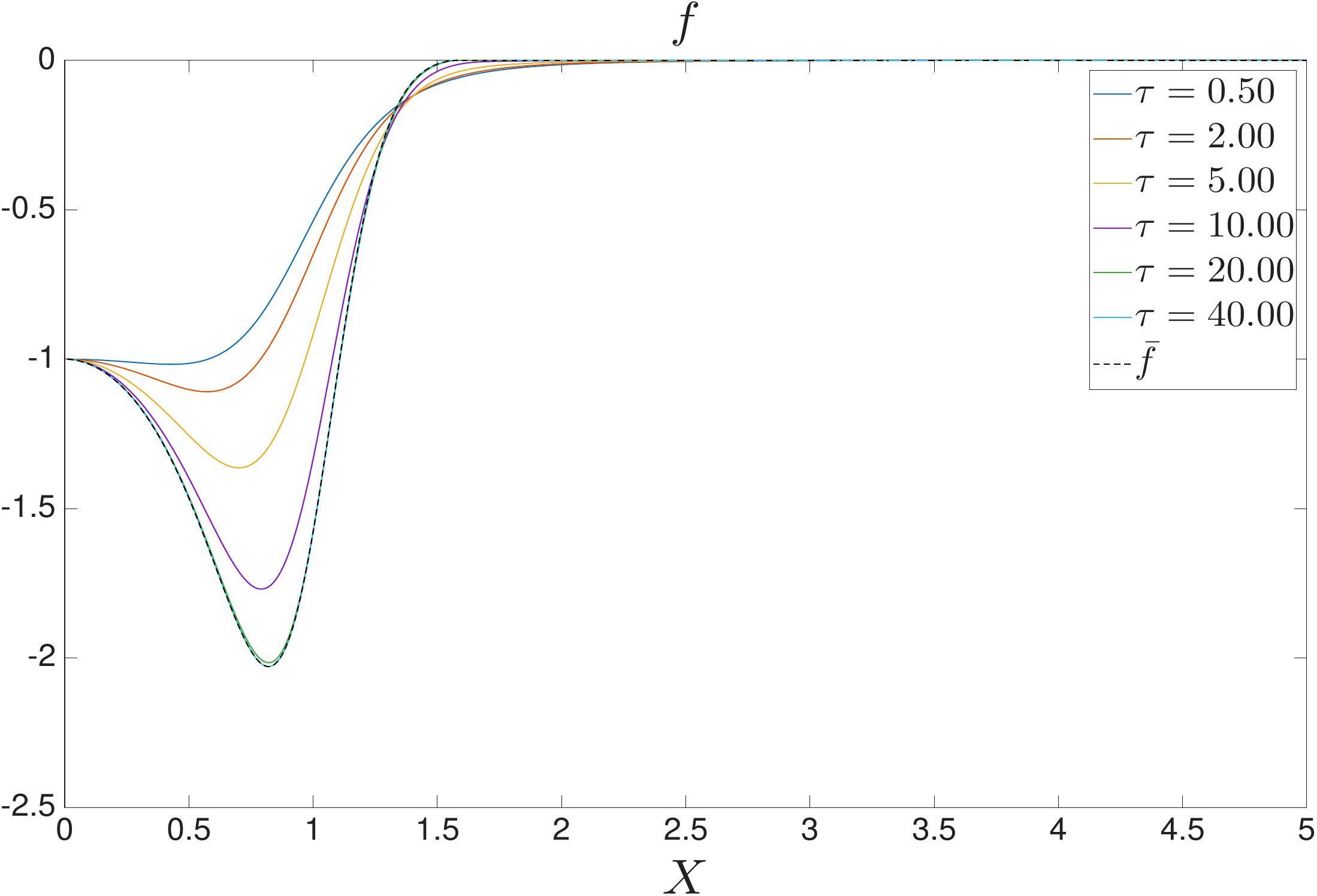}
        \caption{Evolution of $f$}
        \label{fig:a0.5_k3_evolution_f}
    \end{subfigure}
    \caption{Convergence of the profile for $a=0.5$ and $k=3$. (a) Evolution of the rescaled profile $\Omega(X,\tau)$ at different times. The dashed line plots the final steady state $\bar \Omega$, of which the residual has dropped below $1.5\times 10^{-8}$. (b) Evolution of $f(X,\tau)$, related to $\Omega(X,\tau)$ by $f(X,\tau)=\Omega(X,\tau)/X^k$. The preservation of $f(0)=-1$ throughout the evolution demonstrates that the degeneracy of the solution at the origin is maintained numerically.}
    \label{fig:a0.5_k3_convergence}
\end{figure}

The convergence of the scaling factors $c_l$, $c_\omega$, and the spatial shrinking rate $\gamma=-c_l/c_\omega$ is shown in Figure \ref{fig:a0.5_k3_convergence_scaling_factors}. These quantities stabilize as $\Omega(X,\tau)$ approaches the steady state, supporting the formation of a one-scale self-similar blowup in this setting.

\begin{figure}[htbp]
    \begin{subfigure}{0.32\textwidth}
        \includegraphics[width=\textwidth]{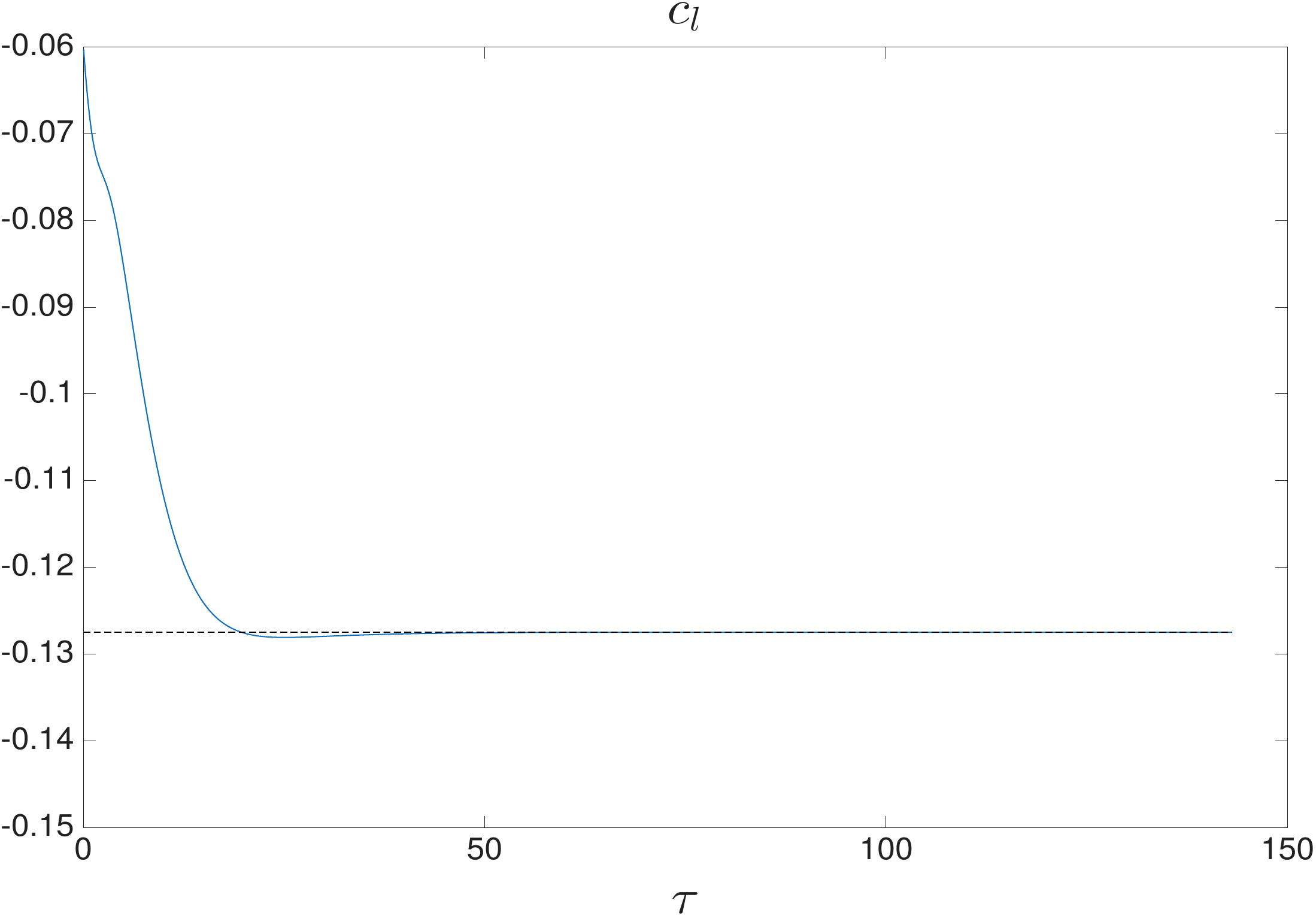}
        \caption{Scaling factor $c_l$}
    \end{subfigure}
    \begin{subfigure}{0.32\textwidth}
        \includegraphics[width=\textwidth]{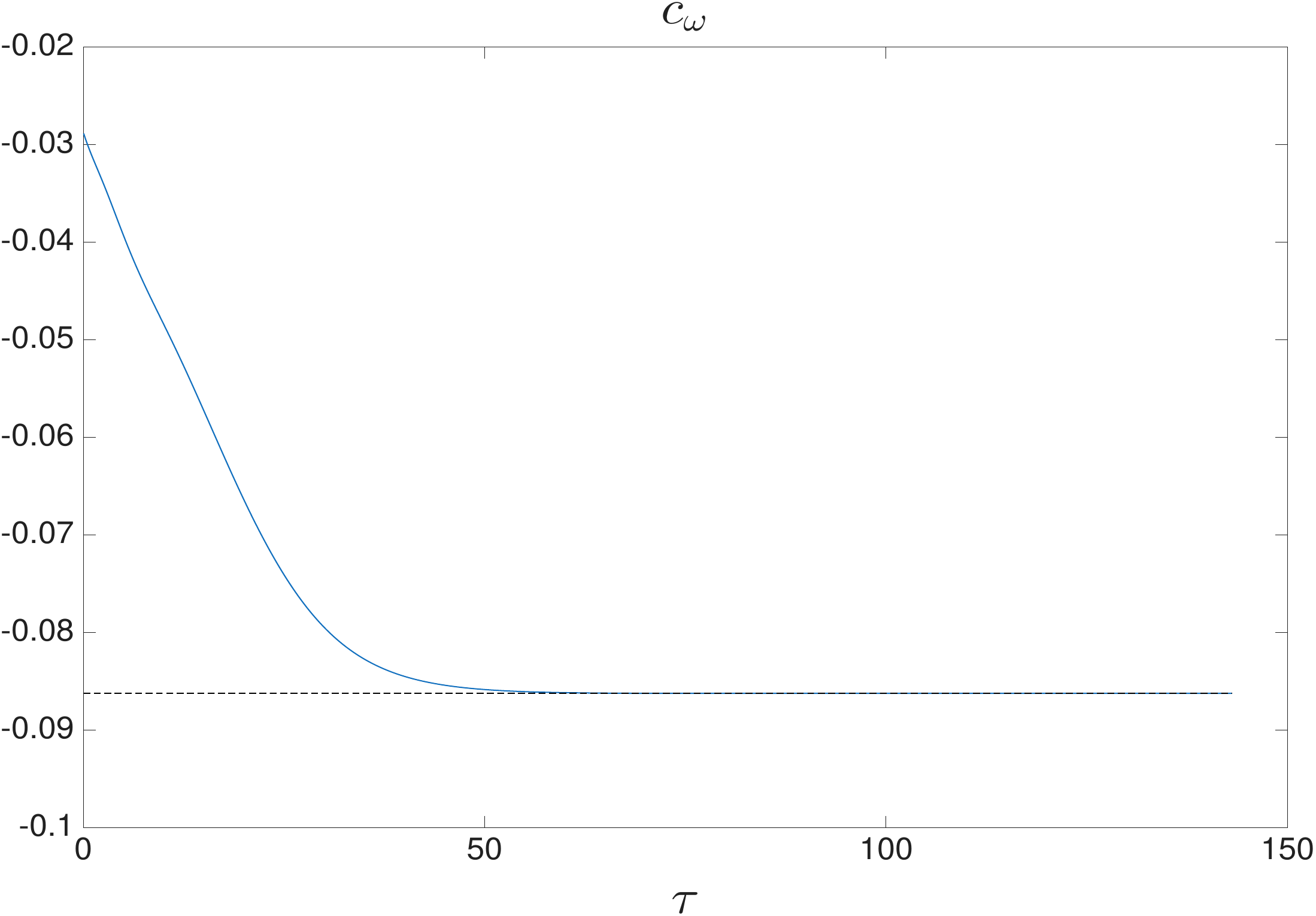}
        \caption{Scaling factor $c_\omega$}
    \end{subfigure}
    \begin{subfigure}{0.32\textwidth}
        \includegraphics[width=\textwidth]{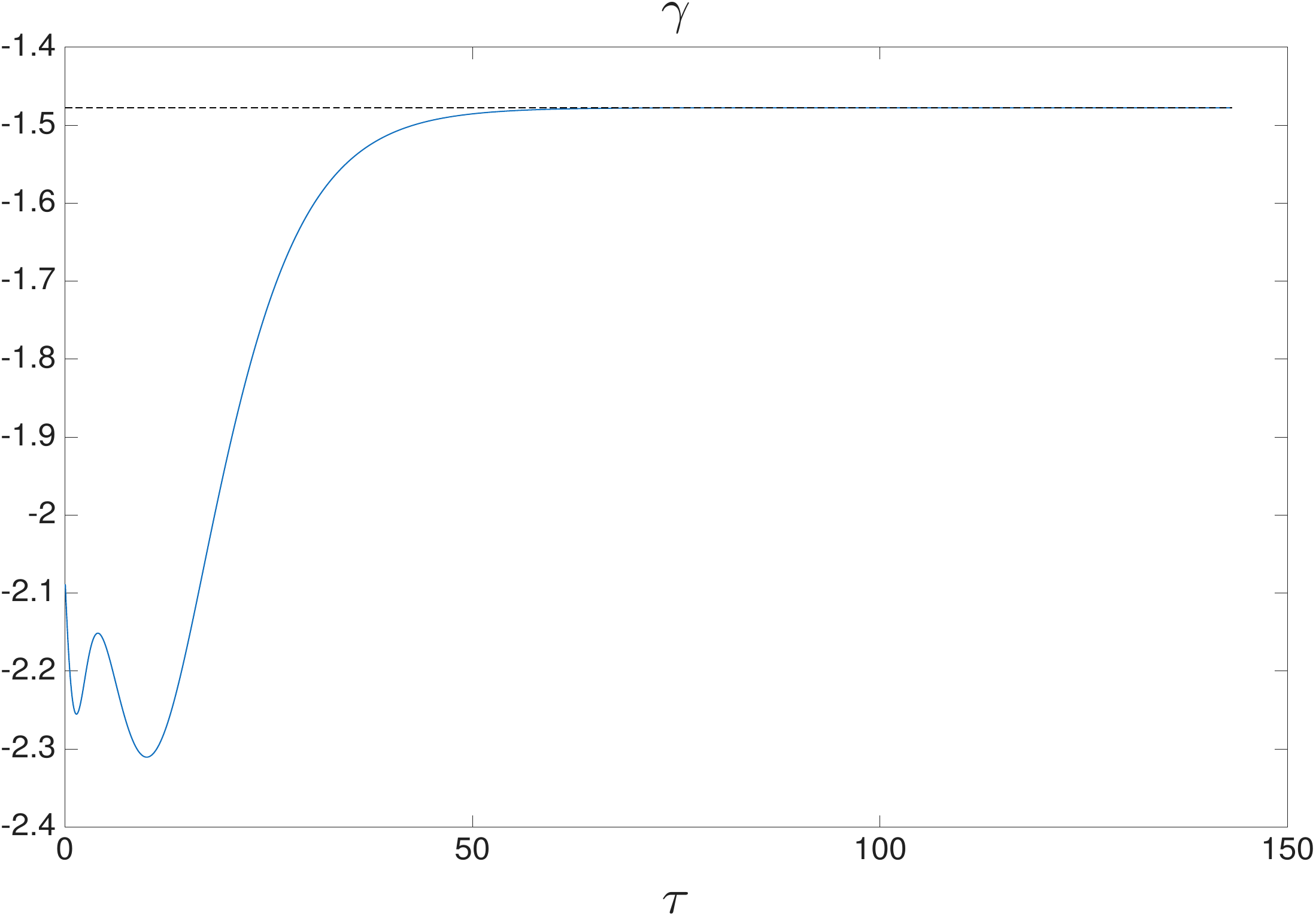}
        \caption{Spatial shrinking rate $\gamma=-c_l/c_\omega$}
    \end{subfigure}
    \caption{Convergence of the scaling factors for $a=0.5$ and $k=3$. The stabilization of $c_l$, $c_\omega$, and $\gamma$ as $\tau$ increases supports convergence towards a one-scale self-similar blowup.}
    \label{fig:a0.5_k3_convergence_scaling_factors}
\end{figure}

\subsection{Numerical results for different $a>0$ with a fixed vanishing order $k=3$}
Here we present the numerical results of the dynamic rescaling simulations for different $a>0$ with a fixed vanishing order $k=3$ for the initial data. The scaling factors of the steady state for $0.1\le a\le1.5$ are listed in Table \ref{tab:one_scale_scaling_factors_fixed_a}, and the corresponding self-similar profiles are plotted in Figure \ref{fig:a_positive_fix_k}. Note that due to the scaling invariance property \eqref{eqt:scaling_invariance}, the precise values of $\bar c_l$ and $\bar c_\omega$ depend on the normalization conditions and are thus not intrinsic. The physically relevant quantity is the ratio $\bar\gamma=-\bar c_l/\bar c_\omega$, which determines the spatial shrinking rate of the solution.

\begin{table}[htbp]
    \begin{tabular}{|c|c|c|c|c|c|c|c|c|c|c|c|}
        \hline
        {$a$}           & $ 0.1     $ & $ 0.2     $  & $ 0.232931     $             & $ 0.232932     $            & $ 0.3     $  & $ 0.4     $ & $ 0.5     $ \\ \hline
        $\bar c_l$      & $ 1.0888  $ & $ 0.0691  $  & $ 6.4688\times 10^{-7}    $  & $ -9.7142\times10^{-7}    $ & $ -0.0717 $  & $ -0.1135 $ & $ -0.1288 $ \\ \hline
        $\bar c_\omega$ & $ -3.8668 $ & $ -0.8357 $  & $ -0.6055    $               & $ -0.6055    $              & $ -0.3479 $  & $ -0.1718 $ & $ -0.0872 $ \\ \hline
        $\bar\gamma$    & $ 0.2816  $ & $ 0.0827  $  & $ 1.0683\times 10^{-6}     $ & $-1.6043\times 10^{-6}   $  & $ -0.2061 $  & $ -0.6604 $ & $ -1.4771 $ \\ \hline\hline
        {$a$}           & $ 0.6     $ & $ 0.7      $ & $0.751405$                   & $0.751406$                  & $ 0.8$       & $ 0.9     $ & $ 1       $ \\ \hline
        $\bar c_l$      & $ -0.1345 $ & $ -0.1364  $ & $-0.1364$                    & $-0.1364$                   & $ -0.1365 $  & $ -0.1360 $ & $ -0.1352 $ \\ \hline
        $\bar c_\omega$ & $ -0.0399 $ & $ -0.0108  $ & $-5.8090\times10^{-8}$       & $1.3080\times 10^{-7}$      & $ 0.0084 $   & $ 0.0217  $ & $ 0.0314  $ \\ \hline
        $\bar \gamma$   & $ -3.3734 $ & $ -12.6465 $ & $-2.3481\times10^6$          & $1.0429\times{10^6}$        & $ 16.2599$   & $ 6.2550  $ & $ 4.2996  $ \\ \hline\hline
        {$a$}           & $ 1.1     $ & $ 1.2     $  & $ 1.3     $                  & $ 1.4      $                & $ 1.5     $  &             &             \\ \hline
        $\bar c_l$      & $ -0.1345 $ & $ -0.1338 $  & $ -0.1334 $                  & $ -0.1332 $                 & $ -0.1331  $ &             &             \\ \hline
        $\bar c_\omega$ & $ 0.0388  $ & $ 0.0446  $  & $ 0.0492  $                  & $ 0.0531  $                 & $ 0.0565   $ &             &             \\ \hline
        $\bar \gamma$   & $ 3.4658  $ & $ 3.0034  $  & $ 2.7094  $                  & $ 2.5058  $                 & $ 2.3563   $ &             &             \\ \hline
    \end{tabular}
    \caption{Scaling factors of the steady state for $0.1\le a\le 1.5$ with fixed vanishing order $k=3$. We observe two critical values: $a_{c,1}\in(0.232931,0.232932)$, where $c_l$ changes sign from positive to negative, and $a_{c,2}\in(0.751405,0.751406)$, where $c_\omega$ changes sign from negative to positive.}
    \label{tab:one_scale_scaling_factors_fixed_a}
\end{table}

\begin{figure}[htbp]
    \includegraphics[width=0.49\textwidth]{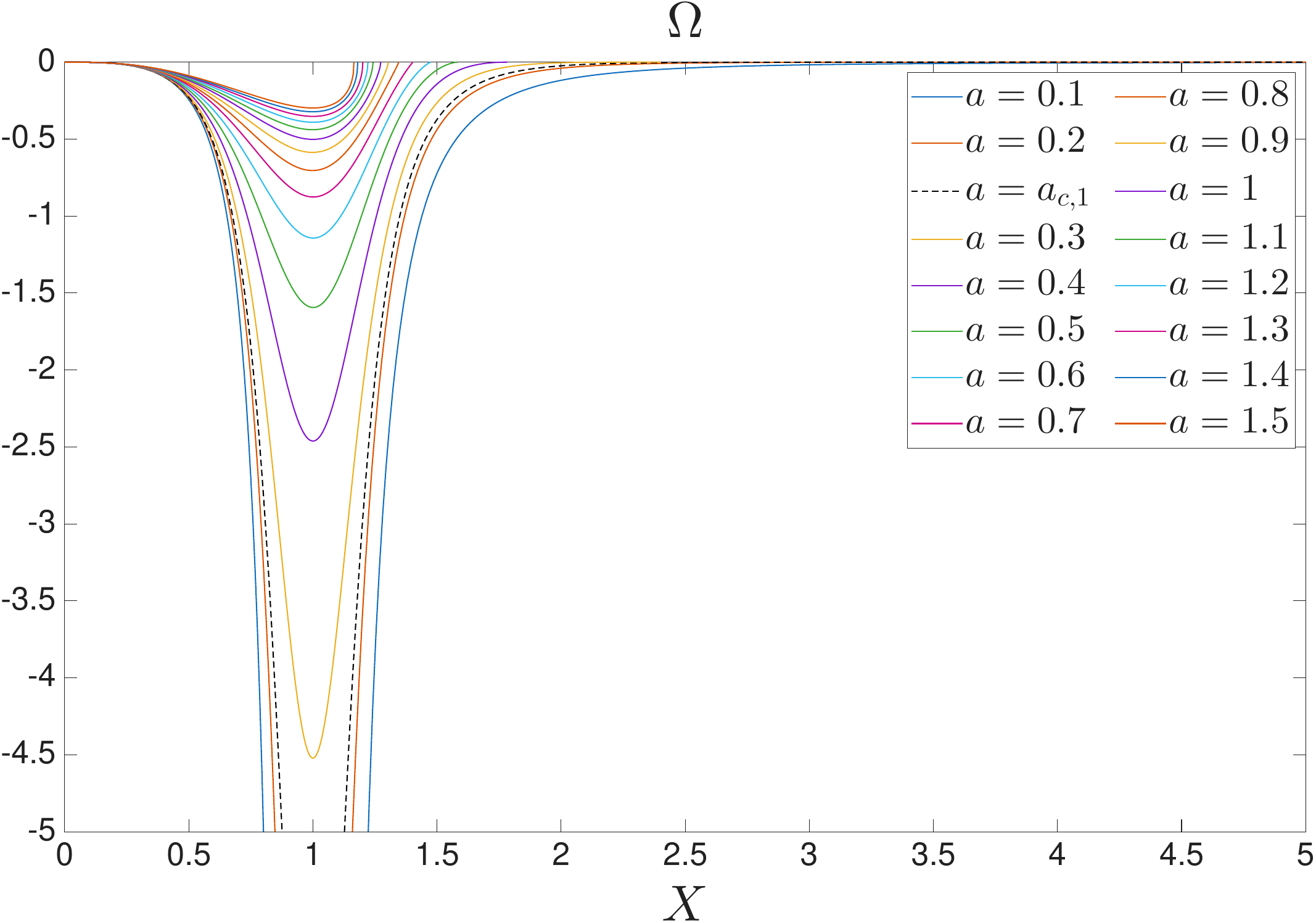}
    \caption{Steady states of the profile $\bar\Omega(X)$ for $0.1\le a\le 1.5$ with fixed vanishing order $k=3$. The dashed curve corresponds to the critical value $a=a_{c,1}$ at which the profile transitions from being smooth with full-line support to being compactly supported.}
    \label{fig:a_positive_fix_k}
\end{figure}

Lushnikov et al. \cite{lushnikov2021collapse} performed direct numerical simulations of the gCLM model \eqref{eqt:gCLM} and found evidence of self-similar finite-time blowup arising from smooth initial data for a wide range of the parameter $a$. They observed a critical value $a_c\approx 0.6891$ such that for $a<a_c$, the self-similar profile is supported on the whole real line and the blowup is focusing (i.e., $\gamma>0$), while for $a>a_c$, the self-similar profile is compactly supported and the blowup is expanding (i.e., $\gamma<0$). 

Our numerical results for degenerate initial data suggest a similar phenomenon, but with different critical values. As shown in Table \ref{tab:one_scale_scaling_factors_fixed_a}, there exists a critical value $a_{c,1}\in(0.232931,0.232932)$ across which $\bar c_l$ changes sign from positive to negative. This corresponds to a transition of the self-similar profile from being smooth and supported on the whole real line to being compactly supported. This critical value $a_{c,1}$ differs from the $a_c$ reported in \cite{lushnikov2021collapse}; we conjecture this discrepancy arises from the different vanishing order of the initial data at the origin.

Moreover, we identify a second critical value $a_{c,2}\in(0.751405,0.751406)$ at which $c_\omega$ changes sign from negative to positive. This indicates a transition from blowup to non-blowup in the original gCLM equation \eqref{eqt:gCLM}, since Proposition \ref{prop:convergence_implies_onescale_blowup} requires $\bar c_\omega<0$ for a one-scale asymptotically self-similar blowup. Nevertheless, even when $a>a_{c,2}$, one can still obtain an exact self-similar blowup solution of \eqref{eqt:gCLM} from the steady state $\bar\Omega$ computed via the dynamic rescaling equation \eqref{eqt:dynamic_rescaling_a_gt_0}. Define
\[
    \Omega^{\mathrm{flip}}(X,\tau):=-\Omega(X,-\tau),\quad c_l^{\mathrm{flip}}=-c_l(-\tau),\quad c_\omega^{\mathrm{flip}}=-c_\omega(-\tau),
\]
and correspondingly
\[
\bar\Omega^{\mathrm{flip}}(X):=-\bar\Omega(X),\quad \bar c_l^{\mathrm{flip}}=-\bar c_l,\quad \bar c_\omega^{\mathrm{flip}}=-\bar c_\omega.
\]
Then, by the scaling invariance \eqref{eqt:scaling_invariance}, the triple $(\bar\Omega^{\mathrm{flip}},\bar c_l^{\mathrm{flip}},\bar c_\omega^{\mathrm{flip}})$ also solves the steady profile equation \eqref{eqt:steady_profile}. Hence $\bar\Omega^{\mathrm{flip}}$ is an exact self-similar blowup profile for \eqref{eqt:gCLM}. On the other hand, $\Omega^{\mathrm{flip}}$ satisfies the dynamic rescaling equation with scaling factors $c_l^{\mathrm{flip}}$ and $c_\omega^{\mathrm{flip}}$:
\begin{equation}
    \Omega^{\mathrm{flip}}_\tau+\bigl(c_l^{\mathrm{flip}} X+aU^{\mathrm{flip}}\bigr)\Omega^{\mathrm{flip}}_X
    =\bigl(c_\omega^{\mathrm{flip}}+U^{\mathrm{flip}}_X\bigr)\Omega^{\mathrm{flip}},
\end{equation}
where $U^{\mathrm{flip}}(X,\tau)=-U(X,-\tau)$. The linearization of this flipped evolution about $\bar\Omega^{\mathrm{flip}}$ has the opposite sign compared to the linearization of \eqref{eqt:dynamic_rescaling} about $\bar\Omega$. Therefore, the observed convergence of the dynamic rescaling equation toward $\bar\Omega$ (i.e. stability of $\bar\Omega$) implies that $\bar\Omega^{\mathrm{flip}}$ is unstable. Consequently, as $a$ increases across $a_{c,2}$, the blowup transitions from stable to unstable, while the self-similar profile remains compactly supported. Below we plot the profiles $\bar\Omega$ for $a<a_{c,2}$ and the flipped profiles $\bar\Omega^{\mathrm{flip}}$ for $a>a_{c,2}$ in Figure \ref{fig:a_positive_fix_k_seperated} to better illustrate the transition across $a_{c,2}$.

\begin{figure}[htbp]
    \begin{subfigure}{0.49\textwidth}
        \includegraphics[width=\textwidth]{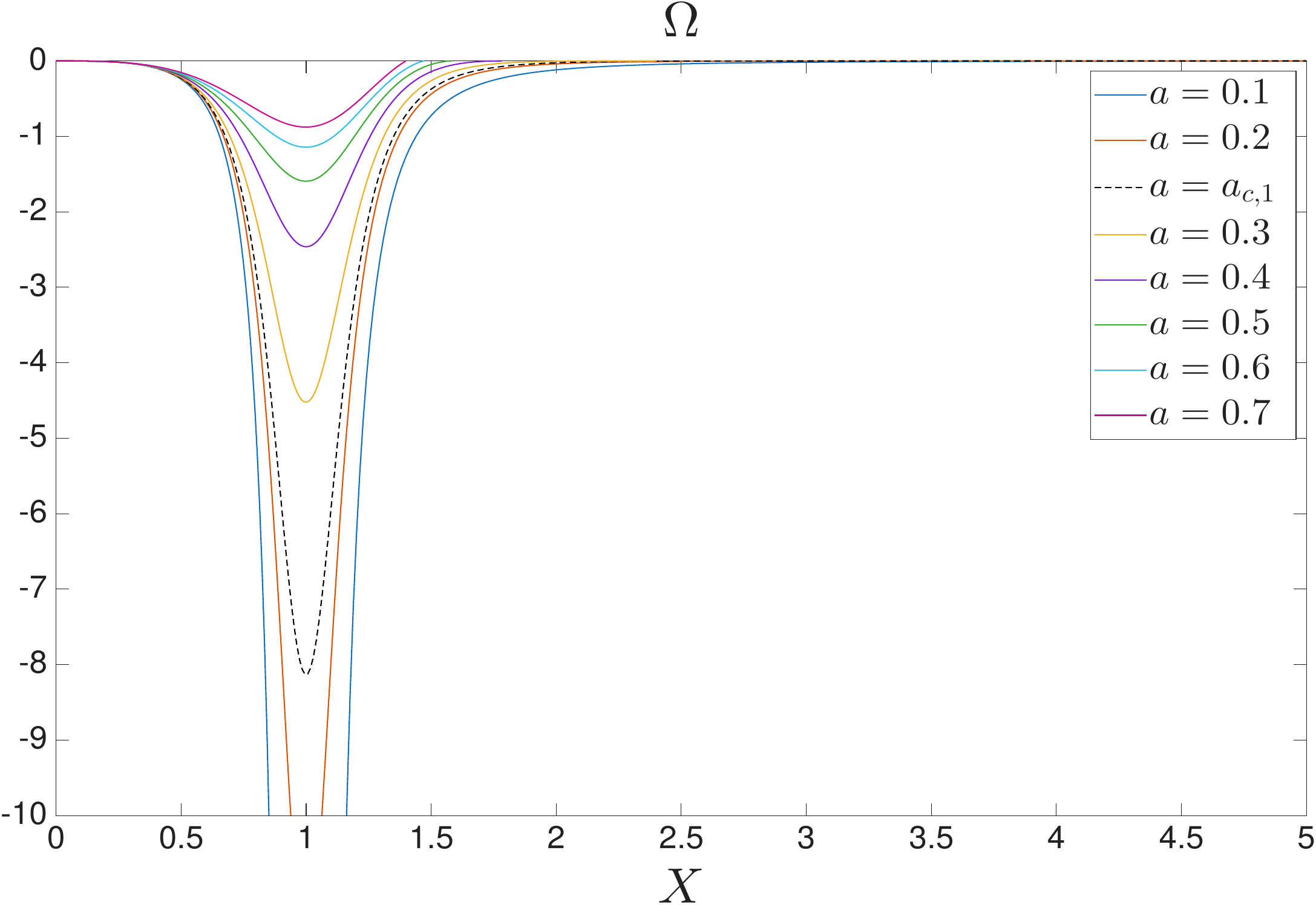}
        \caption{ $a<a_{c,2}$}
        \label{fig:a_positive_fix_k_0.1to0.7}
    \end{subfigure}
    \begin{subfigure}{0.49\textwidth}
        \includegraphics[width=\textwidth]{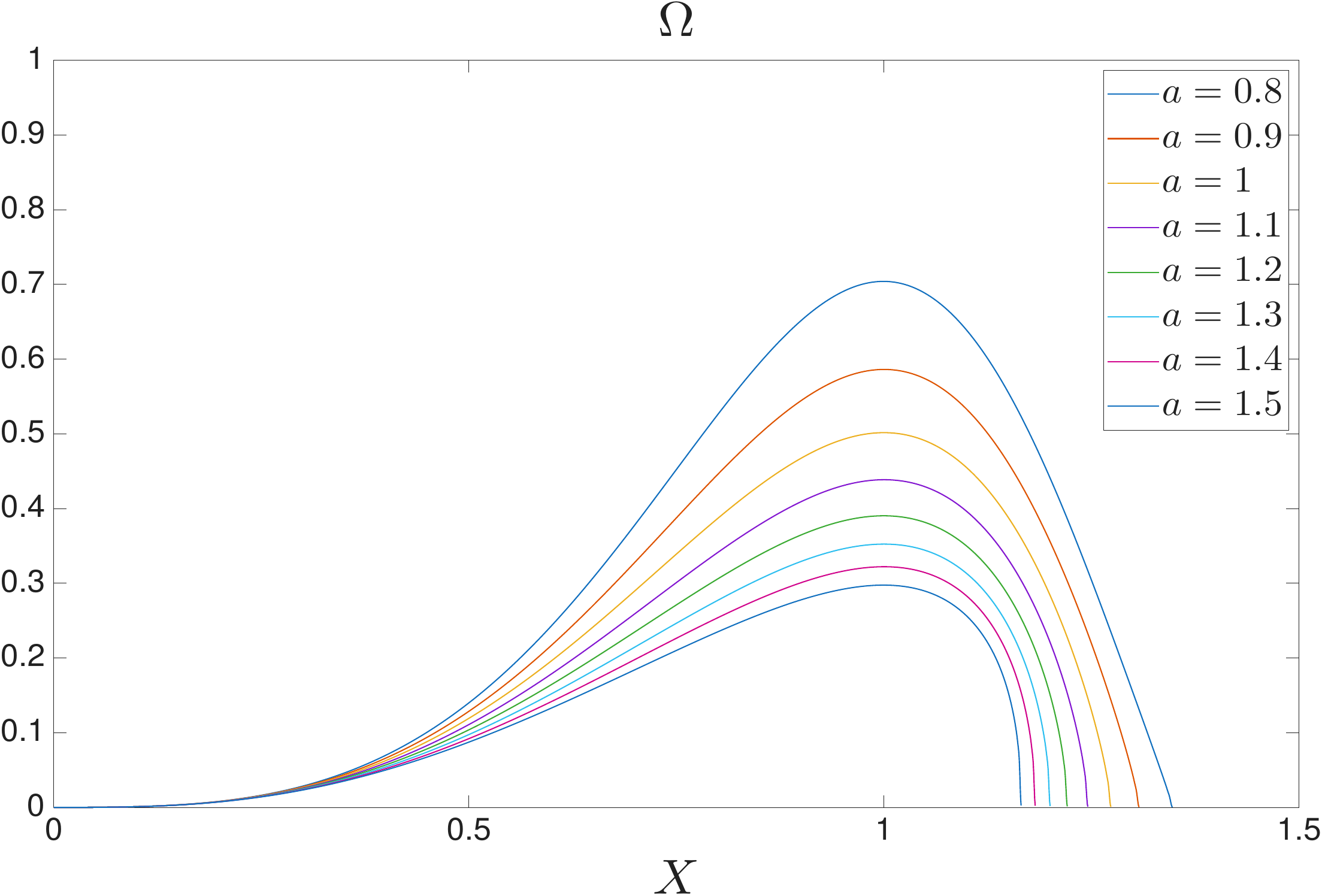}
        \caption{$a>a_{c,2}$}
        \label{fig:a_positive_fix_k_0.8to1.5}
    \end{subfigure}
    \caption{One-scale self-similar blowup profiles for fixed vanishing order $k=3$: (a) $a<a_{c,2}$ and (b) $a>a_{c,2}$. In (b), we plot the flipped profiles $\bar\Omega^{\mathrm{flip}}(X):=-\bar\Omega(X)$ since $\bar c_\omega>0$ in these cases; these $\bar\Omega^{\mathrm{flip}}$ correspond to unstable self-similar blowups.}
    \label{fig:a_positive_fix_k_seperated}
\end{figure}

\subsection{Numerical results for different vanishing orders $k$ with fixed $a=0.5$}
We next present dynamic rescaling results for initial data with different vanishing orders $k$ at the origin, while keeping $a=0.5$ fixed. The steady-state scaling factors for $k=3,5,7,9,11$ are reported in Table \ref{tab:one_scale_scaling_factors_fixed_k}, and the corresponding self-similar profiles are shown in Figure \ref{fig:a_positive_fix_a}.

\begin{table}[htbp]
    \begin{tabular}{|c|c|c|c|c|c|c|c|}
        \hline
        {$k$}           & $ 3       $ & $ 5       $ & $ 7        $ & $ 9          $           & $ 11       $            \\ \hline
        $\bar c_l$      & $ -0.1288 $ & $ -0.1052 $ & $ -0.0497  $ & $ -0.0168    $           & $ -0.0044    $          \\ \hline
        $\bar c_\omega$ & $ -0.0872 $ & $ -0.0193 $ & $ -0.0016  $ & $ 6.7927\times 10^{-4} $ & $ 3.7201\times 10^{-4}$ \\ \hline
        $\bar \gamma$   & $ -1.4771 $ & $ -5.4586 $ & $ -30.5045 $ & $ 24.7677    $           & $ 11.9515  $            \\ \hline
    \end{tabular}
    \caption{Scaling factors for profiles with vanishing orders $k=3,5,7,9,11$ and fixed $a=0.5$.}
    \label{tab:one_scale_scaling_factors_fixed_k}
\end{table}

\begin{figure}[htbp]
    \includegraphics[width=0.49\textwidth]{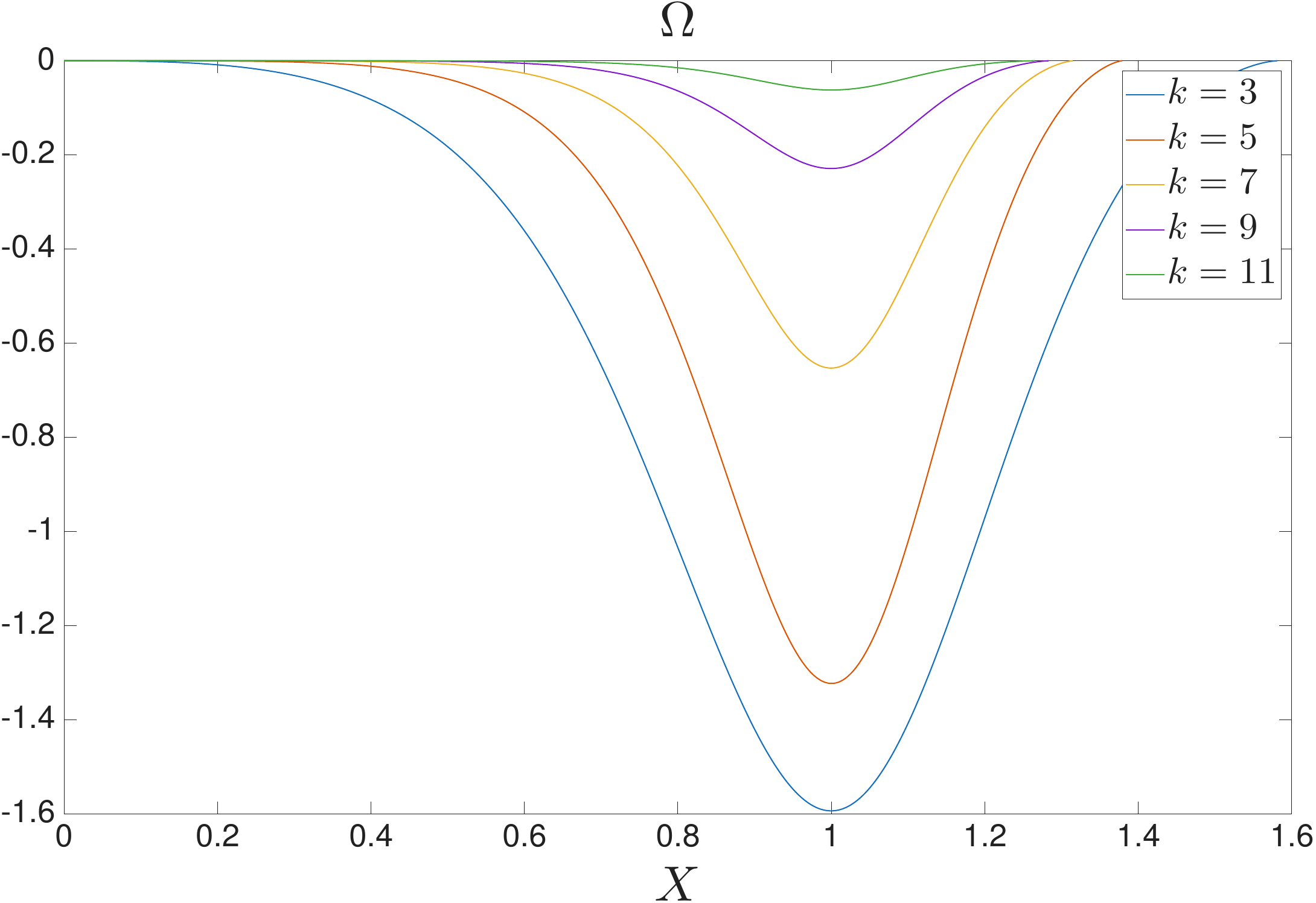}
    \caption{One-scale self-similar profiles $\Omega(X)$ for $k=3,5,7,9,11$ with fixed $a=0.5$.}
    \label{fig:a_positive_fix_a}
\end{figure}

We observe that, for a fixed $a$, there can be multiple blowup profiles distinguished by their vanishing orders at the origin, and the associated scaling factors may differ substantially. As shown in Table \ref{tab:one_scale_scaling_factors_fixed_k}, $c_\omega$ changes sign from negative to positive between $k=7$ and $k=9$. This transition suggests that, for $a=0.5$, initial data with sufficiently high vanishing order (in particular, $k\ge 9$) may not exhibit the stable self-similar blowup behavior observed for smaller values of $k$.

\subsection{Numerical results for various values of $a>0$ and $k\geq 3$}

Table \ref{tab:one_scale_scaling_factors} summarizes the measured shrinking rates $\bar\gamma$ for $0.1\le a\le 1$ and vanishing orders $k\in\{3,5,7,9,11\}$. These data highlight a strong dependence of $\bar\gamma$ on both the advection parameter $a$ and the vanishing order $k$. For each fixed $k$, as $a$ varies from $0.1$ to $1$, $\bar\gamma$ may change sign multiple times, indicating transitions between focusing ($\bar\gamma>0$) and expanding ($\bar\gamma<0$) blowups. Likewise, for each fixed $a$, varying $k$ can also induce sign changes in $\bar\gamma$, showing that the vanishing order at the origin can substantially affect the observed self-similar blowup behavior.

\begin{table}[htbp]
    \begin{tabular}{|c|c|c|c|c|c|c|c|}
        \hline
        \diagbox{$k$}{$\bar\gamma$}{$a$} & $ 0.1      $ & $ 0.2      $ & $ 0.3      $ & $ 0.4         $ & $ 0.5       $  \\ \hline
        $3$                              & $0.2816$     & $0.0827  $   & $  -0.2061 $ & $  -0.6604  $   & $  -1.4771   $ \\ \hline
        $5$                              & $ 0.1201 $   & $ -0.2000 $  & $  -0.7406 $ & $ -1.8523 $     & $ -5.4586$     \\ \hline
        $7$                              & $ 0.0112 $   & $ -0.4125 $  & $ -1.2239 $  & $   -3.4137 $   & $ -30.5045 $   \\ \hline
        $9$                              & $ -0.0678 $  & $  -0.5798$  & $-1.6672  $  & $  -5.5676   $  & $ 24.7677 $    \\ \hline
        $11$                             & $ -0.1292 $  & $ -0.7152$   & $ -2.0756$   & $ -8.7223  $    & $ 11.9515  $   \\ \hline\hline
        \diagbox{$k$}{$\bar\gamma$}{$a$} & $ 0.6     $  & $ 0.7      $ & $ 0.8      $ & $ 0.9         $ & $ 1       $    \\ \hline
        $3$                              & $-3.3734 $   & $ -12.6465 $ & $  16.2599 $ & $  6.2550  $    & $ 4.2996    $  \\ \hline
        $5$                              & $ 61.5417 $  & $ 6.9537 $   & $ 4.3061  $  & $ 3.3675 $      & $ 2.8870$      \\ \hline
        $7$                              & $ 8.3875 $   & $ 4.5758 $   & $ 3.4829 $   & $   2.9737 $    & $  2.6862$     \\ \hline
        $9$                              & $6.5143  $   & $4.5096  $   & $ 3.9087 $   & $  3.7528   $   & $ 3.8290 $     \\ \hline
        $11$                             & $11.4908 $   & $ 15.2982$   & $ 222.7743$  & $ -13.4241  $   & $  -6.2561 $   \\ \hline
    \end{tabular}
    \caption{Scaling factors $\bar\gamma$ for profiles with $0.1\le a\le 1$ and vanishing orders $k=3,5,7,9,11$.}
    \label{tab:one_scale_scaling_factors}
\end{table}

\section{Numerical results with degenerate initial data for $a<0$}\label{sec:a_le_0}
In this section, we present the numerical results of the dynamic rescaling simulations for the gCLM model \eqref{eqt:gCLM} with degenerate initial data when $a< 0$.

Recall that $\omega_0$ denotes the initial data of the gCLM model \eqref{eqt:gCLM}. In the following subsections, we consider the initial data in both the odd-symmetry case (Assumption \ref{ass:odd_sym}) and the half-line case (Assumption \ref{ass:half_line}) with the following subdivisions:
\begin{itemize}
    \item Case \labelText{1.1}{cas:initialdata_case1.1}: $\omega_0$ is odd symmetric (i.e. $\omega_0(-x)=-\omega_0(x)$), $\omega_0'(0)=0$, and $\omega_0(x)<0$ for $x>0$;
    \item Case \labelText{1.2}{cas:initialdata_case1.2}: $\omega_0$ is odd symmetric, $\omega_0(x)=0$ on $[-1,1]$, and $\omega_0(x)<0$ for $x>1$;
    \item Case \labelText{2.1}{cas:initialdata_case2.1}: $\omega_0(x)=0$ for $x\le 0$, $\omega_0'(0)=0$, and $\omega_0(x)<0$ for $x>0$;
    \item Case \labelText{2.2}{cas:initialdata_case2.2}: $\omega_0(x)=0$ for $x\le 1$, and $\omega_0(x)<0$ for $x>1$.
\end{itemize}

In all four cases, the initial condition $\omega_0$ is degenerate at the origin and $C^\infty$-smooth on the entire real line. Note that in the dynamic rescaling formulation, the initial condition $\omega_0$ for the gCLM model \eqref{eqt:gCLM} corresponds directly to the initial profile $\Omega_0=\Omega(\cdot,0)$ for the dynamic rescaling equation \eqref{eqt:dynamic_rescaling}. The specific initial data chosen for these four cases are plotted in Figure \ref{fig:initial_data_a_negative}. For Case \nameref{cas:initialdata_case1.1}, we select an initial datum with infinite vanishing order at the origin. This choice is motivated by our numerical observation that, under our normalization conditions, all derivatives at the origin satisfy $\partial_X^j\Omega(0,\tau)\to 0$ as $\tau\to+\infty$. This suggests that the self-similar profile possesses infinite vanishing order at the origin, regardless of the specific vanishing order of the initial data. We will discuss this phenomenon further in Section \ref{sec:odd_initial_data}.

\begin{figure}[htbp]
    \begin{subfigure}{0.49\textwidth}
        \includegraphics[width=\textwidth]{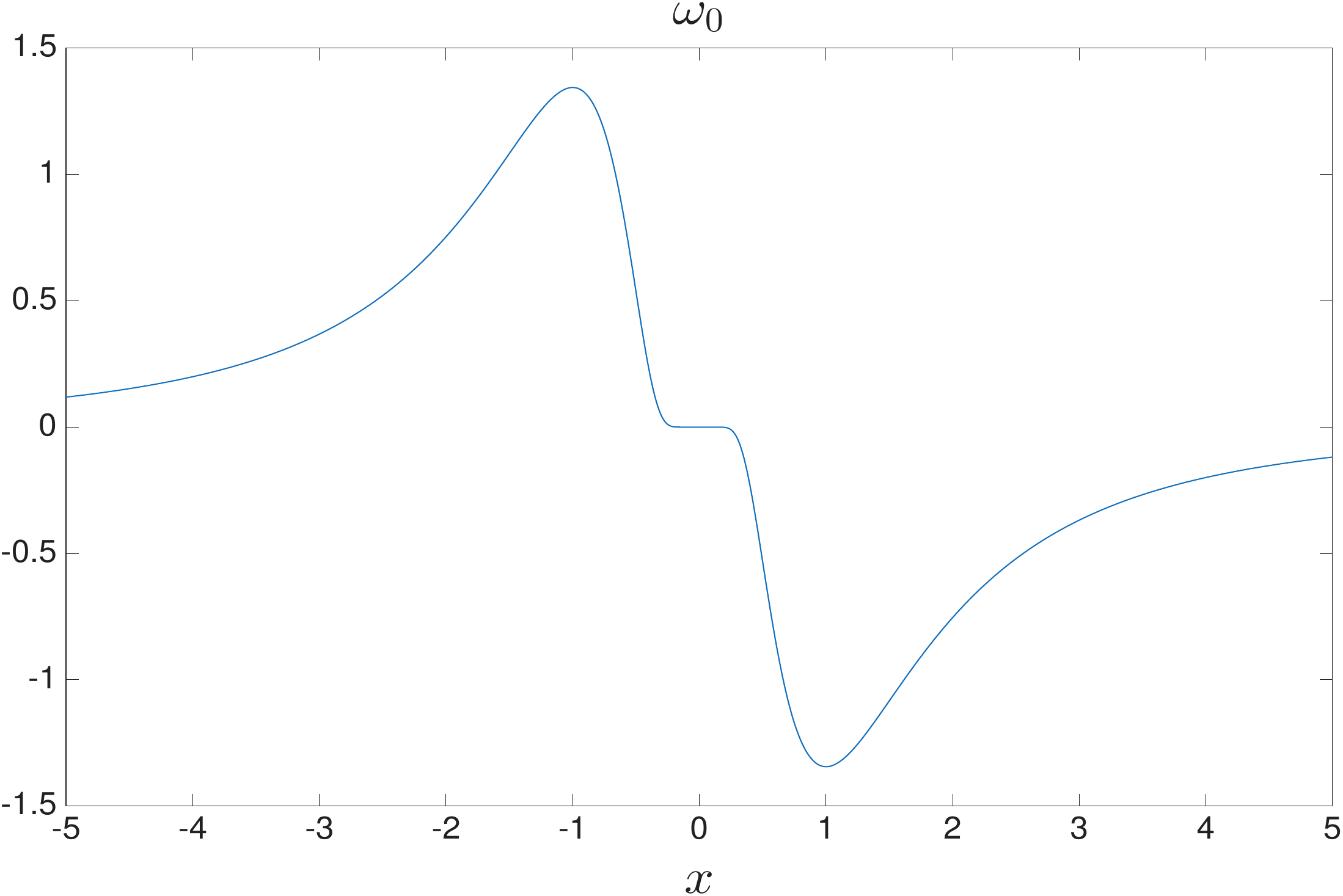}
        \caption{Case \nameref{cas:initialdata_case1.1}: $\omega_0=-(x/3)^{-3}\mathrm{e}^{-\frac{1}{\left|x/3\right|}}$ }
    \end{subfigure}
    \begin{subfigure}{0.49\textwidth}
        \includegraphics[width=\textwidth]{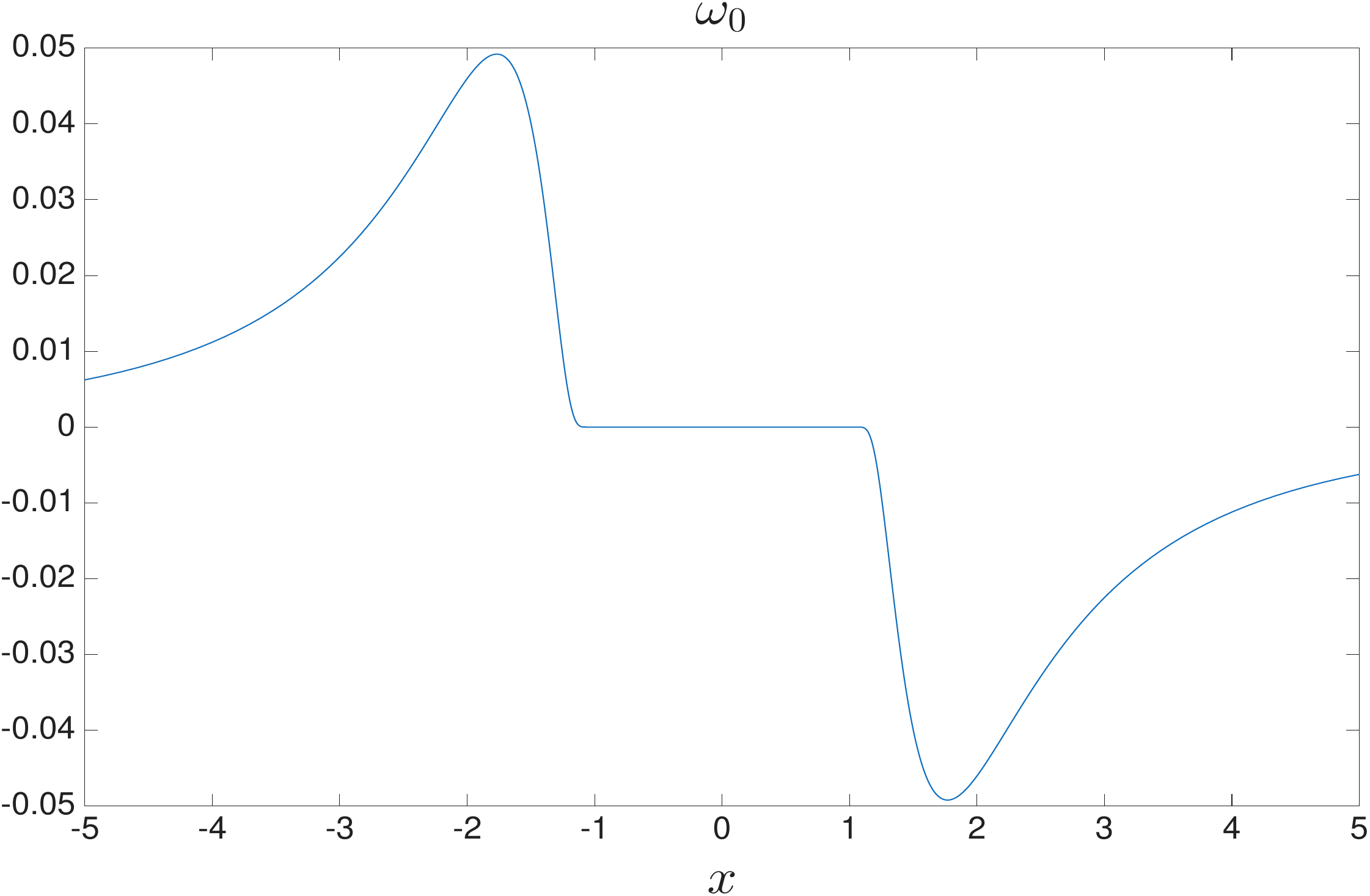}
        \caption{Case \nameref{cas:initialdata_case1.2}: $\omega_0=-x^{-3}\mathrm{e}^{-\frac{1}{|x|-1}}\boldsymbol{1}_{\{|x|>1\}}$ }
    \end{subfigure}
    \begin{subfigure}{0.49\textwidth}
        \includegraphics[width=\textwidth]{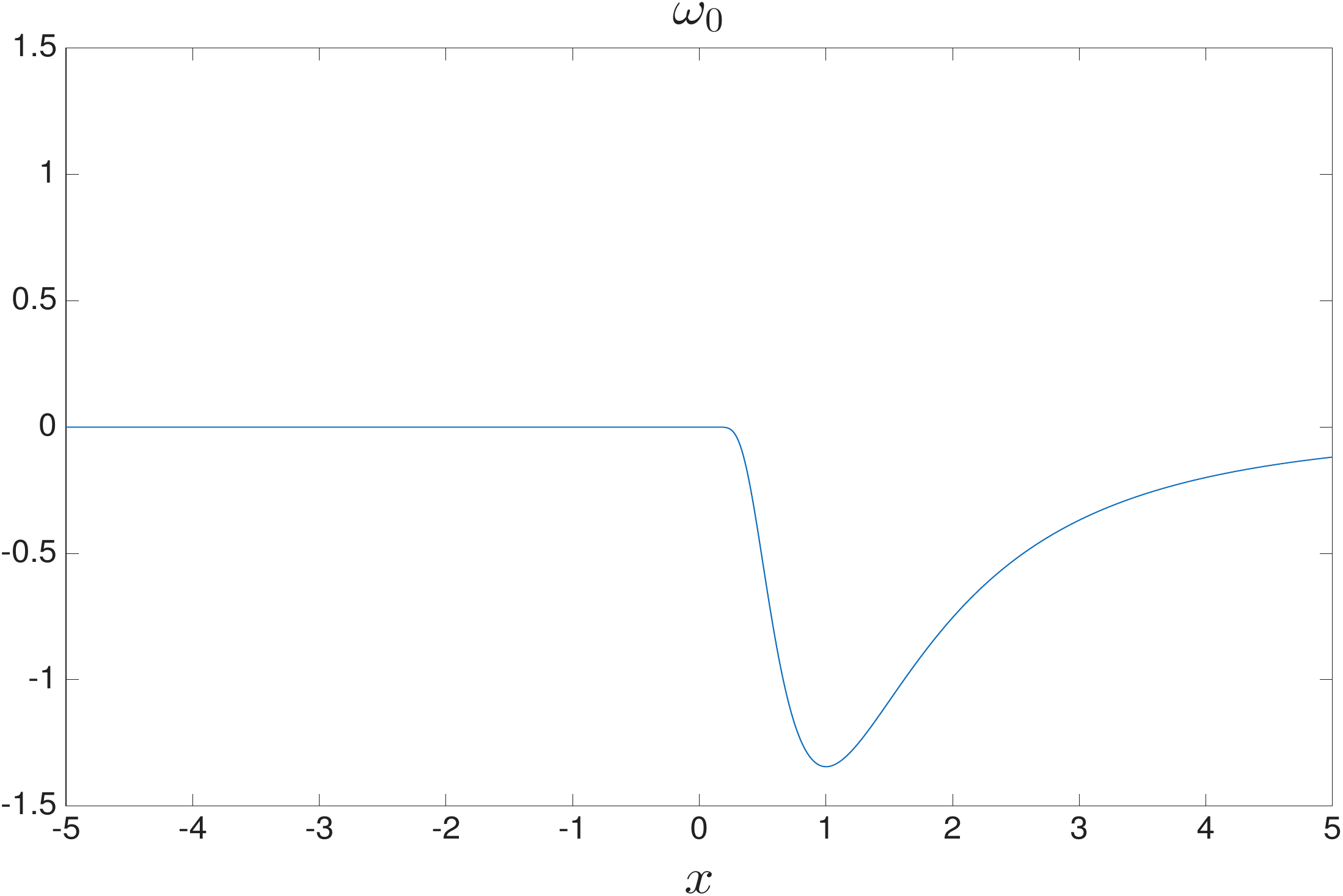}
        \caption{Case \nameref{cas:initialdata_case2.1}: $\omega_0=-(x/3)^{-3}\mathrm{e}^{-\frac{1}{|x/3|}}\boldsymbol{1}_{\{x>0\}}$ }
    \end{subfigure}
    \begin{subfigure}{0.49\textwidth}        \includegraphics[width=\textwidth]{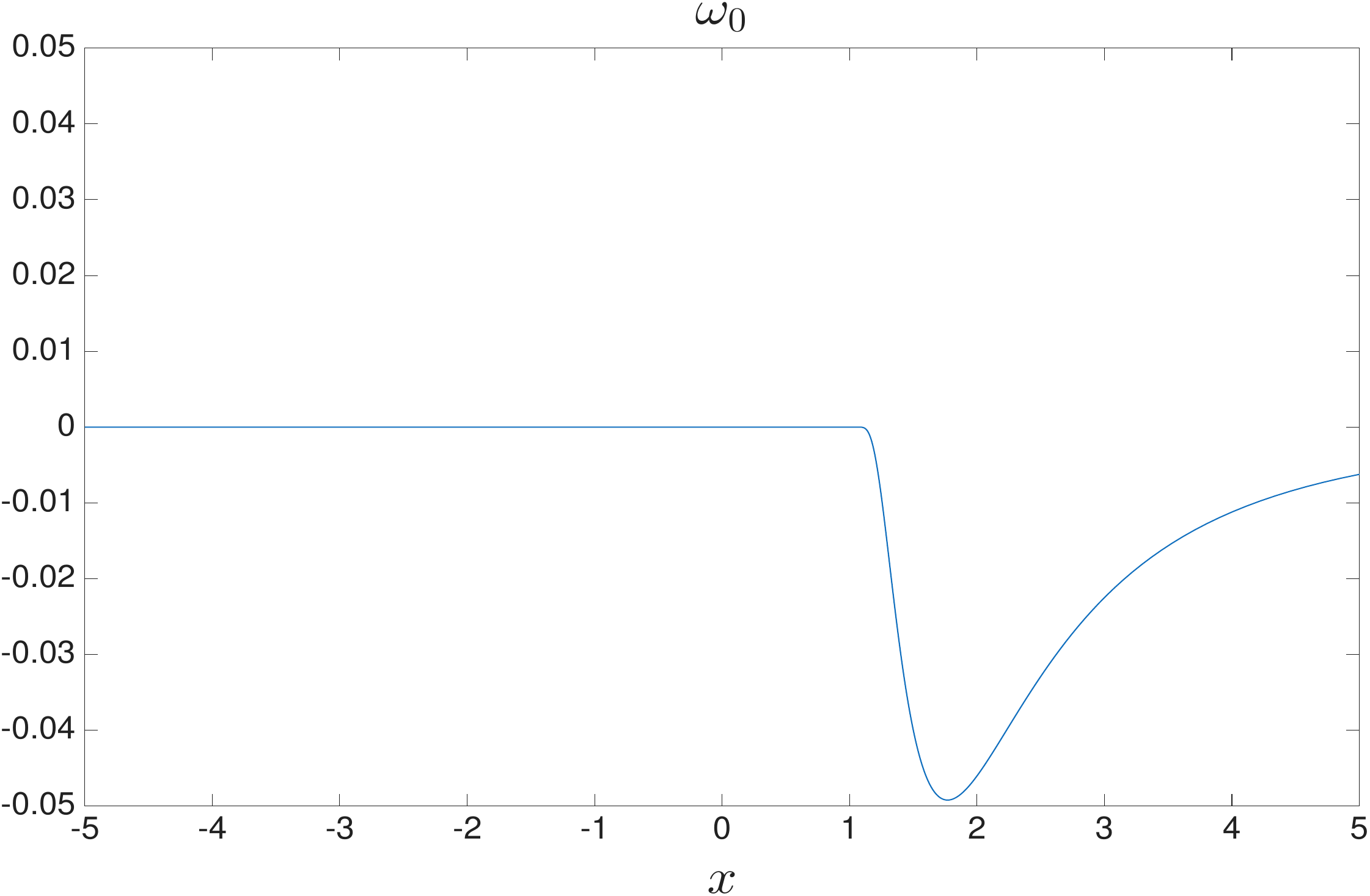}
        \caption{Case \nameref{cas:initialdata_case2.2}: $\omega_0=-x^{-3}\mathrm{e}^{-\frac{1}{x-1}}\boldsymbol{1}_{\{x>1\}}$ }
    \end{subfigure}
    \caption{Initial data for $a<0$. All the initial data are $C^\infty$-smooth, negative when $x>0$, and have infinite vanishing order at the origin.}
    \label{fig:initial_data_a_negative}
\end{figure}

\subsection{Numerical results for different $a<0$ with odd symmetric degenerate initial data}\label{sec:odd_initial_data}

Here we present numerical results from dynamic rescaling simulations for $a<0$ with odd-symmetric degenerate initial data (under Assumption \ref{ass:odd_sym}), corresponding to Cases \nameref{cas:initialdata_case1.1} and \nameref{cas:initialdata_case1.2}. The normalization conditions used to close the dynamic rescaling equation \eqref{eqt:dynamic_rescaling} differ between these two cases. When $a<0$, our simulations indicate that the rescaled profile $\Omega$ evolving from degenerate data develops \emph{infinite} vanishing order at the origin (see further discussion below). Consequently, the normalization condition $\lim_{X\to0}-\Omega(X,\tau)/X^k\equiv 1$ (which fixes a prescribed finite vanishing order) is not suitable in this case. We therefore adopt normalization conditions that do not depend on the local Taylor behavior at $X=0$.

In particular, for Case \nameref{cas:initialdata_case1.1}, we choose to fix the value of the Hilbert transform at the origin, i.e. $\mtx H(\Omega)(0,\tau)\equiv \mtx H(\Omega_0)(0)$. Together with pinning the location of the minimum at $X=1$, we impose
\begin{equation}\label{eqt:normalization_condition_case1.1}
    \partial_X\Omega(1,\tau)\equiv 0,\quad \mtx H(\Omega)(0,\tau)\equiv \mtx H(\Omega_0)(0).
\end{equation}
For Case \nameref{cas:initialdata_case1.2}, since the initial data vanish on $[-1,1]$, the condition $\partial_X\Omega(1,\tau)\equiv 0$ is not suitable for pinning the location of the minimum. Instead, we fix the boundary of the support of $\Omega$ at $X=1$ by imposing
\[
    c_l(\tau)+aU(1,\tau)\equiv 0,
\]
which makes the convection speed in the dynamic rescaling equation \eqref{eqt:dynamic_rescaling} vanish at $X=1$. As a result, the support boundary remains stationary at $X=1$ throughout the evolution. Therefore, for Case \nameref{cas:initialdata_case1.2}, we impose
\begin{equation}\label{eqt:normalization_condition_case1.2}
    c_l(\tau)+aU(1,\tau)\equiv 0,\quad \mtx H(\Omega)(0,\tau)\equiv \mtx H(\Omega_0)(0).
\end{equation}

Figures \ref{fig:a=-1_odd_evolution} and \ref{fig:a=-1_odd_compact_evolution} display the evolution of the rescaled profile $\Omega(X,\tau)$ and the corresponding advection speed $c_l(\tau)X+aU(X,\tau)$ for $a=-1$ in Case \nameref{cas:initialdata_case1.1} and Case \nameref{cas:initialdata_case1.2}, respectively. In Case \nameref{cas:initialdata_case1.1}, as $\tau$ increases, $\Omega$ develops an increasingly sharp front near $X=1$ and appears to approach a singular limiting profile there as $\tau\to+\infty$. At the same time, the profile on $[0,1]$ flattens progressively and tends to vanish. This behavior is related to the evolution of the advection speed $c_lX+aU$. As we can see in Figure \ref{fig:a=-1_odd_evolution}(b), $c_lX+aU$ remains positive for $X>0$, decreases to $0$ at $X=1$ as $\tau$ grows, and develops a cusp-like shape near $X=1$. The positivity of $c_lX+aU$ on $(0,1)$ indicates that the degeneracy of $\Omega$ near $X=0$ is transported toward $X=1$, thus suppressing $\Omega$ on $[0,1]$ and leading to the observed flattening. At the same time, the emergence of a stagnation point at $X=1$ breaks the strictly outgoing property. This prevents the flattening from going beyond $X=1$, thereby allowing the profile $\Omega$ to develop a singularity at that point. For $X>1$, the velocity becomes positive again, transporting $\Omega$ away from the front and generating the observed tail.

\begin{figure}[htbp]
    \begin{subfigure}{0.49\textwidth}
        \includegraphics[width=\textwidth]{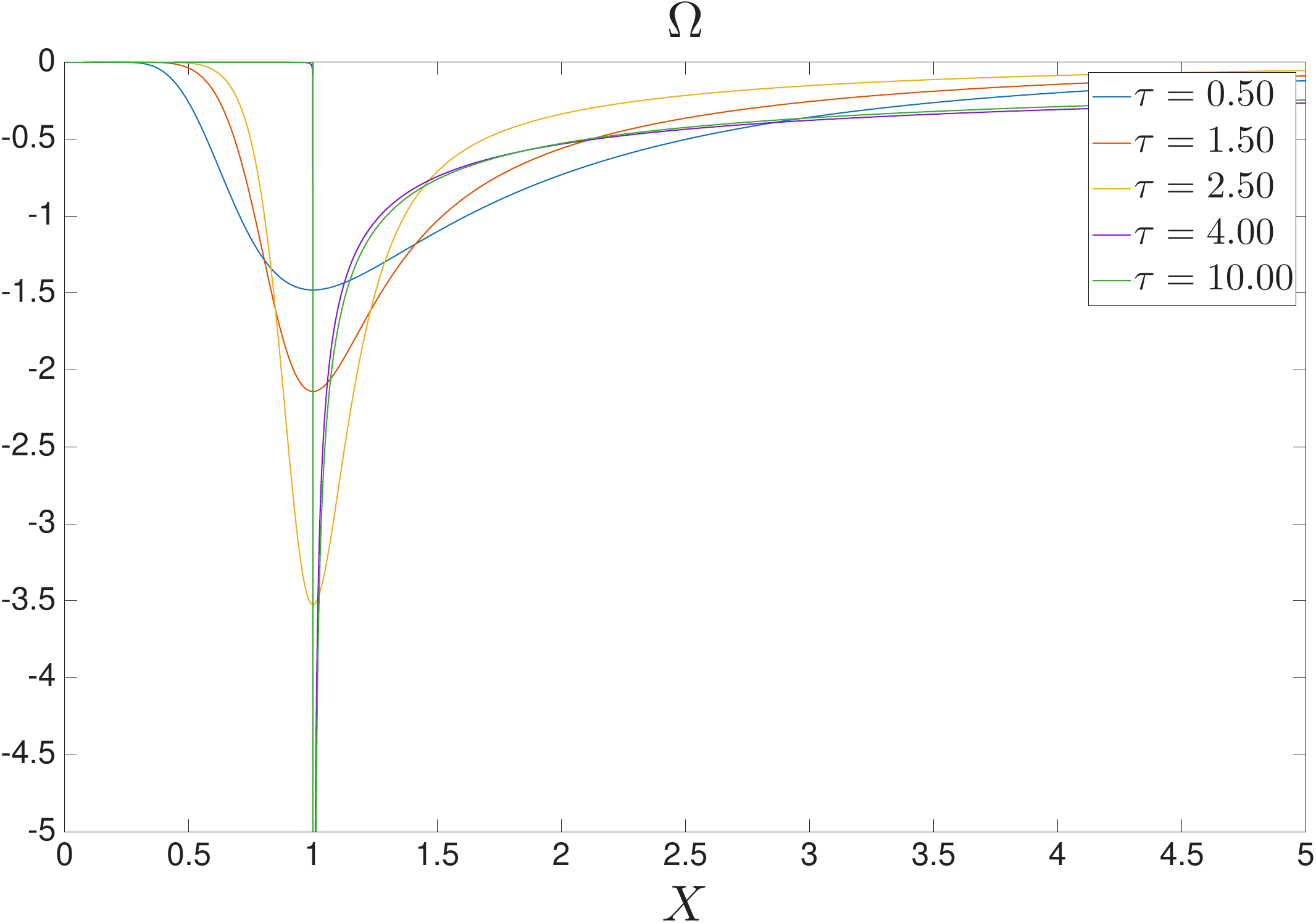}
        \caption{Evolution of the profile $\Omega(X,\tau)$}
    \end{subfigure}
    \begin{subfigure}{0.49\textwidth}
        \includegraphics[width=\textwidth]{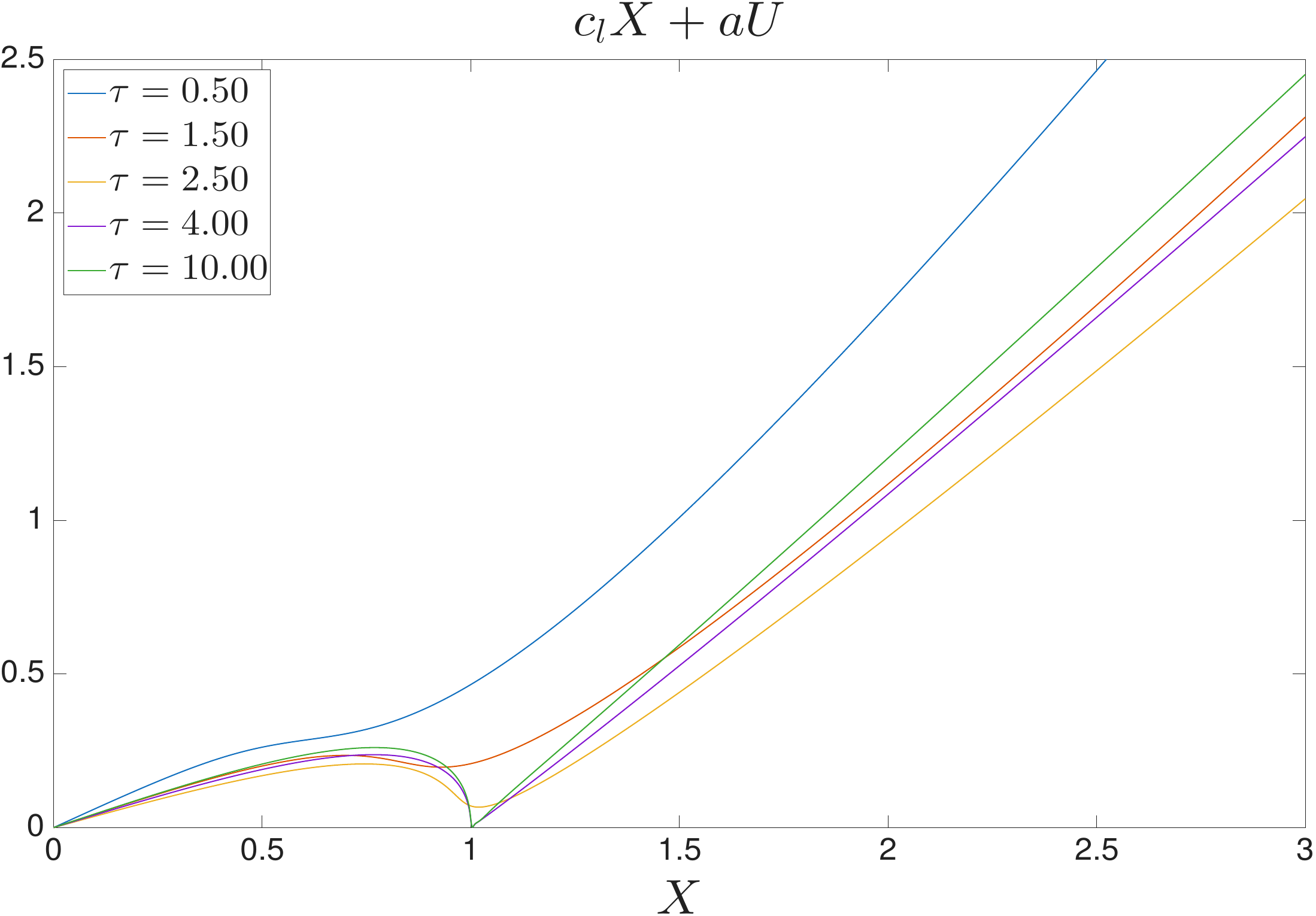}
        \caption{Evolution of the advection speed $c_l(\tau)X+aU(X,\tau)$}
    \end{subfigure}
    \caption{Dynamic rescaling simulation for $a=-1$ with odd-symmetric degenerate initial data supported on the whole real line (Case \nameref{cas:initialdata_case1.1}). We impose the normalization conditions $\partial_X\Omega(1,\tau)\equiv 0$ (pinning the minimum at $X=1$) and $\mtx H(\Omega)(0,\tau)\equiv \mtx H(\Omega_0)(0)$.}
    \label{fig:a=-1_odd_evolution}
\end{figure}

In Case \nameref{cas:initialdata_case1.2}, by contrast, the profile stays identically zero on $[0,1]$ for all $\tau$. This is a direct consequence of the normalization $c_l(\tau)+aU(1,\tau)\equiv 0$, which pins the transition point at $X=1$ and prevents transport across it. Meanwhile, $\Omega$ develops a sharp front to the right of $X=1$, and this front travels toward the transition point as $\tau$ increases. Such behavior is also related to the structure of the advection flow $c_lX+aU$ displayed in Figure \ref{fig:a=-1_odd_compact_evolution}(b). In addition to the origin, $c_lX+aU$ has two other zeros on the half-line $X>0$ before convergence, one at the fixed transition point $X=1$ and the other located slightly to the right. Thus, the strictly outgoing property fails. The front of $\Omega$ consistently lies between these two zeros between which $c_lX+aU<0$, and is therefore advected leftward. As $\tau\to+\infty$, the rightmost zero approaches $X=1$, causing the front to concentrate at the transition point and producing the observed cusp-like behavior of $c_lX+aU$ near $X=1$. Similar to Case \nameref{cas:initialdata_case1.1}, $c_lX+aU$ is positive beyond its rightmost zero, thus transporting the profile rightward and generating the observed tail. Numerically, the limiting profiles of $\Omega$ and $c_lX+aU$ in Cases \nameref{cas:initialdata_case1.1} and \nameref{cas:initialdata_case1.2} appear to coincide, suggesting the convergence to the same steady state with these two different choices of initial data and normalization conditions.

\begin{figure}[htbp]
    \begin{subfigure}{0.49\textwidth}
        \includegraphics[width=\textwidth]{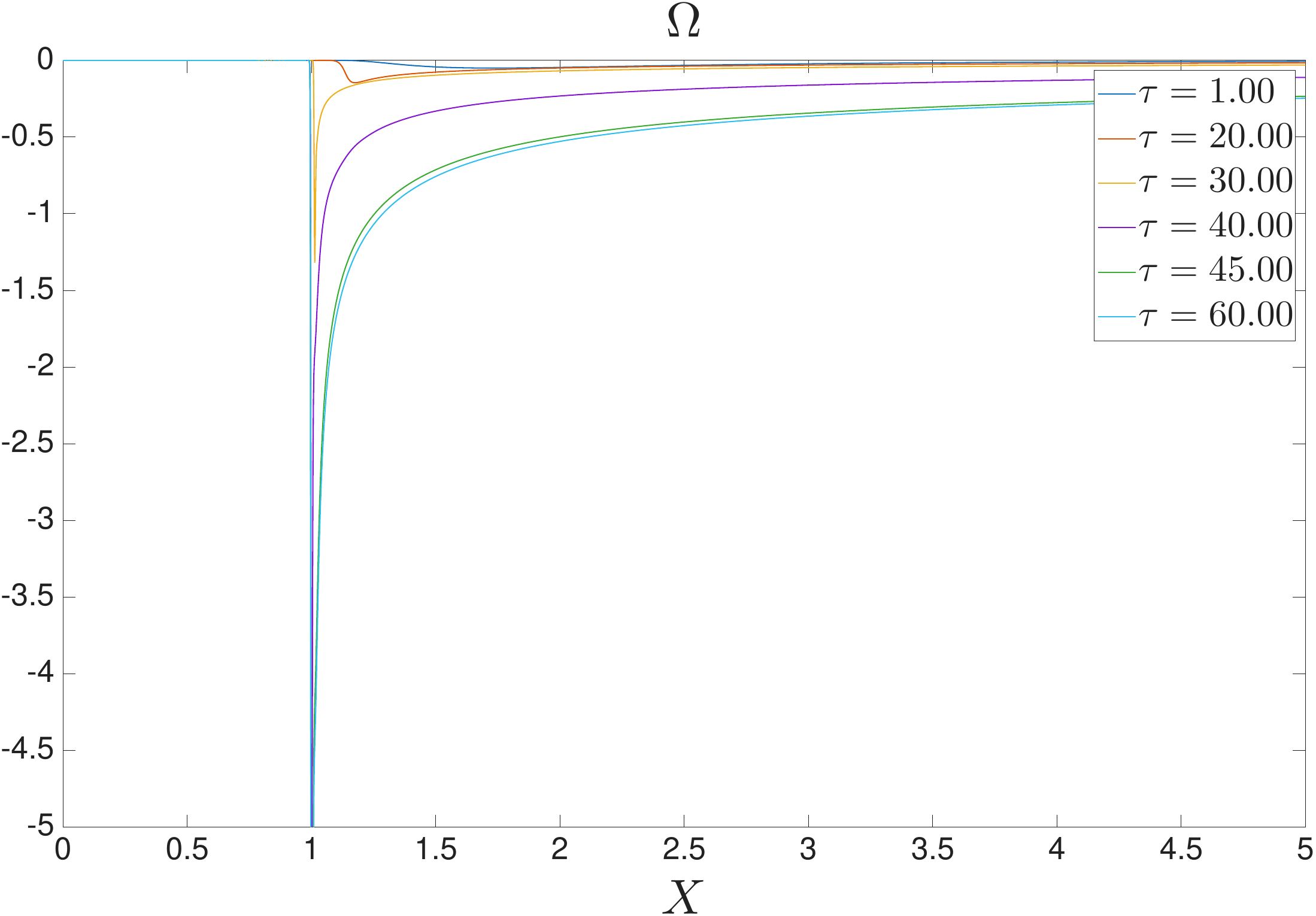}
        \caption{Evolution of the profile $\Omega(X,\tau)$}
    \end{subfigure}
    \begin{subfigure}{0.49\textwidth}
        \includegraphics[width=\textwidth]{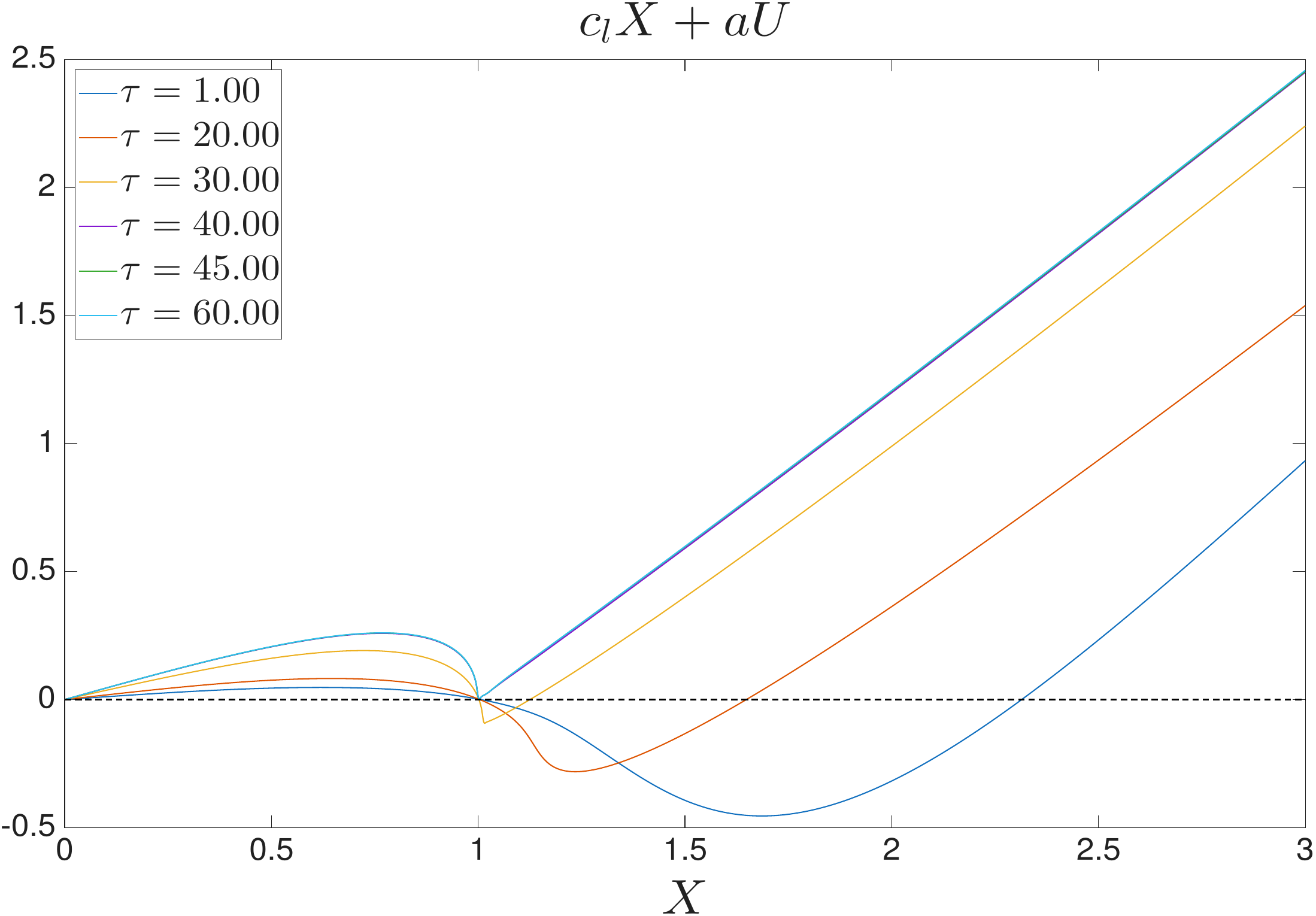}
        \caption{Evolution of the advection speed $c_l(\tau)X+aU(X,\tau)$}
    \end{subfigure}
    \caption{Dynamic rescaling simulation for $a=-1$ with odd-symmetric degenerate initial data that vanishes on $[-1,1]$ (Case \nameref{cas:initialdata_case1.2}). We impose the normalization conditions $c_l(\tau)+aU(1,\tau)\equiv 0$ (pinning the transition point at $X=1$) and $\mtx H(\Omega)(0,\tau)\equiv \mtx H(\Omega_0)(0)$.}
    \label{fig:a=-1_odd_compact_evolution}
\end{figure}

To examine the convergence of the dynamic rescaling simulation in more detail, we use the case $a=-1$ with odd-symmetric degenerate initial data (Case \nameref{cas:initialdata_case1.1}) as a representative example. Figure \ref{fig:a=-1_odd_convergence} shows that, as $\tau\to+\infty$, the rescaled profile $\Omega(X,\tau)$ approaches a limiting shape, both in the near field (a) and in the far field (b). Meanwhile, the scaling factors $c_l(\tau)$ and $c_\omega(\tau)$, as well as the shrinking rate $\gamma(\tau)=-c_l(\tau)/c_\omega(\tau)$, converge to steady values, with limiting $\bar c_\omega<0$. This stabilization provides numerical evidence that the rescaled solution approaches a steady state of \eqref{eqt:dynamic_rescaling}, and hence corresponds to an asymptotically self-similar finite-time blowup of the original gCLM equation \eqref{eqt:gCLM}.

\begin{figure}[htbp]
    \begin{subfigure}{0.49\textwidth}
        \includegraphics[width=\textwidth]{am1_odd}
        \caption{Profile $\Omega(X,\tau)$ in the near field}
    \end{subfigure}
    \begin{subfigure}{0.49\textwidth}
        \includegraphics[width=\textwidth]{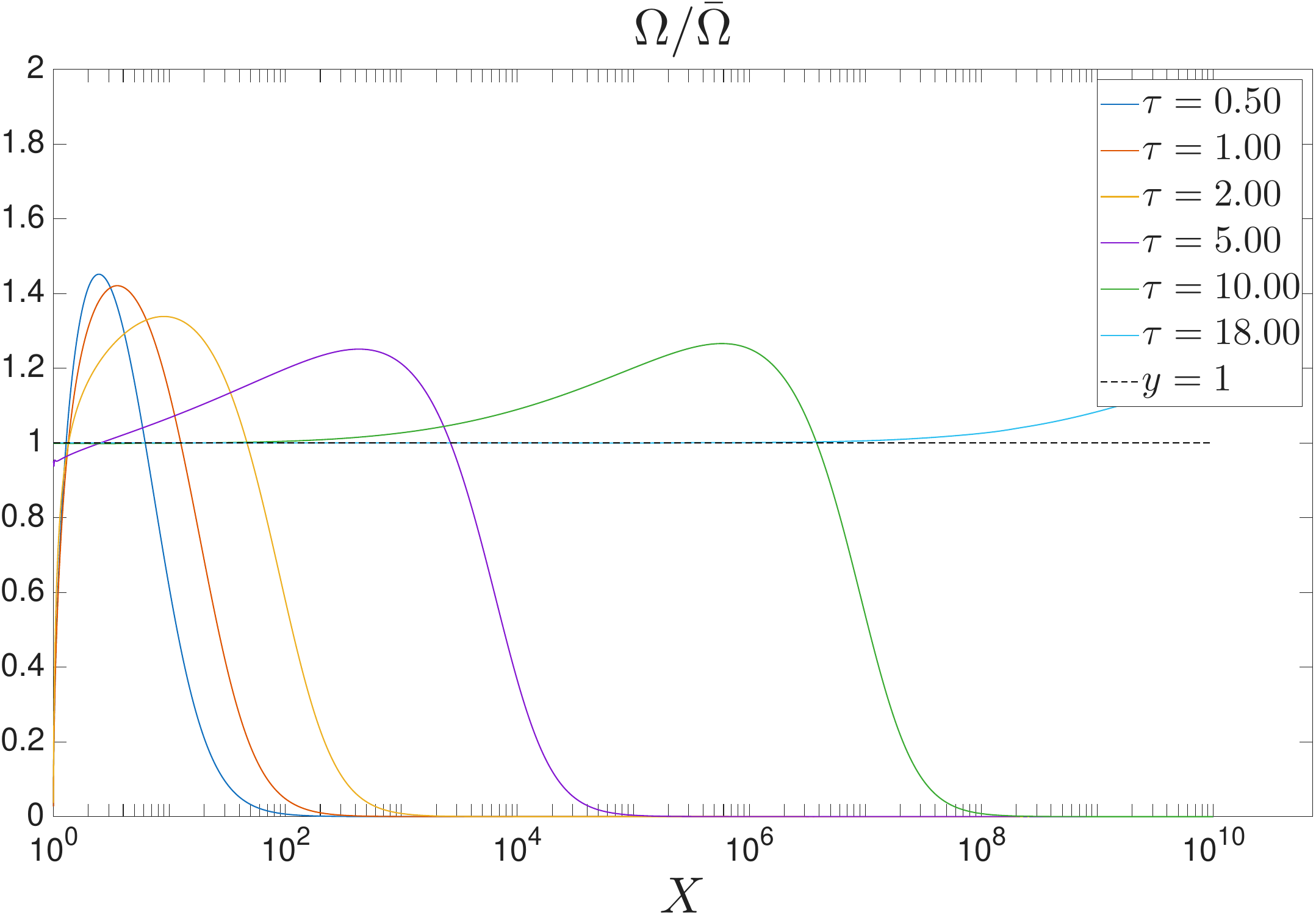}
        \caption{Profile $\Omega(X,\tau)$ in the far field}
    \end{subfigure}
    \begin{subfigure}{0.32\textwidth}
        \includegraphics[width=\textwidth]{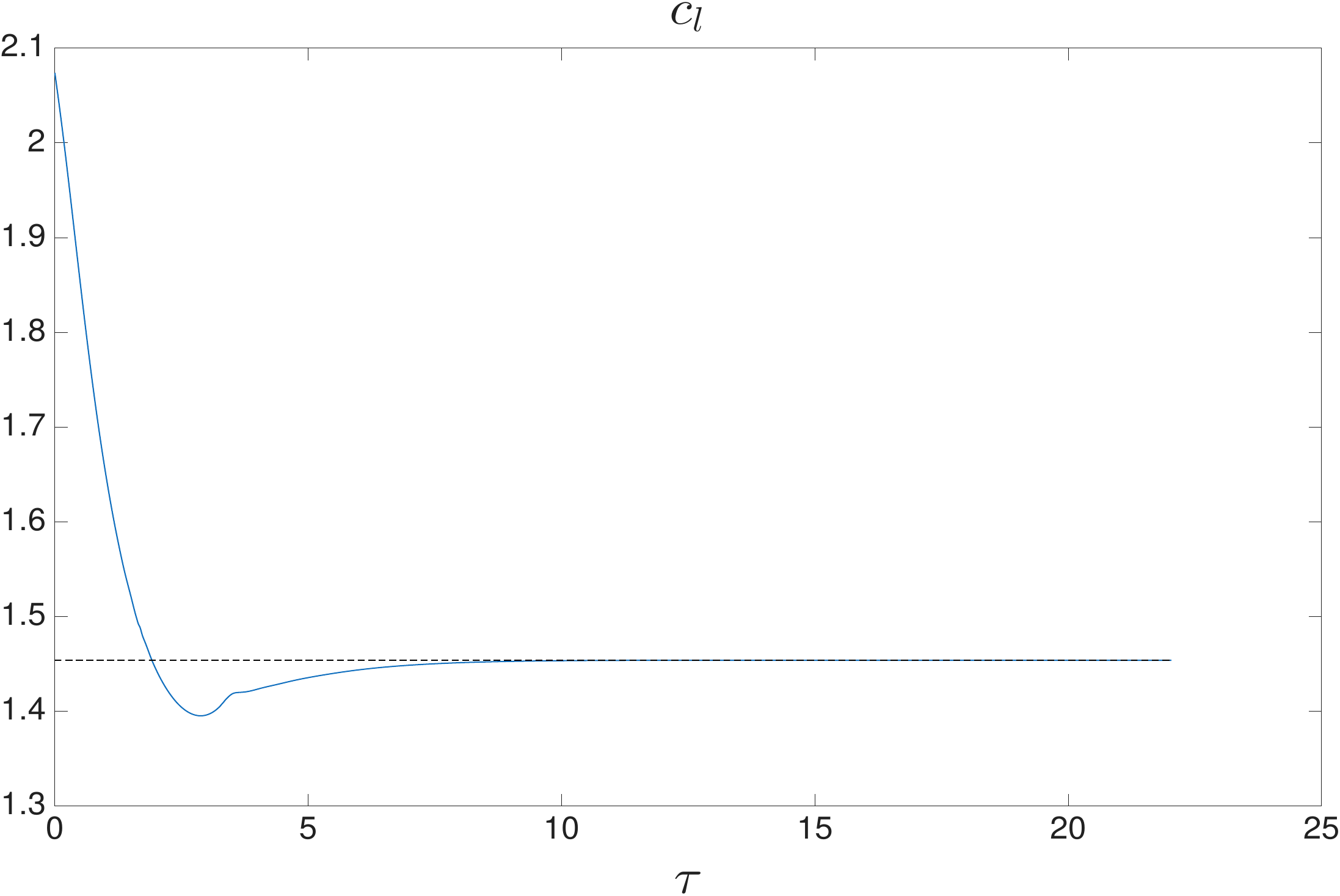}
        \caption{Scaling factor $c_l$}
    \end{subfigure}
    \begin{subfigure}{0.32\textwidth}
        \includegraphics[width=\textwidth]{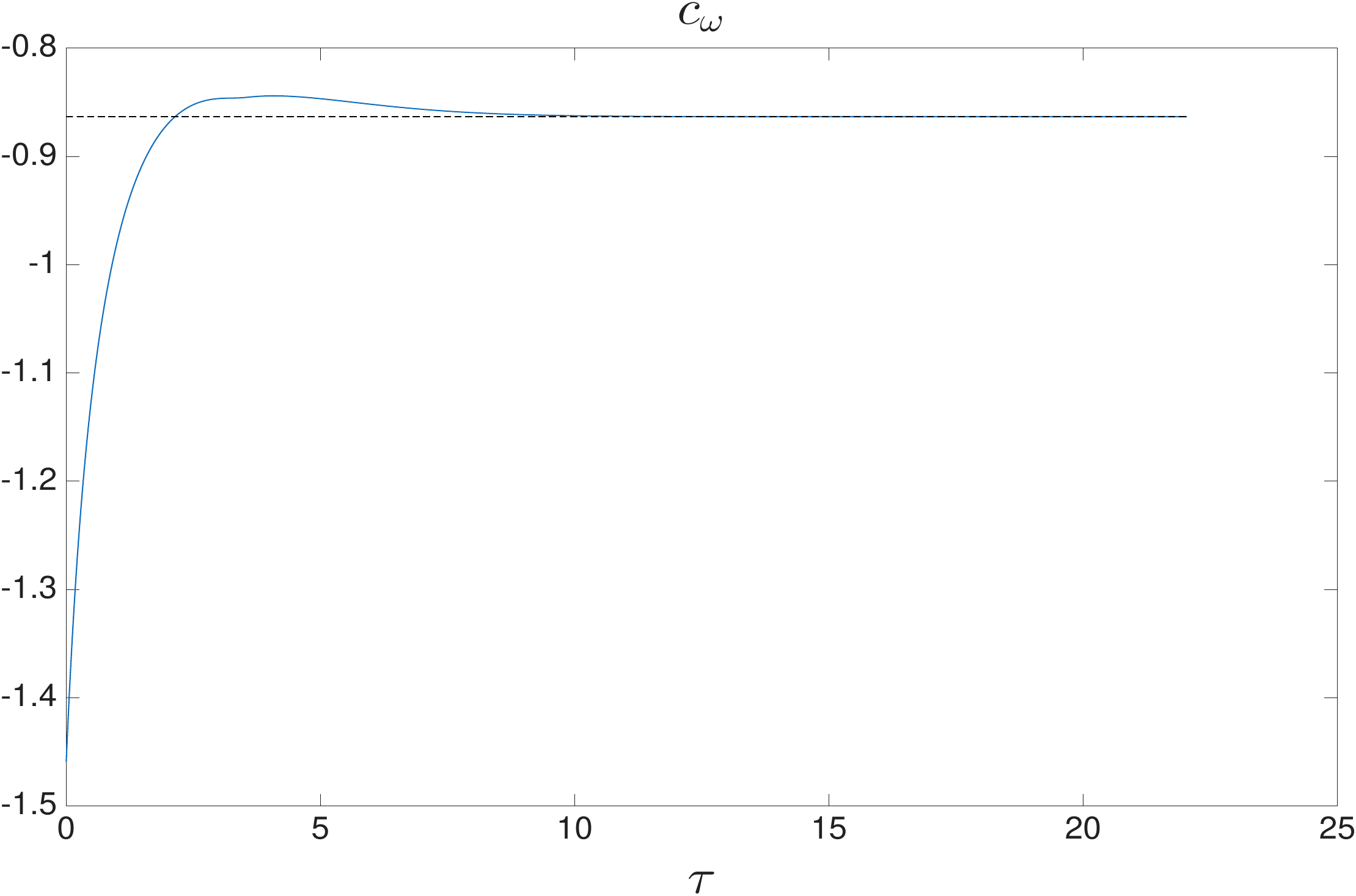}
        \caption{Scaling factor $c_\omega$}
    \end{subfigure}
    \begin{subfigure}{0.32\textwidth}
        \includegraphics[width=\textwidth]{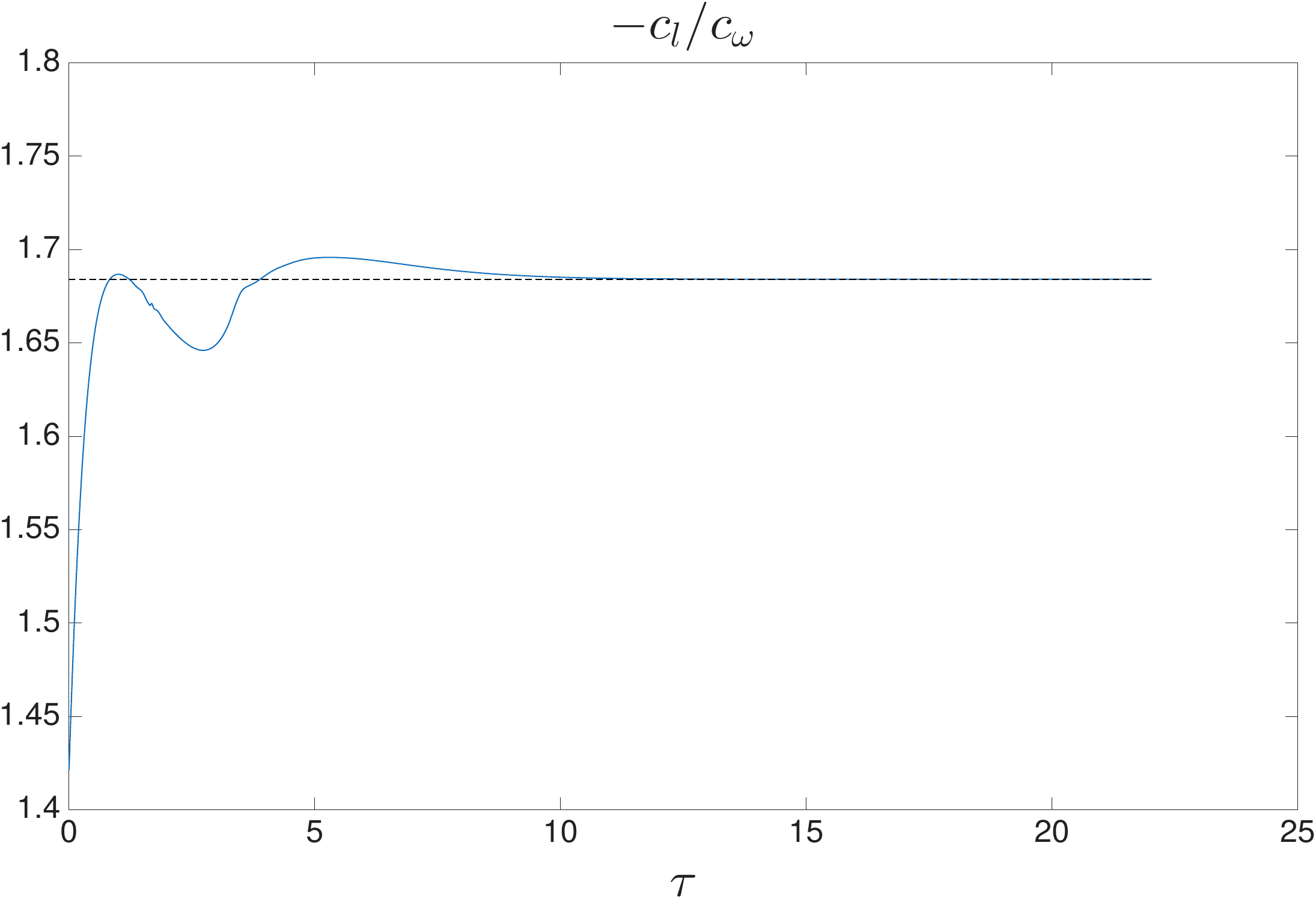}
        \caption{Spatial shrinking rate $\gamma=-c_l/c_\omega$}
    \end{subfigure}
    \caption{Convergence of the dynamic rescaling simulation for $a=-1$ with odd-symmetric degenerate initial data (Case \nameref{cas:initialdata_case1.1}). As $\tau$ increases, the rescaled profile $\Omega(X,\tau)$ approaches a limiting shape in both the near field and the far field, while the scaling factors $c_l(\tau)$ and $c_\omega(\tau)$ (and the shrinking rate $\gamma(\tau)=-c_l(\tau)/c_\omega(\tau)$) stabilize. In (b), the reference steady profile $\bar\Omega(X)$ is taken to be the numerical profile at the first time when the residual of the profile equation, restricted away from the singularities, satisfies
    $
        \bigl\|\Omega_\tau\,\boldsymbol{1}_{\{|X-1|>0.1,\ |X+1|>0.1\}}\bigr\|_{L^\infty}<10^{-8}.
    $
    }
    \label{fig:a=-1_odd_convergence}
\end{figure}

We next present numerical results for other values of $a<0$ with odd-symmetric degenerate initial data. Specifically, we numerically solve the dynamic rescaling equation \eqref{eqt:dynamic_rescaling} for $a=-0.1,-0.2,\ldots,-1$ in both Case \nameref{cas:initialdata_case1.1} and Case \nameref{cas:initialdata_case1.2}, imposing the normalization conditions \eqref{eqt:normalization_condition_case1.1} and \eqref{eqt:normalization_condition_case1.2}, respectively. For all these values of $a$, we observe convergence behavior qualitatively similar to the aforementioned case $a=-1$. To quantify convergence, we monitor the residual of the profile equation away from the singular points and declare convergence once
\[
    \bigl\|\Omega_\tau\,\boldsymbol{1}_{\{|X-1|>0.1,\ |X+1|>0.1\}}\bigr\|_{L^\infty}<10^{-8}.
\]
The resulting scaling factors for $-1\le a\le -0.1$ in Case \nameref{cas:initialdata_case1.1} are listed in Table \ref{tab:one_scale_scaling_factors_odd_a_negative}. The corresponding values for Case \nameref{cas:initialdata_case1.2} are nearly identical and are therefore omitted. The self-similar profiles for each $a$ are shown in Figure \ref{fig:a_negative_odd}.

\begin{table}[htbp]
    \begin{tabular}{|c|c|c|c|c|c|}
        \hline
        $a$        & $-0.1$    & $-0.2$    & $-0.3$    & $-0.4$    & $-0.5$    \\ \hline
        $c_l$      & $0.6189$  & $0.7554$  & $0.8060$  & $0.8676$  & $0.9580$  \\ \hline
        $c_\omega$ & $-0.7908$ & $-0.8287$ & $-0.7961$ & $-0.7897$ & $-0.8104$ \\ \hline
        $\gamma$   & $0.7827$  & $0.9114$  & $1.0124$  & $1.0986$  & $1.1821$  \\ \hline\hline
        $a$        & $-0.6$    & $-0.7$    & $-0.8$    & $-0.9$    & $-1$      \\ \hline
        $c_l$      & $1.0696$  & $1.1718$  & $1.2563$  & $1.3576$  & $1.4541$  \\ \hline
        $c_\omega$ & $-0.8368$ & $-0.8437$ & $-0.8455$ & $-0.8557$ & $-0.8634$ \\ \hline
        $\gamma$   & $1.2782$  & $1.3888$  & $1.4858$  & $1.5866$  & $1.6841$  \\ \hline
    \end{tabular}
    \caption{Scaling factors for $-1\le a\le -0.1$ obtained from odd-symmetric degenerate initial data (Case \nameref{cas:initialdata_case1.1}). The corresponding values for Case \nameref{cas:initialdata_case1.2} are nearly identical.}
    \label{tab:one_scale_scaling_factors_odd_a_negative}
\end{table}

\begin{figure}[htbp]
    \begin{subfigure}{0.49\textwidth}
        \includegraphics[width=\textwidth]{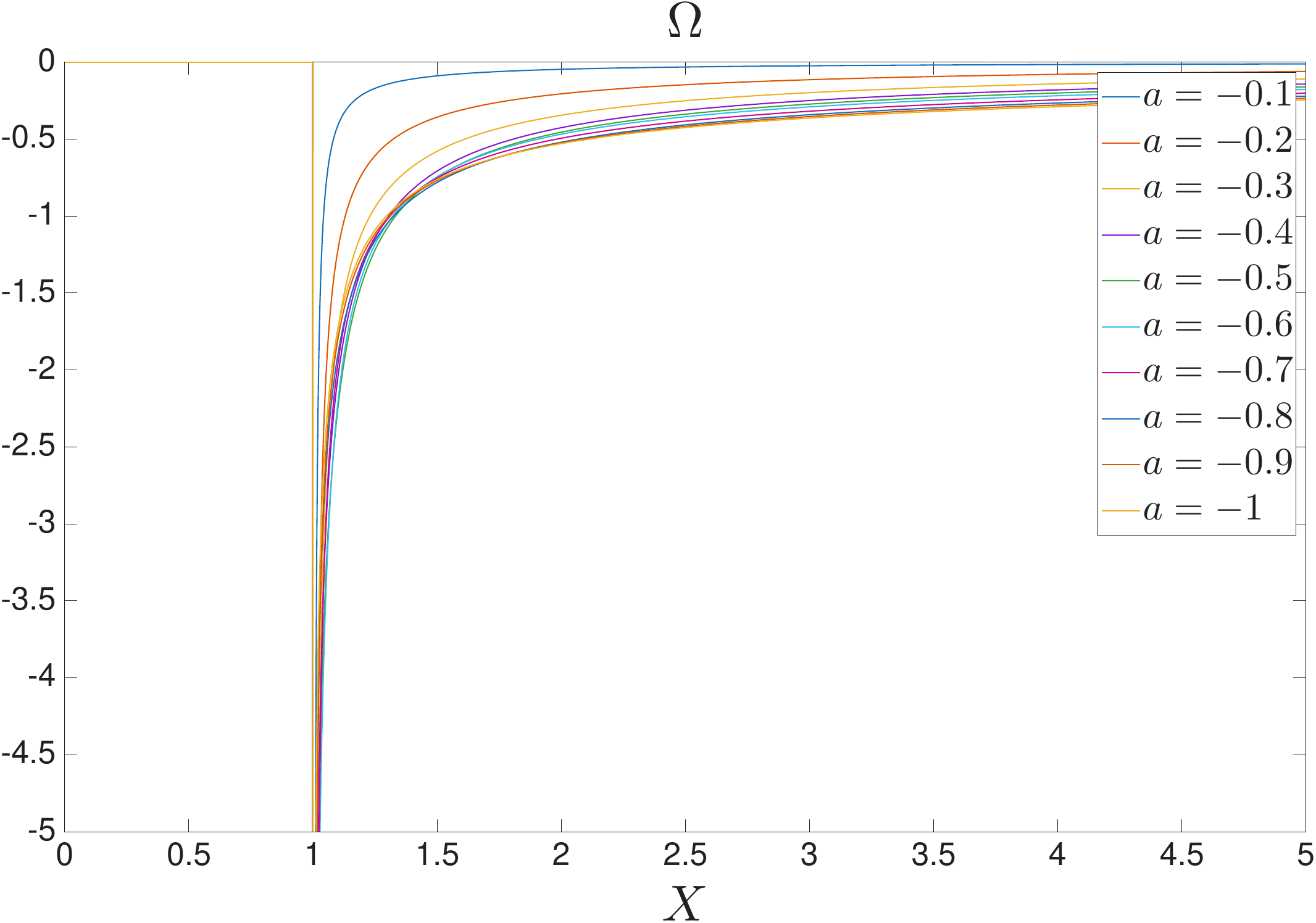}
        \caption{Case \nameref{cas:initialdata_case1.1}}
    \end{subfigure}
    \begin{subfigure}{0.49\textwidth}
        \includegraphics[width=\textwidth]{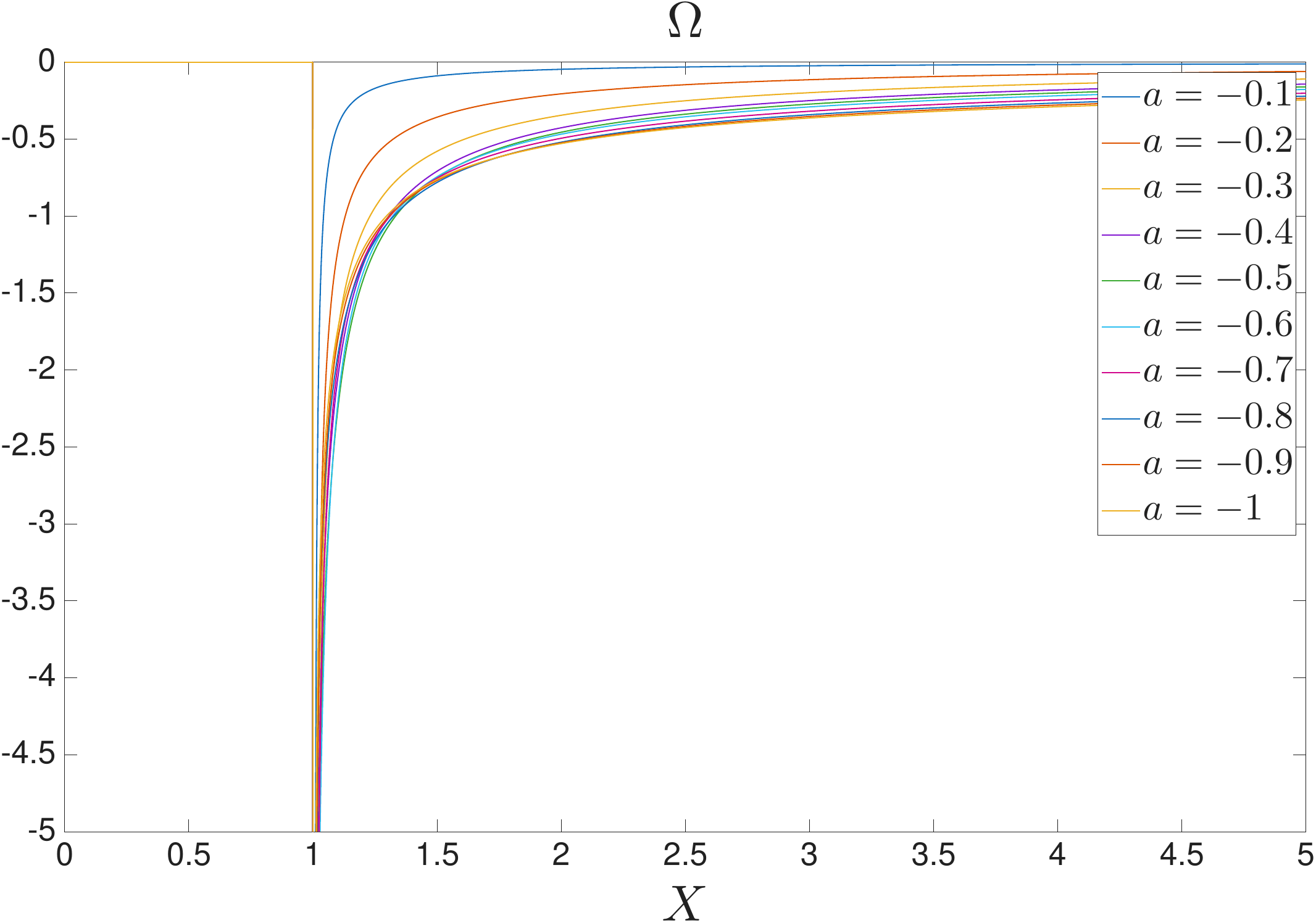}
        \caption{Case \nameref{cas:initialdata_case1.2}}
    \end{subfigure}
    \caption{Singular limiting profiles $\bar\Omega(X)$ for $-1\le a\le -0.1$ computed from odd-symmetric degenerate initial data. For each fixed $a$, the profiles obtained from Cases \nameref{cas:initialdata_case1.1} and \nameref{cas:initialdata_case1.2} coincide.}
    \label{fig:a_negative_odd}
\end{figure}

Figure \ref{fig:a_negative_odd} shows that, for each $a<0$, the solution of the dynamic rescaling equation \eqref{eqt:dynamic_rescaling} converges to a self-similar profile $\bar\Omega(X)$ that develops singularities at $X=\pm1$ and vanishes on $(-1,1)$ (note that the odd symmetry of the initial data is preserved by the evolution). In Table \ref{tab:one_scale_scaling_factors_odd_a_negative} we also observe that the spatial shrinking rate $\gamma$ increases as $a$ becomes more negative. One possible explanation is that, as $a$ becomes more negative, the advection term $a u \omega_x$ in the original physical equation \eqref{eqt:gCLM} becomes more inward-pushing, which enhances the focusing mechanism and leads to a faster spatial collapse. Moreover, for each fixed $a$, simulations in both Case \nameref{cas:initialdata_case1.1} and Case \nameref{cas:initialdata_case1.2} yield the same limiting profile and the same scaling factors up to numerical errors. This indicates that the self-similar blowup with a singular profile is robust with respect to the choice of degenerate initial data, hence suggesting stability of the observed blowup.

To conclude this subsection, we revisit and justify our choice of an initial profile with infinite vanishing order at the origin in Case \nameref{cas:initialdata_case1.1}. This choice is motivated by the following numerical observation for $a<0$: under our normalization conditions, and provided the initial data vanish to at least third order at $X=0$, all derivatives at the origin tend to zero,
\[
    \partial_X^j\Omega(0,\tau)\to 0\quad\text{as }\tau\to+\infty,\ \ \text{for every }j\ge 0.
\]
In particular, the limiting self-similar profile $\bar \Omega$ appears to have infinite vanishing order at $X=0$, regardless of the (finite) vanishing order of the initial data. To illustrate this, Figure \ref{fig:different_vanishingorders} shows the profile evolution for $a=-1$ using the family of initial profiles
\[
    \Omega_0(X)=-\frac{3X^k}{3+kX^{k+3}},\quad k=3,5,7,9.
\]
Despite the different choices of $k$, the corresponding solutions converge to the same limiting self-similar profile, and this limiting profile exhibits infinite vanishing order at the origin. We observe the same behavior for other negative values of $a$ in our simulations.

\begin{figure}[htbp]
    \begin{subfigure}{0.49\textwidth}
        \includegraphics[width=\textwidth]{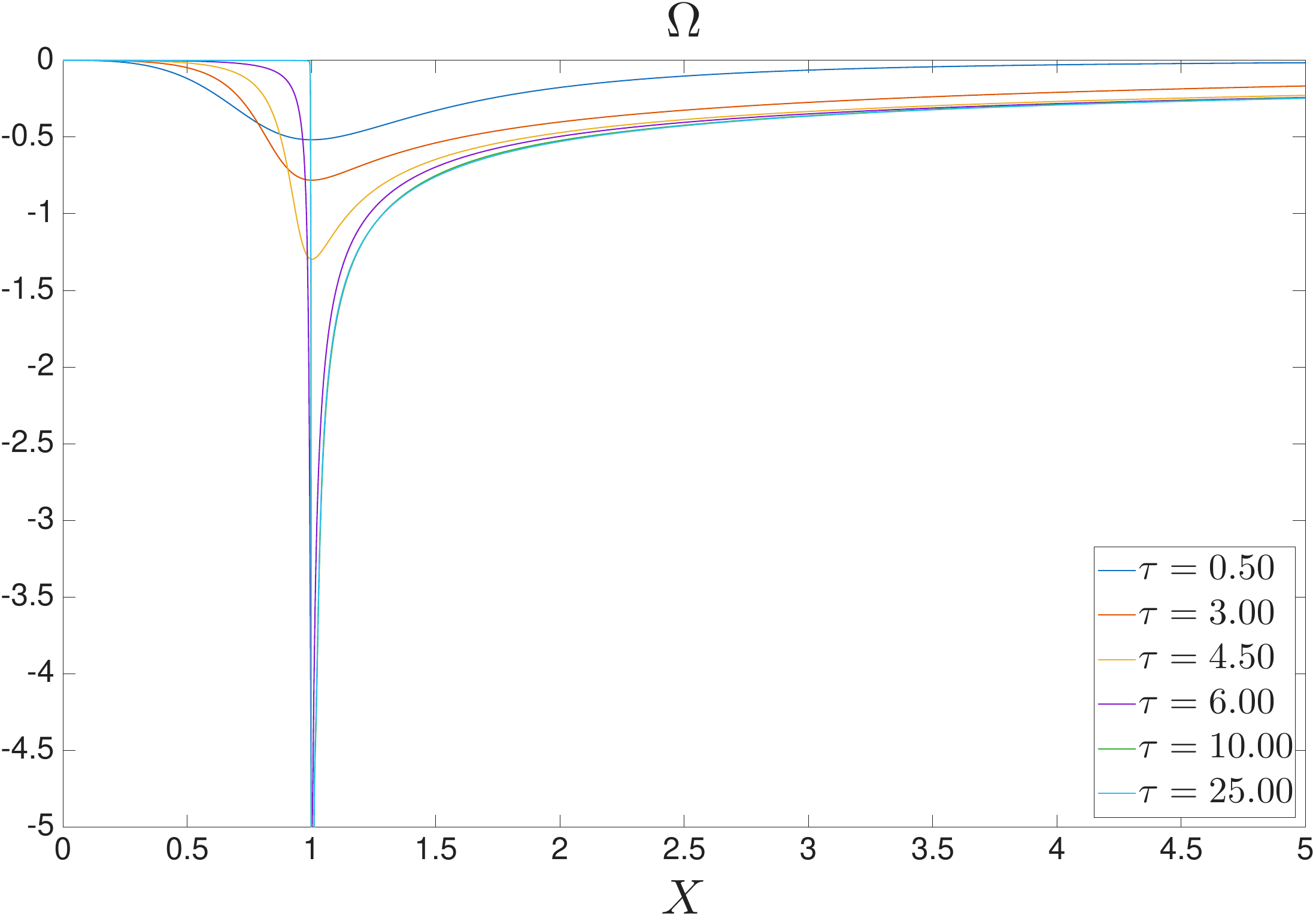}
        \caption{$k=3$}
    \end{subfigure}
    \begin{subfigure}{0.49\textwidth}
        \includegraphics[width=\textwidth]{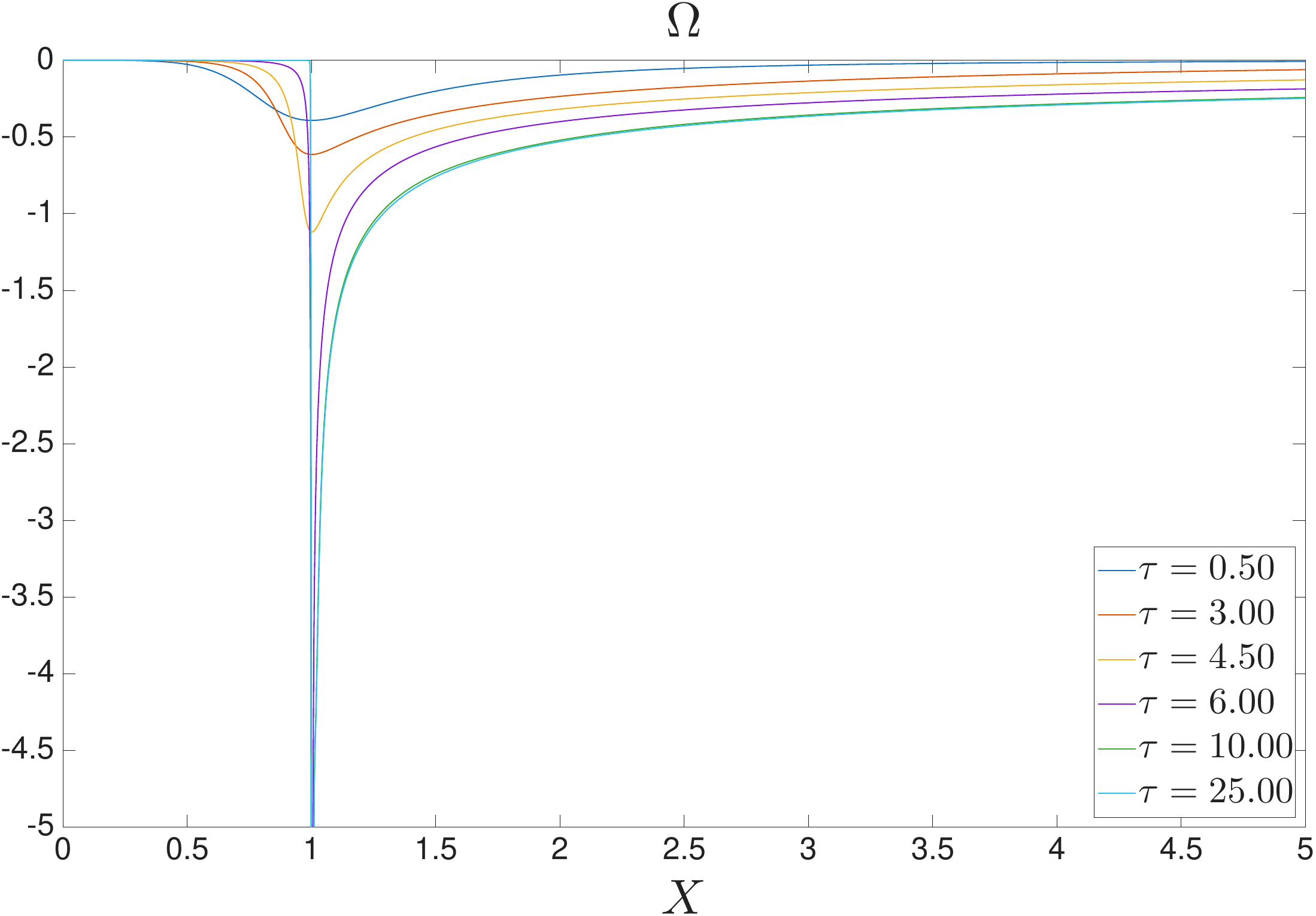}
        \caption{$k=5$}
    \end{subfigure}
    \begin{subfigure}{0.49\textwidth}
        \includegraphics[width=\textwidth]{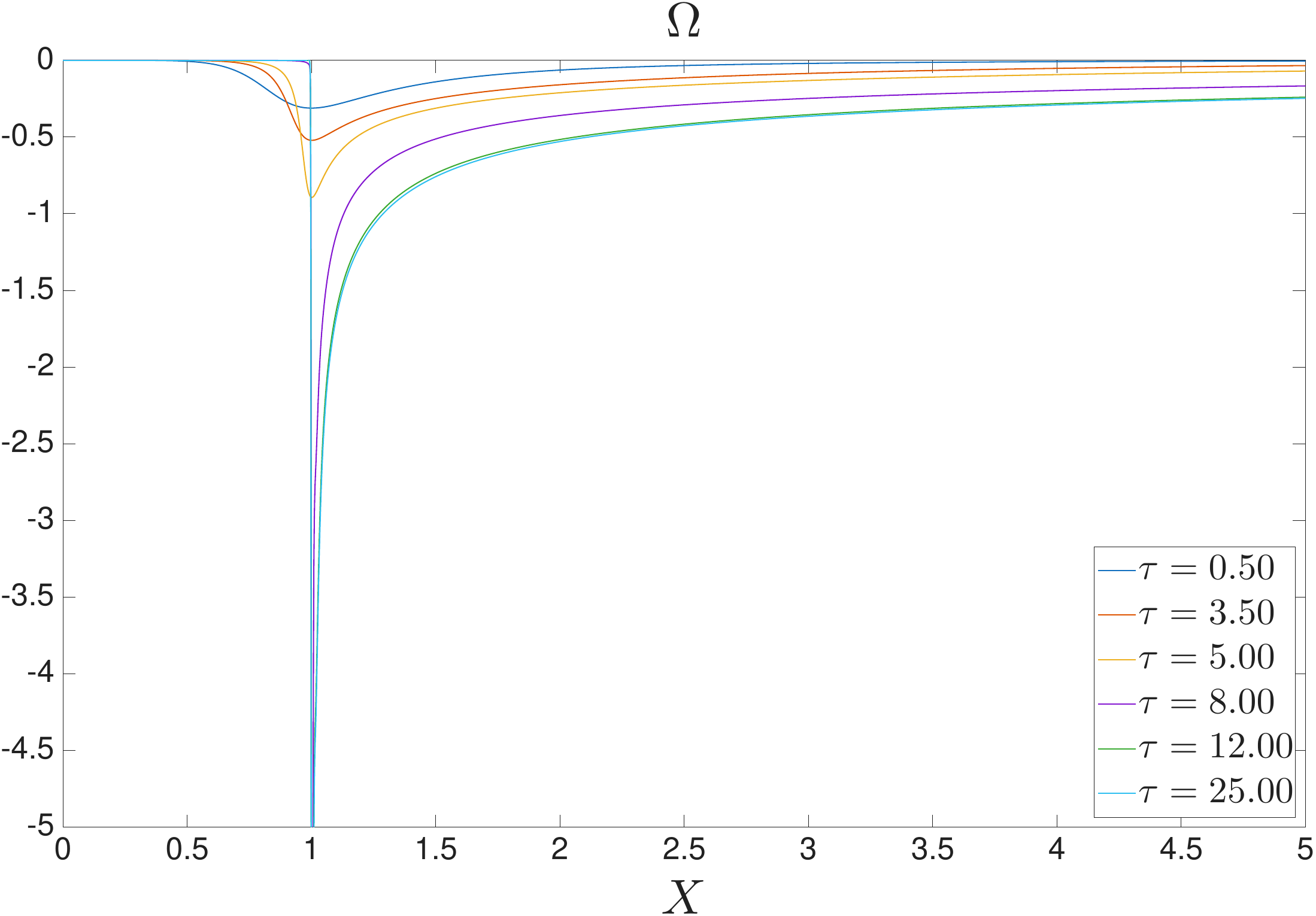}
        \caption{$k=7$}
    \end{subfigure}
    \begin{subfigure}{0.49\textwidth}
        \includegraphics[width=\textwidth]{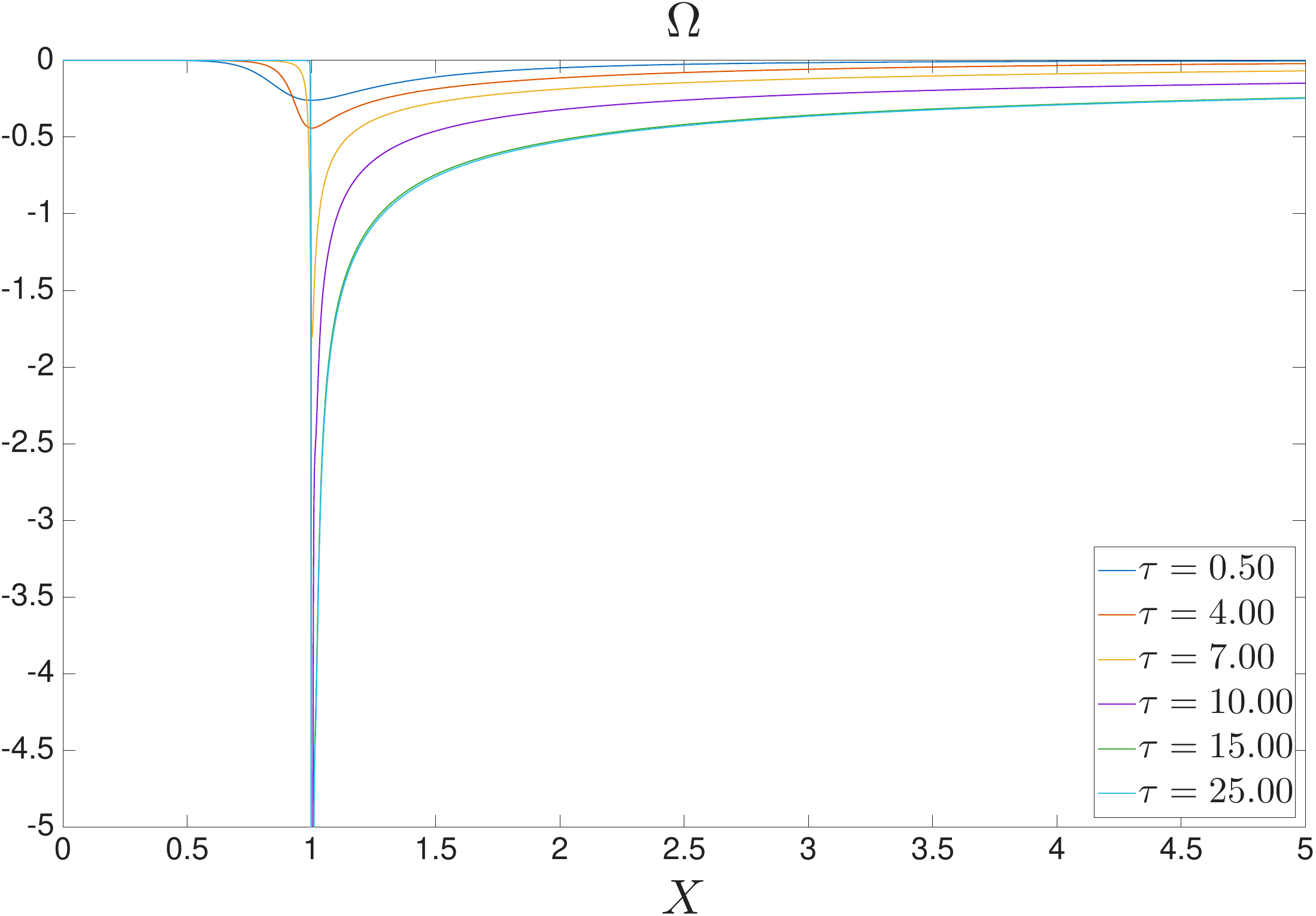}
        \caption{$k=9$}
    \end{subfigure}
    \caption{Evolution of the profile for $a=-1$ with different vanishing orders $k=3,5,7,9$ of the odd initial data. All the solutions converge to the same self-similar profile with infinite vanishing order at the origin.}
    \label{fig:different_vanishingorders}
\end{figure}

\subsection{Numerical results for different $a<0$ with degenerate initial data supported inside $\mathbb R_+$} 

In this subsection, we report numerical results from dynamic rescaling simulations for $a<0$ with half-line degenerate initial data supported inside $\mathbb R_+$ (under Assumption \ref{ass:half_line}), corresponding to Cases \nameref{cas:initialdata_case2.1} and \nameref{cas:initialdata_case2.2}. For Case \nameref{cas:initialdata_case2.1}, we impose the same normalization conditions \eqref{eqt:normalization_condition_case1.1} as in Case \nameref{cas:initialdata_case1.1}, while for Case \nameref{cas:initialdata_case2.2} we use the same normalization conditions \eqref{eqt:normalization_condition_case1.2} as in Case \nameref{cas:initialdata_case1.2}. Figures \ref{fig:a=-1_halfline_evolution} and \ref{fig:a=-1_halfline_compact_evolution} show, for $a=-1$, the evolution of the rescaled profile $\Omega(X,\tau)$ together with the associated advection speed $c_l(\tau)X+aU(X,\tau)$. The dynamics are qualitatively analogous to those observed in the corresponding odd-symmetric cases.

\begin{figure}[htbp]
    \begin{subfigure}{0.49\textwidth}
        \includegraphics[width=\textwidth]{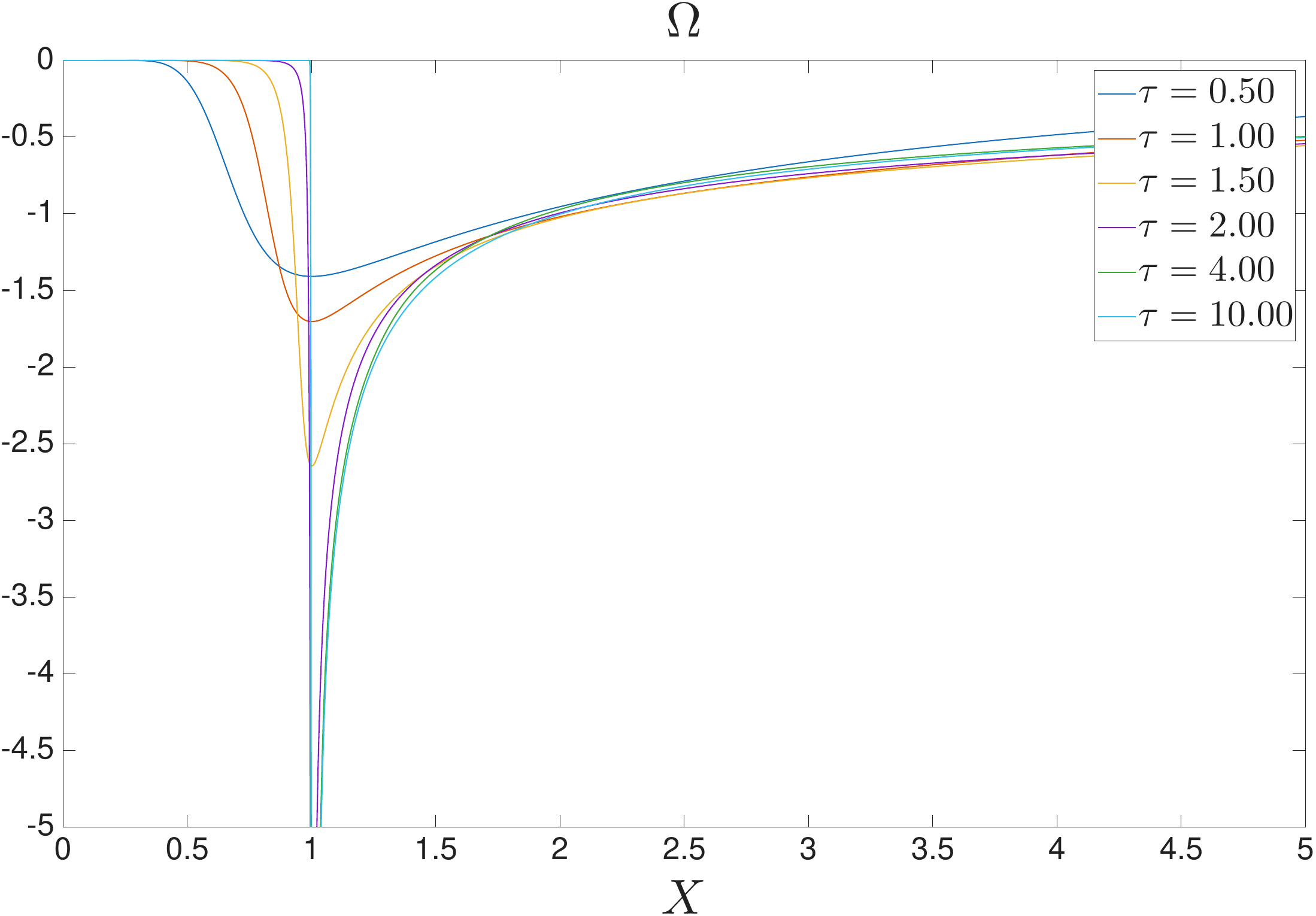}
        \caption{Evolution of the profile $\Omega(X,\tau)$}
    \end{subfigure}
    \begin{subfigure}{0.49\textwidth}
        \includegraphics[width=\textwidth]{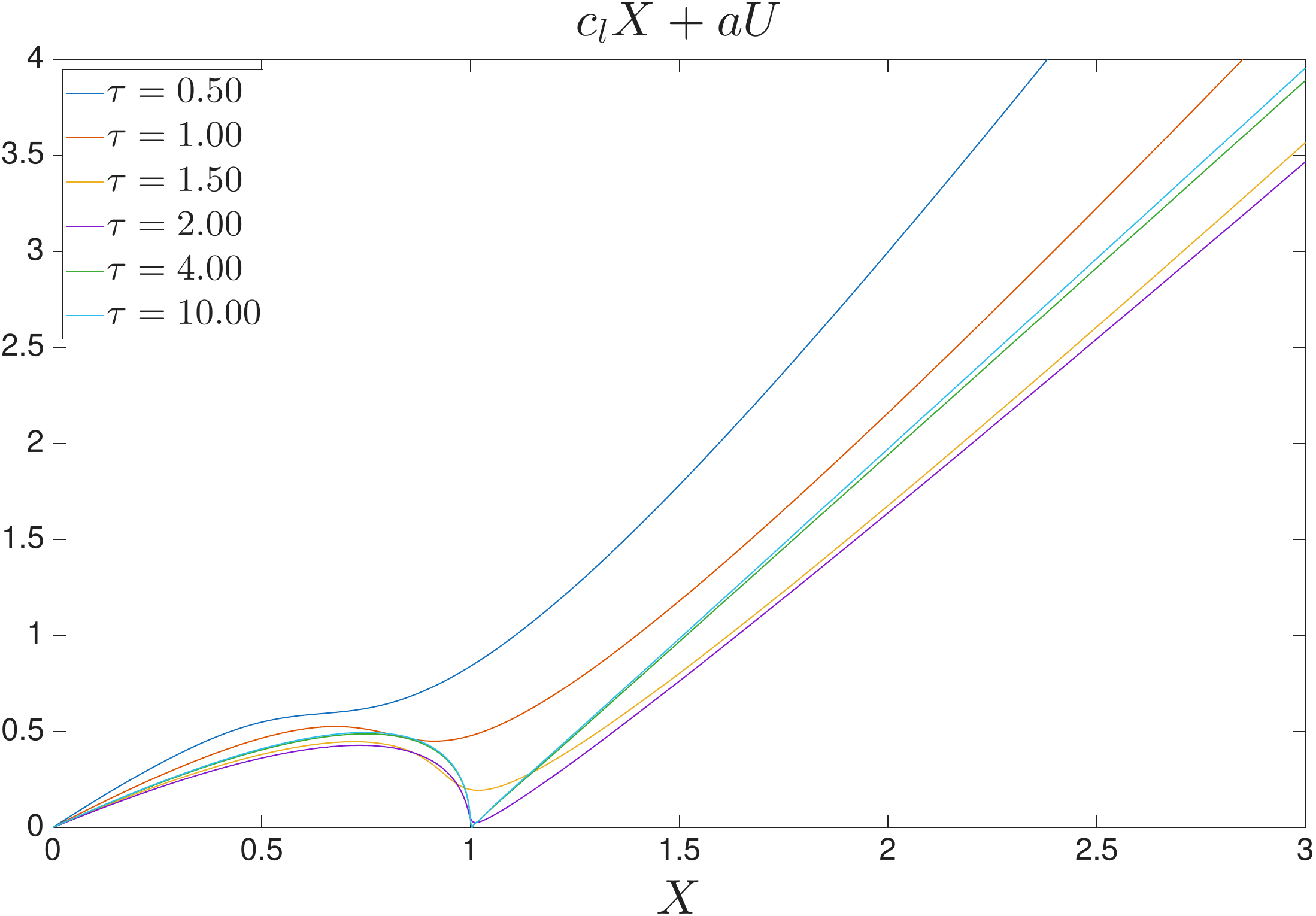}
        \caption{Evolution of the advection speed $c_l(\tau)X+aU(X,\tau)$}
    \end{subfigure}
    \caption{Dynamic rescaling simulation for $a=-1$ with degenerate initial data supported on the positive half-line $\mathbb{R}_+$ (Case \nameref{cas:initialdata_case2.1}). We impose the normalization conditions $\partial_X\Omega(1,\tau)\equiv 0$ (pinning the minimum at $X=1$) and $\mtx H(\Omega)(0,\tau)\equiv \mtx H(\Omega_0)(0)$.}
    \label{fig:a=-1_halfline_evolution}
\end{figure}

\begin{figure}[htbp]
    \begin{subfigure}{0.49\textwidth}
        \includegraphics[width=\textwidth]{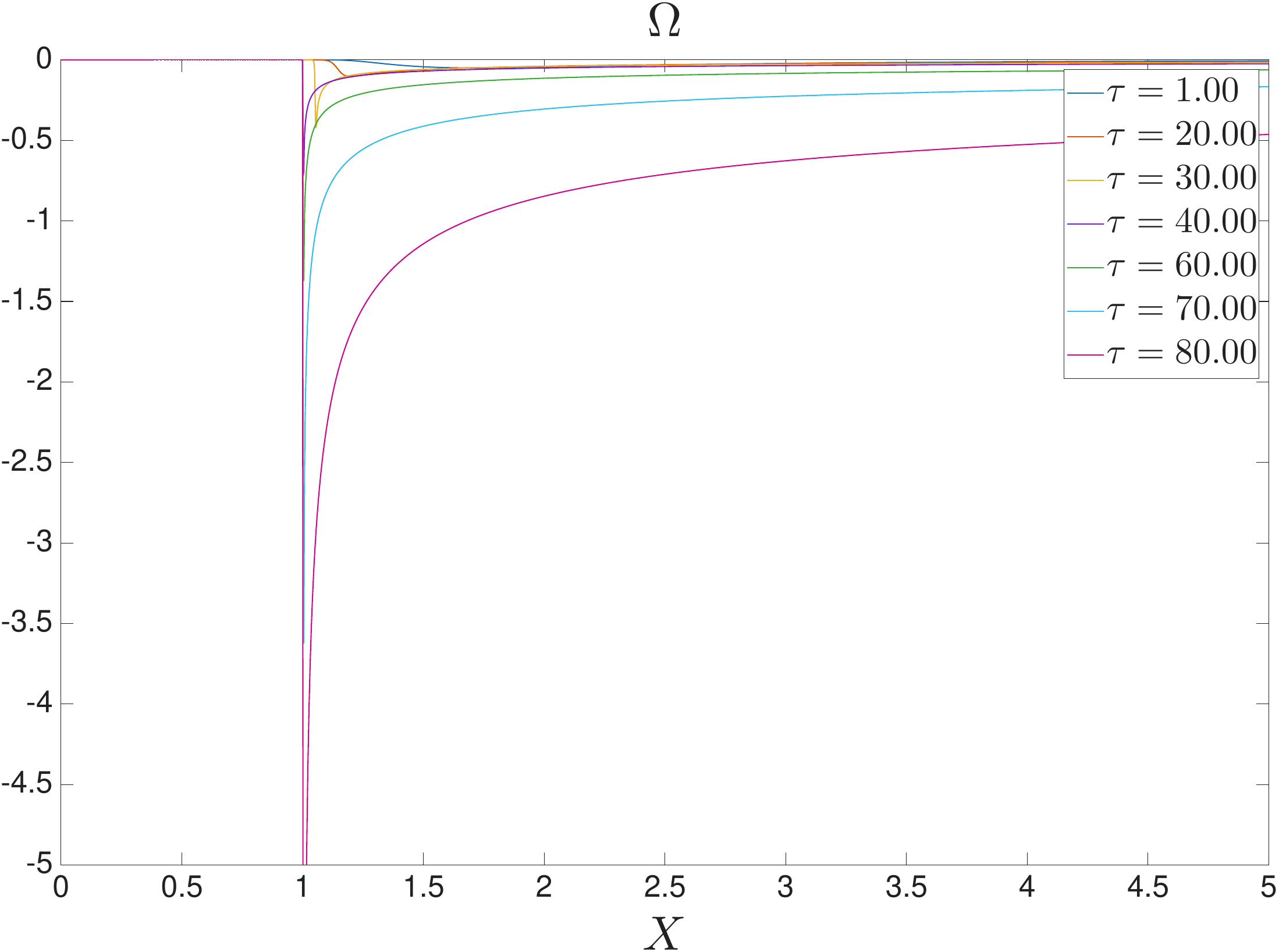}
        \caption{Evolution of the profile $\Omega(X,\tau)$}
    \end{subfigure}
    \begin{subfigure}{0.49\textwidth}
        \includegraphics[width=\textwidth]{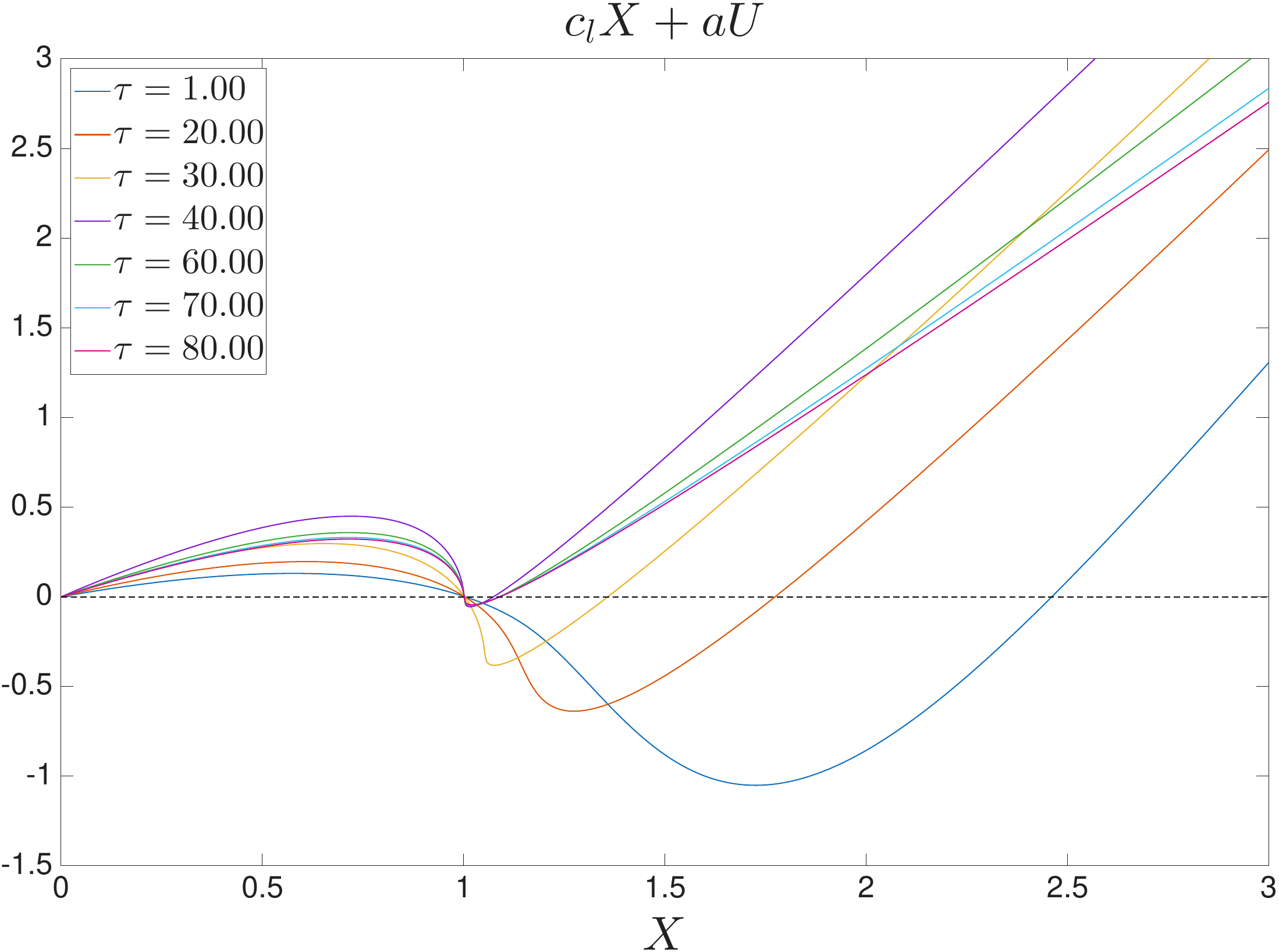}
        \caption{Evolution of the advection speed $c_l(\tau)X+aU(X,\tau)$}
    \end{subfigure}
    \caption{Dynamic rescaling simulation for $a=-1$ with degenerate initial data supported on $[1,+\infty)$ (Case \nameref{cas:initialdata_case2.2}). We impose the normalization conditions $c_l(\tau)+aU(1,\tau)\equiv 0$ (pinning the transition point at $X=1$) and $\mtx H(\Omega)(0,\tau)\equiv \mtx H(\Omega_0)(0)$.}
    \label{fig:a=-1_halfline_compact_evolution}
\end{figure}

Recall that for the half-line case, there is an explicit singular solution to the self-similar profile equation \eqref{eqt:steady_profile} given by Theorem \ref{thm:explicit_singular_solution}. We briefly outline the derivation of this explicit solution here. For $\mu\in(0,1)$ and $\theta\in[0,2\pi)$, define a family of functions
\[
    \Phi_{\mu,\theta}(X):=\frac{\boldsymbol{1}_{\{X>0\}}(X)\cos \theta+\boldsymbol{1}_{\{X<0\}}(X)\cos(\theta-\pi \mu)}{|X|^{\mu}}=\begin{cases}
        |X|^{-\mu}\cos \theta         & X>0, \\
        |X|^{-\mu}\cos(\theta-\pi \mu), & X<0. \\
    \end{cases}
\]
One can calculate that
\[
    \mtx H( \Phi_{\mu,\theta} )(X)=\frac{\boldsymbol{1}_{\{X>0\}}(X)\sin\theta+\boldsymbol{1}_{\{X<0\}}(X)\sin (\theta-\pi \mu)}{|X|^{\mu}}=\begin{cases}
        |X|^{-\mu}\sin \theta         & X>0, \\
        |X|^{-\mu}\sin(\theta-\pi \mu), & X<0. \\
    \end{cases}
\]
In fact, one can check that
\[
    \Phi_{\mu,\theta}(X)+\mathrm{i}\mtx H( \Phi_{\mu,\theta} )(X)=\phi(X),
\]
where
\[
    \phi(z)=\frac{\mathrm{e}^{\mathrm{i}\theta}}{z^{\mu}},
\]
which is holomorphic in the interior of the upper half-plane. Now, given any $a<0$, we fix $\mu=1/(1-a)$ and $\theta=\pi/2$. Let
\begin{equation}\label{eqt:singular_solution_halfline_a_le0}
    \bar \Omega_a(X)=-\Phi_{\mu,\theta}(1-X)=-\frac{\boldsymbol{1}_{\{X>1\}}\sin(\pi \mu)}{|1-X|^{\mu}}.
\end{equation}
Then the Hilbert transform of $\bar \Omega_a$ is given by
\[
    \mtx H(\bar \Omega_a)=\frac{\boldsymbol{1}_{\{X<1\}}+\boldsymbol{1}_{\{X>1\}}\cos(\pi \mu)}{|1-X|^{\mu}}.
\]
One can verify that if we set
\begin{equation}\label{eqt:singular_solution_halfline_a_le0_scaling_factors}
    \bar c_{l,a} = 1-a,\quad \bar c_{\omega,a} = -1,
\end{equation}
then $(\bar \Omega_a,\bar c_{l,a},\bar c_{\omega,a})$ solves the steady self-similar profile equation \eqref{eqt:steady_profile} with $\bar \gamma_a=-\bar c_{l,a}/\bar c_{\omega,a}=1-a$. See Appendix \ref{app:explicit_singular_solution} for a detailed verification. This gives an explicit singular solution to the self-similar profile equation for each $a<0$. In what follows, we show numerically that, for $a<0$, the dynamic rescaling equation with half-line degenerate initial data (Cases \nameref{cas:initialdata_case2.1} and \nameref{cas:initialdata_case2.2}) converges to the explicit singular profile \eqref{eqt:singular_solution_halfline_a_le0} under proper normalization.

As an example, Figure \ref{fig:a=-1_halfline_convergence} illustrates this convergence for $a=-1$ in the half-line setting (Case \nameref{cas:initialdata_case2.1}). As $\tau$ increases, the profile $\Omega(X,\tau)$ converges to the explicit singular profile $\bar\Omega_a$ in \eqref{eqt:singular_solution_halfline_a_le0} in both the near field and the far field. At the same time, the scaling factors $c_l(\tau)$ and $c_\omega(\tau)$ stabilize and converge to the predicted (in \eqref{eqt:singular_solution_halfline_a_le0_scaling_factors}) steady-state values $\bar c_{l,a}=1-a = 2$ and $\bar c_{\omega,a}=-1$, so that the shrinking rate $\gamma(\tau)=-c_l(\tau)/c_\omega(\tau)$ converges to $\bar\gamma=1-a=2$. Together, these observations provide numerical evidence that the dynamic rescaling evolution converges to the explicit steady state and thus corresponds to an asymptotically self-similar focusing finite-time blowup with a singular limiting profile. Note that the scalings of the limiting self-similar profile $\bar\Omega$ (in view of \eqref{eqt:scaling_invariance}) depend on how the initial profile $\Omega_0$ is normalized, while the shape of the limiting profile and the limiting shrinking rate $\bar \gamma$ are uniquely given by \eqref{eqt:singular_solution_halfline_a_le0} and \eqref{eqt:singular_solution_halfline_a_le0_scaling_factors} respectively in all our numerical tests.

\begin{figure}[htbp]
    \begin{subfigure}{0.49\textwidth}
        \includegraphics[width=\textwidth]{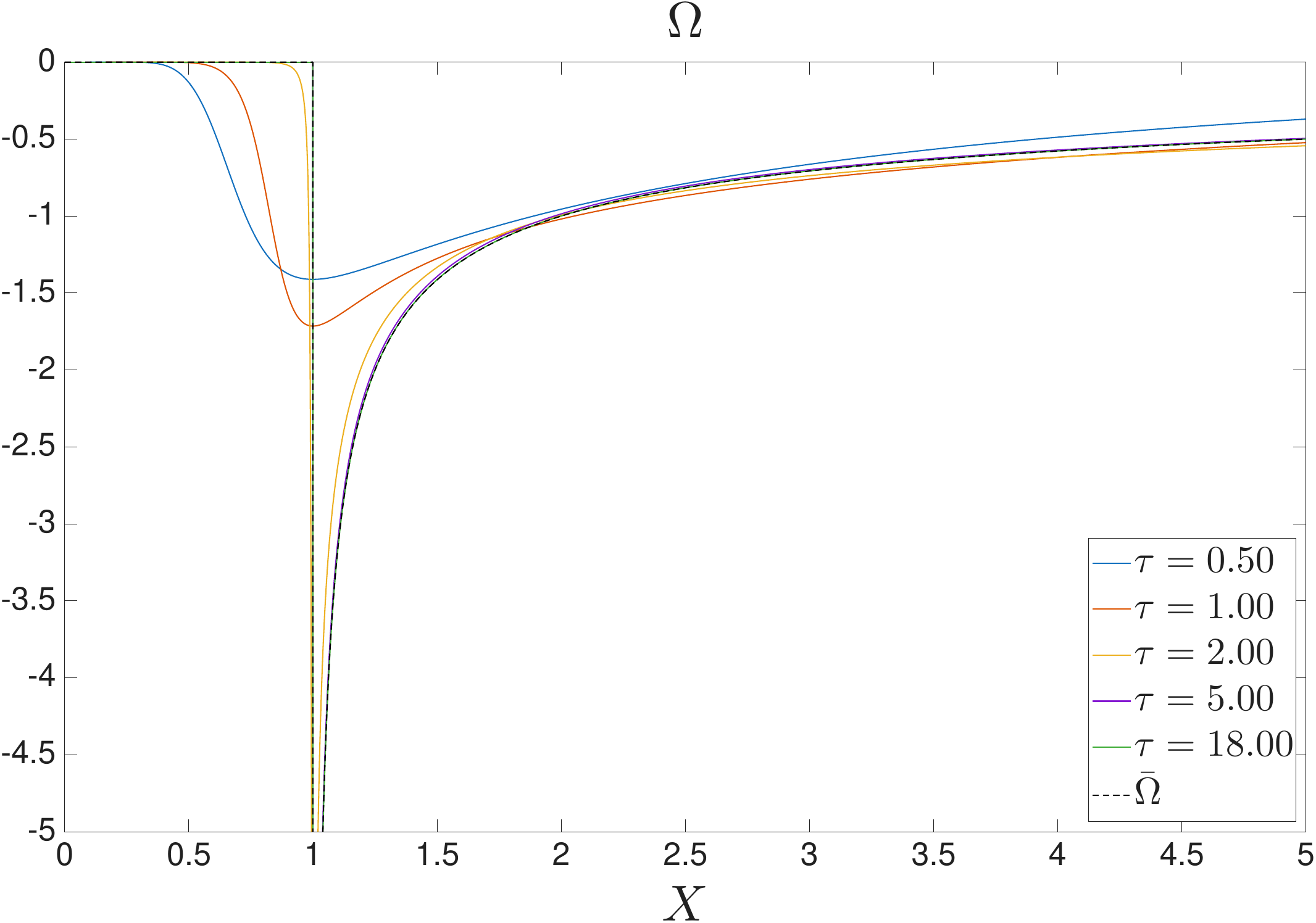}
        \caption{Profile $\Omega(X,\tau)$ in the near field}
    \end{subfigure}
    \begin{subfigure}{0.49\textwidth}
        \includegraphics[width=\textwidth]{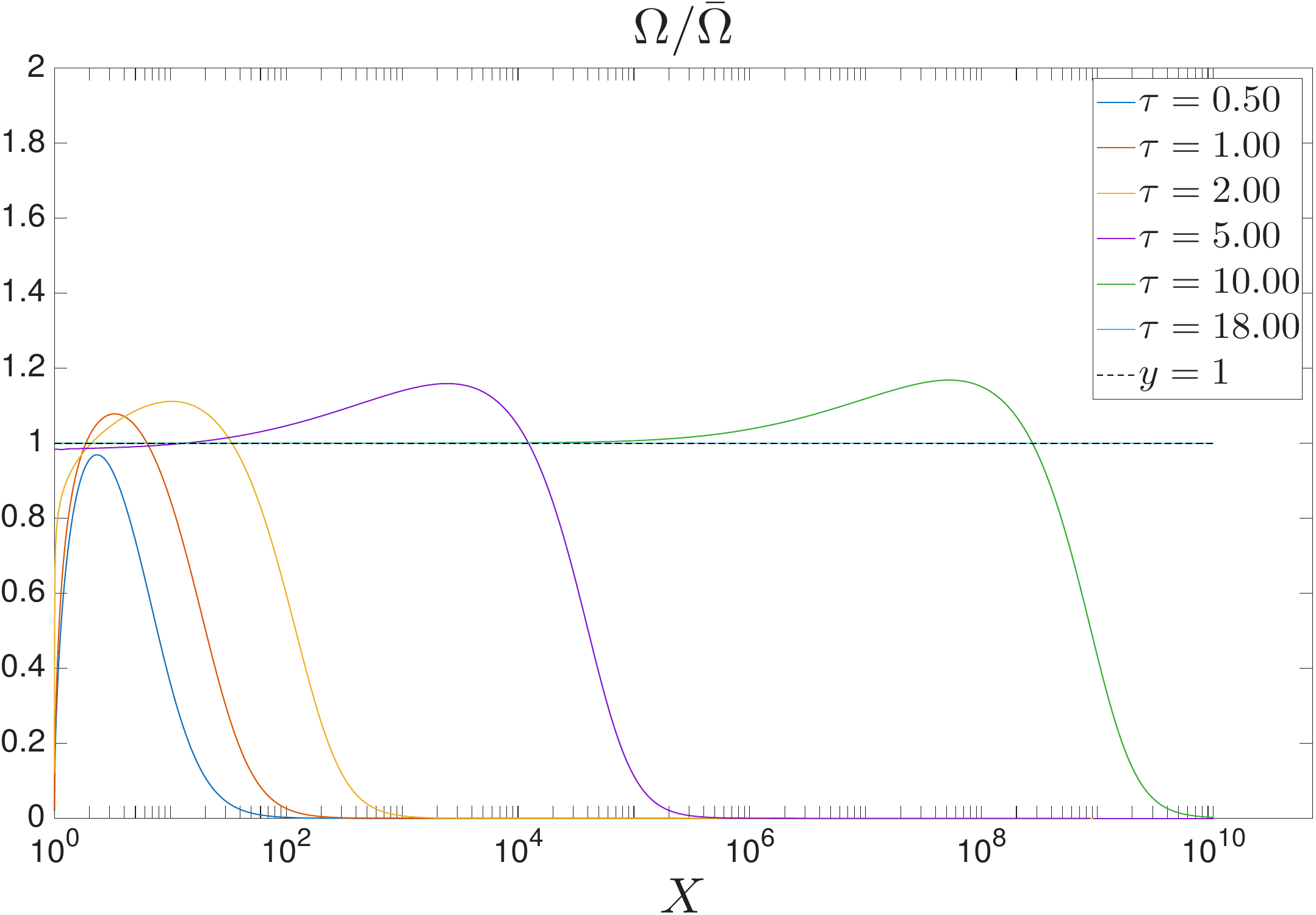}
        \caption{Profile $\Omega(X,\tau)$ in the far field}
    \end{subfigure}
    \begin{subfigure}{0.32\textwidth}
        \includegraphics[width=\textwidth]{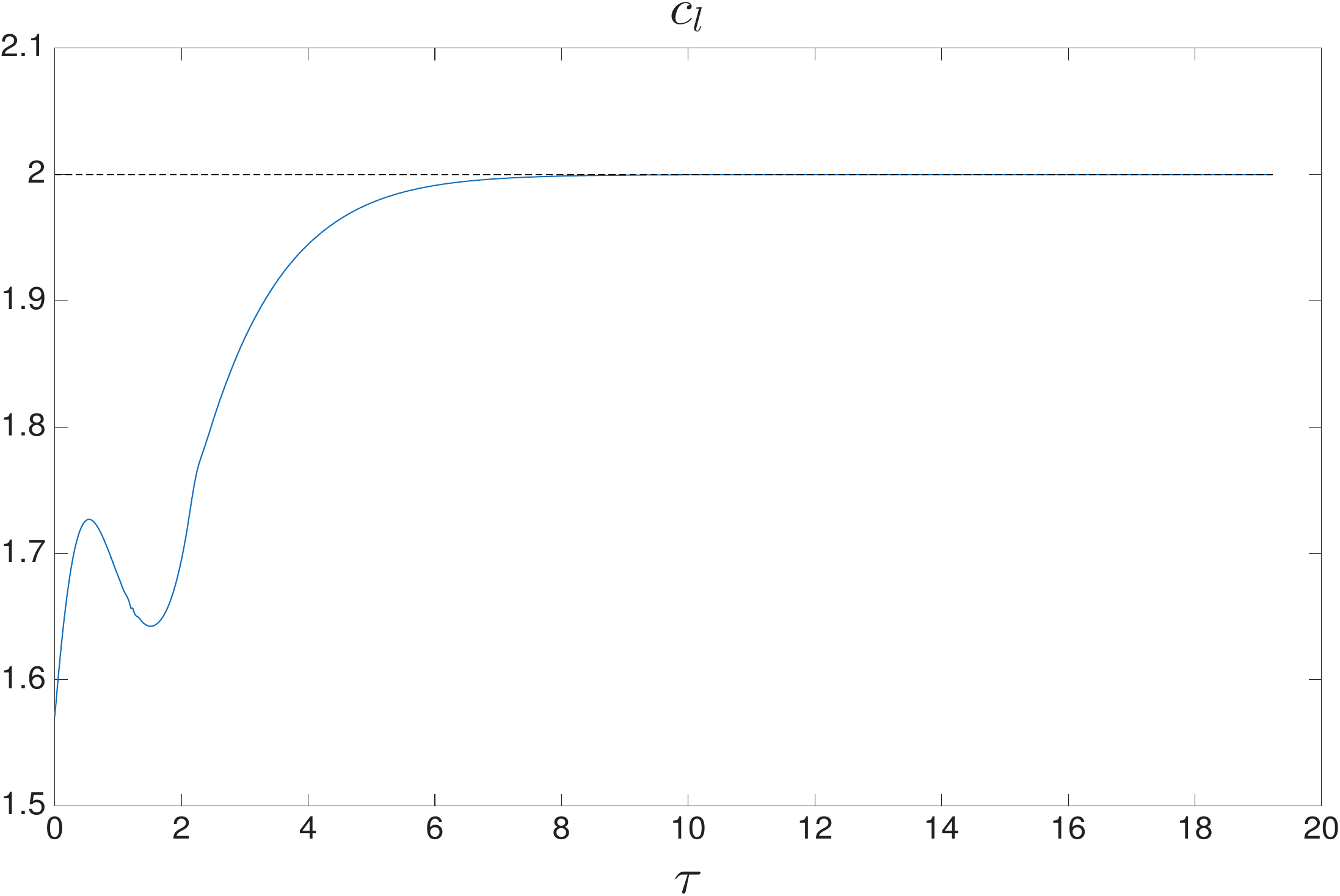}
        \caption{Scaling factor $c_l$}
    \end{subfigure}
    \begin{subfigure}{0.32\textwidth}
        \includegraphics[width=\textwidth]{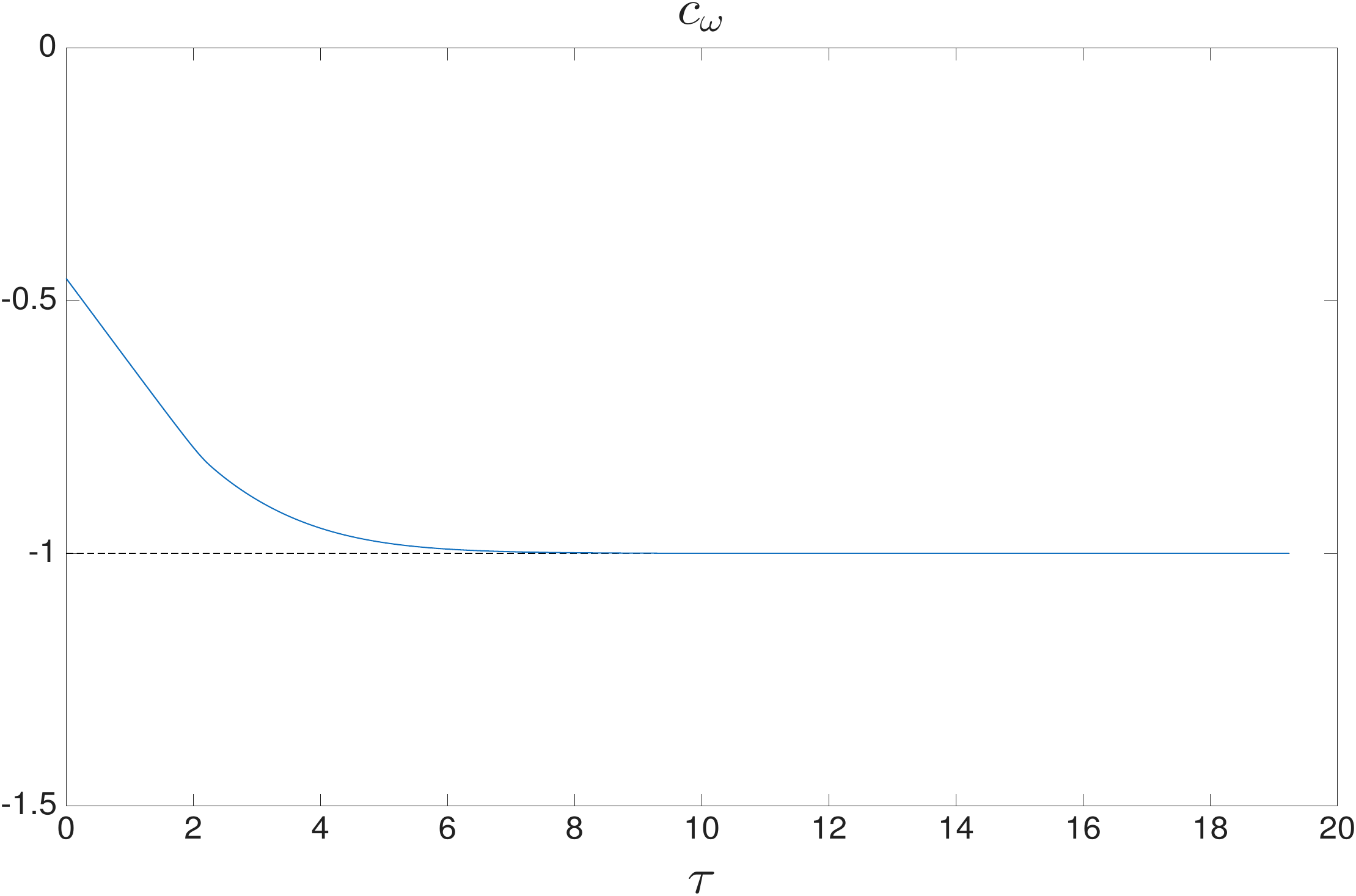}
        \caption{Scaling factor $c_\omega$}
    \end{subfigure}
    \begin{subfigure}{0.32\textwidth}
        \includegraphics[width=\textwidth]{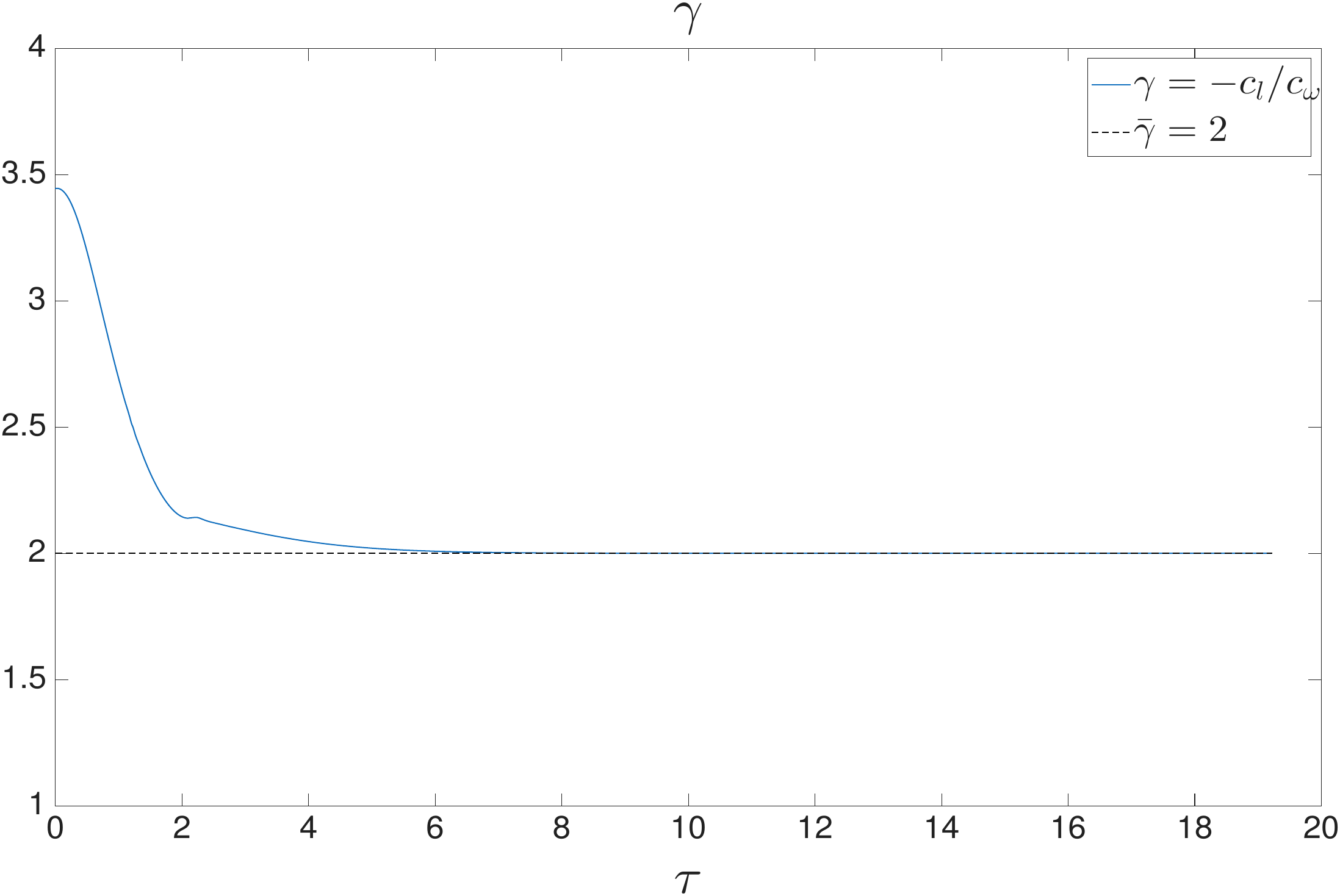}
        \caption{Spatial shrinking rate $\gamma=-c_l/c_\omega$}
    \end{subfigure}
    \caption{Convergence of the dynamic rescaling simulation for $a=-1$ with degenerate initial data supported on the positive half-line $\mathbb{R}_+$ (Case \nameref{cas:initialdata_case2.1}). As $\tau$ increases, the profile $\Omega(X,\tau)$ converges to the explicit singular steady profile \eqref{eqt:singular_solution_halfline_a_le0} with $a=-1$, namely $\bar\Omega(X)=-\boldsymbol{1}_{\{X>1\}}|1-X|^{-1/2}$, in both the near field and the far field. Meanwhile, the scaling factors $c_l(\tau)$ and $c_\omega(\tau)$ (and hence the shrinking rate $\gamma(\tau)=-c_l(\tau)/c_\omega(\tau)$) converge to the predicted values $\bar c_l=1-a=2$, $\bar c_\omega=-1$, and $\bar\gamma=1-a=2$ given by \eqref{eqt:singular_solution_halfline_a_le0_scaling_factors} as $\tau\to+\infty$.}
    \label{fig:a=-1_halfline_convergence}
\end{figure}

For other negative values of $a$, we observe similar convergence of the dynamic rescaling simulation to the explicit singular profile \eqref{eqt:singular_solution_halfline_a_le0} with the predicted scaling factors \eqref{eqt:singular_solution_halfline_a_le0_scaling_factors}. We declare convergence when the residual of the profile equation, restricted away from the singularity at $X=1$, drops below $10^{-8}$, i.e.
\[
    \bigl\|\Omega_\tau\,\boldsymbol{1}_{\{|X-1|>0.1\}}\bigr\|_{L^\infty}<10^{-8}.
\]
The limiting scaling factors for $-1\le a\le -0.1$ in Case \nameref{cas:initialdata_case2.1} are reported in Table \ref{tab:one_scale_scaling_factors_halfline_a_negative}; the results in Case \nameref{cas:initialdata_case2.2} are nearly identical and are therefore not tabulated here. The corresponding limiting self-similar profiles are shown in Figure \ref{fig:a_negative_halfline}.

\begin{table}[htbp]
    \begin{tabular}{|c|c|c|c|c|c|}
        \hline
        $a$              & $-0.1$    & $-0.2$    & $-0.3$    & $-0.4$    & $-0.5$    \\ \hline
        $c_l$            & $1.1535$  & $1.3350$  & $1.2726$  & $1.3846$  & $1.5148$  \\ \hline
        $c_\omega$       & $-1.0588$ & $-1.0883$ & $-0.9654$ & $-0.9959$ & $-1.0142$ \\ \hline
        $\gamma$         & $1.0895$  & $1.2266$  & $1.3182$  & $1.3903$  & $1.4936$  \\ \hline
        $\bar\gamma=1-a$ & $1.1$     & $1.2$     & $1.3$     & $1.4$     & $1.5$     \\ \hline\hline
        $a$              & $-0.6$    & $-0.7$    & $-0.8$    & $-0.9$    & $-1$      \\ \hline
        $c_l$            & $1.5997$  & $1.6998$  & $1.7961$  & $1.9009$  & $2.0000$  \\ \hline
        $c_\omega$       & $-0.9957$ & $-1.0026$ & $-0.9974$ & $-1.0006$ & $-1.0000$ \\ \hline
        $\gamma$         & $1.6066$  & $1.6954$  & $1.8008$  & $1.8998$  & $2.0000$  \\ \hline
        $\bar\gamma=1-a$ & $1.6$     & $1.7$     & $1.8$     & $1.9$     & $2$       \\ \hline
    \end{tabular}
    \caption{Scaling factors for $-1\le a\le -0.1$ obtained from half-line degenerate initial data (Case \nameref{cas:initialdata_case2.1}). The corresponding values for Case \nameref{cas:initialdata_case2.2} are nearly identical. For each $a$, the converged shrinking rate $\gamma$ agrees closely with the theoretical prediction $\bar\gamma=1-a$, supporting that the limiting profile is given by the explicit singular solution \eqref{eqt:singular_solution_halfline_a_le0} with the scaling factors \eqref{eqt:singular_solution_halfline_a_le0_scaling_factors}.}
    \label{tab:one_scale_scaling_factors_halfline_a_negative}
\end{table}

\begin{figure}[htbp]
    \begin{subfigure}{0.49\textwidth}
        \includegraphics[width=\textwidth]{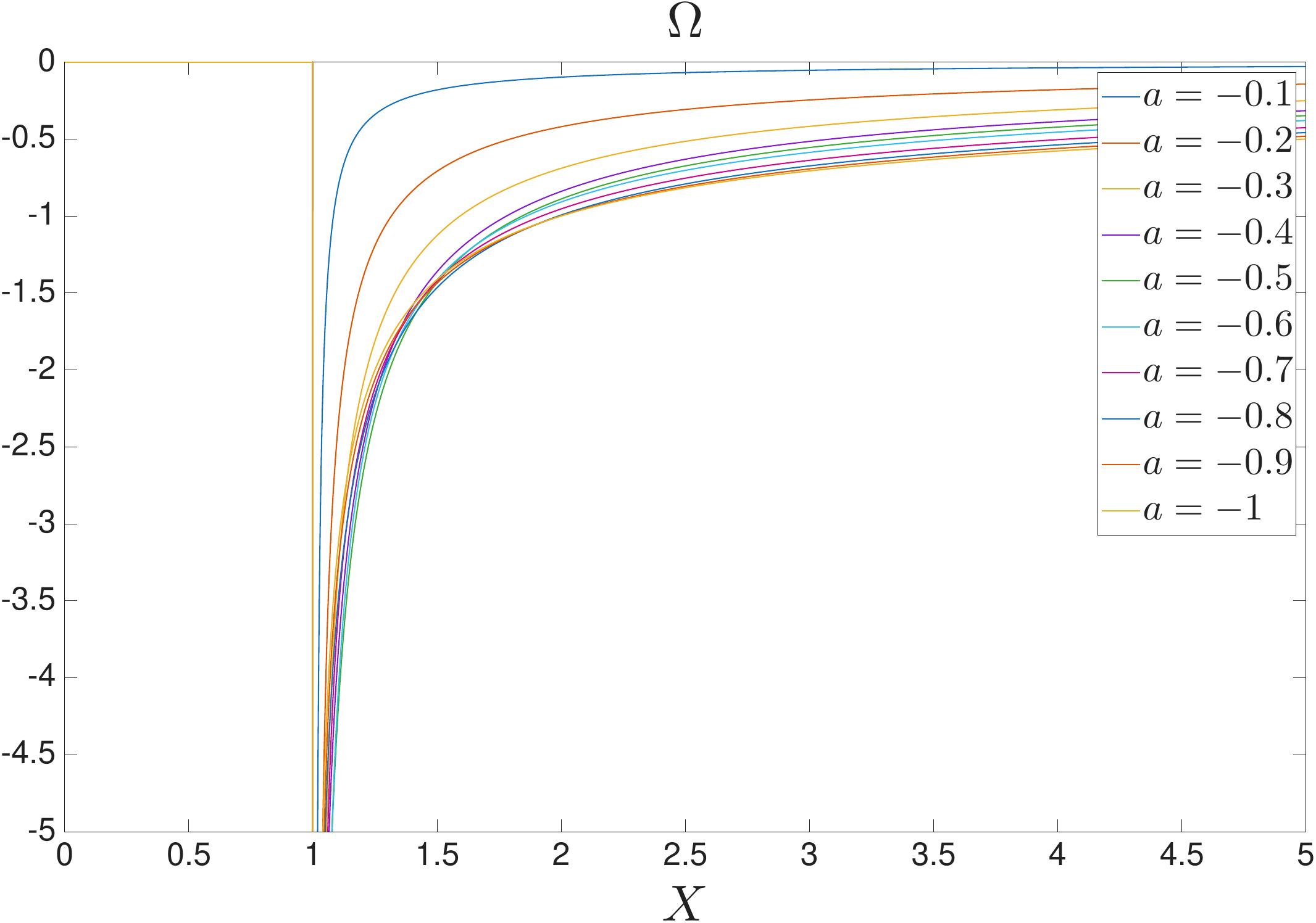}
        \caption{Case \nameref{cas:initialdata_case2.1}}
    \end{subfigure}
    \begin{subfigure}{0.49\textwidth}
        \includegraphics[width=\textwidth]{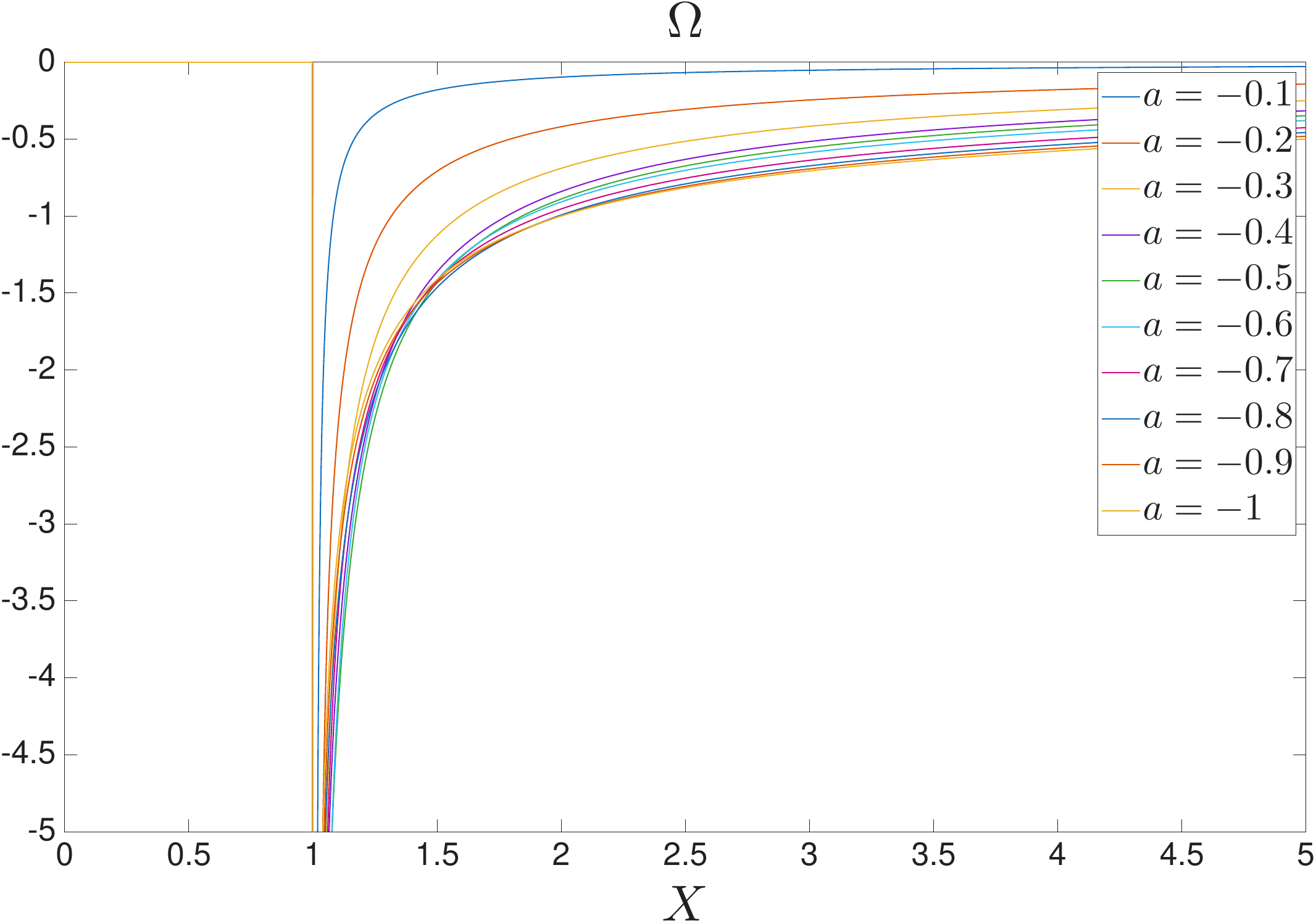}
        \caption{Case \nameref{cas:initialdata_case2.2}}
    \end{subfigure}
    \caption{Singular one-scale profiles $\bar\Omega(X)$ for $-1\le a\le -0.1$ computed from half-line initial data. For each fixed $a$, the profiles obtained from Cases \nameref{cas:initialdata_case2.1} and \nameref{cas:initialdata_case2.2} coincide.}
    \label{fig:a_negative_halfline}
\end{figure}

Figure \ref{fig:a_negative_halfline} suggests that, for each $a<0$, the solution of the dynamic rescaling equation \eqref{eqt:dynamic_rescaling} converges to a limiting self-similar profile $\bar\Omega(X)$ that is singular at $X=1$ and vanishes on $[0,1)$. As in the odd-symmetric case, the spatial shrinking rate $\gamma$ also increases as $a$ decreases. Furthermore, the values of $\gamma$ measured from the simulations are in close agreement with the theoretical prediction $\bar\gamma=1-a$, providing strong numerical evidence that the limiting profile is exactly the explicit singular solution given in \eqref{eqt:singular_solution_halfline_a_le0} (up to rescaling) with the scaling factors in \eqref{eqt:singular_solution_halfline_a_le0_scaling_factors}.

\section{Numerical investigation of the two-scale feature for $a<0$}\label{sec:numerical_multiscale}
In this section, we present numerical evidence for the two-scale structure of the self-similar blowups with singular outer profiles, as discussed in Section \ref{sec:blowup_scenarios}. Recall that the two-scale scenario is described by the ansatz
\begin{equation}\label{eqt:multiscale_blowup_recall}
    \omega(x,t)=(T-t)^{\hat \lambda}\left(\widehat \Omega\left( \frac{x-r(t)(T-t)^{\gamma}}{(T-t)^{\hat\gamma}}\right) +o(1)\right),
\end{equation}
where $\widehat \Omega$ denotes the inner profile on the finer scale $(T-t)^{\hat\gamma}$. As discussed earlier, the leading-order inner dynamics suggests that $\widehat \Omega$ should satisfy the traveling-wave profile equation
\begin{equation}\label{eqt:leading_order_recall}
    (a\widehat U+\hat r)\widehat \Omega_X=\widehat U_X \widehat \Omega.
\end{equation}

We proceed in three steps. First, we extract the inner profile from dynamic rescaling simulations by zooming in near the singular point of the outer profile, and we demonstrate that this inner profile remains smooth and converges to a stable limiting shape. Second, we compare the limiting inner profile with an independently computed traveling-wave solution obtained via an iterative method, and we find excellent agreement. This iterative method is based on our proof of the existence of traveling-wave solutions in Section \ref{sec:existence_traveling_wave}, and detailed in Appendix \ref{apx:iterative_method}. The agreement supports our earlier analysis in Section \ref{sec:2scale_analysis} that the inner profile is governed by a traveling wave solution. Third, we fit the scaling exponents $\hat\lambda$, $\hat\gamma$, and $\gamma$ in \eqref{eqt:multiscale_blowup_recall}, and we verify that $\hat\gamma>\gamma$, i.e., the inner scale collapses faster than the outer scale, which quantitatively supports the two-scale ansatz.

\subsection{Behavior of the inner profile for $a=-0.2$}
We first show that an inner scale exists and converges to a steady profile. To resolve this inner-scale behavior, we zoom in near the point where the outer profile (obtained from the dynamic rescaling simulation with $a=-0.2$) attains its extremal magnitude. At each time, we translate this peak to the origin and rescale both its magnitude and spatial width so that the resulting profile is $O(1)$ in size. More precisely, we define the normalized inner profile $\widehat\Omega$ (with inner coordinate $\widehat X$) by imposing the normalization
\begin{equation}\label{eqt:inner_profile_rescale}
    \widehat\Omega(0)=-1,\quad
    \left|\left\{\widehat X:\ \widehat\Omega(\widehat X)<-0.5\right\}\right|=1.
\end{equation}
With this normalization, $\widehat\Omega$ captures the local structure near the singular point of the outer profile. Numerically, we observe that $\widehat\Omega$ remains smooth for $\widehat X=O(1)$ and approaches a limiting shape as time evolves; see Figure~\ref{fig:inner_profile}. This provides evidence that, although the outer profile becomes singular, the solution develops a regular inner profile on a finer spatial scale.
\begin{figure}[htbp]
    \begin{subfigure}{0.49\textwidth}
        \includegraphics[width=\textwidth]{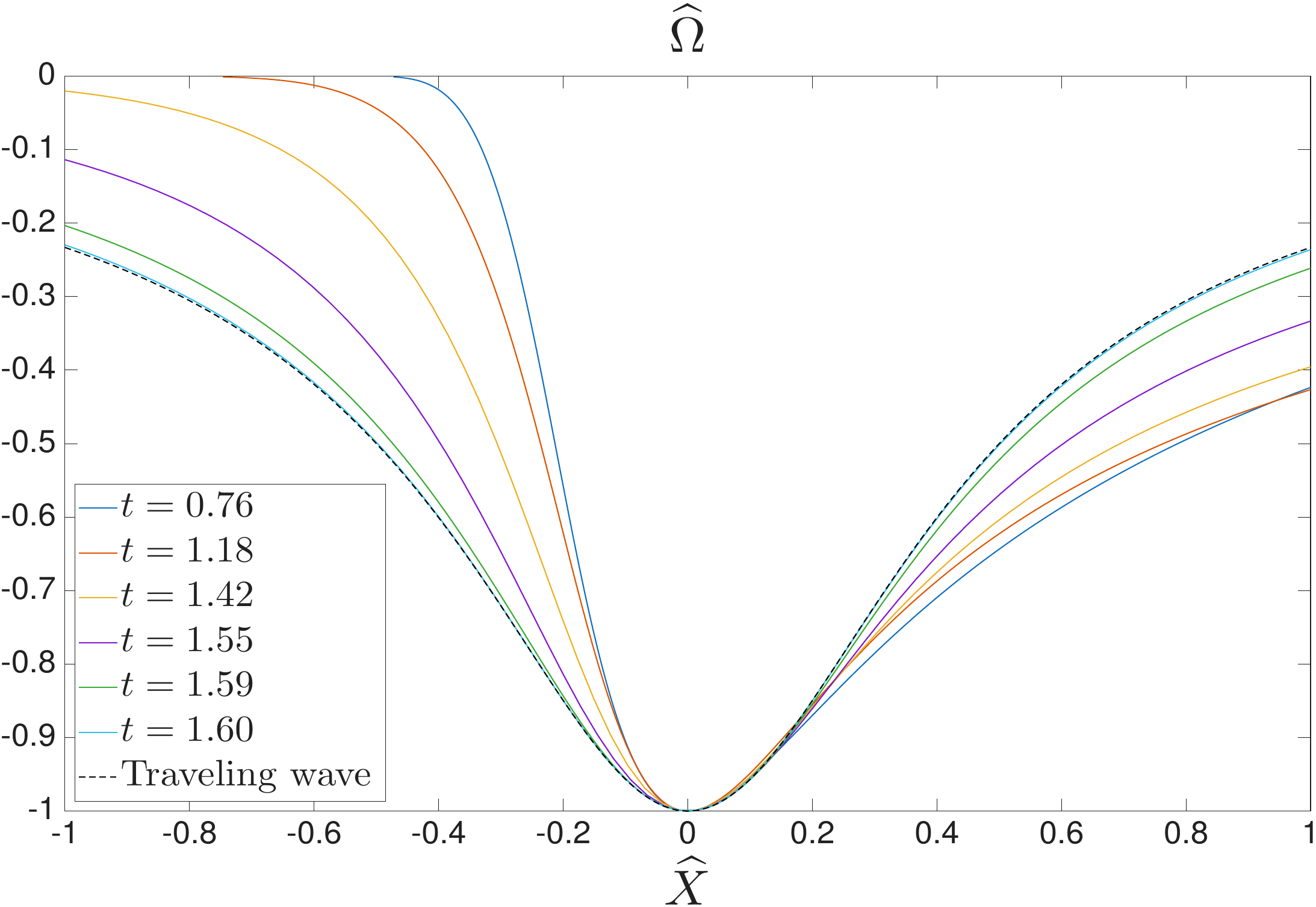}
        \caption{Evolution of inner scale profile}
        \label{fig:inner_profile_evolution}
    \end{subfigure}
    \caption{Evolution of the inner-scale profile $\widehat\Omega$ for $a=-0.2$, computed from initial datum Case \nameref{cas:initialdata_case1.1}. The dashed line shows a traveling-wave profile obtained by the iterative method and rescaled according to \eqref{eqt:inner_profile_rescale}. The inner-scale profile converges to the traveling-wave profile (dashed line).}
    \label{fig:inner_profile}
\end{figure}
Furthermore, Figure \ref{fig:inner_profile} shows that the limiting inner-scale profile $\widehat{\Omega}$ matches closely the traveling-wave solution computed by the iterative method (dashed curve). This agreement provides numerical evidence supporting the two-scale ansatz \eqref{eqt:multiscale_blowup_recall}.

\subsection{Scaling analysis of two-scale blowup for $a=-0.2$}
Here we further justify the two-scale blowup scenario for $a=-0.2$ by fitting the scaling factors $\hat \lambda, \hat \gamma$ in the two-scale ansatz \eqref{eqt:multiscale_blowup_recall}. To this end, we follow the fitting method in \cite{hou2022potential}. Consider a solution quantity $v(t)$ that is expected to behave like a power law near the blowup time $T$, i.e.
\begin{equation}\label{eqt:power_law}
    v(t)\sim \alpha(T-t)^{\eta}\quad \text{as}\ t\to T^-,
\end{equation}
where $\eta$ is the power law exponent and $\alpha>0$ is some constant. We first study the time derivative of the logarithm:
\[
    \frac{\mathrm{d}}{\mathrm{d}t}\ln v(t)=\frac{v'(t)}{v(t)}\sim \frac{-\eta\alpha(T-t)^{\eta-1}}{\alpha(T-t)^{\eta}}=-\frac{\eta}{T-t},
\]
which naturally leads to a linear regression model
\[
    \mathrm{mod_1}(t;v):=\frac{v(t)}{v'(t)}\sim-\frac{1}{\eta}(T-t)=:\tilde a t+\tilde b
\]
with response variable $y$, explanatory variable $t$, and parameters $\tilde a, \tilde b$. We can then apply the ordinary least squares method to fit the parameters $\tilde a, \tilde b$ and obtain an estimate of the power law exponent $\tilde\eta=1/\tilde a$. The quality of the fit can be measured by the coefficient of determination $R^2$, which is defined as
\[
    R^2=1-\frac{SS_{\mathrm{err}}}{SS_{\mathrm{tot}}}.
\]
An $R^2$ value close to 1 indicates a high-quality fit. However, this procedure only provides a crude estimate of $\eta$, since the time derivative $v'(t)$ is computed by numerical differentiation and is subject to relatively large numerical errors. Moreover, the time step is chosen according to the CFL condition and is therefore non-uniform, which further affects the accuracy. To obtain a better approximation of $\eta$ and further justify the power law behavior \eqref{eqt:power_law}, we perform a local search near the crude estimate $\tilde\eta$ and find a finer estimate $\bar\eta$ by requiring that the fitting model 
\[
    \mathrm{mod_2}(t;v):=v(t)^{1/\bar \eta}\sim \alpha^{1/\bar \eta}(T-t)^{\eta/\bar \eta}\sim \alpha^{1/\bar \eta}(T-t)=:\bar a t+\bar b
\]
has the best linear-regression goodness of fitting, as measured by $R^2$. More precisely, we put a uniform mesh (with 101 points) over the interval $[\tilde \eta-0.1,\tilde \eta+0.1]$, compute the $R^2$ value for each candidate on this mesh, and select the one with the best $R^2$ value as our final estimate of the power law exponent $\bar\eta$. If $\bar \eta$ falls into one of the endpoints of the interval, i.e. $\tilde \eta\pm0.1$, we will perform a local search again around $\bar \eta$ and update the interval to $[\bar \eta-0.1,\bar \eta+0.1]$ until $\bar \eta$ is strictly inside the interval. This procedure allows us to obtain a more accurate estimate of the power law exponent $\eta$ and further justify the power law behavior of $v(t)$ near the blowup time.

Although we do not directly simulate the gCLM equation \eqref{eqt:gCLM} in the physical variables $(x,t)$, the solution in physical coordinates can be recovered by inverting the dynamic rescaling transformation \eqref{eqt:change_of_variables}. This reconstruction is used to estimate the blowup exponent $\hat\lambda$ and the inner-scale shrinking exponent $\hat\gamma$ in the two-scale ansatz \eqref{eqt:multiscale_blowup_recall}. To fit $\hat\lambda$, we take
\[
    v(t)=\|\omega(\cdot,t)\|_{L^\infty}.
\]
To fit $\hat\gamma$, we take
\[
    v(t)=|x_2(t)-x_1(t)|,
\]
where $x_1(t)<x_2(t)$ are the two locations satisfying
\[
    \omega(x_1(t),t)=\omega(x_2(t),t)=-\tfrac12\|\omega(\cdot,t)\|_{L^\infty}.
\]
Thus $|x_2(t)-x_1(t)|$ measures the width of the thin peak (at half-maximum) and reflects the spatial size of the inner scale. We carry out the regressions over the time interval $[t_1,t_2]=[1.3,1.55]$, which is close to the estimated blowup time $T\approx 1.6$ while still containing sufficiently many time samples for a robust fit. The fitting results are shown in Figures \ref{fig:fit_hat_lambda} and \ref{fig:fit_hat_gamma}.

\begin{figure}[htbp]
    \begin{subfigure}{0.49\textwidth}
        \includegraphics[width=\textwidth]{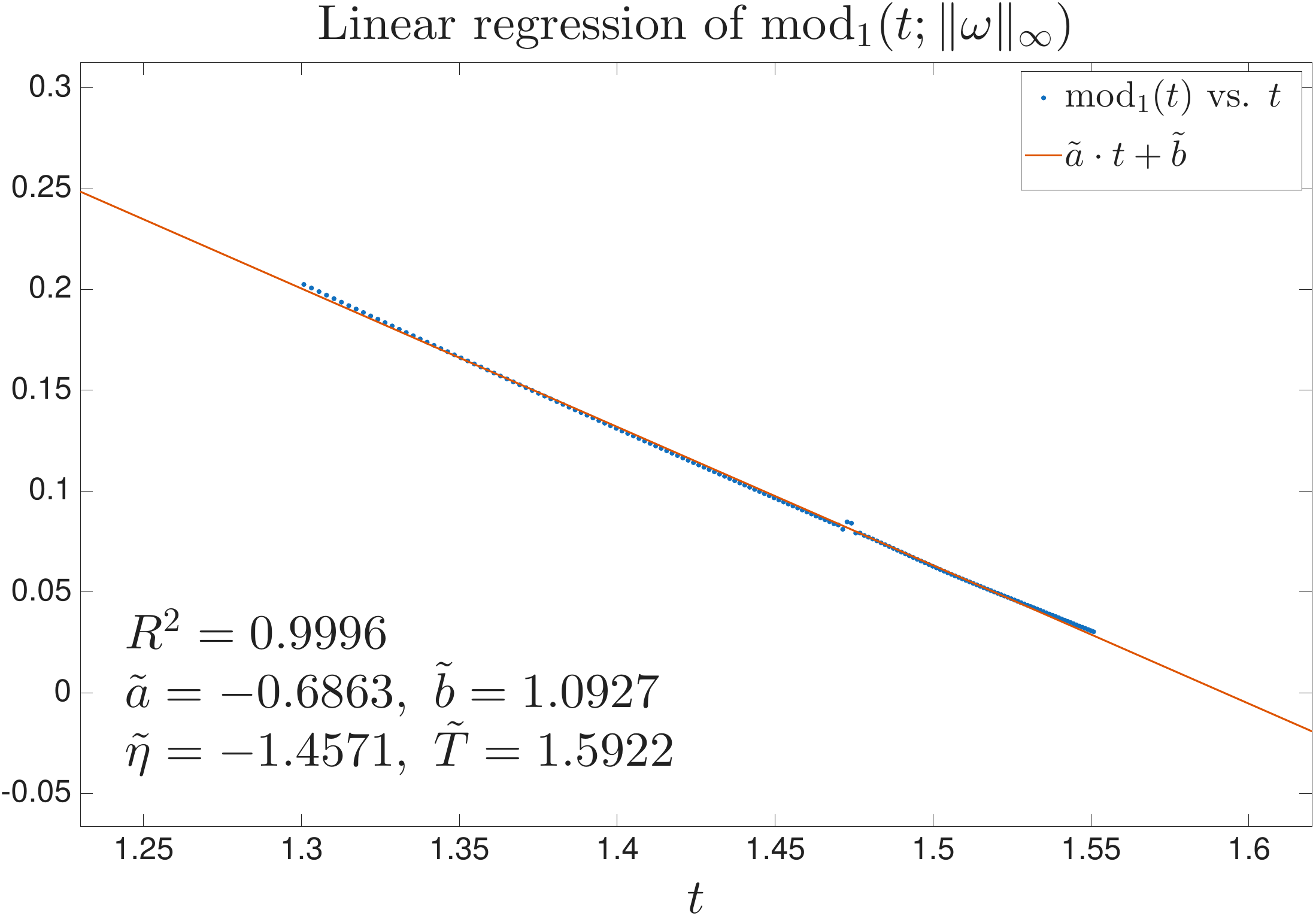}
    \end{subfigure}
    \begin{subfigure}{0.49\textwidth}
        \includegraphics[width=\textwidth]{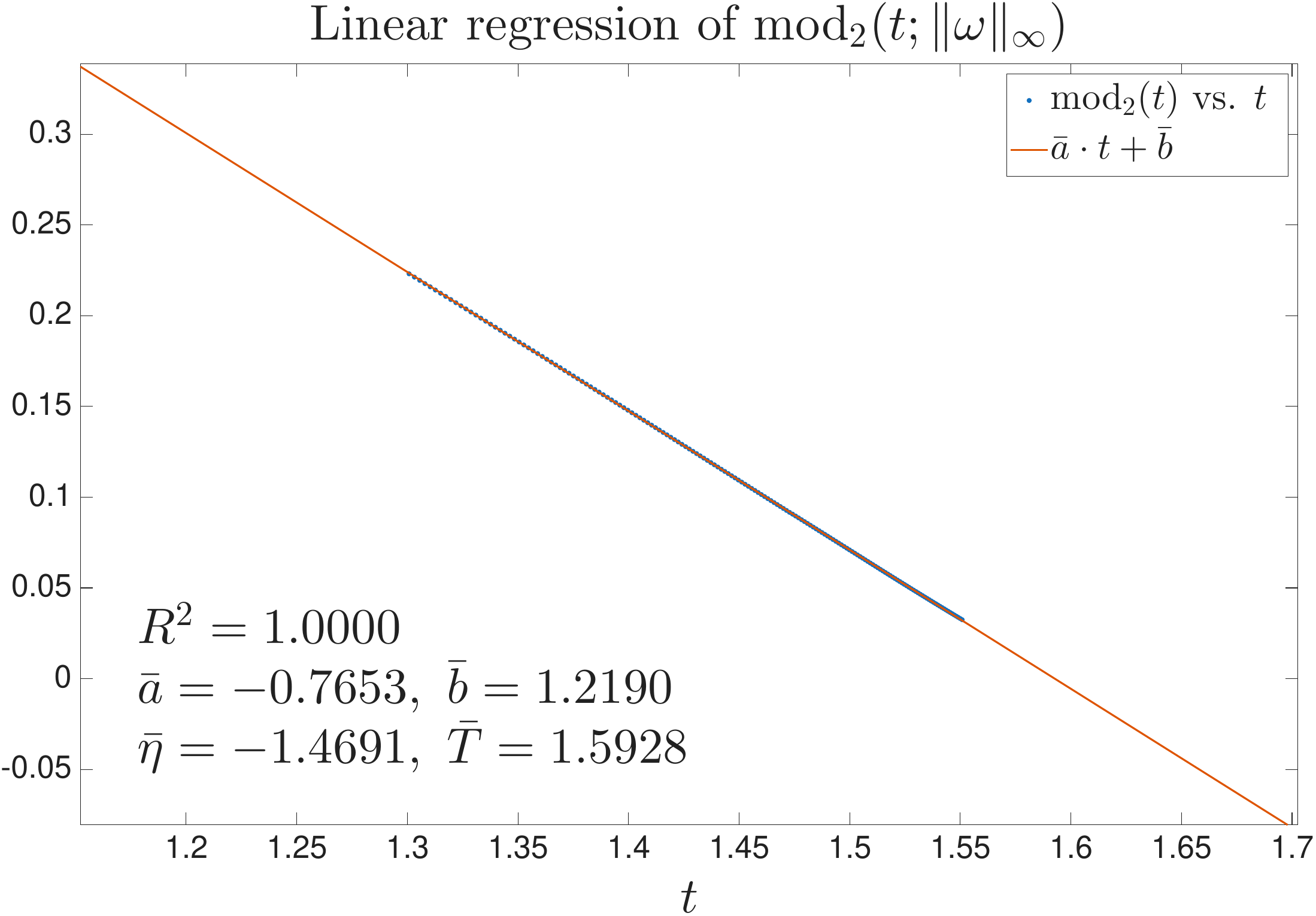}
    \end{subfigure}
    \caption{Fitting of the power law exponent $\eta$ for $\|\omega(\cdot,t)\|_{L^\infty}$ with $a=-0.2$ over the time interval $[t_1,t_2]=[1.3,1.55]$. Left: linear regression of $\mathrm{mod}_1(t;\|\omega\|_{L^\infty})$. Right: linear regression of $\mathrm{mod}_2(t;\|\omega\|_{L^\infty})$. The blue points are the data points obtained from our computation, and the red lines are the linear models (plotted on a larger time interval for clarity). The small jump near $t=1.47$ is caused by an adaptive mesh update.}
    \label{fig:fit_hat_lambda}
\end{figure}

\begin{figure}[htbp]
    \begin{subfigure}{0.49\textwidth}
        \includegraphics[width=\textwidth]{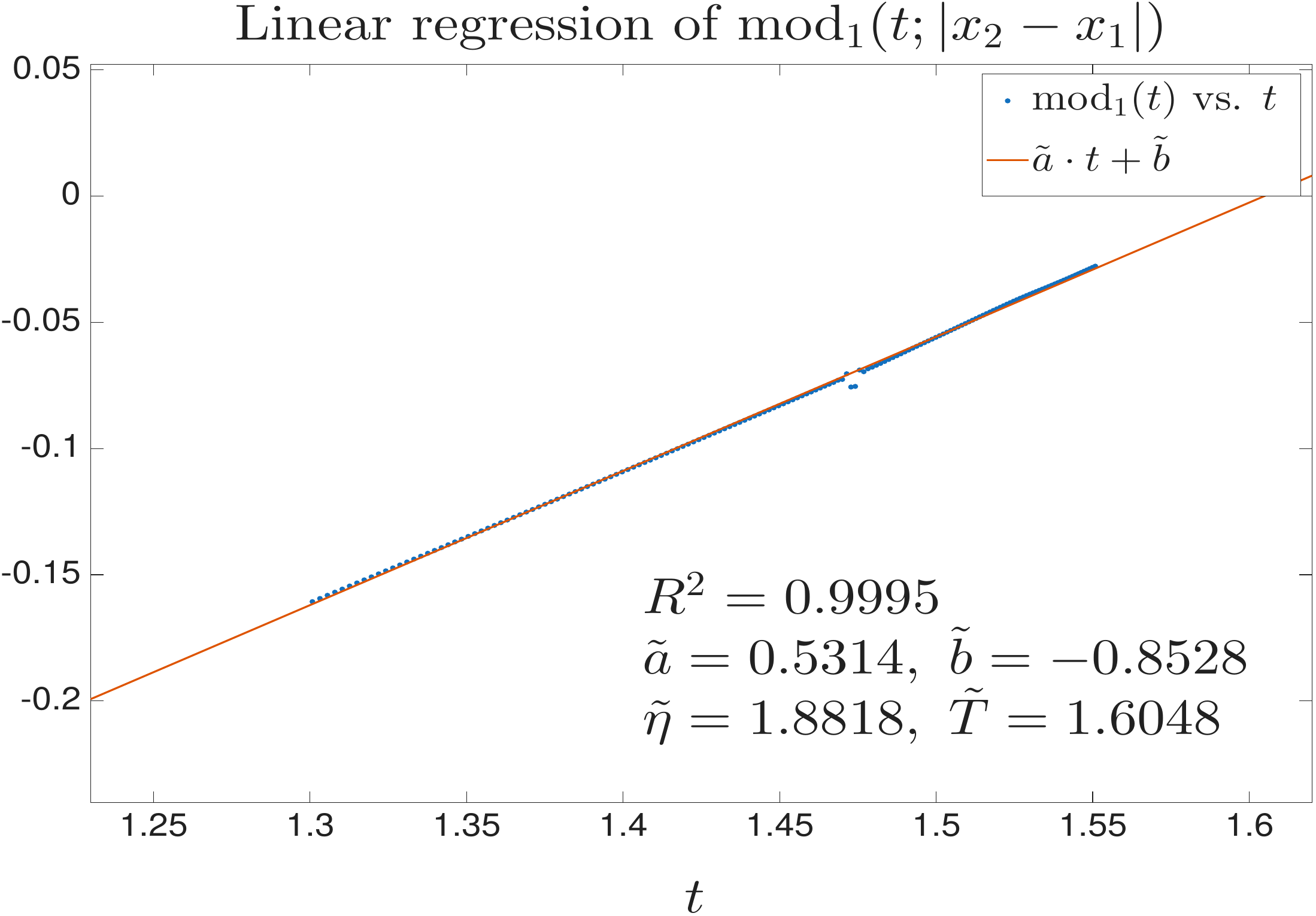}
    \end{subfigure}
    \begin{subfigure}{0.49\textwidth}
        \includegraphics[width=\textwidth]{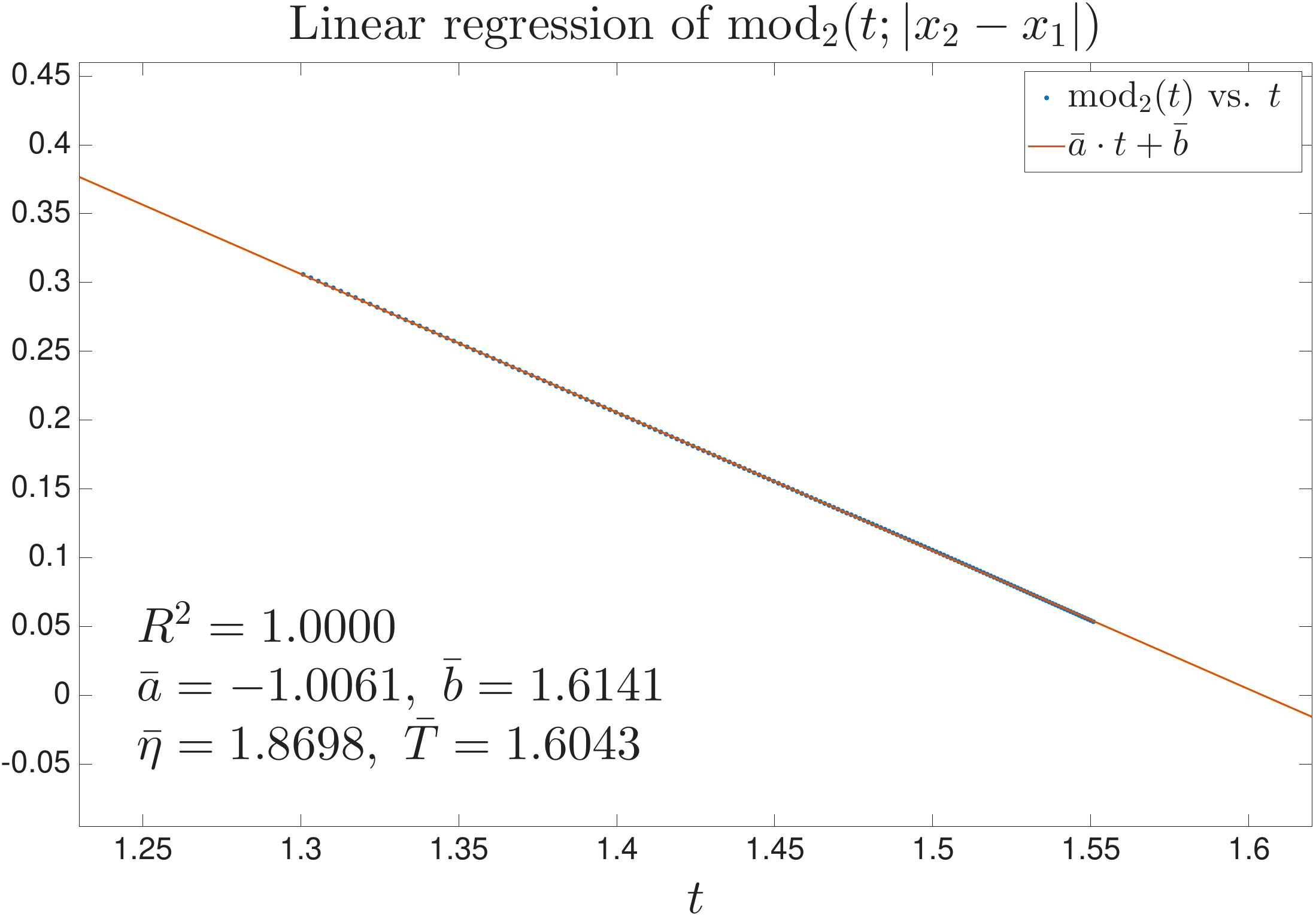}
    \end{subfigure}
    \caption{Fitting of the power law exponent $\eta$ for $|x_2(t)-x_1(t)|$ with $a=-0.2$ over the time interval $[t_1,t_2]=[1.3,1.55]$. Left: linear regression of $\mathrm{mod}_1(t;|x_2-x_1|)$. Right: linear regression of $\mathrm{mod}_2(t;|x_2-x_1|)$. The blue points are the data points obtained from our computation, and the red lines are the linear models (plotted on a larger time interval for clarity). The small jump near $t=1.47$ is caused by an adaptive mesh update.}
    \label{fig:fit_hat_gamma}
\end{figure}

One can see from Figures \ref{fig:fit_hat_lambda} and \ref{fig:fit_hat_gamma} that the linear regression models match the data well, with $R^2$ values close to $1$. This strong agreement supports the power-law behavior of $\|\omega(\cdot,t)\|_{L^\infty}$ and the thin-peak width $|x_2(t)-x_1(t)|$ as $t\to T^-$, as assumed by the two-scale ansatz \eqref{eqt:multiscale_blowup_recall}. Moreover, the fitted exponent $\hat\lambda=-1.4691$ indicates a blowup rate faster than $(T-t)^{-1}$, while the fitted shrinking exponent $\hat\gamma=1.8698$ shows that the inner scale collapses faster than the outer scale (recall $\gamma=0.9114$ from Section \ref{sec:odd_initial_data}). Together, these results provide further quantitative evidence for the two-scale nature of the blowup.

\subsection{Scaling factors for $-1\le a\le -0.1$}\label{sec:scaling_factors_multiscale}
In this subsection, we present the scaling factors $\hat \lambda$, $\hat \gamma$, and $\gamma$ of the two-scale blowup for $-1\le a\le -0.1$. The parameters $\hat \lambda$ and $\hat \gamma$ are obtained by fitting the data using the procedure described in the previous subsection; the results are summarized in Table \ref{tab:multiscale_scaling_factors}. For comparison, we also list the shrinking rate of the outer scale $\gamma$ computed in Section \ref{sec:odd_initial_data}. We observe that for all $a \in [-1, -0.1]$, the fitted factors satisfy $\hat\lambda<-1$, indicating that the magnitude of the solution grows faster than $(T-t)^{-1}$, and moreover $\hat\gamma>\gamma$, showing that the inner scale collapses faster than the outer scale. These features are consistent with the two-scale blowup scenario. Furthermore, the fitted data satisfy $\gamma-\hat\gamma-\hat\lambda-1<0$, which supports our analysis in Section \ref{sec:2scale_analysis}.

We also observe that, as $a$ decreases, $|\hat\lambda|$ decreases (indicating a slower growth of $\|\omega(\cdot,t)\|_{L^\infty}$), while both $\hat\gamma$ and $\gamma$ increase (indicating faster spatial collapsing on both the inner and outer scales).

\begin{table}[htbp]
    \begin{tabular}{|c|c|c|c|c|c|}
        \hline
        $a$           & $-0.1$    & $-0.2$    & $-0.3$     & $-0.4$    & $-0.5$     \\ \hline
        $\hat\lambda$ & $-1.4852$ & $-1.4691$ & $-1.3282$  & $-1.2923$ & $ -1.2887$ \\ \hline
        $\hat \gamma$ & $1.8577$  & $1.8698$  & $1.9635$   & $2.0178$  & $2.0678$   \\ \hline
        $\gamma$      & $0.7827$  & $0.9114$  & $1.0124$   & $1.0986$  & $1.1821$   \\ \hline\hline
        $a$           & $-0.6$    & $-0.7$    & $-0.8$     & $-0.9$    & $-1$       \\ \hline
        $\hat\lambda$ & $-1.2681$ & $-1.2612$ & $ -1.2487$ & $-1.2268$ & $-1.2038$  \\ \hline
        $\hat \gamma$ & $2.1580$  & $2.2936$  & $2.4178$   & $2.4820$  & $2.5562$   \\ \hline
        $\gamma$      & $1.2782$  & $1.3888$  & $1.4858$   & $1.5866$  & $1.6841$   \\ \hline
    \end{tabular}
    \caption{Scaling factors for $-1\le a\le -0.1$. The initial data are chosen to be odd symmetric as in Case \nameref{cas:initialdata_case1.1}.}
    \label{tab:multiscale_scaling_factors}
\end{table}

\section{Existence of traveling wave solutions}\label{sec:existence_traveling_wave}
The goal of this section is to prove the existence of traveling wave solutions for the gCLM equation \eqref{eqt:gCLM} and discuss their properties. Recall that a traveling wave profile is a solution to the equation
\[
    (a\bar u-r) \bar\omega_x=\bar u_x\bar\omega,
\]
where $\bar u_x=\mtx{H}(\bar\omega)$ and $\bar u(0) = 0$. For notational simplicity, we will still use $\omega,u$ for $\bar\omega,\bar u$ respectively. We aim to prove the existence of a solution $(\omega,u,r)$ to the traveling wave profile equation
\begin{equation}\label{eqt:traveling_profile}
    (a u-r) \omega_x=u_x\omega,\quad u_x=\mtx{H}(\omega),
\end{equation}
for all $a\in(-\infty,1]$. For the $a=0$ case, the profile equation \eqref{eqt:traveling_profile} becomes
\[
    -r\omega_x=u_x\omega,
\]
which admits an explicit solution
\begin{equation}{\label{eqt:traveling_profile_a0}}
    \omega(x)=\frac{1}{1+x^2},\quad u(x)=\frac{1}{2}\ln(1+x^2),\quad r=\frac{1}{2}.
\end{equation}
In general, the profile equation \eqref{eqt:traveling_profile} is a nonlinear nonlocal equation, and it is difficult to find explicit solutions. In the remaining part of this section, we will prove the existence of non-trivial solutions to \eqref{eqt:traveling_profile} for all $a\in(-\infty,1]$ by a fixed-point method and discuss some properties of such solutions.

\subsection{Existence of traveling wave solutions by a fixed-point method} 
To prove the nonlinear system \eqref{eqt:traveling_profile} admits a non-trivial solution for each $a\in(-\infty,1]$, we follow the idea of \cite{huang2024self}: we reformulate the problem as a fixed-point problem of some nonlinear map and then apply the Schauder fixed-point theorem to show the existence of fixed points. To this end, we need to select an appropriate function space in which we can establish the continuity and compactness of our nonlinear map.

Consider a Banach space of continuous even functions
\[
    \mathbb V:=\left\{\omega\in C(\mathbb R)\mid\omega(-x)=\omega(x),\|\rho \omega\|_{L^\infty}<+\infty\right\},
\]
endowed with a weighted $L^\infty$-norm $\|\rho \omega\|_{L^\infty}$, referred to as the $L^\infty_\rho$-norm, where $\rho(x)=(1+|x|)^{-1/2}$. Moreover, for each $a\in (-\infty,1]$ we consider a closed (under the $L^\infty_\rho$-norm) and convex subset $\mathbb D_a$ of $\mathbb V$. We define
\[
    \begin{split}
        \mathbb D_a:=\big\{\omega\in \mathbb V\mid
         & \ \omega(0)=1, (1-x^2)_+\le \omega(x)\le 1\text{ for all }x,                                                               \\
         & \ \omega(x)\text{ is non-increasing on }[0,+\infty), \omega(\sqrt s)\text{ is convex in }s,\omega'_-(1/2)\le-\eta_a\big\}.
    \end{split}
\]
Here $\eta_a=1/\left( 2^{9/2}(4+|a|)^3 \right)$ is a constant depending on $a$, whose precise value is not important; it only matters that $\eta_a>0$. In fact, one can check that $\omega(x)\in \mathbb D_a$ implies
\[
    (1-x^2)_+\le \omega(x)\le \max\left\{ 1-\eta_a x^2,1-\eta_a/4 \right\}<1,\quad x> 0,
\]
where the second inequality follows from the assumptions that $\omega(\sqrt s)$ is convex in $s$, $\omega'_-(1/2)\le -\eta_a$, and $\omega(x)$ is non-increasing on $[0,+\infty)$. This avoids $\omega(x)\equiv 1$ being in $\mathbb D_a$.

Now we construct an $a$-dependent nonlinear map whose potential fixed points are the traveling wave solutions. For $\omega\in\mathbb D_a$, we first define a linear operator by the 1D Biot--Savart law (i.e. \eqref{eqt:biot_savart_law}) in the gCLM equation 

\begin{equation}\label{eqt:biot_savart}
    \mtx{T}(\omega)(x):=\frac{1}{\pi}\int_{-\infty}^{+\infty}\ln\left|\frac{x-y}{y}\right|\omega(y)\idiff y=\frac{1}{\pi}\int _0^{+\infty}\ln\left| \frac{x^2-y^2}{y^2} \right|\omega(y)\idiff y,
\end{equation}
where the second equality follows from the even symmetry of $\omega$. Next, for $a\in(-\infty,0)\cup (0,1]$ we define
\[
    \mtx{R}_a(\omega)(x)=\left(1-\frac{a\mtx{T}(\omega)(x)}{r(\omega)}\right)_+^{\frac{1}{a}},\quad \omega\in\mathbb D_a,
\]
where
\begin{equation}\label{eqt:traveling_speed}
    r(\omega)=\frac{1}{\pi}\int_0^{+\infty}\frac{\omega(0)-\omega(y)}{y^2}\idiff y.
\end{equation}
Note that $r(\omega)<+\infty$ holds for $\omega\in\mathbb D_a$ since $(1-x^2)_+\leq \omega(x)\le1$ for all $x$. For the special case $a=0$, we define
\[
    \mtx{R}_0(\omega)(x)=\mathrm{e}^{-\frac{\mtx{T}(\omega)(x)}{r(\omega)}},\quad \omega\in\mathbb D_0.
\]
One can check that this definition is consistent with the definition of $\mtx{R}_a$ for $a\ne0$ by taking the limit $a\to 0$.

We now aim to study the fixed-point problem
\[
    \mtx{R}_a(\omega)=\omega,\quad \omega\in\mathbb D_a.
\]
First we prove a proposition explaining how a fixed point of $\mtx{R}_a$ corresponds to a traveling wave solution.

\begin{proposition}\label{prop:fixed_point_is_traveling_wave}
    For any $a\in(-\infty,1]$, if $\omega\in\mathbb D_a$ is a fixed point of $\mtx{R}_a$, i.e. $\mtx{R}_a(\omega)=\omega$, then $(\omega,r(\omega))$ is a solution to the traveling wave profile equation \eqref{eqt:traveling_profile}.

    Conversely, if $(\omega,r)$ is a solution to \eqref{eqt:traveling_profile} such that $\omega(x)$ is an even function of $x$, $\omega(x)\ge 0$ and $\omega'(x)\le 0$ for all $x\ge 0$, $\omega(0)=1$, and $\lim_{x\to 0}\omega'(x)/(2x)=-1$ (by re-normalization), then $\omega$ is a fixed point of $\mtx{R}_a$, and $r$ is related to $\omega$ as in \eqref{eqt:traveling_speed}.
\end{proposition}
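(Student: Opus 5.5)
The plan is to view the fixed-point relation $\omega=\mtx{R}_a(\omega)$ as the integrated form of the ODE $(au-r)\omega_x=u_x\omega$, where $u=\mtx{T}(\omega)$, so that $u_x=\mtx{H}(\omega)$ and $u(0)=0$. Formally, wherever $\omega>0$, dividing the ODE by $\omega(au-r)$ gives
\[
\frac{\omega_x}{\omega}=\frac{u_x}{au-r}=\frac{1}{a}\frac{\mathrm{d}}{\mathrm{d}x}\ln|r-au| .
\]
Integrating from $0$ with $\omega(0)=1$ and $u(0)=0$ yields $\omega=(1-au/r)^{1/a}$. When $a=0$ the same computation gives $\omega=\mathrm{e}^{-u/r}$. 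Both directions of the proposition amount to making this rigorous, including the cutoff $(\cdot)_+$ when $a>0$.

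\textbf{Forward direction.} Let $\omega\in\mathbb D_a$ be a fixed point, so $\omega=\mtx{R}_a(\omega)$, and set $u=\mtx{T}(\omega)$ and $r=r(\omega)$.
\begin{itemize}
\item First check that $r\in(0,\infty)$. The integrand $(1-\omega(y))/y^2$ is nonnegative and not identically zero, because $\omega<1$ for $x>0$ as noted after the definition of $\mathbb D_a$. It is integrable because $(1-x^2)_+\le\omega\le1$.
\item Next check that $u$ is $C^1$ on $\{\omega>0\}$ with $u_x=\mtx{H}(\omega)$. Since $\omega$ is monotone and $\omega(\sqrt s)$ is convex, $\omega$ is locally Lipschitz. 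That regularity justifies differentiating under the log kernel and makes $\mtx{H}(\omega)$ continuous.
\item On the set $\{x: 1-au/r>0\}$, which equals $\{\omega>0\}$, the map $s\mapsto s_+^{1/a}$ is smooth. The chain rule applied to $\omega=(1-au/r)^{1/a}$ gives
\[
\omega_x=\frac1a\Bigl(1-\frac{au}{r}\Bigr)^{1/a-1}\Bigl(-\frac{a u_x}{r}\Bigr)=-\frac{u_x\,\omega}{r-au}.
\]
This is exactly $(au-r)\omega_x=u_x\omega$.
\item On the complement $\{\omega=0\}$, which can only occur when $a\in(0,1]$, both sides of the equation vanish. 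At the boundary of the support one needs $\omega_x$ to exist, or else interprets the equation in the weak or one-sided sense. For $a\le1$ the exponent satisfies $1/a\ge1$, so $s_+^{1/a}$ is $C^1$ and $\omega_x=0$ there, as required.
\end{itemize}

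\textbf{Converse direction.} Start from a solution $(\omega,r)$ with the stated normalization, and again set $u=\mtx{T}(\omega)$.
\begin{itemize}
\item Recover $r$ by evaluating the equation at $x\to0$. Since $u(0)=0$, dividing by $x$ gives
\[
-r\lim_{x\to0}\frac{\omega_x}{x}=u_x(0)\,\omega(0).
\]
Here $\lim\omega_x/x=-2$ by the normalization, $\omega(0)=1$, and for even $\omega$,
\[
u_x(0)=\mtx{H}(\omega)(0)=\frac1\pi\int_0^\infty\frac{\omega(0)-\omega(y)}{y^2}\,\mathrm{d}y\cdot 2 .
\]
This last formula follows by writing $\mathrm{P.V.}\int\omega(y)/(-y)$ after an integration by parts in the form $\frac{1}{\pi}\int\omega'(y)\ln|y|$, and it needs to be checked carefully, keeping track of the constant and the sign. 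The expected outcome is that the resulting relation reduces to $r=r(\omega)$ as in \eqref{eqt:traveling_speed}.
\item Show that $r-au>0$ on the support of $\omega$.
\begin{itemize}
\item For $a\le0$: $u\ge0$, since $\mtx{T}(\omega)(x)=\frac1\pi\int_0^\infty\ln|x^2/y^2-1|\,\omega(y)\,\mathrm{d}y$ with $\omega$ nonincreasing, and the needed inequality is a standard rearrangement-type fact. Hence $r-au\ge r>0$.
\item For $a>0$: $r-au>0$ holds near $0$. On the maximal interval $[0,L)$ where $r-au>0$, integrate $\omega_x/\omega=u_x/(au-r)$ to obtain the formula for $\omega$. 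At $x=L$ this forces $\omega=0$.
\end{itemize}
\item Beyond the support, $\omega\equiv0$ by monotonicity and nonnegativity. One must then check that $1-au/r\le0$ there, so that the positive part matches. I would argue this by showing that $u$ is increasing for $x>L$, using $u_x=\mtx{H}(\omega)>0$ outside the support of a nonnegative even $\omega$ for $x>0$.
\end{itemize}

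I expect the main obstacle to be the analysis at and beyond the free boundary $x=L$ in the case $a>0$: showing that $\omega$ vanishes exactly where $r-au$ does and that $1-au/r$ stays nonpositive afterwards. A secondary obstacle is the regularity needed to differentiate $\mtx{T}(\omega)$ for $\omega$ that is merely in $\mathbb D_a$.
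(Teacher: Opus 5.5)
Your overall route is the paper's: both directions amount to differentiating, respectively integrating, $(au-r)\omega_x=u_x\omega$ with $u=\mtx{T}(\omega)$. The speed $r$ is then fixed by the normalization at the origin; the paper defers this to the computation \eqref{eqt:lim_T_over_x} in the proof of Lemma~\ref{lem:R_a_properties}.

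\textbf{The identification of $r$ fails as written.} For even $\omega$ the Hilbert transform $\mtx{H}(\omega)$ is odd; equivalently, $u=\mtx{T}(\omega)$ is even. Hence $u_x(0)=\mtx{H}(\omega)(0)=0$. Your own integration by parts confirms this, since $\int_{\mathbb R}\omega'(y)\ln|y|\idiff y=0$ when $\omega'$ is odd.

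Your displayed relation also divides only the left-hand side by $x$. With the correct value $u_x(0)=0$ it would read $2r=0$, contradicting $r>0$.

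What you need is second-order information at the origin. Divide the equation by $x$ on both sides and use $u(0)=0$, $\omega_x/x\to-2$ and $\omega(0)=1$. This gives
\[
2r=\lim_{x\to0}\frac{u_x(x)}{x}=-\frac{2}{\pi}\int_0^{+\infty}\frac{\omega'(y)}{y}\idiff y=\frac{2}{\pi}\int_0^{+\infty}\frac{1-\omega(y)}{y^2}\idiff y=2r(\omega).
\]
The limit comes from the representation \eqref{eqt:T_omega_derivative} of $\mtx{T}(\omega)'$. The region $0<y<2x$ contributes $O(x)$, because $|\omega'(y)|\lesssim y$ near $0$. On $y>2x$ the kernel $x^{-1}\ln|(x+y)/(x-y)|$ is $\lesssim 1/y$ and tends to $2/y$, so dominated convergence applies there, using also $\int_1^{\infty}|\omega'|\le 1$.

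The integral you wrote down is exactly $u_{xx}(0)$, not $u_x(0)$, which is why your expected conclusion $r=r(\omega)$ is nonetheless correct. The corrected computation also gives $r>0$, since $\omega\not\equiv1$, and you need that afterwards.

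\textbf{Two smaller points.}

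First, at $a=1$ the exponent is $1/a=1$, and $s\mapsto s_+$ is not $C^1$. Then $\omega=(1-u/r)_+$ has a corner at $x=L$ with $\omega'(L^-)=-\mtx{H}(\omega)(L)/r<0$, so your claim $\omega_x(L)=0$ is false there. The equation still holds at $L$ in the one-sided sense, because $au(L)-r=0$ and $\omega(L)=0$; the paper likewise only records one-sided derivatives at $L$.

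Second, for $a\le0$, positivity of $r-au$ does not by itself exclude zeros of $\omega$. Run the same maximal-interval argument you use for $a>0$. On $[0,x_0)$ where $\omega>0$ you get $\omega=(1+|a|u/r)^{-1/|a|}$, whose limit at any finite $x_0$ is positive. Hence $\omega>0$ on all of $\mathbb R$.

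The remaining parts are correct. In particular, your argument that $1-au/r<0$ beyond the free boundary, via $\mtx{H}(\omega)>0$ outside the support, is more explicit than the paper's brief ``solving this ODE'' remark.
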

\begin{proof}
    Note that when $\omega$ is an even function, the velocity $u$ defined by \eqref{eqt:biot_savart_law} coincides with $\mtx{T}(\omega)$. If $\omega\in \mathbb D_a$ is a fixed point of $\mtx{R}_a$, then we have
    \[
        \omega(x)=\begin{cases}
            \left(1-\frac{a\mtx{T}(\omega)(x)}{r(\omega)}\right)_+^{\frac{1}{a}}, & a\le1,a\ne 0, \\
            \mathrm{e}^{-\frac{\mtx{T}(\omega)(x)}{r(\omega)}},                   & a=0.
        \end{cases}
    \]
    One can then directly verify that such $\om$ is a solution to the traveling wave profile equation \eqref{eqt:traveling_profile} with $u=\mtx{T}(\omega)$ and $r=r(\omega)$.

    Conversely, if $(\omega(x),r)$ is a solution to \eqref{eqt:traveling_profile} satisfying the conditions in the proposition, then for $x$ such that $\omega(x)\ne 0$, we have
    \[
        \frac{\omega'}{\omega}=\frac{u'}{au-r}.
    \]
    Solving this ODE and writing $u=\mtx{T}(\omega)$ gives
    \[
        \omega(x)=\begin{cases}
            \left(1-\frac{a\mtx{T}(\omega)(x)}{r(\omega)}\right)_+^{\frac{1}{a}}, & a\le1,a\ne 0, \\
            \mathrm{e}^{-\frac{\mtx{T}(\omega)(x)}{r(\omega)}},                   & a=0.
        \end{cases}
    \]
    Moreover, it is not hard to check that
    \[
        \lim_{x\to 0}\frac{\omega'(x)}{2x}=-1\Longleftrightarrow r=\frac{1}{\pi}\int_0^{+\infty}\frac{1-\omega(y)}{y^2}\idiff y.
    \]
    We defer the details of showing this relation to the proof of Lemma \ref{lem:R_a_properties} below.
\end{proof}

\subsubsection{Properties of $r(\omega)$.} We provide an estimate of $r(\omega)$ that will be useful later.
\begin{lemma}\label{lem:r_properties}
    For any $\omega\in \mathbb D_a$ and any $x>0$,
    \[
        \frac{\eta_a}{\pi}\le r(\omega)\le \min\left\{ \frac{2(x+1)}{\pi x}(1-\omega(x))^{1/2},\frac{2}{\pi} \right\}.
    \]
\end{lemma}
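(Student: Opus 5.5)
The plan is to work directly from the definition $r(\omega)=\frac{1}{\pi}\int_0^{+\infty}\frac{1-\omega(y)}{y^2}\idiff y$ (using $\omega(0)=1$). The integrand is nonnegative, and the three bounds come from three different uses of the constraints defining $\mathbb D_a$.

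\emph{Lower bound.} I will use the quadratic upper bound recorded after the definition of $\mathbb D_a$: for $x>0$, $\omega(x)\le\max\{1-\eta_a x^2,1-\eta_a/4\}$. For $y\in(0,1/2]$ this gives $1-\omega(y)\ge\eta_a y^2$, since $\omega(\sqrt s)$ is convex and has slope at most $-\eta_a$ at $s=1/4$, so the chord from $s=0$ to $s=y^2$ has slope $\le-\eta_a$. For $y\ge 1/2$, monotonicity gives $1-\omega(y)\ge 1-\omega(1/2)\ge \eta_a/4$. Hence
\[
\pi r(\omega)\ge \int_0^{1/2}\eta_a\idiff y+\int_{1/2}^{+\infty}\frac{\eta_a}{4y^2}\idiff y=\frac{\eta_a}{2}+\frac{\eta_a}{2}=\eta_a .
\]

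\emph{Upper bound $2/\pi$.} Here I use $\omega\ge(1-y^2)_+$, so $1-\omega(y)\le\min\{y^2,1\}$. This gives $\pi r\le\int_0^1 1\idiff y+\int_1^\infty y^{-2}\idiff y=2$.

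\emph{Upper bound in terms of $1-\omega(x)$.} This is the main step. Set $\delta=1-\omega(x)$ and $g(s)=1-\omega(\sqrt s)$. Then $g$ is concave, nondecreasing, and satisfies $g(0)=0$ and $g(s)\le \min\{s,1\}$. Concavity gives $g(s)\le \frac{s}{x^2}\delta$ for $s\ge x^2$, which is too weak far out, so I will combine it with monotonicity and the bound $g\le 1$. Concavity also gives $g(s)\ge \frac{s}{x^2}\delta$ for $s\le x^2$, which is in the wrong direction, so on $[0,x^2]$ I will use $g(y^2)\le\min\{y^2,\delta\}$ (monotonicity and $g(s)\le s$). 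On $[x^2,\infty)$ I will use $g(y^2)\le\min\{y^2\delta/x^2,1\}$. Splitting the integral at $\sqrt\delta$ (which lies below $x$ because $\delta\le x^2$), at $x$, and at $x/\sqrt\delta$ then gives
\[
\pi r\le \sqrt\delta+\delta\left(\frac{1}{\sqrt\delta}-\frac1x\right)+\frac{\delta}{x^2}\left(\frac{x}{\sqrt\delta}-x\right)+\frac{\sqrt\delta}{x}\le 2\sqrt\delta+\frac{2\sqrt\delta}{x}=\frac{2(x+1)}{x}\sqrt\delta .
\]
If instead $x/\sqrt\delta<x$, which cannot happen since $\delta\le1$, the bound would need adjusting. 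Similarly, the case $\delta\ge x^2$ can only occur with $\delta=x^2$ and is handled the same way.

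I expect the main obstacle to be this case bookkeeping: keeping all the constants within the factor $2(x+1)/x$. If the split above turns out to be loose, I will fall back to the cruder bound $g\le\delta$ on $[0,x]$ and $g\le\min\{1,y^2\delta/x^2\}$ beyond, and then optimize the cutoffs.
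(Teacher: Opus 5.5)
Your proposal is correct and follows the paper's proof essentially step by step. You use the same pointwise bounds: $1-\omega(y)\ge\eta_a y^2$ on $(0,1/2]$ and $\ge\eta_a/4$ beyond for the lower bound; $1-\omega(y)\le\min\{y^2,1\}$ for the $2/\pi$ bound; and, for the last bound, $\min\{y^2,\delta\}$ on $[0,x]$ together with $\min\{y^2\delta/x^2,1\}$ on $[x,+\infty)$.

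The only difference is cosmetic. The paper adds the last two minima into a single bound valid for all $y\ge 0$ and integrates each one over $[0,+\infty)$, which gives $2\sqrt\delta$ and $2\sqrt\delta/x$ directly and avoids your case bookkeeping. Your four-piece split is also valid and even yields the slightly sharper $2\sqrt\delta+2\sqrt\delta/x-2\delta/x$.
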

\begin{proof}
    The definition of $\mathbb D_a$ implies that for any $x>0$,
    \[
        (1-x^2)_+\le \omega(x)\le \max\left\{ 1-\eta_a x^2,1-\eta_a/4\right\}<1.
    \]
    Therefore, we have
    \[
        r(\omega)=\frac{1}{\pi}\int_0^{+\infty}\frac{1-\omega(y)}{y^2}\idiff y\le \frac{1}{\pi}\int_0^1 1\idiff y+\frac{1}{\pi}\int_1^{+\infty}\frac{1}{y^2}\idiff y=\frac{2}{\pi},
    \]
    \[
        r(\omega)=\frac{1}{\pi}\int_0^{+\infty}\frac{1-\omega(y)}{y^2}\idiff y\ge \frac{1}{\pi}\int_0^{1/2}\eta_a \idiff y+\frac{1}{\pi}\int_{1/2}^{+\infty}\frac{\eta_a}{4y^2}\idiff y=\frac{\eta_a}{\pi}.
    \]
    Next, fix an arbitrary $x>0$. For $0\le y\le x, \omega(y)\ge \max\{(1-y^2)_+,\omega(x)\}$, so $1-\omega(y)\le \min\{y^2,1-\omega(x)\}$. For $y\ge x$, the convexity of $\omega(\sqrt{s})$ implies that $(1-\omega(y))/y^2\le (1-\omega(x))/x^2$, and so $1-\omega(y)\le \min\{ y^2(1-\omega(x))/x^2,1 \}$. Combining these two estimates, we have
    \[
        1-\omega(y)\le \min\left\{ y^2,1-\omega(x) \right\}+\min\left\{ y^2(1-\omega(x))/x^2,1 \right\},\quad y\geq 0.
    \]
    We thus obtain that
    \[
        \begin{split}
            r(\omega) & \le \frac{1}{\pi}\int_0^{+\infty}\frac{\min\left\{ y^2,1-\omega(x) \right\}}{y^2}\idiff y+\frac{1}{\pi}\int_0^{+\infty}\frac{\min\left\{ y^2(1-\omega(x))/x^2,1 \right\}}{y^2}\idiff y \\
                      & \le \frac{2(x+1)}{\pi x}(1-\omega(x))^{1/2},
        \end{split}
    \]
    which is the desired bound.
\end{proof}
\begin{lemma}\label{lem:r_continuity}
    $r(\omega):\mathbb D_a\to \mathbb{R} $ is Hölder continuous in the $L^\infty_\rho$-norm. In particular,
    \[
        |r(\omega_1)-r(\omega_2)|\lesssim \|\rho(\omega_1-\omega_2)\|_{L^\infty}^{1/2},
    \]
    for any $a\in(-\infty,1]$ and $\omega_1,\omega_2\in \mathbb D_a$.
\end{lemma}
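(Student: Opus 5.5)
We need $|r(\omega_1)-r(\omega_2)|\lesssim \|\rho(\omega_1-\omega_2)\|_{L^\infty}^{1/2}$. The plan is to write
\[
r(\omega_1)-r(\omega_2)=\frac{1}{\pi}\int_0^{+\infty}\frac{\omega_2(y)-\omega_1(y)}{y^2}\idiff y,
\]
which uses $\omega_1(0)=\omega_2(0)=1$, and to split the integral at a threshold $\delta\in(0,1]$ chosen later. Set $\epsilon:=\|\rho(\omega_1-\omega_2)\|_{L^\infty}$.

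\emph{Near field $0\le y\le\delta$.} Both $\omega_i$ lie in $[(1-y^2)_+,1]$, so $|\omega_1(y)-\omega_2(y)|\le 1-(1-y^2)_+\le y^2$. Hence this part contributes at most $\frac1\pi\int_0^\delta 1\idiff y=\delta/\pi$, uniformly in $\epsilon$.

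\emph{Far field $y\ge\delta$.} By definition of $\epsilon$, $|\omega_1(y)-\omega_2(y)|\le \epsilon(1+y)^{1/2}$. This part is therefore bounded by
\[
\frac{\epsilon}{\pi}\int_\delta^{+\infty}\frac{(1+y)^{1/2}}{y^2}\idiff y\lesssim \epsilon\left(\int_\delta^1 \frac{\idiff y}{y^2}+\int_1^{+\infty}y^{-3/2}\idiff y\right)\lesssim \epsilon\,\delta^{-1}+\epsilon,
\]
and the tail integral is convergent precisely because of the weight $\rho=(1+|x|)^{-1/2}$. Alternatively, one could use the trivial bound $|\omega_1-\omega_2|\le 1$ for large $y$, but the weighted bound already suffices.

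\emph{Optimization.} Combining the two pieces gives $|r(\omega_1)-r(\omega_2)|\lesssim \delta+\epsilon/\delta+\epsilon$. If $\epsilon\le 1$, take $\delta=\epsilon^{1/2}$ to get $\lesssim \epsilon^{1/2}$. If $\epsilon>1$, Lemma \ref{lem:r_properties} gives $|r(\omega_1)-r(\omega_2)|\le 4/\pi\lesssim\epsilon^{1/2}$ directly. The implicit constant is absolute, independent of $a$, since only the bounds $(1-x^2)_+\le\omega\le1$ and $\omega(0)=1$ from $\mathbb D_a$ are used.

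The only mildly delicate point is the near field: there the $L^\infty_\rho$ control alone gives a nonintegrable $\epsilon/y^2$, so the quadratic pinching $1-\omega(y)\le y^2$ from the lower barrier $(1-x^2)_+$ is what must be used. The balancing of this against the $\epsilon/\delta$ far-field term is what produces the exponent $1/2$.
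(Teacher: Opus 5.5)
Your proposal is correct and follows essentially the paper's own argument: bound $|\omega_1-\omega_2|$ by $\min\{y^2,\epsilon(1+y)^{1/2}\}$ using the lower barrier $(1-y^2)_+$, split the integral at $y=\sqrt{\epsilon}$, and balance the two pieces to get the exponent $1/2$. Your separate treatment of $\epsilon>1$ is unnecessary, since $0\le\omega_i\le 1$ and $\rho\le 1$ already force $\epsilon\le 1$, but it does no harm.
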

\begin{proof}
    Recall that $\rho=(1+|x|)^{-1/2}$. Denote $\delta=\|\rho(\omega_1-\omega_2)\|_{L^\infty}\le 1$. Since $\omega_i\ge (1-x^2)_+$ for $i=1,2$, we have
    \[
        |\omega_1(x)-\omega_2(x)|\le \min\left\{ x^2,\ \delta(1+|x|)^{1/2} \right\}.
    \]
    Hence,
    \[
        \begin{split}
            |r(\omega_1)-r(\omega_2)|
             & \le \frac{1}{\pi}\int_0^{+\infty}\frac{|\omega_1(y)-\omega_2(y)|}{y^2}\idiff y\le \frac{1}{\pi}\left( \int_0^{\sqrt \delta}1\idiff y+\int_{\sqrt \delta}^{+\infty}\frac{\delta(1+y)^{1/2}}{y^2}\idiff y \right) \\
             & =\frac{\sqrt \delta}{\pi}\left( 1+\sqrt{\sqrt \delta+1}+\sqrt{\delta}\cdot\mathrm{arcsinh}\left(\delta^{-1/4}\right)\right)\le \frac{1+\sqrt 2+\mathrm{arcsinh}(1)}{\pi}\sqrt \delta.                           \\
        \end{split}
    \]
    This proves the lemma.
\end{proof}

\subsubsection{Properties of $\mtx{T}(\omega)$.}

\begin{lemma}\label{lem:property_of_T}
    For any $\omega\in\mathbb D_a$, $\mtx{T}(\omega)(0)=0$, $\mtx{T}(\omega)'(x)\ge0$ on $[0,+\infty)$, and $\mtx{T}(\omega)(\sqrt{s})$ is concave in $s$.
\end{lemma}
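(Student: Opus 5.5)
We prove the three properties of $\mtx{T}(\omega)$ for $\omega\in\mathbb D_a$ separately, working throughout with the symmetrized representation
\[
\mtx{T}(\omega)(x)=\frac{1}{\pi}\int_0^{+\infty}\ln\left|\frac{x^2-y^2}{y^2}\right|\omega(y)\idiff y .
\]

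\textbf{Step 1: $\mtx{T}(\omega)(0)=0$.} At $x=0$ the integrand is $\ln 1=0$. Hence this property is immediate.

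\textbf{Step 2: monotonicity.} For $x>0$ the derivative is $\mtx{T}(\omega)'(x)=\mtx{H}(\omega)(x)$. Using evenness we write
\[
\mtx{H}(\omega)(x)=\frac{1}{\pi}\,\mathrm{P.V.}\int_0^{+\infty}\frac{2x\,\omega(y)}{x^2-y^2}\idiff y .
\]
We first remove the constant. Since $\mathrm{P.V.}\int_0^\infty \frac{\idiff y}{x^2-y^2}=0$ for $x>0$, we may subtract $\omega(x)$ and obtain
\[
\mtx{H}(\omega)(x)=\frac{2x}{\pi}\int_0^\infty\frac{\omega(y)-\omega(x)}{x^2-y^2}\idiff y .
\]
This integral converges because $\omega(\sqrt s)$ is convex, hence locally Lipschitz in $s>0$. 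Its integrand is nonnegative: $\omega$ is non-increasing on $[0,\infty)$, so $\omega(y)-\omega(x)$ and $x^2-y^2$ have the same sign. This gives $\mtx{T}(\omega)'\ge 0$.

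\textbf{Step 3: concavity of $\mtx{T}(\omega)(\sqrt s)$.} Write $g(s)=\omega(\sqrt s)$, which is convex and non-increasing. Substituting $y=\sqrt t$, so that $\idiff y=\idiff t/(2\sqrt t)$, gives
\[
\frac{\diff}{\diff s}\mtx{T}(\omega)(\sqrt s)=\frac{\mtx{H}(\omega)(\sqrt s)}{2\sqrt s}=\frac{1}{2\pi}\int_0^\infty\frac{g(t)-g(s)}{s-t}\,\frac{\idiff t}{\sqrt t}.
\]
It then suffices to show that
\[
\Phi(s)=\int_0^\infty\frac{g(s)-g(t)}{t-s}\,t^{-1/2}\idiff t
\]
is non-increasing in $s$. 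We substitute $t=s\tau$ to get
\[
\Phi(s)=s^{-1/2}\int_0^\infty\frac{g(s)-g(s\tau)}{\tau-1}\tau^{-1/2}\idiff\tau .
\]
This factorization is less clean than one would like, so we instead argue through the slope. The difference quotient $q(s,t)=\frac{g(t)-g(s)}{t-s}$ of a convex function is non-decreasing in $s$ for each fixed $t$, so $-q(s,t)$ is non-increasing in $s$. Since $\Phi(s)=\int_0^\infty -q(s,t)\,t^{-1/2}\idiff t$ and the weight $t^{-1/2}$ does not depend on $s$, $\Phi$ is non-increasing. Therefore the derivative of $\mtx{T}(\omega)(\sqrt s)$ is $\frac{1}{2\pi}\Phi(s)$, which is non-increasing, and $\mtx{T}(\omega)(\sqrt s)$ is concave. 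This is the key observation of the proof.

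\textbf{Main obstacle: justifying the differentiations.} Two points need care.
\begin{itemize}
\item Interchange of derivative and integral. This is handled by the local Lipschitz bound on $g$ near $t=s$ and the bound $\omega\le 1$, which give an integrable majorant.
\item Tails of $\Phi$. Near $t=0$, $g$ is bounded and $t^{-1/2}$ is integrable. For large $t$, $|g(s)-g(t)|\le 1$ and the kernel decays like $t^{-3/2}$.
\end{itemize}
Alternatively, one can avoid derivatives entirely by proving the concavity inequality directly, through monotonicity of the increments $\mtx{T}(\omega)(\sqrt{s_2})-\mtx{T}(\omega)(\sqrt{s_1})$ expressed as integrals of $-q$.
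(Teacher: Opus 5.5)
Your proof is correct, and it takes a genuinely different route from the paper's.

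The paper integrates by parts to move the derivative onto $\omega$, obtaining $\mtx{T}(\omega)'(x)=-\frac{1}{\pi}\int_0^{+\infty}\ln\left|\frac{x+y}{x-y}\right|\omega'(y)\idiff{y}\ge 0$. For concavity it smooths $\omega$ so that $\omega(\sqrt s)$ is $C^2$. It then integrates by parts once more to get $\left(\mtx{T}(\omega)'(x)/x\right)'=\frac{1}{\pi}\int_0^{+\infty}G(y/x)\left(\omega'(y)/y\right)'\idiff{y}$. Finally it relies on Lemma \ref{lem:G_properties} for the sign of the kernel $G$ and for its vanishing at $0$ and $+\infty$, which kills the boundary terms.

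You instead subtract $\omega(x)$ using $\mathrm{P.V.}\int_0^{+\infty}\frac{\mathrm{d}y}{x^2-y^2}=0$ and pass to $t=y^2$. This writes $\frac{\mathrm{d}}{\mathrm{d}s}\mtx{T}(\omega)(\sqrt s)$ as an integral of the chord slopes $-q(s,t)$ of the convex function $g$ against the $s$-independent weight $t^{-1/2}$. Non-negativity (from $g$ non-increasing) and monotonicity in $s$ (from the three-chord lemma) then follow at once. There is no smoothing step and no auxiliary kernel, and you use exactly the regularity that $\mathbb D_a$ provides. Your Step 3 formula in fact contains Step 2. By monotone convergence it also gives $\lim_{s\to0^+}\frac{\mathrm{d}}{\mathrm{d}s}\mtx{T}(\omega)(\sqrt s)=\frac{1}{2\pi}\Phi(0)=r(\omega)$.

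What the paper's route buys is the log-kernel formula for $\mtx{T}(\omega)'$. It reuses that formula for the lower bounds on $\mtx{T}(\omega)'$ in Lemma \ref{lem:R_a_properties} and Theorem \ref{thm:asymptotic_behavior}.

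One caution on your justification paragraph: the identity $\mtx{T}(\omega)'=\mtx{H}(\omega)$ does not follow from a single integrable majorant. The difference quotients of $\ln|x^2-y^2|$ have a logarithmic singularity at $y=x+h$ that moves with $h$. The identity is nonetheless standard, e.g.\ by Fubini in the integrated-by-parts form, or by uniform integrability after subtracting $\omega(x)$ using $\mtx{T}(1)\equiv 0$. The paper takes the analogous step for granted too.
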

\begin{proof}
    Taking $x=0$ in the definition of $\mtx{T}(\omega)$ instantly leads to $\mtx{T}(\omega)(0)=0$. To prove $\mtx{T}(\omega)'(x)\ge0$, we use integration by parts to compute that
    \[
        \begin{split}
            \mtx{T}(\omega)(x) & =\frac{1}{\pi}\int _0^{+\infty}\ln\left| \frac{x^2-y^2}{y^2} \right|\omega(y)\idiff y                                                           \\
                         & =\frac{1}{\pi}\int_{0}^{+\infty}\partial_y\left( x\ln\left|\frac{x+y}{x-y}\right|+y\ln\left|\frac{x^2-y^2}{y^2}\right| \right)\omega(y)\idiff y \\
                         & =-\frac{1}{\pi}\int_{0}^{+\infty}\left( x\ln\left|\frac{x+y}{x-y}\right|+y\ln\left|\frac{x^2-y^2}{y^2}\right| \right)\omega'(y)\idiff y.
        \end{split}
    \]
    Therefore, we have
    \begin{equation}\label{eqt:T_omega_derivative}
        \mtx{T}(\omega)'(x)=-\frac{1}{\pi}\int_0^{+\infty}\ln\left| \frac{x+y}{x-y} \right|\omega'(y)\idiff y\ge0.
    \end{equation}
    Next, we show that $\mtx{T}(\omega)(\sqrt{s})$ is concave in $s$. By approximation theory, we may assume that $\omega(\sqrt s)$ is twice differentiable in $s$, so that the convexity of $\omega(\sqrt s)$ is equivalent to $(\omega'(x)/x)'\ge 0$ for $x> 0$. Continuing the calculations above, we have
    \[
        \begin{split}
            \left( \frac{\mtx{T}(\omega)'(x)}{x} \right)'
             & =-\frac{1}{\pi}\int_0^{+\infty}\partial_x\left(\frac{1}{x}\ln\left| \frac{x+y}{x-y} \right|\right)\omega'(y)\idiff y                                        \\
             & =\frac{1}{\pi}\int_0^{+\infty}\left(\frac{2 y}{x^3-x y^2}+\frac{1}{x^2}\ln\left| \frac{x+y}{x-y} \right|\right)\omega'(y)\idiff y                           \\
             & =\frac{1}{\pi}\int_0^{+\infty}\left(\frac{2 y^2}{x^3-x y^2}+\frac{y}{x^2}\ln\left| \frac{x+y}{x-y} \right|\right)\frac{\omega'(y)}{y}\idiff y               \\
             & =-\frac{1}{\pi}\int_0^{+\infty}\left(-\frac{y}{x}+\frac{x^2+y^2}{2x^2}\cdot\ln\left|\frac{x+y}{x-y}\right|\right)\left(\frac{\omega'(y)}{y}\right)'\idiff y \\
             & =\frac{1}{\pi}\int_0^{+\infty}G\left( y/x\right)\left(\frac{\omega'(y)}{y}\right)'\idiff y\le0,
        \end{split}
    \]
    where the function $G$ is defined as
    \begin{equation}\label{eqt:G}
        G(t)=t-\frac{1+t^2}{2}\ln\left| \frac{1+t}{1-t} \right|.
    \end{equation}
    The integration by parts and the final inequality follow from the properties of $G$ collected in the next lemma.
    \end{proof}

    \begin{lemma}\label{lem:G_properties}
        The function $G$ defined in \eqref{eqt:G} satisfies:
        \begin{itemize}
            \item $G(0)=0$ and $\displaystyle \lim_{t\to+\infty}G(t)=0$;
            \item $G(t)\le 0$ for all $t>0$.
        \end{itemize}
    \end{lemma}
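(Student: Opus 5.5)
We need to prove $G(0)=0$, $\lim_{t\to+\infty} G(t)=0$, and $G(t)\le 0$ for $t>0$, where $G(t)=t-\frac{1+t^2}{2}\ln\left|\frac{1+t}{1-t}\right|$. The plan is to exploit an inversion symmetry of $G$, which reduces everything to the interval $(0,1)$, and then settle the sign there by a power series expansion.

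First, $G(0)=0$ is immediate, since $\ln 1=0$. Next, set $L(t)=\ln\left|\frac{1+t}{1-t}\right|$ and note that $L(1/t)=\ln\left|\frac{t+1}{t-1}\right|=L(t)$. Therefore
\[
G(1/t)=\frac1t-\frac{1+t^2}{2t^2}L(t)=\frac{1}{t^2}\left(t-\frac{1+t^2}{2}L(t)\right)=\frac{G(t)}{t^2}.
\]
From this identity, $\lim_{t\to+\infty}G(t)=\lim_{s\to0^+}s^2G(1/s)=\lim_{s\to0^+}G(s)$, so the limit at infinity follows from continuity of $G$ at $0$, and its value is $0$. The identity also shows that the sign of $G$ on $(1,\infty)$ matches its sign on $(0,1)$. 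At $t=1$ we have $G(t)\to-\infty$; there $G$ is interpreted as $-\infty$, which is harmless because $G(y/x)$ appears only inside an integral against $(\omega'(y)/y)'$ with a logarithmic, hence integrable, singularity.

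It remains to show $G<0$ on $(0,1)$. For $0<t<1$ we have $L(t)=2\sum_{k\ge0}\frac{t^{2k+1}}{2k+1}$, so
\[
\frac{1+t^2}{2}L(t)=\sum_{k\ge0}\frac{t^{2k+1}}{2k+1}+\sum_{k\ge0}\frac{t^{2k+3}}{2k+1}=t+\sum_{k\ge1}\left(\frac{1}{2k+1}+\frac{1}{2k-1}\right)t^{2k+1}.
\]
This gives
\[
G(t)=-\sum_{k\ge1}\frac{4k}{4k^2-1}\,t^{2k+1}<0 .
\]
As a cross-check, one could instead compute $G'(t)=-tL(t)$ on $(0,1)$, using $L'(t)=2/(1-t^2)$, and conclude from $G(0)=0$.

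There is no real obstacle in this argument. The only care needed is in handling $t=1$ and in making sure the integration by parts in Lemma \ref{lem:property_of_T} is justified by the vanishing of $G$ at $0$ and at $\infty$, which is exactly what the first bullet provides.
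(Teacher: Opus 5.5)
Your two main ingredients are correct: the inversion identity $G(1/t)=G(t)/t^2$ (from $L(1/t)=L(t)$), and the expansion $G(t)=-\sum_{k\ge1}\frac{4k}{4k^2-1}t^{2k+1}$ on $(0,1)$. The expansion gives $G<0$ on $(0,1)$, and the identity then gives $G<0$ on $(1,\infty)$.

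However, your derivation of $\lim_{t\to+\infty}G(t)=0$ fails as written. With $t=1/s$ you have $G(t)=G(1/s)=s^{-2}G(s)$. The expression $s^2G(1/s)$ that you wrote equals $G(s)=G(1/t)$, not $G(t)$. So the identity actually yields
\[
\lim_{t\to+\infty}G(t)=\lim_{s\to0^+}\frac{G(s)}{s^2},
\]
and continuity of $G$ at $0$ says nothing about this limit; you need $G(s)=o(s^2)$. If $G$ merely vanished linearly at $0$, the same identity would force $G$ to grow linearly at infinity.

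The missing input is already in your proof. The series gives $G(s)=-\frac43 s^3+O(s^5)$, hence $G(t)=t^2G(1/t)=-\frac{4}{3t}+O(t^{-3})\to0$. So establish the series first and deduce the limit from it.

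There is also a smaller slip in your cross-check. Using $L'(t)=2/(1-t^2)$,
\[
G'(t)=1-tL(t)-\frac{1+t^2}{1-t^2}=-tL(t)-\frac{2t^2}{1-t^2},
\]
not $-tL(t)$. This is still negative on $(0,1)$, so the cross-check itself survives.

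With that repair, your route is genuinely different from the paper's. The paper obtains the limit at infinity by L'H\^opital's rule. It gets the sign by showing that $G(t)/(1+t^2)$ is decreasing on $(0,1)$, increasing on $(1,\infty)$, and tends to $0$ at $0^+$ and at $+\infty$. In that argument the limit at infinity is a necessary input for the sign on $(1,\infty)$. (A direct computation gives $\bigl(G(t)/(1+t^2)\bigr)'=\frac{4t^2}{(1+t^2)^2(t+1)(t-1)}$; the numerator printed in the paper differs, but only its positivity is used.)

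Your inversion symmetry makes $(1,\infty)$ a mirror image of $(0,1)$, so the sign there needs no limit argument. The explicit series also gives strict negativity together with the exact rates $G(t)\sim-\frac43t^3$ as $t\to0^+$ and $G(t)\sim-\frac{4}{3t}$ as $t\to+\infty$. The cost is a power-series computation in place of a single derivative.
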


    \begin{proof}
        The identity $G(0)=0$ is immediate. Moreover, applying L'H\^opital's rule gives
        \[
            \begin{split}
                \lim_{t\to+\infty}G(t)
                &=\lim_{t\to+\infty}
                \frac{\frac{t}{1+t^2}-\frac12 \ln\left| \frac{1+t}{1-t} \right|}{\frac{1}{1+t^2}}
                =\lim_{t\to+\infty}\frac{2t}{1-t^2}=0.
            \end{split}
        \]
        Next, a direct computation yields
        \[
            \left(\frac{G(t)}{1+t^2}\right)'
            =\frac{t^4+6t^2+1}{(t^2+1)^2(t+1)(t-1)}.
        \]
        Since the numerator is positive for all $t$, we see that $\frac{G(t)}{1+t^2}$ is strictly decreasing on $(0,1)$ and strictly increasing on $(1,+\infty)$. Together with
        \[
            \lim_{t\to 0^+}\frac{G(t)}{1+t^2}=0,
            \qquad
            \lim_{t\to+\infty}\frac{G(t)}{1+t^2}=0,
        \]
        it follows that $\frac{G(t)}{1+t^2}\le 0$ for all $t>0$, and hence $G(t)\le 0$ for all $t>0$.
    \end{proof}

\subsubsection{Properties of $\mtx{R}_a(\omega)$}
\begin{lemma}\label{lem:R_a_properties}
    For any $a\in(-\infty,1]$, $\mtx{R}_a$ maps $\mathbb D_a$ into itself.
\end{lemma}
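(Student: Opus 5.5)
We need to verify, for $\omega\in\mathbb D_a$ and $\tilde\omega:=\mtx{R}_a(\omega)$, each defining property of $\mathbb D_a$. Write $\tilde\omega=F_a(\mtx{T}(\omega)/r)$ with $r=r(\omega)\in[\eta_a/\pi,2/\pi]$ (Lemma \ref{lem:r_properties}) and $F_a(v)=(1-av)_+^{1/a}$ for $a\ne0$, $F_0(v)=\mathrm{e}^{-v}$. The function $F_a$ is non-increasing on $[0,\infty)$ with $F_a(0)=1$. It is also convex there: for $a<0$ it equals $(1+|a|v)^{-1/|a|}$, and for $0<a\le1$ the exponent $1/a\ge1$.

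\textbf{Elementary properties.} Evenness and continuity are inherited from $\mtx{T}(\omega)$. By Lemma \ref{lem:property_of_T}, $\mtx{T}(\omega)(0)=0$ and $\mtx{T}(\omega)\ge0$ is non-decreasing on $[0,\infty)$. Hence $\tilde\omega(0)=1$, $0\le\tilde\omega\le1$, and $\tilde\omega$ is non-increasing on $[0,\infty)$. Finiteness of $\|\rho\tilde\omega\|_{L^\infty}$ is trivial since $\tilde\omega\le1$. For convexity of $\tilde\omega(\sqrt s)$: $s\mapsto\mtx{T}(\omega)(\sqrt s)$ is concave and non-decreasing (Lemma \ref{lem:property_of_T}), and $F_a$ is convex and non-increasing. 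A convex non-increasing function composed with a concave function is convex, so $\tilde\omega(\sqrt s)$ is convex.

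\textbf{Lower bound $\tilde\omega\ge(1-x^2)_+$.} By convexity of $\tilde\omega(\sqrt s)$ in $s$ and $\tilde\omega\ge0$, it suffices to show $\frac{d}{ds}\tilde\omega(\sqrt s)|_{s=0^+}\ge-1$, i.e. $\lim_{x\to0}\tilde\omega'(x)/(2x)\ge-1$. The chain rule gives
\[
\lim_{x\to0}\frac{\tilde\omega'(x)}{2x}=-\frac{1}{r}\lim_{x\to0}\frac{\mtx{T}(\omega)'(x)}{2x}=-\frac{\mtx{T}(\omega)''(0)}{2r}.
\]
Differentiating \eqref{eqt:T_omega_derivative}, or directly expanding $\mtx{T}(\omega)(x)=\frac1\pi\int_0^\infty\ln|1-x^2/y^2|\,\omega(y)\idiff y$ after subtracting $\omega(0)$, one computes
\[
\mtx{T}(\omega)''(0)=\frac{2}{\pi}\int_0^\infty\frac{1-\omega(y)}{y^2}\idiff y=2r(\omega).
\]
Concretely, use $\int_0^\infty\ln|1-x^2/y^2|\idiff y=0$ to write $\mtx{T}(\omega)(x)=-\frac1\pi\int_0^\infty\ln|1-x^2/y^2|\,(1-\omega(y))\idiff y$, then Taylor expand, justified by $1-\omega(y)\le\min\{y^2,1\}$. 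So the limit is exactly $-1$. This computation is also the deferred identity in Proposition \ref{prop:fixed_point_is_traveling_wave}, and it explains the choice of $r(\omega)$.

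\textbf{Main obstacle: the slope bound $\tilde\omega'_-(1/2)\le-\eta_a$.} Write $\tilde\omega'(1/2)=F_a'(v)\,\mtx{T}(\omega)'(1/2)/r$ with $v=\mtx{T}(\omega)(1/2)/r$, and bound each factor separately.

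First, a lower bound on $\mtx{T}(\omega)'(1/2)$. From \eqref{eqt:T_omega_derivative}, $\mtx{T}(\omega)'(1/2)\ge-\frac1\pi\int_0^{1/2}\ln\frac{1/2+y}{1/2-y}\,\omega'(y)\idiff y$. The convexity of $\omega(\sqrt s)$ makes $\omega'(y)/y$ non-decreasing, so $\omega'(y)\le 2y\,\omega'_-(1/2)\le-2\eta_a y$ on $(0,1/2)$. This yields $\mtx{T}(\omega)'(1/2)\ge c\,\eta_a$ for an absolute $c>0$.

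Second, an upper bound on $v$, so that $|F_a'(v)|$ is bounded below. Concavity of $\mtx{T}(\omega)(\sqrt s)$ with zero value at $s=0$ and initial slope $r$ gives $\mtx{T}(\omega)(x)\le r x^2$. Hence $v\le1/4$, and $|F_a'(v)|=(1-av)^{1/a-1}\ge(1+|a|/4)^{-1/|a|-1}$ for $a<0$, with an analogous bound $\ge(3/4)^{1/a-1}$ for $0<a\le1$.

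Finally, $1/r\ge\pi/2$. Combining the three bounds gives $|\tilde\omega'(1/2)|\ge C(a)\,\eta_a$, where the constant is of size $(4+|a|)^{-1}$-ish. The delicate point is checking that this product actually dominates $\eta_a=2^{-9/2}(4+|a|)^{-3}$. The danger is that the lower bound on $\mtx{T}'$ is itself proportional to $\eta_a$, which would make the argument circular. If it is, I would instead bound $\mtx{T}(\omega)'(1/2)$ below using only $\omega\ge(1-x^2)_+$ and monotonicity, for instance by splitting the integral in \eqref{eqt:T_omega_derivative} at $y=1$, to obtain an absolute constant. Tracking these explicit constants against the prescribed value of $\eta_a$ is where I expect the real work.
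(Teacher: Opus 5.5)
Your treatment of evenness, $\tilde\omega(0)=1$, monotonicity, the lower bound $(1-x^2)_+$ (via $\mtx{T}(\omega)(x)/x^2\to r(\omega)$), and the convexity of $\tilde\omega(\sqrt s)$ is correct. The composition argument (convex non-increasing $F_a$ applied after the concave map $s\mapsto\mtx{T}(\omega)(\sqrt s)$) is cleaner than the paper's second-derivative computation.

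The gap is the condition $\tilde\omega'_-(1/2)\le-\eta_a$. You leave it open, and neither of your two routes can close it. Bounding $\mtx{T}(\omega)'(1/2)$ from below and $r(\omega)$ from above \emph{separately} is provably insufficient. Take $\omega(x)=\max\{1-\eta_a x^2,\,1-\eta_a/4\}$, which lies in $\mathbb D_a$. Then $\mtx{T}(\omega)'(1/2)=\frac{2\eta_a}{\pi}\int_0^{1/2}y\ln\frac{1/2+y}{1/2-y}\idiff y=\frac{\eta_a}{2\pi}$. So your linear bound is sharp, and no $\eta_a$-independent lower bound exists; this rules out your fallback. Since $r\le 2/\pi$ is attained at $(1-x^2)_+$ and $|F_a'|\le 1$, the decoupled estimate can never give more than $\frac{\eta_a}{2\pi}\cdot\frac{\pi}{2}=\frac{\eta_a}{4}<\eta_a$. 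Yet for this same $\omega$ one has $r(\omega)=\eta_a/\pi$, so the true ratio $\mtx{T}(\omega)'(1/2)/r(\omega)$ equals $1/2$. A small $\mtx{T}'$ and a large $r$ never occur together, and the argument has to exploit that coupling.

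The missing idea is to bound the ratio $\mtx{T}(\omega)'(x)/r(\omega)$ directly, by a quantity \emph{sublinear} in the input slope. The paper uses the coupled estimate $r(\omega)\le\frac{2(x+1)}{\pi x}(1-\omega(x))^{1/2}$ from Lemma~\ref{lem:r_properties}, together with two lower bounds:
\begin{itemize}
\item $\mtx{T}(\omega)'(x)\ge x|\omega'(x)|/\pi$, which is your bound;
\item $\mtx{T}(\omega)'(x)\ge\frac{1}{\sqrt2\pi x}(1-\omega(x))^{3/2}$, obtained by keeping only $y\in(z,x)$ with $z=((1-\omega(x))/2)^{1/2}$.
\end{itemize}
Interpolating these with weights $2/3$ and $1/3$ makes the factor $(1-\omega(x))^{1/2}$ cancel against the bound on $r$. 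Combined with your factor $|F_a'(v)|\ge\frac{3/4}{1+|a|/4}$, this gives $\tilde\omega'(1/2)\le-\eta_a^{1/3}|\omega'_-(1/2)|^{2/3}\le-\eta_a$.

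The exponent $2/3<1$ is what breaks the circularity, and $\eta_a=2^{-9/2}(4+|a|)^{-3}$ is chosen exactly so that the prefactor equals $\eta_a^{1/3}$. This is also essentially sharp, so no absolute-constant bound on the ratio is available either. Take the profile with $\omega(\sqrt s)$ of slope $-1$ on $[0,\eta_a^{2/3}]$, slope $-\eta_a$ on $[\eta_a^{2/3},1/4]$, and constant afterwards. It lies in $\mathbb D_a$ and has $\mtx{T}(\omega)'(1/2)/r(\omega)=O(\eta_a^{2/3})$.
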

\begin{proof}
    Given $\omega\in\mathbb D_a$, let $h(x)=\mtx{R}_a(\omega)(x)$. We prove this lemma through the following steps:

    \noindent \textbf{Step $1$: Show that $h(0)=1$ and $h'(x)\le 0$ for $x\ge 0$.} Since $\mtx{T}(\omega)(0)=0$, $0<r(\omega)<+\infty$, and $\mtx{T}(\omega)'(x)\ge 0$ for $x \ge 0$ (by Lemma \ref{lem:property_of_T}), we have for $a\in(-\infty,0)\cup(0,1]$
    \[
        h(0)=\left(1-\frac{a\mtx{T}(\omega)(0)}{r(\omega)}\right)_+^{\frac{1}{a}}=1,
    \]
    and
    \[
        h'(x)=-\frac{\mtx{T}(\omega)'(x)}{r(\omega)}\left(1-\frac{a\mtx{T}(\omega)(x)}{r(\omega)}\right)_+^{\frac{1}{a}-1}\le 0.
    \]
    It is also straightforward to verify that $h(0)=1$ and $h'(x)\le 0$ for the $a=0$ case.
    Note that, if $h$ is compactly supported on $[-L,L]$ for some $L>0$, then $h'(x)=0$ for $x>L$, $h'(L-)\le 0$, and $h'(L+)=0$.

    \noindent \textbf{Step $2$: Show that $h(\sqrt s)$ is convex in $s$.}
    We only need to show that $(h'(x)/x)'$ is non-negative on $[0,+\infty)$. Continuing the calculations in step 1, we further derive for $a\in(-\infty,0)\cup(0,1]$
    \[
        \left(\frac{h'(x)}{x}\right)'=\left(1-\frac{a \mtx{T}(\omega)(x)}{r(\omega)}\right)_+^{\frac{1}{a}-2}\left( -\frac{1}{r(\omega)}\left( \frac{\mtx{T}(\omega)'(x)}{x} \right)' \left( 1-\frac{a \mtx{T}(\omega)(x)}{r(\omega)} \right)_++\frac{1-a}{x}\left(\frac{\mtx{T}(\omega)'(x)}{r(\omega)}\right)^2\right)\ge0,
    \]
    owing to the fact that $(\mtx{T}(\omega)'(x)/x)'\le0$ for $x\ge0$ by Lemma \ref{lem:property_of_T}. Similarly, for the special case $a=0$ we have
    \[
        \left( \frac{h'(x)}{x} \right)'=\left( -\frac{1}{r(\omega)}\left( \frac{\mtx{T}(\omega)'(x)}{x} \right)'+\frac{1}{x}\left(\frac{\mtx{T}(\omega)'(x)}{r(\omega)}\right)^2\right)\mathrm{e}^{-\frac{\mtx{T}(\omega)(x)}{r(\omega)}}\ge0.
    \]

    \noindent \textbf{Step $3$: Show that $(1-x^2)_+\le h(x)\le 1$.} The fact that $h(x)\le 1$ follows directly from step 1. To prove that $h(x)\ge (1-x^2)_+$, namely $h(\sqrt s)\ge(1-s)_+$ for $s\ge0$, we only need to show that
    \[
        \lim_{x\to 0}\frac{h'(x)}{2x}=\frac{\mathrm d}{\mathrm d s}h(\sqrt s)\Big|_{s=0}=-1
    \]
    and then use the fact that $h(\sqrt s)$ is convex in $s$ (step 2). We can compute that
    \begin{equation}\label{eqt:lim_T_over_x}
        \lim_{x\to 0}\frac{\mtx{T}(\omega)'(x)}{x}=-\frac{1}{\pi}\lim_{x\to 0}\int_0^{+\infty}\frac{1}{x}\ln \left| \frac{x+y}{x-y} \right|\omega'(y)\idiff y  =-\frac{2}{\pi}\int_0^{+\infty}\frac{\omega'(y)}{y}\idiff y                                                           =2 r(\omega)
    \end{equation}
    since
    \[r(\omega)=\frac{1}{\pi}\int_0^{+\infty}\frac{\omega(0)-\omega(y)}{y^2}\idiff y.\] Hence
    \begin{equation}\label{eqt:lim_h_over_x}
        \begin{split}
            \lim_{x\to 0}\frac{h'(x)}{2x}
             & =-\frac{1}{2r(\omega)}\lim_{x\to0} \frac{\mtx{T}(\omega)'(x)}{x}                                                                =-1.
        \end{split}
    \end{equation}

    \noindent \textbf{Step $4$: Show that $h'_-(1/2)\le -\eta_a$.} We first show that $h'(x)$ can be bounded in terms of $\mtx{T}(\omega)'(x)$. Using \eqref{eqt:lim_T_over_x} and the fact that $\mtx{T}(\omega)(\sqrt s)$ is concave in $s$ (by Lemma \ref{lem:property_of_T}), we have
    \[
        \mtx{T}(\omega)(x)\le \mtx{T}(\omega)(0)+r(\omega)x^2 =r(\omega)x^2.
    \]
    Note that $\mtx{T}(\omega)(x)\ge 0$ for $x\ge 0$, thus for $a\le 1$ we always have
    \[
        \left( 1-|a|x^2 \right)_+\le \left(1-\frac{a\mtx{T}(\omega)(x)}{r(\omega)}\right)_+\le 1+|a|x^2.
    \]
    Combining \eqref{eqt:lim_h_over_x} and the convexity of $h(\sqrt s)$ leads to $h(x)\ge (1-x^2)_+$ for $x\ge 0$. It follows that
    \[
        h'(x)=-\frac{\mtx{T}(\omega)'(x)}{r(\omega)}h(x)\cdot\left(1-\frac{a\mtx{T}(\omega)(x)}{r(\omega)}\right)_+^{-1}\le -\frac{(1-x^2)\mtx{T}(\omega)'(x)}{r(\omega)\left( 1+|a|x^2 \right)},\quad x\in [0,1).
    \]

    Next, we lower bound $\mtx{T}(\omega)'(x)$ in two ways. On one hand, by the monotonicity of $\omega'(x)/x$,
    \[
        \begin{split}
            \mtx{T}(\omega)'(x)
             & =\frac{1}{\pi}\int_0^{+\infty}\ln\left| \frac{x+y}{x-y} \right|(-\omega'(y))\idiff y \ge \frac{1}{\pi}\int_0^{x}\ln\left| \frac{x+y}{x-y} \right|(-\omega'(y))\idiff y                               \\
             & \ge\frac{1}{\pi}\int_0^{x}\ln\left| \frac{x+y}{x-y} \right|y\cdot\frac{-\omega'(x)}{x}\idiff y=-\frac{x\omega'(x)}{\pi }\int_0^1t\ln\left| \frac{1+t}{1-t} \right|\idiff t=-\frac{x\omega'(x)}{\pi}.
        \end{split}
    \]
    On the other hand, for any $0<z<x$ we have
    \[
        \begin{split}
            \mtx{T}(\omega)'(x)
             & \ge \frac{1}{\pi}\int_0^{x}\ln\left| \frac{x+y}{x-y} \right|(-\omega'(y))\idiff y\ge \frac{1}{\pi}\int_0^x \frac{2y}{x} (-\omega'(y))\idiff y                                    \\
             & \ge \frac{2}{\pi}\int_z^x \frac{z}{x}  (-\omega'(y))\idiff y =\frac{2}{\pi}\cdot\frac{z}{x}\left( \omega(z)-\omega(x) \right)\ge \frac{2z}{\pi x}\left( 1-z^2-\omega(x) \right).
        \end{split}
    \]
    We then choose $z=( (1-\omega(x))/2 )^{1/2}$ to obtain
    \[
        \mtx{T}(\omega)'(x)\ge \frac{1}{\sqrt 2\pi x}\left( 1-\omega(x) \right)^{3/2}.
    \]
    Putting these together, we reach
    \[
        \mtx{T}(\omega)'(x)\ge \left(\frac{1}{\sqrt 2\pi x}\right)^{1/3}\left( 1-\omega(x) \right)^{1/2}\cdot \left(\frac{x|\omega'(x)|}{\pi}\right)^{2/3}=\frac{1}{\pi} \left( \frac{x}{\sqrt 2} \right)^{1/3}|\omega'(x)|^{2/3}\left( 1-\omega(x) \right)^{1/2}.
    \]
    Finally, by Lemma \ref{lem:r_properties} we have
    \[
        h'(x)\le -\frac{(1-x^2)\mtx{T}(\omega)'(x)}{r(\omega)\left( 1+|a|x^2 \right)}\le-\frac{x^{4/3}(1-x)}{2^{7/6}\left( 1+|a|x^2 \right)}|\omega'(x)|^{2/3}.
    \]
    Plugging in $x=1/2$ and using that $\omega'(1/2)\le -\eta_a$ yields $h'(1/2)\le -\eta_a^{1/3} |\omega'(1/2)|^{2/3}\leq -\eta_a$.

    Combining these steps proves the lemma.
\end{proof}

Next, we show that $\mtx{R}_a$ is continuous on $\mathbb D_a$ in the $L^\infty_\rho$-topology.

\begin{theorem}\label{thm:R_a_continuity}
    For any $a\in(-\infty,1]$, $\mtx{R}_a: \mathbb D_a\to \mathbb D_a$ is continuous with respect to the $L^\infty_\rho$-norm.
\end{theorem}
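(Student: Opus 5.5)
We must show: if $\omega_n\to\omega$ in $L^\infty_\rho$ within $\mathbb D_a$, then $\mtx{R}_a(\omega_n)\to\mtx{R}_a(\omega)$ in $L^\infty_\rho$. Since $\mtx{R}_a(\omega)$ is a composition of the map $\omega\mapsto \mtx{T}(\omega)/r(\omega)$ with the pointwise function $\Phi_a(s)=(1-as)_+^{1/a}$ (or $\mathrm{e}^{-s}$ when $a=0$), the plan is to establish continuity of $\mtx{T}$ in a suitable local sense and then handle the far field separately, using the uniform structure of $\mathbb D_a$.

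\textbf{Step 1 (local continuity of $\mtx{T}$).} For $\omega_1,\omega_2\in\mathbb D_a$ with $\delta=\|\rho(\omega_1-\omega_2)\|_{L^\infty}$, I will bound $|\mtx{T}(\omega_1)(x)-\mtx{T}(\omega_2)(x)|$ for $|x|\le M$. The plan is to use the second (symmetric) form of $\mtx{T}$ and split the $y$-integral into three pieces. On $[0,\sqrt\delta]$ I will use $|\omega_1-\omega_2|\le y^2$, exactly as in Lemma \ref{lem:r_continuity}. On $[\sqrt\delta,K]$ the kernel $\ln|(x^2-y^2)/y^2|$ is locally integrable uniformly in $|x|\le M$, so this piece is $\lesssim \delta(1+K)^{1/2}\cdot C(M,K)$. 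On $y\ge K\gg M$ the kernel behaves like $-x^2/y^2$, so the tail is $\lesssim M^2\delta\int_K^\infty (1+y)^{1/2}y^{-2}\idiff y$. Altogether $\sup_{|x|\le M}|\mtx{T}(\omega_1)-\mtx{T}(\omega_2)|\to0$ as $\delta\to0$. Combined with Lemma \ref{lem:r_continuity} and the lower bound $r\ge\eta_a/\pi$ from Lemma \ref{lem:r_properties}, this gives $\mtx{T}(\omega_n)/r(\omega_n)\to \mtx{T}(\omega)/r(\omega)$ uniformly on $[-M,M]$. Since $\Phi_a$ is uniformly continuous on $[0,\infty)$ (it is monotone and bounded by $1$; for $a\in(0,1]$ it is Hölder continuous, with the cutoff at $1/a$ harmless), it follows that $\mtx{R}_a(\omega_n)\to\mtx{R}_a(\omega)$ uniformly on compacts.

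\textbf{Step 2 (far field), which I expect to be the main obstacle.} Because $0\le \mtx{R}_a(\cdot)\le1$, we have $\rho|\mtx{R}_a(\omega_n)-\mtx{R}_a(\omega)|\le 2(1+M)^{-1/2}$ for $|x|\ge M$. This tail is small uniformly in $n$, so no decay estimate for $\mtx{R}_a$ is needed. Choosing $M$ large first and then $n$ large via Step 1 finishes the argument. The only delicate points are the uniformity of the kernel estimates in Step 1 near $y=x$, where the log singularity is integrable uniformly, and near $y=0$, where the Taylor bound $1-\omega\le y^2$ absorbs the $\ln y^{-2}$ factor. The case $a=0$ is identical with $\Phi_0(s)=\mathrm{e}^{-s}$, which is Lipschitz on $[0,\infty)$.
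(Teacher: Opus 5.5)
Your proposal is correct and follows essentially the paper's argument: a bound on $\mtx{T}(\omega)-\mtx{T}(\omega_0)$ for bounded $x$, combined with Lemma \ref{lem:r_continuity}, the lower bound $r\ge\eta_a/\pi$, and the Lipschitz continuity of $s\mapsto(1-as)_+^{1/a}$ (or $\mathrm{e}^{-s}$), with the far field controlled by the decay of the weight $\rho$. The differences are cosmetic. The paper obtains the bound $|\mtx{T}(\omega)-\mtx{T}(\omega_0)|\lesssim\delta x(1+x)^{1/2}$ in one stroke via the substitution $y=tx$, where you split the integral into three ranges. It also picks the cutoff $X_0$ through $\rho\,\mtx{R}_a(\omega_0)$ and uses monotonicity, where your cruder bound $\rho\le(1+M)^{-1/2}$ works equally well.
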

\begin{proof}
    Recall that $\rho(x)=(1+|x|)^{-1/2}$. Given any (fixed) $\omega_0\in\mathbb D_a$, we only need to prove that $\mtx{R}_a$ is $L^\infty_\rho$-continuous at $\omega_0$. Let $\epsilon>0$ be an arbitrarily small number. Since $\mtx{R}_a(\omega_0)$ is bounded, continuous, and non-increasing on $[0,+\infty)$, there is some $X_0>1$ such that
    \[
        \rho(X_0)\mtx{R}_a(\omega_0)(X_0)=\epsilon.
    \]
    This also means $\rho(x)\mtx{R}_a(\omega_0)(x)\le \epsilon$ for all $x\ge X_0$.

    Let $\omega\in \mathbb D_a$ be an arbitrary function such that $\|\rho(\omega-\omega_0)\|_{L^\infty}\le \delta$ for some sufficiently small $\delta>0$. For any $x\ge 0$, we have
    \[
        \begin{split}
            |\mtx{T}(\omega)(x)-\mtx{T}(\omega_0)(x)|
             & =\frac{1}{\pi}\left| \int_0^{+\infty}\ln\left| \frac{x^2-y^2}{y^2} \right|\cdot \left( \omega(y)-\omega_0(y) \right)\idiff y \right| \\
             & \le \frac{1}{\pi}\int_0^{+\infty}\left |\ln\left| \frac{x^2-y^2}{y^2} \right|\cdot\delta (1+y)^{1/2}\right|\idiff y                  \\
             & =\frac{\delta}{\pi}\cdot x \int_0^{+\infty}\left| \ln\left| \frac{1-t^2}{t^2} \right| \right|\cdot(1+tx)^{1/2}\idiff t               \\
             & \le \frac{\delta}{\pi}\cdot x (1+x)^{1/2}\int_0^{+\infty}\left| \ln\left| \frac{1-t^2}{t^2} \right| \right|\cdot(1+t)^{1/2}\idiff t  \\
             & \lesssim \delta x(1+x)^{1/2}.
        \end{split}
    \]
    The last integral of $t$ above is finite since $\ln| (1-t^2)/{t^2} |\sim -2\ln t$ as $t \to 0$ and $\ln| (1-t^2)/{t^2} |=O(t^{-2})$ as $t\to +\infty$. Recall that for any $\omega\in \mathbb D_a$ we have $\omega(x)\le 1$, so a similar argument shows that $|\mtx{T}(\omega)(x)|,|\mtx{T}(\omega_0)(x)|\lesssim x$. Combining these estimates with Lemma \ref{lem:r_properties} and Lemma \ref{lem:r_continuity}, we obtain
    \[
        \begin{split}
            \left| \frac{\mtx{T}(\omega)(x)}{r(\omega)}-\frac{\mtx{T}(\omega_0)(x)}{r(\omega_0)} \right|
             & \le \left( \left| \frac{\mtx{T}(\omega)-\mtx{T}(\omega_0)}{r(\omega_0)} \right|+\left|\frac{\mtx{T}(\omega)}{r(\omega)r(\omega_0)}\right|\cdot|r(\omega)-r(\omega_0)| \right) \\
             & \lesssim \left(\delta x(1+x)^{1/2}+\delta^{1/2} x\right)\lesssim \delta^{1/2} x(1+x)^{1/2}.
        \end{split}
    \]
    For the general case $a\in(-\infty,0)\cup (0,1]$, we denote
    \[
        g_0:=\left(1-\frac{a\mtx{T}(\omega_0)(x)}{r(\omega_0)}\right)_+,\quad g:=\left(1-\frac{a\mtx{T}(\omega)(x)}{r(\omega)}\right)_+.
    \]
    Then it follows directly
    \[
        \begin{split}
            |g(x)-g_0(x)|
             & \le |a|\cdot\left| \frac{\mtx{T}(\omega)(x)}{r(\omega)}-\frac{\mtx{T}(\omega_0)(x)}{r(\omega_0)} \right|\lesssim |a|\delta^{1/2} x(1+x)^{1/2},
        \end{split}
    \]
    which implies that, for $x\in [0,X_0]$,
    \[
        \begin{split}
            \rho(x)|\mtx{R}_a(\omega)(x)-\mtx{R}_a(\omega_0)(x)|
             & \le\rho(x)\left|g^{1/a}(x)-g_0^{1/a}(x)\right|                                                     \\
             & \le\frac{1}{|a|}\rho(x)\left|g(x)-g_0(x)\right|\cdot\left( g^{(1-a)/a}(x)+g_0^{(1-a)/a}(x) \right) \\
             & \lesssim \delta^{1/2} x\le \delta^{1/2} X_0,
        \end{split}
    \]
    since $g,g_0\le 1$ when $a\in(0,1]$, and $g,g_0\ge 1$ when $a\in(-\infty,0)$. As for the case $a=0$, we have
    \[
        \begin{split}
            \rho(x)|\mtx{R}_0(\omega)(x)-\mtx{R}_0(\omega_0)(x)|
             & =\rho(x)\left| \mathrm{e}^{-\frac{\mtx{T}(\omega)(x)}{r(\omega)}}-\mathrm{e}^{-\frac{\mtx{T}(\omega_0)(x)}{r(\omega_0)}} \right| \\
             &\le \rho(x)\left| \frac{\mtx{T}(\omega)(x)}{r(\omega)}-\frac{\mtx{T}(\omega_0)(x)}{r(\omega_0)} \right| \\
             &\lesssim \delta^{1/2} x\le \delta^{1/2} X_0.
        \end{split}
    \]
    The first inequality comes from $\mtx{T}(\omega)/r(\omega),\mtx{T}(\omega_0)/r(\omega_0)\ge 0$. Hence we have $\rho(x)|\mtx{R}_a(\omega)(x)-\mtx{R}_a(\omega_0)(x)|\lesssim \delta^{1/2} X_0$ for any $a\in(-\infty,1]$.  Again, provided that $\delta$ is sufficiently small, we can have $\rho(X_0)\mtx{R}_a(\omega)(X_0)\le 2\epsilon$. By the monotonicity of $\mtx{R}_a(\omega)$, we also have $\rho(x)\mtx{R}_a(\omega)(x)\le 2\epsilon$ for all $x\ge X_0$. Therefore, we can choose $\delta$ small enough ($\delta\lesssim \epsilon^2 X_0^{-2} $) such that
    \[
        \|\rho(\mtx{R}_a(\omega)-\mtx{R}_a(\omega_0))\|_{L^\infty}\lesssim \epsilon
    \]
    for all $\omega\in \mathbb D_a$ with $\|\rho(\omega-\omega_0)\|_{L^\infty}\le \delta$. This proves the $L_\rho^\infty$-continuity of $\mtx{R}_a$ at $\omega_0$.
\end{proof}

One last ingredient for establishing the existence of a fixed point of $\mtx{R}_a$ is the compactness of the set $\mathbb D_a$.
\begin{lemma}\label{lem:compactness}
    For any $a\in(-\infty,1]$, $\mathbb D_a$ is compact in the $L^\infty_\rho$-topology.
\end{lemma}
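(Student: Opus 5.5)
The plan is an Arzelà--Ascoli argument combined with uniform tail control coming from the weight $\rho$. Since $\mathbb V$ with the $L^\infty_\rho$-norm is a metric space, compactness is equivalent to sequential compactness. So I take an arbitrary sequence $\{\omega_n\}\subset\mathbb D_a$ and aim to extract a subsequence that converges in $L^\infty_\rho$ to a limit which again lies in $\mathbb D_a$. By even symmetry it suffices to work on $[0,+\infty)$.

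The first step is equicontinuity on compact sets. Every $\omega\in\mathbb D_a$ satisfies $0\le\omega\le 1$, so the family is uniformly bounded. The key structural fact is that $\omega(\sqrt s)$ is convex and non-increasing in $s$, with right derivative at $s=0$ bounded below by $-1$ (because $(1-s)_+\le\omega(\sqrt s)\le 1$ and $\omega(0)=1$). This forces the difference quotients in $s$ to lie in $[-1,0]$ everywhere, so $s\mapsto\omega(\sqrt s)$ is $1$-Lipschitz. Consequently $|\omega(x)-\omega(y)|\le|x^2-y^2|$ uniformly over $\mathbb D_a$, which is locally uniform Lipschitz. Arzelà--Ascoli on each $[0,N]$, followed by a diagonal argument, then gives a subsequence converging locally uniformly to some continuous even $\omega_\infty$.

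The second step upgrades local uniform convergence to $L^\infty_\rho$ convergence. This is where the weight does the work: for $x\ge N$ we have $\rho(x)|\omega_n(x)-\omega_m(x)|\le 2(1+N)^{-1/2}$, simply because $0\le\omega_n\le1$. Given $\epsilon>0$, I choose $N$ so that this tail bound is below $\epsilon$, and then use uniform convergence on $[0,N]$ to make the subsequence Cauchy in $L^\infty_\rho$. Hence $\omega_n\to\omega_\infty$ in $L^\infty_\rho$.

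The last step checks that $\omega_\infty\in\mathbb D_a$. All pointwise constraints pass to the limit: $\omega_\infty(0)=1$, the bounds $(1-x^2)_+\le\omega_\infty\le1$, monotonicity, and convexity of $\omega_\infty(\sqrt s)$. The only delicate condition is $(\omega_\infty)'_-(1/2)\le-\eta_a$, since one-sided derivatives are not continuous under uniform convergence. I would handle it through convexity in $s=x^2$. Let $\psi_n(s)=\omega_n(\sqrt s)$ and $s_0=1/4$. The left derivative condition becomes $(\psi_n)'_-(s_0)\le-\eta_a/(2\cdot\tfrac12)=-\eta_a$, using $\omega'(x)=2x\psi'(x^2)$. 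By convexity, the chord slopes $(\psi_n(s_0)-\psi_n(s_0-h))/h$ are at most $(\psi_n)'_-(s_0)$ for every $h>0$. This closed inequality passes to the limit, and letting $h\to0$ gives the condition for $\omega_\infty$. Equivalently, one can use the corresponding statement for the secant quotient of $\omega$ itself, $(\omega(1/2)-\omega(1/2-h))/h\le\omega'_-(1/2)$, which also holds by convexity in $s$. I expect this step to be the main (if mild) obstacle, and I will be careful with the chain-rule factor when translating between $x$ and $s$. This completes sequential compactness, and therefore compactness of $\mathbb D_a$.
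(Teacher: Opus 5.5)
Your argument is correct and is essentially the paper's: equicontinuity on bounded intervals (you use that $\omega(\sqrt s)$ is $1$-Lipschitz, the paper uses $|\omega'|\le\min\{2x,2x^{-1}\}\le 2$), Arzel\`a--Ascoli with a diagonal extraction, and the tail bound $\rho\,|\omega_n-\omega_m|\le\rho$. You also explicitly check that the limit lies in $\mathbb D_a$, which the paper takes for granted by declaring $\mathbb D_a$ closed, and your chord-slope argument giving $(\psi_\infty)'_-(1/4)\le-\eta_a$ is right.

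Drop the ``equivalently'' remark, though. The bound $(\omega(1/2)-\omega(1/2-h))/h\le\omega'_-(1/2)$ is false in general: any $\omega\in\mathbb D_a$ equal to $1-x^2$ near $x=1/2$ gives $-1+h>-1$. The correct inequality has right-hand side $(1-h)\,\omega'_-(1/2)$, and that version would still suffice.
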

\begin{proof}
    For any $\omega\in \mathbb{D}_a$, we use convexity and monotonicity to obtain
    \begin{equation}\label{eqt:omega_derivative_estimation}
        -\frac{\omega'(x)}{2x}\le \frac{\omega(0)-\omega(x)}{x^2}\le \min \left\{ 1,\ \frac{1}{x^2} \right\},\quad x>0,
    \end{equation}
    implying that $|\omega'(x)|\le \min\left\{ 2x,\ 2x^{-1} \right\}\le 2$. Based on this, we show that $\mathbb{D}_a$ is sequentially compact.

    Let $\{ \omega_n \}_{n=1}^{+\infty}$ be an arbitrary sequence in $\mathbb{D}_a$. Initialize $n_{0,k}=k,\ k\ge 1$. For each integer $m\ge 1$, let $\epsilon_m=2^{-m}$, and $L_m=\epsilon_m^{-2}$. It follows that $\rho(x)\omega_n(x)\le \rho(x)\le \epsilon_m$ for all $x\ge L_m$. Furthermore, since $|\omega_n'(x)|\le 2$ on $[0,L_m]$, we can apply Ascoli's theorem to obtain a subsequence $\{ \omega_{n_{m,k}}\}_{k=1}^{+\infty}$ of $\{ \omega_{n_{m-1,k}}\}_{k=1}^{+\infty}$ such that $\|\rho(\omega_{n_{m,i}}-\omega_{n_{m,j}})\|_{L^\infty}\le 2\epsilon_m$ for any $i,j\ge 1$. Then the diagonal subsequence $\{ \omega_{n_{m,m}}\}_{m=1}^{+\infty}$ is a Cauchy sequence in the $L^\infty_\rho$-norm. This proves that $\mathbb{D}_a$ is sequentially compact.
\end{proof}

We are now ready to prove the existence of fixed points of $\mtx{R}_a$ for any $a\in (-\infty,0)\cup(0,1]$ using the Schauder fixed-point theorem.

\begin{theorem}
    For any $a\in(-\infty,1]$, there exists a fixed point $\omega_a\in \mathbb D_a$ of $\mtx{R}_a$, i.e. $\mtx{R}_a(\omega_a)=\omega_a$. As a corollary, for any $a\in(-\infty,1]$, \eqref{eqt:traveling_profile} admits a solution $(\omega_a,r_a)$ with $r_a=r(\omega_a)$.
\end{theorem}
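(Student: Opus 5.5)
The plan is to apply the Schauder fixed-point theorem to $\mtx{R}_a$ on $\mathbb D_a$, viewed as a subset of the Banach space $(\mathbb V,\|\rho\,\cdot\,\|_{L^\infty})$. Schauder's theorem in the form we need says: a continuous map from a nonempty, convex, compact subset of a Banach space into itself has a fixed point. The hypotheses are checked one by one, almost entirely from earlier results.

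\textbf{Nonemptiness.} Exhibit one explicit element, for instance $\omega_\ast(x)=1/(1+x^2)$ (or $(1-x^2)_+$ if $\eta_a$ permits). For $\omega_\ast=1/(1+x^2)$ we have $\omega_\ast(\sqrt s)=1/(1+s)$, which is convex in $s$. It also satisfies $\omega_\ast(0)=1$, is non-increasing on $[0,+\infty)$, obeys $1/(1+x^2)\ge(1-x^2)_+$, and has $\|\rho\omega_\ast\|_{L^\infty}<\infty$. Finally $\omega_\ast'(1/2)=-16/25\le-\eta_a$, since $\eta_a\le 2^{-9/2}4^{-3}$ is tiny.

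\textbf{Convexity.} Every defining condition of $\mathbb D_a$ is preserved under convex combinations:
\begin{itemize}
\item evenness, the normalization $\omega(0)=1$, and the pointwise bounds $(1-x^2)_+\le\omega\le1$;
\item monotonicity on $[0,+\infty)$;
\item convexity of $\omega(\sqrt s)$ in $s$;
\item the bound $\omega'_-(1/2)\le-\eta_a$, because left derivatives are linear.
\end{itemize}

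\textbf{Compactness.} This is exactly Lemma~\ref{lem:compactness}. Closedness of $\mathbb D_a$ is already part of that sequential-compactness statement, provided the limit of the diagonal subsequence lies in $\mathbb D_a$. This deserves one line of justification. Pointwise bounds, monotonicity and convexity pass to locally uniform limits. The derivative condition passes through the difference-quotient characterization: by convexity of $\omega(\sqrt s)$, the left derivative at $1/2$ is the supremum of certain difference quotients, and a supremum of quantities that are each $\le-\eta_a$ remains $\le-\eta_a$ in the limit.

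\textbf{Self-mapping and continuity.} Lemma~\ref{lem:R_a_properties} gives $\mtx{R}_a(\mathbb D_a)\subset\mathbb D_a$. Theorem~\ref{thm:R_a_continuity} gives continuity of $\mtx{R}_a$ in the $L^\infty_\rho$ topology. Schauder's theorem then yields $\omega_a\in\mathbb D_a$ with $\mtx{R}_a(\omega_a)=\omega_a$.

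\textbf{The corollary.} Proposition~\ref{prop:fixed_point_is_traveling_wave} shows that $(\omega_a,r(\omega_a))$ solves \eqref{eqt:traveling_profile}. Lemma~\ref{lem:r_properties} gives $r_a\ge\eta_a/\pi>0$, so the speed is nondegenerate.

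\textbf{Main obstacle.} The only delicate point is the closedness step, namely that the derivative constraint $\omega'_-(1/2)\le-\eta_a$ survives limits; the rest is assembly. If the difference-quotient argument above turned out to be awkward, the fallback is to argue directly from the monotone difference quotients of the convex function $\omega(\sqrt s)$, whose behaviour under uniform convergence is standard.
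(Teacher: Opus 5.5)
Your proposal is correct and takes the same route as the paper. You apply Schauder's fixed-point theorem on $\mathbb D_a$ in the $L^\infty_\rho$ topology, using self-mapping from Lemma~\ref{lem:R_a_properties}, continuity from Theorem~\ref{thm:R_a_continuity}, compactness from Lemma~\ref{lem:compactness}, and the corollary from Proposition~\ref{prop:fixed_point_is_traveling_wave}. Your extra checks of nonemptiness, convexity and closedness of $\mathbb D_a$ are sound and make explicit what the paper takes for granted; this includes passing $\omega'_-(1/2)\le-\eta_a$ to the limit through the left difference quotients of the convex function $\omega(\sqrt s)$ at $s=1/4$.
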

\begin{proof}
    By Theorem \ref{thm:R_a_continuity} and Lemma \ref{lem:compactness}, $\mathbb D_a$ is convex, closed and compact in the $L_\rho^\infty$-norm, and $\mtx{R}_a$ continuously maps $\mathbb D_a$ into itself. The Schauder fixed-point theorem then implies that there exists a fixed point $\omega_a\in \mathbb D_a$ such that $\mtx{R}_a(\omega_a)=\omega_a$. The corollary follows from Proposition \ref{prop:fixed_point_is_traveling_wave}.
\end{proof}

\subsection{Properties of the traveling wave solution $\omega(x)$}\label{sec:fixed_point_properties}
\subsubsection{Explicit solution for the $a=0$ case.} In this subsection, we will show that the only fixed point of $\mtx{R}_0$ is the explicit solution
\[\omega(x)=\frac{1}{1+x^2}.\] Let $\omega_0$ be a fixed point of $\mtx{R}_0$. Then the convexity of $\omega_0(\sqrt{s})$ implies that $\omega_0(x)$ is differentiable almost everywhere, and
\[\omega_0(x)=\mtx{R}_0(\omega_0)(x)=\mathrm{e}^{-\frac{\mtx{T}(\omega_0)(x)}{r(\omega_0)}}\] implies that $\omega_0$ does not have any zero on $\mathbb{R}$. Therefore, by the result established in \cite{huang2025multiscale} we have
\[
    \omega_0(x)=\frac{2cr(\omega_0)}{1+c^2x^2}
\]
for some constant $c>0$. Recall that we have shown $\omega_0(0)=1$ and $\lim_{x\to 0}\omega_0'(x)/(2x)=-1$ in the proof of Lemma \ref{lem:R_a_properties}, it then directly follows that $2cr(\omega_0)=1$ and $c=1$. Hence we have $\omega_0(x)=1/(1+x^2)$.

\subsubsection{Asymptotic behavior.} Here we classify and characterize the tail behavior of the fixed point $\omega(x)$ as $x \to +\infty$ depending on the sign of the parameter $a$. One interesting phenomenon is that the decay rate of $\omega(x)$ shows a jump discontinuity at $a=0$: the limit of decay rate is $-1$ as $a\to 0$ from below, while the decay rate is $-2$ for the $a=0$ case.
\begin{theorem}\label{thm:asymptotic_behavior}
    Let $\omega$ be a fixed point of $\mtx{R}_a$ for some $a\in(-\infty,1]$. Then, one of the following happens:
    \begin{enumerate}[label=(\arabic*)]
        \item $a<0$: there exist constants $\bar C_a,\tilde C_a>0$ depending on $a$ such that $\bar C_a x^{-1/(1+|a|)}\le \omega(x)\le \tilde C_a x^{-1/(1+|a|)}$ for all $x\ge 1 $.
        \item $a=0$: $\omega(x)\sim x^{-2}$ as $x\to +\infty$.
        \item $0<a\le1$: $\omega$ is compactly supported on $[-L_\omega,L_\omega]$, where $L_\omega:=\sup\{x\mid \omega(x)>0\}$.
    \end{enumerate}

\end{theorem}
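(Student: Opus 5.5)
The plan is to work with the fixed-point identity $\omega=\mtx{R}_a(\omega)$ and reduce every case to the large-$x$ behavior of $\mtx{T}(\omega)$. Since $\omega\in\mathbb D_a$ is non-increasing and nonnegative with $\omega(0)=1$, I will compare $\mtx{T}(\omega)(x)$ with the logarithmic growth it would have if $\omega$ decayed integrably. The case $a=0$ is already settled by the explicit formula $\omega=1/(1+x^2)$ from the preceding subsection, so (2) is immediate.

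For (3), $0<a\le1$, note that $\omega=(1-a\mtx{T}(\omega)/r)_+^{1/a}$. Compact support follows once $\mtx{T}(\omega)(x)\to+\infty$, because then $1-a\mtx{T}/r$ becomes negative. To show the divergence, argue by contradiction. If $\mtx{T}(\omega)$ stayed bounded by $M$, then $\omega(x)\ge(1-aM/r)_+^{1/a}$. If this lower bound is positive, then $\omega$ is bounded below by a positive constant. That forces $\mtx{T}(\omega)(x)\gtrsim c\int_0^{\infty}\ln|(x^2-y^2)/y^2|\,dy$-type growth, of order $x$ (via the derivative formula \eqref{eqt:T_omega_derivative}, $\mtx{T}'$ is bounded below), which contradicts boundedness. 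Otherwise $\omega$ vanishes at some finite point, and since $\omega$ is non-increasing, compact support follows directly. In fact I expect $\mtx{T}(\omega)(x)\ge c\ln x$ in general: for $x\ge2$, split $\int_0^\infty \ln|(x^2-y^2)/y^2|\,\omega(y)\,dy$ into $y\le1$, where the integrand is $\ge \ln(x^2-1)\cdot\omega(y)\ge \ln(x^2-1)(1-y^2)$, and $y>1$, where the positive part on $(1,x/\sqrt2)$ dominates the negative part for large $y$ thanks to monotonicity. That gives compactness cleanly.

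For (1), $a<0$, write $b=|a|$, so $\omega=(1+b\mtx{T}/r)^{-1/b}$ and $\omega>0$ everywhere. The key is a bootstrap: show $\mtx{T}(\omega)(x)\approx C x^{\beta}$ with $\omega\approx x^{-\beta/b}$, where self-consistency requires $\beta=1-\beta/b$ (because $\mtx{T}$ of a profile decaying like $x^{-\gamma}$ with $\gamma<1$ grows like $x^{1-\gamma}$). Solving gives $\beta=b/(1+b)$ and decay exponent $\gamma=1/(1+b)$, matching the claim. The steps are as follows. (i) Prove two-sided estimates: if $c_1x^{-\gamma_1}\le\omega\le c_2x^{-\gamma_2}$ on $x\ge1$ with exponents in $(0,1)$, then $\mtx{T}(\omega)(x)$ lies between multiples of $x^{1-\gamma_1}$ and $x^{1-\gamma_2}$, up to logarithmic corrections. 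This is done by substituting $y=xt$ in $\mtx{T}(\omega)(x)=\frac{x}{\pi}\int_0^\infty\ln|(1-t^2)/t^2|\,\omega(xt)\,dt$ and using the monotonicity of $\omega$. (ii) Apply the fixed-point identity to map the exponents as $\gamma\mapsto(1-\gamma)/b$, a contraction-like map around $\gamma^*=1/(1+b)$ only when $b<1$. So instead I will use a direct argument. I will show $\mtx{T}(\omega)(x)/x^{\beta}$ is bounded above and below by analysing $\Phi(x):=\omega(x)\,x^{\gamma^*}$ at its supremum and infimum, in the style of a maximum principle on the scaling.

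The hard part is step (i) for sign-changing kernels. The kernel $\ln|(1-t^2)/t^2|$ is negative for $t>1/\sqrt2$, so upper and lower bounds do not transfer monotonically. I will first try integrating by parts to the positive kernel in \eqref{eqt:T_omega_derivative}: $\mtx{T}'(x)=\frac1\pi\int\ln|(x+y)/(x-y)|(-\omega'(y))\,dy$. This way monotone $\omega$ with $-\omega'\ge0$ yields clean two-sided bounds for $\mtx{T}'$, hence for $\mtx{T}$. Controlling $-\omega'$ through $\omega'=-\omega\,\mtx{T}'/(r+b\mtx{T})$ closes the estimate as an ODE-type differential inequality for $\mtx{T}$. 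I expect that step to be the main obstacle.
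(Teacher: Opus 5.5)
There are genuine gaps in cases (3) and (1). Case (2), and your reduction of case (3) to showing $\mtx{T}(\omega)(x)\to+\infty$ (after which monotonicity and continuity of $\mtx{T}(\omega)$ produce $L_\omega$), agree with the paper.

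\textbf{Case (3).} Both arguments you give for the divergence fail. The kernel $K(y)=\ln|x^2/y^2-1|$ satisfies $\int_0^\infty K(y)\,\mathrm{d}y=0$. So $\mtx{T}$ annihilates constants, and a bound $\omega\ge c>0$ forces no growth at all; nor is $\mtx{T}(\omega)'$ bounded below by a positive constant.

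The $y>1$ part of your split is not nonnegative either. Monotonicity only gives $\int_1^\infty K\omega\,\mathrm{d}y\ge\omega(x/\sqrt2)\int_1^\infty K\,\mathrm{d}y=-\omega(x/\sqrt2)\int_0^1K\,\mathrm{d}y$, which is negative of order $\ln x$.

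One example refutes both arguments. The function $\omega(y)=\max\{1-y^2/2,\,1/2\}$ lies in $\mathbb D_a$ and satisfies $\omega\ge 1/2$. Yet $\pi\mtx{T}(\omega)(x)=\int_0^1K(y)\frac{1-y^2}{2}\,\mathrm{d}y=\frac23\ln x+O(1)$, which is below your claimed bound $\frac23\ln(x^2-1)$.

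Your ``otherwise'' branch is also unjustified: $1-aM/r\le 0$ does not produce a zero of $\omega$, since $\sup\mtx{T}(\omega)=r/a$ need not be attained.

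The growth has to come from the quantitative drop of $\omega$ at the origin. The paper uses $\omega'(y)\le 2\omega'(1/2)y\le-2\eta_a y$ on $(0,1/2)$, which follows from the convexity of $\omega(\sqrt s)$. Inserting this into the positive-kernel formula \eqref{eqt:T_omega_derivative} gives $\mtx{T}(\omega)'(x)\gtrsim\eta_a/x$, hence $\mtx{T}(\omega)\gtrsim\eta_a\ln x$. Your splitting idea can also be repaired by subtracting the constant at the sign change of $K$. The identity $\pi\mtx{T}(\omega)(x)=\int_0^\infty K(y)\bigl(\omega(y)-\omega(x/\sqrt2)\bigr)\,\mathrm{d}y$ has a nonnegative integrand, and for $x\ge 1$ one has $\omega(y)-\omega(x/\sqrt2)\ge\eta_a/4-y^2$.

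\textbf{Case (1).} You have the correct exponent count but not a proof. Step (i) is left open. The exponent map $\gamma\mapsto(1-\gamma)/b$ has slope $-1/b$, so it contracts for $b>1$, not $b<1$. The maximum-principle argument for $\omega(x)x^{\gamma^*}$ is not specified.

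The missing idea again comes from the convexity of $\omega(\sqrt s)$, which you never use: a pointwise lower bound of $\mtx{T}(\omega)'(x)$ by $-\omega'(x)$ at the same point. Since $-\omega'(y)/y$ is non-increasing, $-\omega'(y)\ge -y\,\omega'(x)/x$ on $(0,x)$. Restricting \eqref{eqt:T_omega_derivative} to $(0,x)$ and using $\int_0^1 t\ln\frac{1+t}{1-t}\,\mathrm{d}t=1$ yields $\mtx{T}(\omega)'(x)\ge -x\omega'(x)/\pi$.

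The fixed-point identity reads $-\omega'=\frac{\mtx{T}(\omega)'}{r(\omega)}\,\omega^{1+|a|}$. Also $\omega'(x)<0$ for $x>0$, because $\mtx{T}(\omega)'>0$ by the drop at $1/2$. So your ``ODE-type inequality'' closes in one line as $x\,\omega(x)^{1+|a|}\le \pi r(\omega)\le 2$, which is the paper's upper bound.

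For the lower bound, the sign change of the kernel that you flagged is harmless in the direction needed. Discard the part $y>x/\sqrt2$, where the kernel is negative, and insert $\omega(y)\le\min\{1,Cy^{-1/(1+|a|)}\}$ on $(0,x/\sqrt2)$; this gives $\mtx{T}(\omega)(x)\lesssim x^{|a|/(1+|a|)}$. Then $\omega=(1+|a|\mtx{T}(\omega)/r(\omega))^{-1/|a|}$, together with $r(\omega)\ge \eta_a/\pi$, gives $\omega(x)\gtrsim x^{-1/(1+|a|)}$ for $x\ge 1$.
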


\begin{proof} The asymptotic behavior of the $a=0$ case directly follows from the explicit solution $\omega(x)=1/(1+x^2)$. Hence we only need to prove the first and third cases.

    (1): We first show that $\omega'(x)<0$ for all $x>0$. Recall that if $\omega\in \mathbb D_a$ is a fixed point of $\mtx{R}_a$, we have
    \[
        \omega(x)=\left(1-\frac{a\mtx{T}(\omega)(x)}{r(\omega)}\right)_+^{\frac{1}{a}}=\left(1+\frac{|a|\mtx{T}(\omega)(x)}{r(\omega)}\right)^{-\frac{1}{|a|}},
    \]
    here the subscript $+$ is omitted since $1+|a|\mtx{T}(\omega)(x)/r(\omega)\ge 1$. Then we have
    \[
        \omega'(x)=-\frac{\mtx{T}(\omega)'(x)}{r(\omega)}\left(1+\frac{|a|\mtx{T}(\omega)(x)}{r(\omega)}\right)^{-\frac{1+|a|}{|a|}}.
    \]
    We can use the calculations in the proof of Lemma \ref{lem:R_a_properties} to show that for any $x>0$,
    \[
        \begin{split}
            \mtx{T}(\omega)'(x)
             & =\frac{1}{\pi}\int_0^{+\infty}\ln\left| \frac{x+y}{x-y} \right|(-\omega'(y))\idiff y \ge \frac{1}{\pi}\int_0^{1/2} \ln\left| \frac{x+y}{x-y} \right|(-\omega'(y))\idiff y                            \\
             & \ge \frac{1}{\pi}\int_0^{1/2} y \ln\left| \frac{x+y}{x-y} \right|\cdot\left( -2\omega'(1/2) \right)\idiff{y} \geq \frac{2\eta_a x^2}{\pi}\int_0^{\frac{1}{2x}}t \ln\left| \frac{1+t}{1-t} \right|\idiff{t} >0,
        \end{split}
    \]
    which leads to $\omega'(x)<0$ for all $x>0$.

    Next we prove the lower and upper bounds of $\omega(x)$. Similarly we have
    \[
        \begin{split}
            \mtx{T}(\omega)'(x)
             & =\frac{1}{\pi}\int_0^{+\infty}\ln\left| \frac{x+y}{x-y} \right|(-\omega'(y))\idiff y \ge \frac{1}{\pi}\int_0^x \ln\left| \frac{x+y}{x-y} \right|(-\omega'(y))\idiff y                                                  \\
             & \ge \frac{1}{\pi}\int_0^x y \ln\left| \frac{x+y}{x-y} \right|\cdot\left( -\frac{\omega'(x)}{x} \right)\idiff{y}=-\frac{x\omega'(x)}{\pi}\int_0^1 t\ln\left| \frac{1+t}{1-t} \right|\idiff{t} =-\frac{x\omega'(x)}{\pi},
        \end{split}
    \]
    hence
    \[
        -\omega'(x)=\frac{\mtx{T}(\omega)'(x)}{r(\omega)}\left(1+\frac{|a|\mtx{T}(\omega)(x)}{r(\omega)}\right)^{-\frac{1+|a|}{|a|}}\ge-\frac{x\omega'(x)}{\pi r(\omega)}\left(1+\frac{|a|\mtx{T}(\omega)(x)}{r(\omega)}\right)^{-\frac{1+|a|}{|a|}},
    \]
    which provides the upper bound
    \[
        \omega(x)=\left(1+\frac{|a|\mtx{T}(\omega)(x)}{r(\omega)}\right)^{-\frac{1}{|a|}}\le \left( \frac{x}{\pi r(\omega)} \right)^{-\frac{1}{1+|a|}}\le \left( \frac{x}{2\eta_a } \right)^{-\frac{1}{1+|a|}}.
    \]

    On the other hand, by the definition of $\mtx{T}(\omega)$,
    \[
        \begin{split}
            \mtx{T}(\omega)(x) & =\frac{1}{\pi}\int _0^{+\infty}\ln\left| \frac{x^2-y^2}{y^2} \right|\omega(y)\idiff y  \le\frac{1}{\pi} \left( {2\eta_a } x\right)^{-\frac{1}{1+|a|}}\int _0^{+\infty}\ln\left| \frac{x^2-y^2}{y^2} \right|\left( \frac{y}{x} \right)^{-\frac{1}{1+|a|}}\idiff y \\
                         & =\frac{1}{\pi} (2\eta_a)^{-\frac{1}{1+|a|}}x^{\frac{|a|}{1+|a|}}\int_0^{+\infty}\ln\left| \frac{1-t^2}{t^2} \right|t^{-\frac{1}{1+|a|}}\idiff t.
        \end{split}
    \]
    Denote \[
        C_a=\frac{1}{\pi} (2\eta_a)^{-\frac{1}{1+|a|}}\int_0^{+\infty}\ln\left| \frac{1-t^2}{t^2} \right|t^{-\frac{1}{1+|a|}}\idiff t,
    \]
    we have
    \[
        \omega(x)\ge \left( 1+\frac{|a|C_a x^{\frac{|a|}{1+|a|}}}{r(\omega)} \right)^{-\frac{1}{|a|}}\ge\left( x^{\frac{|a|}{1+|a|}}+\frac{|a|C_a \pi x^{\frac{|a|}{1+|a|}}}{\eta_a} \right)^{-\frac{1}{|a|}}=\left( 1+\frac{|a|C_a\pi }{\eta_a} \right)^{-\frac{1}{|a|}}x^{-\frac{1}{1+|a|}}
    \]
    for all $x\ge 1$. We have used Lemma \ref{lem:r_properties} to bound $r(\om)$ from below.

    (3): By the convexity of $\omega(\sqrt{s})$ and $\omega'(1/2)\le -\eta_a$, we have for $0<x<1/2$,
    \[
        \omega'(x)\le 2\omega'(1/2)x \le -2\eta_a x,
    \]
    which implies that
    \[
        \mtx{T}(\omega)'(x)=\frac{1}{\pi}\int_0^{+\infty}\ln\left| \frac{x+y}{x-y} \right|(-\omega'(y))\idiff y \ge \frac{1}{\pi}\int_0^{1/2} \ln\left| \frac{x+y}{x-y} \right|(2\eta_a y)\idiff y,
    \]
    hence
    \[
        \begin{split}
            \mtx{T}(\omega)(x)
             & \ge \frac{1}{\pi}\int_{0}^{1/2}\left( x\ln\left|\frac{x+y}{x-y}\right|+y\ln\left|\frac{x^2-y^2}{y^2}\right| \right)(2\eta_a y)\idiff y  \\
             & =\frac{2\eta_a}{\pi}x^3\int_{0}^{1/2x}\left( t\ln\left| \frac{1+t}{1-t} \right|+t^2\ln\left| \frac{1-t^2}{t^2} \right| \right)\idiff t.
        \end{split}
    \]
    We use L'Hopital's rule to compute the limit
    \[
        \begin{split}
             & \lim_{x\to +\infty}  x^3\int_{0}^{1/2x}\left( t\ln\left| \frac{1+t}{1-t} \right|+t^2\ln\left| \frac{1-t^2}{t^2} \right| \right)\idiff t
            =\lim_{x\to +\infty}\frac{-\frac{1}{2x^2}\left( \frac{1}{2x}\ln\left| \frac{2x+1}{2x-1} \right|+\frac{1}{4x^2}\ln\left| 4x^2+1 \right| \right)}{-3x^{-4}} \\
             & =\lim_{x\to+\infty}\frac{x}{12}\ln\left| \frac{2x+1}{2x-1} \right|+\lim_{x\to+\infty}\frac{1}{24}\ln\left| 4x^2+1 \right|=+\infty.
        \end{split}
    \]
    This shows that $\mtx{T}(\omega)(x)\to +\infty$ as $x\to +\infty$. Note that the continuity of $\omega$ implies
    \[\mtx{T}(\omega)(x) =\frac{1}{\pi}\int _0^{+\infty}\ln\left| \frac{x^2-y^2}{y^2} \right|\omega(y)\idiff y \]
    is continuous. Hence by the monotonicity of $\mtx{T}(\omega)$ and $\mtx{T}(\omega)(0)=0$ (Lemma \ref{lem:property_of_T}), there exists some $L_\omega>0$ such that $\mtx{T}(\omega)(x)<r(\omega)/a$ for $x\in (0,L_\omega)$, and $\mtx{T}(\omega)(x)\ge r(\omega)/a$ for $x\ge L_\omega$, which implies that $\omega$ is compactly supported on $[-L_\omega,L_\omega]$.
\end{proof}

\subsubsection{Regularity.} In this subsection we discuss the regularity of a solution $\omega(x)$ to the traveling wave profile equation \eqref{eqt:traveling_profile} with $\omega\in \mathbb D_a$ being the fixed point of $\mtx{R}_a$ for some $a\in (-\infty,0)\cup(0,1]$. We shall always denote
\[g=\left(1-\frac{a\mtx{T}(\omega)}{r(\omega)}\right)_+\] in the sequel. Recall that
\[
    \omega(x)=\mtx{R}_a(\omega)(x)=\left(1-\frac{a\mtx{T}(\omega)(x)}{r(\omega)}\right)_+^{\frac{1}{a}},
\]
which leads to
\begin{equation}\label{eqt:omega_derivative}
    \omega'(x)=-\frac{\mtx{T}(\omega)'(x)}{r(\omega)}\left(1-\frac{a\mtx{T}(\omega)(x)}{r(\omega)}\right)_+^{\frac{1}{a}-1}=-\frac{\mtx{T}(\omega)'(x)}{r(\omega)}\frac{\omega(x)}{g(x)}.
\end{equation}
Since \eqref{eqt:omega_derivative_estimation} yields $|\omega'(x)|\le 2\min\{x,x^{-1}\}$, it is not hard to check by formula \eqref{eqt:T_omega_derivative} that $\mtx{T}(\omega)\in C^1(\mathbb{R})$, and thus $g\in C^1([-L_\omega,L_\omega])$, where
\[
    L_\omega=\sup\{x\mid g(x)>0\}=\sup\{x\mid \omega(x)>0\}.
\]
Clearly $L_\omega\ge 1$ since $\omega(x)\ge (1-x^2)_+$, and $L_\omega=+\infty$ if $\omega$ is strictly positive on $\mathbb{R} $. Note that $\omega/g\in C(-L_\omega,L_\omega)$. We then obtain from \eqref{eqt:omega_derivative} that $\omega(x)\in C^1(-L_\omega,L_\omega)$.

To obtain higher regularity of $\omega$, we apply a bootstrapping technique.

\begin{theorem}
    Let $\omega\in \mathbb D_a$ be a fixed point of $\mtx{R}_a$ for some $a\in (-\infty,0)\cup(0,1]$. Denote $L_\omega=\sup\{x\mid \omega(x)>0\}$. Then depending on the sign of $a$, one of the following happens:
    \begin{enumerate}[label=(\arabic*)]
        \item $a<0$: $\omega$ is strictly positive on $\mathbb{R} $, and $\omega' \in H^k(\mathbb{R} )$ for any $k\ge 0$.
        \item $0<a\le1$: $\omega$ is compactly supported on $[-L_\omega,L_\omega]$, and $\omega$ is smooth in the interior of $(-L_\omega,L_\omega)$.
    \end{enumerate}
\end{theorem}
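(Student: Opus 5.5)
We will bootstrap regularity through the fixed-point relation \eqref{eqt:omega_derivative}, $\omega'=-\frac{\mtx{T}(\omega)'}{r(\omega)}\frac{\omega}{g}$, where $g=(1-a\mtx{T}(\omega)/r(\omega))_+$ and $\mtx{T}(\omega)'=\mtx{H}(\omega)$. The mechanism is that the Hilbert transform preserves $H^k$ regularity of $\omega'$: $(\mtx{H}\omega)'=\mtx{H}(\omega')$ and $\mtx{H}$ is an isometry on $H^k$. Consequently, if $\omega'\in H^k$ (locally, or globally with suitable weights), then $\mtx{T}(\omega)'$ has one more derivative in $L^2$ than $\omega'$ minus one, i.e. $\mtx{T}(\omega)''=\mtx{H}(\omega')\in H^k$. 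Since $\omega/g=g^{1/a-1}$ is a smooth function of $g$ wherever $g>0$, the right-hand side gains one derivative, giving $\omega'\in H^{k+1}$.

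Positivity for $a<0$ is immediate, since $g=1+|a|\mtx{T}(\omega)/r\ge1$. Hence $L_\omega=+\infty$, $\omega=g^{-1/|a|}$, and $\omega/g=g^{-1-1/|a|}\in(0,1]$.

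For the base case in case (1), we use the bound $|\omega'(x)|\le 2\min\{x,x^{-1}\}$ from \eqref{eqt:omega_derivative_estimation}, which gives $\omega'\in L^2(\mathbb{R})$. Then $\mtx{T}(\omega)'=\mtx{H}(\omega)$. We need $(\mtx{H}\omega)'=\mtx{H}(\omega')\in L^2$; this holds because $\omega$ is continuous, bounded and decays like $x^{-1/(1+|a|)}$ by Theorem \ref{thm:asymptotic_behavior}, so the identity is justified distributionally.

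The inductive step in case (1) assumes $\omega'\in H^k$. Write $\omega'=-r^{-1}\,\mtx{H}(\omega)\cdot\Phi(\mtx{T}(\omega))$ with $\Phi(s)=(1+|a|s/r)^{-1-1/|a|}$, which is smooth with all derivatives bounded on $[0,\infty)$. Differentiating $k+1$ times and applying Leibniz produces products of the following factors:
\[
\partial^{j}\mtx{H}(\omega)=\mtx{H}(\partial^{j-1}\omega')\ (j\ge1),\qquad \mtx{H}(\omega)\in L^\infty ,
\]
together with derivatives of $\Phi(\mtx{T}(\omega))$. By Faà di Bruno, the latter are polynomials in $\mtx{T}(\omega)'=\mtx{H}\omega$ and $\mtx{H}(\partial^{i}\omega')$, times bounded functions. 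The bound $\mtx{H}\omega\in L^\infty$ follows from $|\mtx{T}(\omega)'|$ being bounded: using \eqref{eqt:T_omega_derivative} with $|\omega'|\le 2\min\{x,x^{-1}\}$, the integral is uniformly bounded.

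Each product has at most one factor with $k$ derivatives on $\omega'$, placed in $L^2$. Every other factor carries fewer derivatives and is placed in $L^\infty$ via the one-dimensional Sobolev embedding $H^1\subset L^\infty$ applied to the lower-order terms, which are already in $H^{k}$ with $k\ge1$. For $k=0$ we handle the terms directly, using $\mtx{H}(\omega')\in L^2$ and the boundedness of the remaining factors. Hence $\omega'\in H^{k+1}$.

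Case (2), $0<a\le1$, uses Theorem \ref{thm:asymptotic_behavior}, which gives compact support on $[-L_\omega,L_\omega]$. On any compact $K\subset(-L_\omega,L_\omega)$ we have $g\ge c_K>0$, so $\omega/g=g^{1/a-1}$ is a smooth function of $\mtx{T}(\omega)$ there. We run the same bootstrap locally using interior regularity of $\mtx{H}$: split $\omega=\chi\omega+(1-\chi)\omega$ with a cutoff $\chi$ equal to $1$ near $K$. The far part contributes a $C^\infty$ function on $K$, since its kernel is smooth away from the singularity. The near part gains Sobolev regularity as in case (1). Iterating gives $\omega\in H^k_{\mathrm{loc}}(-L_\omega,L_\omega)$ for all $k$, hence smoothness in the interior.

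The main obstacle is the rigorous base step and the global-in-space control for $a<0$. This means justifying $(\mtx{H}\omega)'=\mtx{H}(\omega')$ for a nonintegrable $\omega\sim x^{-1/(1+|a|)}$, and confirming that $\Phi(\mtx{T}(\omega))$ and its derivatives stay bounded even though $\mtx{T}(\omega)$ grows at infinity. The first point we would handle by writing $\mtx{H}\omega$ via the formula \eqref{eqt:T_omega_derivative}, which only involves $\omega'$. The second follows since $\Phi^{(m)}$ decays in $s\ge0$.
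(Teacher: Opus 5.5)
Your strategy is the paper's: bootstrap through $\omega'=-r^{-1}\mtx{T}(\omega)'\,\omega/g$, using $\mtx{T}(\omega)''=\mtx{H}(\omega')$ and the $H^k$-isometry of $\mtx{H}$. For $0<a\le 1$ you run it locally where $g$ is bounded below, and your cutoff splitting plays the role of the paper's $H^k_{\mathrm{loc}}$ lemma for the Hilbert transform. Case (2), the boundedness of $\mtx{H}\omega$, and the distributional identity $(\mtx{H}\omega)'=\mtx{H}(\omega')$ are fine.

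In case (1), however, your product estimate has a hole. Write $\omega'=-r^{-1}\mtx{H}(\omega)\Phi(\mtx{T}(\omega))=-r^{-1}\partial_x[\Psi(\mtx{T}(\omega))]$ with $\Psi'=\Phi$. Applying Fa\`a di Bruno to $\partial_x^{k+1}\omega'$ produces, among others, the term in which every derivative is a first derivative of $\mtx{T}(\omega)$:
\[
-r^{-1}\,\Phi^{(k+1)}(\mtx{T}(\omega))\,(\mtx{H}\omega)^{k+2}.
\]
For $k=0$ this is $-r^{-1}\Phi'(\mtx{T}(\omega))(\mtx{H}\omega)^2$. This term contains no factor $\mtx{H}(\partial^i\omega')$, so in your bookkeeping nothing can be placed in $L^2$. ``Boundedness of the remaining factors,'' and your remark that $\Phi^{(m)}$ stays bounded, give only $L^\infty$, not $L^2(\mathbb{R})$. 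Nor can you put $\mtx{H}\omega$ in $L^2$: for $a\le -1$ the lower bound $\omega(x)\ge \bar C_a x^{-1/(1+|a|)}$ shows $\omega\notin L^2$, so $\mtx{H}\omega\notin L^2$ either. As written, the induction does not close, already at $k=0$.

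The repair is short. Since $\Phi(s)=(1+|a|s/r)^{-1-1/|a|}$, one has $\Phi^{(\ell)}(s)=c_\ell\,(1+|a|s/r)^{-\ell}\,\Phi(s)$, and therefore
\[
\Phi^{(\ell)}(\mtx{T}(\omega))\,(\mtx{H}\omega)^{\ell+1}
=c_\ell\Big(\frac{\mtx{H}\omega}{g}\Big)^{\ell}\mtx{H}(\omega)\,\Phi(\mtx{T}(\omega))
=-r\,c_\ell\Big(\frac{\mtx{H}\omega}{g}\Big)^{\ell}\omega'.
\]
This is a product of an $L^\infty$ factor and an $L^2$ factor, because $\mtx{H}\omega$ is bounded and $g\ge 1$.

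Equivalently, keep the factor $\omega/g$ as in the paper's formulation and use $(\omega/g)'=(1-a)\,\omega'/g$. This gives
\[
\omega''=-r^{-1}\Big(\mtx{H}(\omega')\,\frac{\omega}{g}+(1-a)\,\mtx{H}(\omega)\,\frac{\omega'}{g}\Big).
\]
Every further differentiation then preserves at least one factor of the form $\mtx{H}(\partial^i\omega')$ or $\partial^i\omega'$, which is what makes the $L^2$ bookkeeping work. With this observation added, the rest of your argument goes through, including the Sobolev embedding $H^1\subset L^\infty$ for the lower-order factors.
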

\begin{proof}
    (1): Note that in this case $\omega\le 1$ and $g(x)\ge 1$ for $x\in [0,+\infty)$, which implies that $\omega$ and $1/g$ both belong to $L^\infty(\mathbb{R})$. Then, by \eqref{eqt:omega_derivative} together with the well-known identity $\|\mtx{T}(\omega)''\|_{H^k(\mathbb{R})}=\|\mtx H(\omega')\|_{H^k(\mathbb{R})}=\|\omega'\|_{H^k(\mathbb{R})}$ for any integer $k\ge 0$, we see that $\omega'\in H^k(\mathbb{R})$ implies $\omega'\in H^{k+1}(\mathbb{R})$ for any $k\ge 0$. By \eqref{eqt:omega_derivative_estimation}, we have $|\omega'(x)|\le 2\min\{x,x^{-1}\}$, and hence $\omega'\in L^2(\mathbb{R})$. Therefore, by a standard induction argument, we conclude that $\omega'\in H^k(\mathbb{R})$ for every $k\ge 0$.

    (2): In this case $g'(x)=-a\mtx{T}(\omega)'(x)/r(\omega)\le 0$ for $x\in [0,L_\omega]$, then for any $0<L'<L_\omega$, $g(x)\ge g(L')>0$ for $x\in[-L',L']$. It follows straightforwardly from Lemma \ref{lem:Hilbert_transform_kth_derivative} that if $\omega\in H_{loc}^k(-L',L')$ for some integer $k\ge 0$, then $\omega\in H_{loc}^{k+1}(-L',L')$. Since $\omega\in C^1([-L',L'])\subset H^1([-L',L'])$, we immediately obtain by recursion that $\omega\in H^k([-L',L'])$ for any integer $k\ge 0$. This further implies that $\omega\in H^k(-L_\omega,L_\omega)$ for any integer $k\ge 0$ since $L'<L_\omega$ is arbitrary, and thus $\omega$ is smooth in the interior of $(-L_\omega,L_\omega)$.
\end{proof}

\appendix
\section{Translation invariance}\label{app:translation_invariance}
The following lemma shows that solutions of the gCLM model with the same initial data but different choices of $x_0(t)$ such that $u(x_0(t),t)=0$ differ only by a time-dependent translation in $x$.

\begin{lemma}\label{lem:translation_invariance}
    Let $\omega_0$ be some suitable initial data and $x_1(t),x_2(t)$ be two continuous trajectories. Suppose that $(\omega_i,u_i)_{i=1,2}$ are the solutions to the gCLM model \eqref{eqt:gCLM} with $u_i(x_i(t),t)=0$, respectively, both starting from the same initial data $\omega_0$. Let $c(t)$ be a time-dependent quantity that solves 
    \[
    c'(t)=-a u_1(x_2(t)-c(t),t)=-\frac{a}{\pi}\int_{\mathbb{R}}\ln\left| \frac{x_2(t)-c(t)-y}{x_1(t)-y} \right|\omega_1(y,t)\idiff y,\quad c(0)=0.
    \]
    Then it holds that
    \[
    (\omega_2(x,t),u_2(x,t))=(\omega_1(x-c(t),t),u_1(x-c(t),t)-u_1(x_2(t)-c(t),t)).
    \]
\end{lemma}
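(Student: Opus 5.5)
The plan is to verify directly that the translated pair $(\tilde\omega,\tilde u)(x,t):=(\omega_1(x-c(t),t),\,u_1(x-c(t),t)-u_1(x_2(t)-c(t),t))$ solves the gCLM system with the normalization $\tilde u(x_2(t),t)=0$. We then conclude $\tilde\omega=\omega_2$ by uniqueness of solutions to \eqref{eqt:gCLM} for a prescribed normalization trajectory and the given initial data. Since $c(0)=0$, we have $\tilde\omega(\cdot,0)=\omega_0$. The normalization $\tilde u(x_2(t),t)=0$ holds by construction.

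\textbf{Biot--Savart relation.} The Hilbert transform commutes with translations, so
\[\tilde u_x(x,t)=(u_1)_x(x-c(t),t)=\mtx H(\omega_1(\cdot,t))(x-c(t))=\mtx H(\tilde\omega(\cdot,t))(x).\]
Hence $\tilde u$ is the antiderivative of $\mtx H(\tilde\omega)$ that vanishes at $x_2(t)$. Equivalently, it is given by the logarithmic kernel formula with base point $x_2(t)$; this matches the integral expression for $c'$ after the substitution $y\mapsto y-c$.

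\textbf{Evolution equation.} Write $\xi=x-c(t)$. The chain rule gives
\[\tilde\omega_t=(\omega_1)_t(\xi,t)-c'(t)(\omega_1)_x(\xi,t).\]
Inserting the equation for $\omega_1$ gives
\[\tilde\omega_t=-a u_1(\xi,t)(\omega_1)_x(\xi,t)+(u_1)_x(\xi,t)\,\omega_1(\xi,t)-c'(t)(\omega_1)_x(\xi,t).\]
Now use the defining ODE $c'(t)=-a\,u_1(x_2(t)-c(t),t)$. The two advection terms then combine into $-a\bigl(u_1(\xi,t)-u_1(x_2-c,t)\bigr)(\omega_1)_x=-a\tilde u\,\tilde\omega_x$. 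The stretching term equals $\tilde u_x\tilde\omega$. This is exactly $\tilde\omega_t+a\tilde u\tilde\omega_x=\tilde u_x\tilde\omega$.

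\textbf{Main obstacle.} The delicate points are analytic rather than algebraic. The first is existence of $c(t)$: the right-hand side $-a\,u_1(x_2(t)-c,t)$ is continuous in $t$ and locally Lipschitz in $c$, because $(u_1)_x=\mtx H(\omega_1)$ is bounded for suitable (say $H^1$) data. Picard's theorem therefore gives a local solution, which extends as long as $\omega_1$ remains regular. The second is uniqueness for the system normalized by $u(x_2(t),t)=0$. For this step I would show that a change of base point only adds a time-dependent constant $b(t)$ to $u$. Such a constant contributes the transport term $a\,b(t)\,\omega_x$, which is handled by a standard energy estimate of the difference in $H^1$ together with Gronwall. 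I would take this local well-posedness as standard for the "suitable initial data" in the statement.
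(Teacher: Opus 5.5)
Your proof is correct and follows the same route as the paper. You obtain the velocity of the translated vorticity from the logarithmic kernel (equivalently, from translation invariance of $\mtx H$), substitute into the gCLM equation, and note that the advection terms match exactly when $c'=-a\,u_1(x_2-c,t)$, with $c$ supplied by standard ODE theory. You are also slightly more careful than the paper: you state explicitly the appeal to uniqueness of the solution with a prescribed normalization trajectory, which the paper's ``substituting and comparing'' step uses only implicitly.
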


\begin{proof}
By the definition \eqref{eqt:biot_savart_law} of $u$  we have
\begin{equation}\label{eqt:app_u_definition}
    u_i(x,t)=\int_{x_i(t)}^x H(\omega_i)(y,t)\idiff{y}=\frac{1}{\pi}\int_{\mathbb{R}} \ln\left| \frac{x-y}{x_i(t)-y} \right|\omega_i(y,t)\idiff{y},\quad i=1,2.
\end{equation}
We only need to show that there is some time-dependent translation $c(t)$ such that
\begin{equation}\label{eqt:translation_invariance}
    \omega_2(x,t)=\omega_1(x-c(t),t).
\end{equation}
In fact, if \eqref{eqt:translation_invariance} holds, then by \eqref{eqt:app_u_definition} we have
\[
    \begin{split}
        u_2(x,t)
         & =\frac{1}{\pi}\int_{\mathbb{R}} \ln\left| \frac{x-y}{x_2(t)-y} \right|\omega_2(y,t)\idiff y \\
         & =\frac{1}{\pi}\int_{\mathbb{R}} \ln\left| \frac{(x-c(t))-(y-c(t))}{(x_2(t)-c(t))-(y-c(t))} \right|\omega_1(y-c(t),t)\idiff y \\
         & =\frac{1}{\pi}\int_{\mathbb{R}} \ln\left| \frac{x-c(t)-y}{x_1(t)-y} \right|\omega_1(y,t)\idiff y-\frac{1}{\pi}\int_{\mathbb{R}}\ln\left| \frac{x_2(t)-c(t)-y}{x_1(t)-y} \right|\omega_1(y,t)\idiff y \\
         & =u_1(x-c(t),t)-u_1(x_2(t)-c(t),t).
    \end{split}
\]
Substituting this into the gCLM equation for $\omega_2$ and comparing it with that for $\omega_1$, we find that \eqref{eqt:translation_invariance} holds provided that $c(t)$ satisfies the ODE
\[
    c'(t)=-a u_1(x_2(t)-c(t),t)=-\frac{a}{\pi}\int_{\mathbb{R}}\ln\left| \frac{x_2(t)-c(t)-y}{x_1(t)-y} \right|\omega_1(y,t)\idiff y.
\]
Note that $c(0)=0$ since both solutions start from the same initial data. The existence and uniqueness of $c(t)$ then follows from the standard ODE theory. The lemma is thus proved.
\end{proof}

\section{Explicit singular solution to the profile equation when $a<0$}\label{app:explicit_singular_solution}
We verify that the explicit singular function given in Theorem \ref{thm:explicit_singular_solution} is a solution to the self-similar profile equation \eqref{eqt:steady_profile}. Recall that for a fixed $a<0$, the expressions of $(\bar\Omega_a, \bar c_{l,a}, \bar c_{\omega,a})$ are given by
\begin{equation}
    \bar\Omega_a(X)=-\frac{\boldsymbol{1}_{\{X>1\}}\sin(\pi \mu)}{|1-X|^\mu},\quad \bar c_{l,a}=1-a,\quad \bar c_{\omega,a}=-1,
\end{equation}
where $\mu=1/(1-a)\in(0,1)$. Then the Hilbert transform of $\bar \Omega_a$ is
\[
    (\bar U_a)_X=\mtx H(\bar\Omega_a)(X)= \frac{\boldsymbol{1}_{\{X<1\}}+\boldsymbol{1}_{\{X>1\}}\cos(\pi \mu)}{|1-X|^\mu}.
\]

\begin{proof}[Proof of Theorem \ref{thm:explicit_singular_solution}]
We can calculate that for $X\neq 1$,
\[
    (\bar \Omega_a)_X(X)=\frac{\mu\boldsymbol{1}_{\{X>1\}}\sin (\pi \mu)}{|1-X|^{1+\mu}},
\]
and for all $X\in \mathbb{R} $,
\[
    \begin{split}
        \bar U_a(X)
         & =\int_0^X (\bar U_a)_X(Y)\idiff Y=\frac{1}{1-\mu}+\frac{|1-X|^{1-\mu}}{1-\mu}(\boldsymbol{1}_{\{X>1\}}\cos(\pi \mu)-\boldsymbol{1}_{\{X<1\}}) \\
         & =\frac{1}{1-\mu}+\frac{X-1}{1-\mu}(\bar U_a)_X(X).
    \end{split}
\]
We then compute that
\begin{align*}
     & -\bar c_{l,a} X (\bar \Omega_a)_X+\bar c_{\omega,a}\bar \Omega_a+(\bar U_a)_X\bar \Omega_a-a\bar U_a(\bar \Omega_a)_X                                                                                                          \\
     &=-\bar c_{l,a} X\cdot \frac{\mu\boldsymbol{1}_{\{X>1\}}\sin (\pi \mu)}{|1-X|^{1+\mu}}-\bar c_{\omega,a}\cdot \frac{\boldsymbol{1}_{\{X>1\}}\sin(\pi \mu)}{|1-X|^\mu}-\frac{\boldsymbol{1}_{\{X>1\}}\sin(\pi \mu)\cos(\pi \mu)}{|1-X|^{2\mu}} \\
     & \quad +\frac{\boldsymbol{1}_{\{X>1\}}\sin(\pi \mu)}{|1-X|^{1+\mu}} + \frac{\boldsymbol{1}_{\{X>1\}}\sin(\pi \mu)\cos(\pi \mu)}{|1-X|^{2\mu}} \\
     &=\left( -\bar c_{l,a} \mu X-\bar c_{\omega,a}(X-1)+1 \right)\frac{\boldsymbol{1}_{\{X>1\}}\sin(\pi \mu)}{|1-X|^{1+\mu}} \\
     &=0.
\end{align*}
Therefore, $(\bar\Omega_a,\bar c_{l,a},\bar c_{\omega,a})$ exactly solves the self-similar profile equation \eqref{eqt:steady_profile} for all $a<0$.
\end{proof}

\section{Details of the numerical implementation}\label{app:numerics}
\subsection{Numerical method for computing the Hilbert transform and its integral}\label{app:numerics_hilbert_transform}
Here we describe the numerical method we use for computing the Hilbert transform $\mtx H(f)(x)$ for a given function $f$. We approximate $f$ on the computing domain $[-M,M]$ using cubic splines as
\begin{equation}\label{eqt:cubic_spline_approximation}
    f(x)=f_i P_i(x)+f_{i+1}P_{i+1}(x)+f_i'Q_i(x)+f_{i+1}'Q_{i+1}(x),\quad  x\in [x_i,x_{i+1}),
\end{equation}
where $\{x_i\}_{i=0}^N$ is the grid points with $x_0=-M,x_N=M$, $f_i$ are the function values at the grid points, and
\[
    \begin{split}
        P_i(x) & =\left( 1+2\frac{x-x_i}{x_{i-1}-x_i} \right)\left( \frac{x-x_{i-1}}{x_i-x_{i-1}} \right)^2\boldsymbol{1}_{\{x_{i-1}\le x <x_i\}}+\left( 1+2\frac{x-x_i}{x_{i+1}-x_i} \right)\left( \frac{x-x_{i+1}}{x_i-x_{i+1}} \right)^2\boldsymbol{1}_{\{x_i\le x <x_{i+1}\}}, \\
        Q_i(x) & =(x-x_i)\left( \frac{x-x_{i-1}}{x_i-x_{i-1}} \right)^2\boldsymbol{1}_{\{x_{i-1}\le x <x_{i}\}}+(x-x_i)\left( \frac{x-x_{i+1}}{x_i-x_{i+1}} \right)^2\boldsymbol{1}_{\{x_i\le x <x_{i+1}\}}
    \end{split}
\]
are sums of the Hermite basis functions on $[x_{i-1},x_i]$ and $[x_i,x_{i+1}]$. We consider $P_i,Q_i$ rather than the standard Hermite basis functions since they are $C^{1}_0$ functions, which makes their Hilbert transforms bounded. The derivatives on the grid points $f_i'$ are computed using the standard cubic spline method with end conditions $f''(x_0)=f''(x_N)=0$. Then for each $P_i,Q_i$, we compute their Hilbert transforms analytically. We have
\[
    \mtx H(P_i)(x)  =A(l)-A(r),\quad \mtx H(Q_i)(x) =(x_{i-1}-x_i)B(l)-(x_{i+1}-x_i)B(r),
\]
where
\[
    l=\frac{x_{i-1}-x_i}{x-x_i},\quad r=\frac{x_{i+1}-x_i}{x-x_i},
\]
\[
    A(s)=\frac{-5s^3-12s^2+12s+6(s^3-3s+2)\ln|1-s|}{6\pi s^3},\quad B(s)=\frac{2s^3-9s^2+6s+6(s-1)^2\ln|1-s|}{6\pi s^3}.
\]
Note that when $s\to 0$ $A(s)$ tends to $0$, while
\[
    \frac{-5s^3-12s^2+12s}{6\pi s^3},\quad \frac{6(s-1)^2\ln|1-s|}{6\pi s^3}
\]
both tend to $\infty$, and such behavior would lead to large numerical errors introduced by cancellation. Hence to compute $A(s)$ and $B(s)$ accurately when $s$ is close to $0$, we use the following minimax approximation for $(s-1)(\ln|1-s|+s+s^2/2+s^3/3)/s^4$ on $[-0.5,0.5]$ computed by Mathematica 14.2:
\begin{align*}
     & \frac{(s-1)\left(\ln|1-s|+s+\frac{s^2}{2}+\frac{s^3}{3}\right)}{s^4}      \approx R_1(s)    \\
     & = 0.25-0.04999999999999866 s-0.03333333333338085 s^2-0.02380952381002668 s^3                \\
     & \quad -0.01785714284817429 s^4-0.01388888883280194 s^5-0.0111111117723456 s^6               \\
     & \quad  -0.00909091199993801 s^7-0.007575732470825958 s^8-0.006410171819766842 s^9           \\
     & \quad  -0.005495065477419788 s^{10}-0.004763424514296551 s^{11}-0.0041587945938036 s^{12}   \\
     & \quad  -0.003658683032021482 s^{13}-0.003340203914467509 s^{14}-0.00306305502520356 s^{15}  \\
     & \quad  -0.002195292685440593 s^{16}-0.001652974757186405 s^{17}-0.003865944631093168 s^{18} \\
     & \quad  -0.004466133542538447 s^{19}+0.002210270189446115 s^{20}+0.003532901236598389 s^{21} \\
     & \quad  -0.006394622541761529 s^{22}-0.006964811435003471 s^{23}
\end{align*}
with a maximum absolute error $2.011826942268783\times 10^{-16}$. Using this approximation we can compute $A(s)$ and $B(s)$ accurately for $s\in[-0.5,0.5]$:
\[
    A(s)\approx -\frac{3s^2+2s^3}{6\pi}+\frac{-2s+s^2+s^3}{\pi} R_1(s),\quad B(s)\approx\frac{s-2s^2}{6\pi}+\frac{-s+s^2}{\pi} R_1(s).
\]
Finally, for the grid point $x_i$, both $l$ and $r$ tend to  infinity, hence we need to compute the limit explicitly. By taking the limit $x\to x_i$ yields
\[
    \mtx H(P_i)(x_i)=\frac{1}{\pi}\ln\left| \frac{x_{i-1}-x_i}{x_{i+1}-x_i} \right|,\quad \mtx H(Q_i)(x_i)=\frac{x_{i-1}-x_{i+1}}{3\pi}
\]
for $i=1,2,\ldots,N-1$. Using the above formulas we can compute $\mtx H(f)(x_i)$ for any grid point $x_i\in[-M,M]$ accurately.

To compute the integral of the Hilbert transform, we still use the cubic spline approximation \eqref{eqt:cubic_spline_approximation} for $f$ and integrate $\mtx H(P_i)(x)$ and $\mtx H(Q_i)(x)$ analytically:
\[
    -(-\Delta)^{-1/2}P_i(x)=\int \mtx H(P_i)(x)\idiff x= C(x_i-x_{i-1},l) + C(x_{i+1}-x_i,r),
\]
\[
    -(-\Delta)^{-1/2}Q_i(x)=\int \mtx H(Q_i)(x)\idiff x=D(x_i-x_{i-1},l)-D(x_{i+1}-x_i,r),
\]
where
\[
    C(d,s)=\frac{d\left( -19s^4+8s^3+18s^2-12s+(12s^4-24s^3+24s-12)\ln|1-s|+12s^4\ln\left|\frac{d}{s}\right| \right)}{24\pi s^4},
\]
\[
    D(d,s)=\frac{d^2\left( 13s^4+36s^3-78s^2+36s+(-12s^4+72s^2-96s+36)\ln|1-s|-12s^4 \ln\left|\frac{d}{s}\right|\right)}{144\pi s^4}.
\]
For $s$ close to $0$, we use the minimax approximation for $(s-1)^3(\ln|1-s|+s+s^2/2+s^3/3)/s^4$ on $[-0.5,0.5]$ computed by Mathematica 14.2:
\begin{align*}
     & \frac{(s-1)^3\left(\ln|1-s|+s+\frac{s^2}{2}+\frac{s^3}{3}\right)}{s^4} \approx R_2(s)           \\
     & =0.25-0.5500000000000005 s+0.3166666666666673 s^2-0.007142857142718389 s^3                      \\
     & \quad -0.003571428571658581 s^4-0.001984126996758493 s^5-0.001190476169455116 s^6               \\
     & \quad -0.0007575752302443226 s^7-0.0005050513541658974 s^8-0.0003496624830794595 s^9            \\
     & \quad -0.0002497317240860826 s^{10}-0.0001829814020322089 s^{11}-0.0001376024384624091 s^{12}   \\
     & \quad -0.0001065281770442371 s^{13}-0.00007978095495689995 s^{14}-0.00005609995868764908 s^{15} \\
     & \quad -0.00006109725530208673 s^{16}-0.00007171425465849955 s^{17}                              \\
     & \quad -6.388916781803325\times 10^{-6}s^{18}+0.00003490077048616611 s^{19}                      \\
     & \quad -0.00006504398511532267 s^{20}-0.0000856727088008827 s^{21}
\end{align*}
with a maximum absolute error $4.143007531279983\times 10^{-17}$. Then we can compute $C(d,s)$ and $D(d,s)$ accurately for $s\in[-0.5,0.5]$:
\[
    C(d,s)\approx \frac{d\left( -3+2s^2-4s^3+12\ln\left|\frac{d}{s}\right| \right)}{24\pi}+\frac{d(1+s)}{2\pi} R_2(s),
\]
\[
    D(d,s)\approx\frac{d^2\left( 9-12s+6s^2+4s^3-12\ln\left|\frac{d}{s}\right| \right)}{144\pi}-\frac{d^2(3+s)}{12\pi} R_2(s).
\]
For the grid point $x_i$, taking the limit $x\to x_i$ yields
\[
    -(-\Delta)^{-1/2}P_i(x_i)=\frac{-19(d_1+d_2)+12 d_1\ln d_1+12 d_2\ln d_2}{24\pi},
\]
\[
    -(-\Delta)^{-1/2}Q_i(x_i)=\frac{13d_1^2-13d_2^2-12d_1^2\ln d_1+12 d_2^2 \ln d_2}{144\pi},
\]
where $d_1=x_i-x_{i-1}, d_2=x_{i+1}-x_i$.
\subsection{Numerical method for simulating the dynamic rescaling equation}

Here we describe our numerical approach for solving the dynamic rescaling equation \eqref{eqt:dynamic_rescaling}. To prevent round-off and discretization errors from degrading the prescribed high-order vanishing of the solution at the origin, we evolve
\[
    f(X,\tau)=\frac{\Omega(X,\tau)}{X^k}
\]
instead of $\Omega$ itself, where $k\ge 3$ is an odd integer. Substituting $\Omega=X^k f$ into \eqref{eqt:dynamic_rescaling} gives
\[
    f_\tau + (c_l X + a U)\, f_X
    =\Bigl( c_\omega + U_X - k\Bigl(c_l + a\frac{U}{X}\Bigr)\Bigr) f.
\]
The choice of $k$ depends on the sign of $a$. For $a>0$, we choose $k$ to match the vanishing order of the initial data. For $a<0$, we fix $k=3$, since the computations in Section \ref{sec:a_le_0} indicate that the evolving profile develops infinite vanishing order at the origin, provided the initial data vanish to order at least $3$. The velocity $U$ and its derivative $U_X$ are computed from $\Omega=X^k f$ using the method described in Appendix \ref{app:numerics_hilbert_transform}. We discretize the advection term $(c_l X + aU)f_X$ using a fifth-order WENO scheme \cite{jiang1999high} and advance in time with the ten-stage, fourth-order Strong Stability Preserving Runge--Kutta method SSPRK$(10,4)$ \cite{gottlieb2011strong}.

For $a>0$, the convergence criterion is set as $\|f_\tau\|_{L^\infty} < 10^{-8}$. For $a<0$, since the profile becomes singular at $X=1$ under our normalization conditions, we exclude the neighborhood of the singularity from the convergence criterion and use
\[
    \|f_\tau\boldsymbol{1}_{\{0 \le X < 0.9\text{ or }X>1.1\}}\|_{L^\infty} < 10^{-8}.
\]

In Section \ref{sec:numerical_multiscale}, to better resolve the developing singularity for $a<0$, we use an adaptive mesh refinement strategy. The grid points $X$ are generated from a uniformly discretized computational coordinate $\rho\in[0,\rho_{\max}]$ via
\[
    X(\rho)=X_m(1-\cosh\rho)+\sqrt{c+X_m^2}\,\sinh\rho,
\]
where $X_m$ denotes the location where $|\Omega|$ attains its maximum. The parameter $c$ controls the mesh concentration near $X_m$ and is chosen so that $N_{\mathrm{bulk}}=600$ grid points lie in the interval $[X_1,X_2]$ capturing the thin peak of the solution, defined by
\[
    |\Omega(X_1)|=|\Omega(X_2)|=0.5\|\Omega\|_{L^\infty}.
\]
We truncate the domain at $M=10^{10}$ by choosing $\rho_{\max}$ such that $X(\rho_{\max})=M$, and we use a uniform step size $\Delta\rho=0.01$. During the evolution, we track $X_m$ and the width of the thin peak $X_2-X_1$. We regenerate the mesh and interpolate the solution onto the new grid using cubic splines whenever (i) $X_m$ drifts by more than $25\%$ of the current width of the thin peak, or (ii) the width shrinks by more than $50\%$. For the $a>0$ case (Section \ref{sec:a_gt_0}), the profile remains smooth, so we use a fixed mesh generated by the same procedure, with $X_1=0.99$, $X_2=1.01$, and $X_m=1$ chosen manually. In Section \ref{sec:a_le_0}, we focus on the outer profile and do not explicitly zoom in near the singularity at $X=1$; accordingly, we again use a fixed mesh generated from the initial data, and rely on the WENO5 discretization to capture the sharp front near $X=1$.

\subsection{The iterative method for computing the traveling wave profile}\label{apx:iterative_method}
Recall the nonlinear operator $R_a$ introduced in Section \ref{sec:existence_traveling_wave}:
\begin{equation}\label{eqt:operator_R_a_recall}
    R_a(\omega)(x)=\left(1-\frac{a\,T(\omega)(x)}{r(\omega)}\right)_+^{\frac1a},
\end{equation}
where
\[
    T(\omega)(x)=\frac1\pi\int_0^{+\infty}\ln\left|\frac{x^2-y^2}{y^2}\right|\omega(y)\,\idiff y,
    \qquad
    r(\omega)=\frac1\pi\int_0^{+\infty}\frac{\omega(0)-\omega(y)}{y^2}\,\idiff y.
\]
By Proposition \ref{prop:fixed_point_is_traveling_wave}, any fixed point of $R_a$ corresponds to a traveling-wave profile of the gCLM model. To compute such a profile numerically, we apply the fixed-point iteration
\[
    \omega^{(n+1)} = R_a\bigl(\omega^{(n)}\bigr),\qquad n=0,1,2,\ldots,
\]
initialized with
\[
    \omega^{(0)}(x)=\frac{1}{1+x^2}.
\]
At each step, the operator $T(\omega)$ is evaluated using the method described in Appendix \ref{app:numerics_hilbert_transform}. The iteration is terminated once the increment satisfies
\[
    \bigl\|\omega^{(n+1)}-\omega^{(n)}\bigr\|_{L^\infty(\mathbb{R})} < 10^{-8}.
\]

\section{On the Hilbert transform}
\begin{lemma}\label{lem:Hilbert_transform_kth_derivative}
    Given a function $\omega$, suppose that $\|x^\delta\omega\|_{L^\infty(\mathbb{R} )}=\sup_{x\in \mathbb{R} }|x|^\delta|\omega(x)|<+\infty$ for some $\delta>0$. If $\omega\in H_{loc}^k(A,B)$ for some $A<B$ and some integer $k\ge 0$, then $\mtx H(\omega) \in H_{loc}^k(A,B)$.
\end{lemma}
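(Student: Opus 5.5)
The approach is to localize $\omega$ with a smooth cutoff and to split $\mtx H(\omega)$ into a near part and a far part. Fix an arbitrary compact subinterval $[A',B']\subset(A,B)$; it suffices to show $\mtx H(\omega)\in H^k(A',B')$. Choose $A<A''<A'$ and $B'<B''<B$, and let $\chi\in C_c^\infty(A,B)$ with $0\le\chi\le1$ and $\chi\equiv1$ on $[A'',B'']$. Then write
\[
\mtx H(\omega)=\mtx H(\chi\omega)+\mtx H\big((1-\chi)\omega\big).
\]

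For the near part, observe that $\omega\in H^k_{loc}(A,B)$ implies $\chi\omega\in H^k(\mathbb{R})$. Indeed, $\chi\omega$ is compactly supported in $(A,B)$, and by the Leibniz rule each derivative up to order $k$ lies in $L^2$. Since the Hilbert transform is a Fourier multiplier with symbol $-\iunit\,\sgn(\xi)$, it is an isometry on $H^k(\mathbb{R})$. Hence $\mtx H(\chi\omega)\in H^k(\mathbb{R})$, and in particular it lies in $H^k(A',B')$.

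For the far part, set $g=(1-\chi)\omega$. This function vanishes on $[A'',B'']$, so for $x\in[A',B']$ the integrand of
\[
\mtx H(g)(x)=\frac1\pi\int_{\mathbb{R}}\frac{g(y)}{x-y}\idiff y
\]
is nonsingular, since $|x-y|\ge d:=\min\{A'-A'',B''-B'\}>0$ on the support of $g$. We may therefore differentiate under the integral sign as often as we like:
\[
\partial_x^j\mtx H(g)(x)=\frac{(-1)^j j!}{\pi}\int\frac{g(y)}{(x-y)^{j+1}}\idiff y .
\]
It remains to check that these integrals converge uniformly for $x\in[A',B']$; this gives $\mtx H(g)\in C^\infty([A',B'])\subset H^k(A',B')$.

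The main obstacle is the integrability of $g$, since $\omega$ is only known to be locally $L^2$ inside $(A,B)$ together with the weighted bound $|\omega(y)|\le C|y|^{-\delta}$. On the bounded region $|y|\le R$ with $|x-y|\ge d$, the kernel is bounded by $d^{-j-1}$. The factor $|y|^{-\delta}$ is integrable near $y=0$ provided $\delta<1$. If $\delta\ge1$, we interpret the hypothesis as $|\omega|\le C\min\{1,|y|^{-\delta}\}$ and argue as follows: either $0\in(A,B)$, in which case $L^2_{loc}$ control handles it, or we additionally use local boundedness of $\omega$. In the paper's application $\omega\le1$, so I would record this implicitly assumed local integrability and proceed. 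On the far region $|y|>R$, the kernel decays like $|y|^{-j-1}$ while $|g|\lesssim|y|^{-\delta}$, so the integrand is $O(|y|^{-\delta-j-1})$, which is integrable since $\delta>0$ and $j\ge0$. This convergence is uniform in $x\in[A',B']$, which justifies differentiating under the integral sign and completes the proof.
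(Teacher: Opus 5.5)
Your proof is correct, but it takes a different route from the paper.

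\textbf{The paper's route.} It uses a sharp cutoff $\boldsymbol{1}_{[a,b]}$. By induction and repeated integration by parts it derives the explicit formula $\mtx{H}(\omega)^{(k)}=\mtx{H}(\boldsymbol{1}_{[a,b]}\omega^{(k)})+g_{a,b}^{(k)}+\sum_{j<k}f_{a,b,j}^{(k-j)}$. Here $g_{a,b}$ is the far-field integral and $f_{a,b,j}=\frac{1}{\pi}(\omega^{(j)}(a)\ln|x-a|-\omega^{(j)}(b)\ln|x-b|)$ are boundary terms. The first term is controlled by the $L^2$ isometry, and the other two are smooth inside $(a,b)$.

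\textbf{What your route buys.} Your smooth cutoff $\chi$ removes the boundary terms altogether. The Leibniz rule puts $\chi\omega$ in $H^k(\mathbb{R})$, and the $H^k$-isometry of the multiplier $-\iunit\operatorname{sgn}(\xi)$ handles all derivatives at once. You need no induction and no pointwise values $\omega^{(j)}(a)$, which the paper must justify through $H^k_{loc}\subset C^{k-1}$.

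\textbf{What the paper's route buys.} It gives an explicit representation of $\mtx{H}(\omega)^{(k)}$. This shows that the only obstructions at the cut points are logarithmic singularities, whose derivatives behave like $|x-a|^{-(k-j)}$. That is why the paper must also pass to interior subintervals, which your nested choice $[A',B']\subset[A'',B'']$ builds in from the start.

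\textbf{A shared implicit assumption.} Both arguments need local integrability of $\omega$ in the far field. The paper simply asserts that $g_{a,b}$ is smooth. You correctly observe that for $\delta\ge 1$ the weighted bound alone does not make $\omega$ integrable near $y=0$ when $0\notin(A,B)$. So an extra hypothesis such as local boundedness must be added. It holds in the paper's application, since there $0\le\omega\le 1$. This is a gap in the statement, not in your proof. Your case distinction there is redundant, though: once you assume $|\omega|\le C\min\{1,|y|^{-\delta}\}$, the issue is already settled.
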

\begin{proof}
    We first prove a formula for the $k$-th derivative of the Hilbert transform $\mtx{H}(\omega)$. If $\omega\in H_{loc}^k(A,B)$, then for any $A<a<b<B$ and any $x\in (a,b)$,
    \begin{equation}\label{eqt:Hilbert_transform_kth_derivative}
        \mtx{H}(\omega)^{(k)}(x)=\mtx H(\boldsymbol{1}_{\{a\le x\le b\}}\omega^{(k)})(x)+g_{a,b}^{(k)}(x)+\sum_{j=0}^{k-1}f_{a,b,j}^{(k-j)}(x),
    \end{equation}
    where the summation is $0$ if $k=0$, and
    \[
        \mtx{H}(\boldsymbol{1}_{\{a\le x\le b\}}\omega^{(k)})(x)=\frac{1}{\pi} P.V.\int_a^b\frac{\omega^{(k)}(y)}{x-y}\idiff y,
    \]
    \[
        g_{a,b}(x)=\frac{1}{\pi}\int_{-\infty}^a\frac{\omega(y)}{x-y}\idiff y+\frac{1}{\pi}\int_b^{+\infty}\frac{\omega(y)}{x-y}\idiff y,
    \]
    \[
        f_{a,b,j}(x):=\frac{1}{\pi}\left( \omega^{(j)}(a)\ln|x-a|-\omega^{(j)}(b)\ln|x-b| \right),\quad j=0,1,2,\ldots.
    \]
    We prove this formula by induction. The base $k=0$ is trivial:
    \[
        \mtx H(\omega)(x)=\frac{1}{\pi} P.V.\int_{-\infty}^{+\infty}\frac{\omega(y)}{x-y}\idiff y=\frac{1}{\pi} P.V.\int_{a}^b\frac{\omega(y)}{x-y}\idiff y+g_{a,b}(x).
    \]
    Now suppose that \eqref{eqt:Hilbert_transform_kth_derivative} holds for some integer $k\ge 0$, we need to show that it is then also true for $k+1$. Under the assumption that $\omega\in H_{loc}^{k+1}(A,B)$, we can use integration by parts to rewrite the first term on the right-hand side of \eqref{eqt:Hilbert_transform_kth_derivative} as
    \[
        \begin{split}
            \mtx H(\boldsymbol{1}_{\{a\le x\le b\}}\omega^{(k)})(x)
             & =\frac{1}{\pi} P.V.\int_a^b\frac{\omega^{(k)}(y)}{x-y}\idiff y                                                 \\
             & =-\frac{1}{\pi}\omega^{(k)}(y)\ln|x-y|\Big|_{y=a}^{y=b}+\frac{1}{\pi}\int_a^b\omega^{(k+1)}(y)\ln|x-y|\idiff y \\
             & =f_{a,b,k}(x)+\frac{1}{\pi}\int_a^b\omega^{(k+1)}(y)\ln|x-y|\idiff y.
        \end{split}
    \]
    Note that $\omega^{(k)}(a)$ and $\omega^{(k)}(b)$ are finite since $\omega\in H_{loc}^{k+1}(A,B)$. It then follows from the induction hypothesis that, for $x\in(a,b)$,
    \[
        \begin{split}
            \mtx {H}(\omega)^{(k+1)}(x)
             & =\left( \mtx H\left( \boldsymbol{1}_{\{a\le x\le b\}}\omega^{(k)} \right)(x) \right)'+\left( g_{a,b}^{(k)}(x)+\sum_{j=0}^{k-1}f_{a,b,j}^{(k-j)}(x) \right)' \\
             & = f'_{a,b,k}(x)+\frac{1}{\pi} P.V.\int_a^b\frac{\omega^{(k+1)}(y)}{x-y}\idiff y+g_{a,b}^{(k+1)}(x)+\sum_{j=0}^{k-1}f_{a,b,j}^{(k+1-j)}(x)                   \\
             & =\frac{1}{\pi} P.V.\int_a^b\frac{\omega^{(k+1)}(y)}{x-y}\idiff y+g_{a,b}^{(k+1)}(x)+\sum_{j=0}^{k}f_{a,b,j}^{(k+1-j)}(x).
        \end{split}
    \]
    Hence \eqref{eqt:Hilbert_transform_kth_derivative} holds for $k+1$. This completes the induction.

    We then use \eqref{eqt:Hilbert_transform_kth_derivative} to prove the lemma. Note that under the assumptions of the lemma, it is easy to see that $g_{a,b}(x)$ and $f_{a,b,j}(x),j=0,1,\ldots,k-1$, are infinitely smooth in the interior of $(a,b)$, and thus $g_{a,b},f_{a,b,j}\in H_{loc}^k(A,B)$. As for the first term on the right-hand side of \eqref{eqt:Hilbert_transform_kth_derivative}, we have
    \[
        \left\|\mtx H\left( \boldsymbol{1}_{\{a\le x\le b\}}\omega^{(k)} \right)\right\|_{L^2([a,b])}\le\left\|\mtx H\left( \boldsymbol{1}_{\{a\le x\le b\}}\omega^{(k)} \right)\right\|_{L^2(\mathbb{R} )}=\left\|\boldsymbol{1}_{\{a\le x\le b\}}\omega^{(k)}\right\|_{L^2(\mathbb{R} )}=\|\omega\|_{\dot H^k([a,b])}<+\infty.
    \]
    Therefore, \eqref{eqt:Hilbert_transform_kth_derivative} implies that $\mtx H(\omega)^{(k)}\in H_{loc}^k(A,B)$. Since this is true for any $A<a<b<B$, we immediately have $\mtx H(\omega)^{(k)}\in H_{loc}^k(A,B)$.
\end{proof}

\textbf{Acknowledgements.} The authors are supported by the National Key R\&D Program of China under the grant 2021YFA1001500. DH and XW are also supported by the National Natural Science Foundation of China under the grant NSFC No. 12288101.

\bibliographystyle{myalpha}
\bibliography{reference}

@article{chen2021finite,
  title={On the finite time blowup of the {D}e {G}regorio model for the 3{D} {E}uler equations},
  author={Chen, Jiajie and Hou, Thomas Y and Huang, De},
  journal={Communications on Pure and Applied Mathematics},
  volume={74},
  number={6},
  pages={1282--1350},
  year={2021},
  publisher={Wiley Online Library}
}

@article{chen2020singularity,
  title={Singularity formation and global well-posedness for the generalized {C}onstantin--{L}ax--{M}ajda equation with dissipation},
  author={Chen, Jiajie},
  journal={Nonlinearity},
  volume={33},
  number={5},
  pages={2502},
  year={2020},
  publisher={IOP Publishing}
}

@article{constantin1985simple,
  title={A simple one-dimensional model for the three-dimensional vorticity equation},
  author={Constantin, Peter and Lax, Peter D and Majda, Andrew},
  journal={Communications on Pure and Applied Mathematics},
  volume={38},
  number={6},
  pages={715--724},
  year={1985},
  publisher={Wiley Online Library}
}

@article{elgindi2020effects,
  title={On the effects of advection and vortex stretching},
  author={Elgindi, Tarek M and Jeong, In-Jee},
  journal={Archive for Rational Mechanics and Analysis},
  volume={235},
  number={3},
  pages={1763--1817},
  year={2020},
  publisher={Springer}
}

@article{jia2019gregorio,
  title={On the {D}e {G}regorio modification of the {C}onstantin--{L}ax--{M}ajda model},
  author={Jia, Hao and Stewart, Samuel and Sverak, Vladimir},
  journal={Archive for Rational Mechanics and Analysis},
  volume={231},
  number={2},
  pages={1269--1304},
  year={2019},
  publisher={Springer}
}

@article{lushnikov2021collapse,
  title={Collapse Versus Blow-Up and Global Existence in the Generalized {C}onstantin--{L}ax--{M}ajda Equation},
  author={Lushnikov, Pavel M and Silantyev, Denis A and Siegel, Michael},
  journal={Journal of Nonlinear Science},
  volume={31},
  number={5},
  pages={1--56},
  year={2021},
  publisher={Springer}
}

@article{de1990one,
  title={On a one-dimensional model for the three-dimensional vorticity equation},
  author={De Gregorio, Salvatore},
  journal={Journal of statistical physics},
  volume={59},
  number={5},
  pages={1251--1263},
  year={1990},
  publisher={Springer}
}

@article{de1996partial,
  title={A partial differential equation arising in a {1D} model for the {3D} vorticity equation},
  author={De Gregorio, Salvatore},
  journal={Mathematical Methods in the Applied Sciences},
  volume={19},
  number={15},
  pages={1233--1255},
  year={1996},
  publisher={Wiley Online Library}
}

@article{okamoto2008generalization,
  title={On a generalization of the {C}onstantin--{L}ax--{M}ajda equation},
  author={Okamoto, Hisashi and Sakajo, Takashi and Wunsch, Marcus},
  journal={Nonlinearity},
  volume={21},
  number={10},
  pages={2447},
  year={2008},
  publisher={IOP Publishing}
}

@article{lei2020constantin,
  title={On the {C}onstantin--{L}ax--{M}ajda model with convection},
  author={Lei, Zhen and Liu, Jie and Ren, Xiao},
  journal={Communications in Mathematical Physics},
  volume={375},
  number={1},
  pages={765--783},
  year={2020},
  publisher={Springer}
}

@article{chen2021slightly,
  title={On the Slightly Perturbed {D}e {G}regorio Model on $ {S}^1$ },
  author={Chen, Jiajie},
  journal={Archive for Rational Mechanics and Analysis},
  volume={241},
  number={3},
  pages={1843--1869},
  year={2021},
  publisher={Springer}
}

@article{landman1988rate,
  title={Rate of blowup for solutions of the nonlinear {S}chr{\"o}dinger equation at critical dimension},
  author={Landman, Michael J and Papanicolaou, George C and Sulem, Catherine and Sulem, Pierre-Louis},
  journal={Physical Review A},
  volume={38},
  number={8},
  pages={3837},
  year={1988},
  publisher={APS}
}

@article{mclaughlin1986focusing,
  title={Focusing singularity of the cubic {S}chr{\"o}dinger equation},
  author={McLaughlin, David W and Papanicolaou, George C and Sulem, Catherine and Sulem, Pierre-Louis},
  journal={Physical Review A},
  volume={34},
  number={2},
  pages={1200},
  year={1986},
  publisher={APS}
}

@article{chen2022stable,
  title={Stable nearly self-similar blowup of the 2{D} {B}oussinesq and 3{D} {E}uler equations with smooth data},
  author={Chen, Jiajie and Hou, Thomas Y},
  journal={arXiv preprint arXiv:2210.07191},
  year={2022}
}

@article{chen2025regularity,
  author  = {Chen, Jiajie},
  title   = {On the regularity of the {De Gregorio} model for the {3D Euler} equations},
  journal = {Journal of the European Mathmetical Society},
  volume  = {27},
  number  = {4},
  pages   = {1619--1677},
  year    = {2025},
  doi     = {10.4171/JEMS/1399}
}

@article{huang2023self,
  title={On self-similar finite-time blowups of the {D}e {G}regorio model on the real line},
  author={Huang, De and Tong, Jiajun and Wei, Dongyi},
  journal={Communications in Mathematical Physics},
  volume={402},
  number={3},
  pages={2791--2829},
  year={2023},
  publisher={Springer}
}

@article{huang2024self,
  title={Self-similar finite-time blowups with smooth profiles of the generalized {C}onstantin--{L}ax--{M}ajda model},
  author={Huang, De and Qin, Xiang and Wang, Xiuyuan and Wei, Dongyi},
  journal={Archive for Rational Mechanics and Analysis},
  volume={248},
  number={2},
  pages={22},
  year={2024},
  publisher={Springer}
}

@article{huang2025multiscale,
  title={Multiscale Self-Similar Finite-Time Blowups of the {C}onstantin--{L}ax--{M}ajda Model for the Three-Dimensional {E}uler Equations},
  author={Huang, De and Qin, Xiang and Wang, Xiuyuan},
  journal={SIAM Journal on Mathematical Analysis},
  volume={57},
  number={4},
  pages={4068--4096},
  year={2025},
  publisher={SIAM}
}

@article{elgindi2021stable,
  title={Stable self-similar blow-up for a family of nonlocal transport equations},
  author={Elgindi, Tarek M and Ghoul, Tej-Eddine and Masmoudi, Nader},
  journal={Analysis \& PDE},
  volume={14},
  number={3},
  pages={891--908},
  year={2021},
  publisher={Mathematical Sciences Publishers}
}

@article{hou2022potential,
  title={A potential two-scale traveling wave singularity for 3{D} incompressible {E}uler equations},
  author={Hou, Thomas Y and Huang, De},
  journal={Physica D: Nonlinear Phenomena},
  volume={435},
  pages={133257},
  year={2022},
  publisher={Elsevier}
}

@article{elgindi2021finite,
  title={Finite-time Singularity Formation for ${C}^{1,\alpha}$ Solutions to the Incompressible {E}uler Equations on $\mathbb{R}^3$},
  author={Elgindi, Tarek M},
  journal={Annals of Mathematics},
  volume={194},
  number={3},
  pages={647--727},
  year={2021},
  publisher={Department of Mathematics, Princeton University Princeton, New Jersey, USA}
}

@article{cordoba2025finite,
  title={Finite Time Singularities to the 3{D} Incompressible {E}uler Equations for Solutions in \({C}^{\infty}(\mathbb{R}^3 \setminus \{0\})\cap {C}^{1,\alpha}\cap {L}^2\)},
  author={C{\'o}rdoba, Diego and Mart{\'i}nez-Zoroa, Luis and Zheng, Fan},
  journal={Annals of PDE},
  volume={11},
  number={2},
  pages={19},
  year={2025},
  publisher={Springer}
}

@article{castro2009self,
  title={Self-similar solutions for a transport equation with non-local flux},
  author={Castro, Angel and C{\'o}rdoba, Diego},
  journal={Chinese Annals of Mathematics, Series B},
  volume={30},
  number={5},
  pages={505--512},
  year={2009},
  publisher={Springer}
}

@article{zheng2023exactly,
doi = {10.1088/1361-6544/acf01e},
url = {https://doi.org/10.1088/1361-6544/acf01e},
year = {2023},
publisher = {IOP Publishing},
volume = {36},
number = {10},
pages = {5252},
author = {Zheng, Fan},
title = {Exactly self-similar blow-up of the generalized {D}e {G}regorio equation},
journal = {Nonlinearity},
}

@book{gottlieb2011strong,
  title={Strong stability preserving {Runge--Kutta} and multistep time discretizations},
  author={Gottlieb, Sigal and Ketcheson, David I and Shu, Chi-Wang},
  year={2011},
  publisher={World Scientific}
}

@article{jiang1999high,
  title={A high-order {WENO} finite difference scheme for the equations of ideal magnetohydrodynamics},
  author={Jiang, Guang-Shan and Wu, Cheng-Chin},
  journal={Journal of Computational Physics},
  volume={150},
  number={2},
  pages={561--594},
  year={1999},
  publisher={Elsevier}
}

@article{chen2022asymptotically,
  title={Asymptotically self-similar blowup of the {Hou--Luo} model for the 3{D} {E}uler equations},
  author={Chen, Jiajie and Hou, Thomas Y and Huang, De},
  journal={Annals of PDE},
  volume={8},
  number={2},
  pages={24},
  year={2022},
  publisher={Springer}
}

@article{chen2025singularity,
  title={Singularity formation in 3{D} {E}uler equations with smooth initial data and boundary},
  author={Chen, Jiajie and Hou, Thomas Y},
  journal={Proceedings of the National Academy of Sciences},
  volume={122},
  number={27},
  pages={e2500940122},
  year={2025},
  publisher={National Academy of Sciences}
}

@article{wang2025discovery,
  title={Discovery of unstable singularities},
  author={Wang, Yongji and Bennani, Mehdi and Martens, James and Racani{\`e}re, S{\'e}bastien and Blackwell, Sam and Matthews, Alex and Nikolov, Stanislav and Cao-Labora, Gonzalo and Park, Daniel S and Arjovsky, Martin and others},
  journal={arXiv preprint arXiv:2509.14185},
  year={2025}
}

@article{wang2023asymptotic,
  title={Asymptotic self-similar blow-up profile for three-dimensional axisymmetric {E}uler equations using neural networks},
  author={Wang, Yongji and Lai, C-Y and G{\'o}mez-Serrano, Javier and Buckmaster, Tristan},
  journal={Physical Review Letters},
  volume={130},
  number={24},
  pages={244002},
  year={2023},
  publisher={APS}
}

@article{collot2023collapsing,
  title={Collapsing-ring blowup solutions for the {Keller--Segel} system in three dimensions and higher},
  author={Collot, Charles and Ghoul, Tej-Eddine and Masmoudi, Nader and Nguyen, Van Tien},
  journal={Journal of Functional Analysis},
  volume={285},
  number={7},
  pages={110065},
  year={2023},
  publisher={Elsevier}
}

@article{del2020new,
  title={New type {II} finite time blow-up for the energy supercritical heat equation},
  author={del Pino, Manuel and Lai, Chen-Chih and Musso, Monica and Wei, Juncheng and Zhou, Yifu},
  journal={arXiv preprint arXiv:2006.00716},
  year={2020}
}

@article{del2020type,
  title={Type {II} finite time blow-up for the energy critical heat equation in {$\mathbb{R}^4$}},
  author={del Pino, Manuel and Musso, Monica and Wei, Juncheng and Zhou, Yifu},
  journal={Discrete Contin. Dyn. Syst},
  volume={40},
  number={6},
  pages={3327--3355},
  year={2020}
}

@article{chen2026asymptotically,
  title={Asymptotically Self-Similar Blowup for 3{D} Incompressible {E}uler with $ {C}^{1, 1/3-} $ {V}elocity {I}: $ {C}^\infty $ 1{D} Limiting Profiles},
  author={Chen, Jiajie},
  journal={arXiv preprint arXiv:2605.15149},
  year={2026}
}

@article{chen2026asymptotically2,
  title={Asymptotically Self-Similar Blowup for 3{D} Incompressible {E}uler with $ {C}^{1, 1/3-}$ {V}elocity {II}: 3{D} Profiles, Blowup, and Limiting behavior},
  author={Chen, Jiajie},
  journal={arXiv preprint arXiv:2605.15130},
  year={2026}
}

@article{shao2026self,
  title={Self-similar blow-up solutions of incompressible {E}uler equations in $\mathbb {R}^d, d\ge 3$ with $ {C}^{1, 1-\frac{2}{d}-} $ velocity},
  author={Shao, Feng and Wei, Dongyi and Zhang, Ping and Zhang, Zhifei},
  journal={arXiv preprint arXiv:2605.19716},
  year={2026}
}

@article{shkoller2026incompressible,
  title={Incompressible {E}uler Blow-up at the $ {C}^{1,\frac{1}{3}}$  Threshold},
  author={Shkoller, Steve},
  journal={arXiv preprint arXiv:2603.10945},
  year={2026}
}

@article{danchin2007axisymmetric,
  title={Axisymmetric incompressible flows with bounded vorticity},
  author={Danchin, Raphael},
  journal={Russian Mathematical Surveys},
  volume={62},
  number={3},
  pages={475--496},
  year={2007}
}

@article{constantin2026putativeselfsimilarityincompressible3d,
      title={On putative self-similarity for incompressible 3{D} {E}uler}, 
      author={Peter Constantin and Mihaela Ignatova and Vlad Vicol},
      journal={arXiv preprint arXiv:2602.17570},
      year={2026},
      eprint={2602.17570},
      archivePrefix={arXiv},
      primaryClass={math.AP},
      url={https://arxiv.org/abs/2602.17570}, 
}

@article{shao2026global,
  title={Global regularity of axisymmetric {E}uler equations without swirl in higher dimensions},
  author={Shao, Feng and Wei, Dongyi and Zhang, Zhifei},
  journal={Acta Mathematica Sinica, English Series},
  volume={42},
  number={3},
  pages={663--679},
  year={2026},
  publisher={Springer}
}

@inproceedings{elgindi2026dynamics,
title = {Dynamics of Ideal Fluid Flows},
author = {Tarek M. Elgindi},
booktitle = {Proceedings of the International Congress of Mathematicians 2026},
volume={5},
pages = {285-306},
year={2026},
publisher={Society for Industrial and Applied Mathematics}
}

\end{document}